\definecolor{myred}{rgb}{0.75,0,0}
\definecolor{mygreen}{rgb}{0,0.5,0}
\definecolor{myblue}{rgb}{0,0,0.65}
\theoremstyle{plain}
\newtheorem{theorem}[subsubsection]{Theorem}
\newtheorem{proposition}[theorem]{Proposition}
\newtheorem{lemma}[theorem]{Lemma}
\newtheorem{corollary}[theorem]{Corollary}
\theoremstyle{definition}
\newtheorem{definition}[theorem]{Definition}
\newtheorem{construction}[theorem]{Construction}
\newtheorem{remark}[theorem]{Remark}
\newtheorem{example}[theorem]{Example}
\newtheorem{convention}[theorem]{Convention}
\newtheorem{conjecture}[theorem]{Conjecture}
\newtheorem{notation}[theorem]{Notation}
\theoremstyle{remark}
\numberwithin{equation}{section}
\newcommand\nc{\newcommand}
\nc\on{\operatorname}
\nc\renc{\renewcommand}
\newcommand*{\shom}{\mathscr{H}\kern -.5pt om}
\newcommand*{\stor}{\mathscr{T}\kern -.5pt or}
\newcommand*{\sext}{\mathscr{E}\kern -.5pt xt}
\newcommand \cC{{\mathcal C}}
\providecommand\@dotsep{5}
\renewcommand{\listoftodos}[1][\@todonotes@todolistname]{%
\@starttoc{tdo}{#1}}
\def\Ddots{\mathinner{\mkern1mu\raise\p@
		\vbox{\kern7\p@\hbox{.}}\mkern2mu
		\raise4\p@\hbox{.}\mkern2mu\raise7\p@\hbox{.}\mkern1mu}}
	\newcommand{\NB}[1]{\todo[color=gray!40]{#1}}
	\newcommand{\TODO}[1]{\todo[color=red]{#1}}
	\newcommand{\NB}[1]{}
	\newcommand{\TODO}[1]{}
	\renewcommand{\todo}[1]{}
	\renewcommand{\todo}[1]{}
\newcommand{\customlabel}[2]{\protected@write \@auxout {}{\string \newlabel {#1}{{#2}{\thepage}{#2}{#1}{}} }\hypertarget{#1}{#2}}
\DeclareMathOperator\id{id}
\DeclareMathOperator\Alg{Alg}
\DeclareMathOperator\Sp{Sp}
\DeclareMathOperator\Fun{Fun}
\DeclareMathOperator\Spc{Spc}
\DeclareMathOperator\cof{cof}
\DeclareMathOperator\unit{\mathbb{1}}
\renewcommand\hom{\mathrm{Hom}}
\DeclareMathOperator\Ind{Ind}
\DeclareMathOperator\spec{Spec}
\DeclareMathOperator\Set{Set}
\DeclareMathOperator\Mod{Mod}
\DeclareMathOperator\im{im}
\DeclareMathOperator\map{map}
\newcommand\Top{\mathrm{Top}}
\DeclareMathOperator\gal{Gal}
\DeclareMathOperator\Spec{Spec}
\DeclareMathOperator\tr{tr}
\DeclareMathOperator\frob{Frob}
\DeclareMathOperator\cl{Cl}
\DeclareMathOperator\Hur{Hur}
\DeclareMathOperator\Conf{Conf}
\DeclareMathOperator\aut{Aut}
\newcommand\QQ{\mathbb{Q}}
\newcommand\ZZ{\mathbb{Z}}
\newcommand\RR{\mathbb{R}}
\newcommand\NN{\mathbb{N}}
\newcommand\EE{\mathbb{E}}
\DeclareMathOperator\ord{ord}
\DeclareMathOperator\fib{fib}
\DeclareMathOperator\conf{Conf}
\DeclareMathOperator\colim{colim}
\DeclareFontFamily{U}{wncy}{}
\DeclareFontShape{U}{wncy}{m}{n}{<->wncyr10}{}
\DeclareSymbolFont{mcy}{U}{wncy}{m}{n}
\DeclareMathSymbol{\Sha}{\mathord}{mcy}{"58}
\newcommand{\hur}[4]{[\operatorname{Hur}^{#1, #3}_{#2,#4}/#1]}
\newcommand{\phur}[4]{\operatorname{Hur}^{#1, #3}_{#2,#4}}
\newcommand{\chur}[4]{[\operatorname{CHur}^{#1, #3}_{#2,#4}/#1]}
\newcommand{\cphur}[4]{\operatorname{CHur}^{#1, #3}_{#2, #4}}
\newcommand{\boundarycphur}[5]{\operatorname{CHur}^{#1, #3, #5}_{#2, #4}}
\newcommand{\hurc}[3]{[\operatorname{Hur}^{#1, #3}_{#2}/#1]}
\newcommand{\phurc}[3]{\operatorname{Hur}^{#1, #3}_{#2}}
\newcommand{\churc}[3]{[\operatorname{CHur}^{#1, #3}_{#2}/#1]}
\newcommand{\cphurc}[3]{\operatorname{CHur}^{#1, #3}_{#2}}
\DeclareMathOperator\surj{Surj}
\def\listtodoname{List of Todos}
\def\listoftodos{\@starttoc{tdo}\listtodoname}
\title{The Cohen--Lenstra moments over function fields via the stable homology of non-splitting Hurwitz spaces}
\subjclass[2020]{Primary 11R29; Secondary 11R11, 11R58, 55P43}
\keywords{The Cohen--Lenstra heuristics, Hurwitz spaces, homological stability, higher algebra}
\author{Aaron Landesman}
\author{Ishan Levy}
\begin{document}

\date{October 29, 2024}
\begin{abstract}
	We compute the average number of surjections from class groups of
	quadratic function fields over $\mathbb F_q(t)$ onto finite odd order
	groups $H$, once $q$ is sufficiently large. These yield the first known
	moments of these class groups, as predicted by the Cohen--Lenstra heuristics, apart from the case
	$H = \mathbb Z/3\mathbb Z$. The key input to this result is a
	topological one, where we compute the stable rational homology groups of Hurwitz
	spaces associated to non-splitting conjugacy classes.
\end{abstract}

\maketitle
\tableofcontents

\section{Introduction}

The Cohen--Lenstra heuristics, introduced by Cohen and Lenstra in
\cite{cohen1984heuristics}, predict the distribution of the odd part
of class groups of quadratic number fields, and have been one of the driving conjectures in
arithmetic statistics over the last four decades.
If $K$ is a quadratic extension of $\mathbb Q$, and $\mathscr O_K$ is its ring
of integers, the Cohen--Lenstra heuristics predict the distribution of the odd
part of $\on{Cl}(\mathscr O_K)$.
Next, let $\mathbb F_q$ be a finite field with odd characteristic,
$K$ be a quadratic extension of $\mathbb F_q(t)$,
and $\mathscr O_K$ denote the normalization of $\mathbb F_q[t]$
in $K$. The natural extension of the Cohen--Lenstra heuristics to global function
fields similarly predicts the distribution of the odd
part of $\on{Cl}(\mathscr O_K)$.

One of the primary approaches to determining this distribution is to compute its
moments, by which we mean the average number of surjections $\left|
\surj(\cl(\mathscr O_K), H) \right|$, for $H$ a finite abelian group of odd
order, as $K$ ranges over imaginary quadratic fields or real quadratic fields.
As further motivation for considering these moments, it was shown in 
\cite[Theorem 1.3]{wangW:moments-and-interpretations}
that the Cohen--Lenstra distribution for imaginary
quadratic fields or real quadratic fields is determined by its associated
moments. Hence, if one were
able to compute all conjectured moments, one would prove the Cohen--Lenstra
heuristics.

Although the problem of computing the average size of the $\ell$-torsion
(corresponding to the $\mathbb Z/\ell \mathbb Z$ moment) in the
class groups of quadratic fields may appear substantially more tractable than
determining the entire distribution of these class groups, the only $\ell$ for
which this has been carried out is $\ell = 3$.
This was computed
over $\mathbb Q$ by Davenport and Heilbronn \cite{davenportH:density-discriminants-cubica}
and the analog over function fields has been verified by Datskovsky and Wright
in 
\cite{datskovsky-wright:density-of-discriminants-of-cubic-extensions}.
To the best of our knowledge, since 1988, no additional odd order moments of class groups
of quadratic fields have been
computed.
We note that the original Cohen--Lenstra conjectures have been extended from
odd order abelian groups to all finite abelian groups by Gerth, and the 
limiting distribution
associated to $2^\infty$-part of the class group in this sense has been computed by Smith
\cite[Theorem 1.9]{smith:the-distribution-of-selmer-groups-1}.

Over function fields of the form $\mathbb F_q(t)$, significant progress was made
when a weaker version of these
moments was computed.
Namely, in
\cite[Theorem 3.1]{Achter:cohenQuadratic}, building on
\cite{Achter:distributionClassGroups},
Achter computes these moments where one first takes a large $q$ limit.
Using techniques from homological stability, Ellenberg--Venkatesh--Westerland
were able to strengthen Achter's results by computing a version of these moments where one first takes a large
height limit, and only after takes a large $q$ limit
\cite{EllenbergVW:cohenLenstra}. But even via this approach, no moment (other
than the case $H = \mathbb Z/3 \mathbb Z$) over a fixed $\mathbb
F_q(t)$ has been computed, or even shown to exist.

In this paper, we
compute the moment associated to {\em any} odd order abelian group
associated to class groups of quadratic extensions of $\mathbb F_q(t)$ over
suitably large finite fields $\mathbb F_q$ of suitable characteristic.
Our main new input which lets us accomplish this is a computation of the stable rational
homology of certain Hurwitz spaces.

\subsection{Results toward the Cohen--Lenstra heuristics}

Let $q$ be an odd prime power and 
let $\mathcal{MH}_{n,q}$ denote the set of function fields $K$
of monic smooth hyperelliptic curves of the form $y^2 = f(x)$, where $f(x)$ is a monic
squarefree degree $n$ polynomial with coefficients in $\mathbb F_q$.
(We consider monic smooth hyperelliptic curves to be the analog of quadratic number fields in
the function field case.)
Let $\mathscr O_K$ denote the normalization of $\mathbb F_q[t]$
in the quadratic extension $K$ and let $\cl(\mathscr O_K)$ denote its class group.

\begin{theorem}
	\label{theorem:moments}
	Suppose $H$ is a finite abelian group of odd order.
	Let $q$ be an odd prime power with
	$\gcd( |H|, q(q-1)) = 1$.
	There is an integer $C$, depending only on $H$, so that
	if $q > C$ and $i \in \{0,1\}$,
	\begin{align}
		\label{equation:moments}
		\lim_{\substack{n \to \infty \\ n \equiv i \bmod 2 }}
		\frac{\sum_{K \in \mathcal{MH}_{n,q}}
	\left|\surj(\cl(\mathscr O_K), H) \right|} {\sum_{K \in \mathcal{MH}_{n,q}} 1} =
	\begin{cases}
		1 & \text{ if } i =1 \\
		\frac{1}{|H|} & \text{ if } i = 0.
	\end{cases}
	\end{align}
\end{theorem}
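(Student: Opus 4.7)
My plan is to follow the Ellenberg--Venkatesh--Westerland dictionary between Cohen--Lenstra moments and $\mathbb F_q$-point counts on Hurwitz spaces, but to push the argument to a fixed (large) $q$ rather than their large-$q$ limit, by means of a new computation of the stable rational homology of the relevant Hurwitz spaces.

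First, set $G := H \rtimes \ZZ/2$, where $-1 \in \ZZ/2$ acts on $H$ by inversion (a valid action since $|H|$ is odd), and let $c \subset G$ be the conjugacy class(es) of involutions $(h,-1)$ for $h \in H$. Class field theory translates a surjection $\cl(\mathscr O_K) \twoheadrightarrow H$, for $K = \mathbb F_q(x)[y]/(y^2 - f(x))$, into an unramified $H$-extension $L/K$, whose Galois closure over $\mathbb F_q(t)$ is a $G$-cover of $\PP^1_{\mathbb F_q}$ branched over the roots of $f$ (and possibly $\infty$, depending on the parity of $\deg f$), with local monodromies in $c$. Up to a combinatorial factor depending only on $|H|$, $|\Aut(H)|$, and the parity of $n$, the numerator in \eqref{equation:moments} thus equals $|\operatorname{Hur}^{G,c}_n(\mathbb F_q)|$ for an appropriate Hurwitz stack, while the denominator is the familiar count $q^n - q^{n-1}$ of monic squarefree degree $n$ polynomials.

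Next, I would apply the Grothendieck--Lefschetz trace formula together with Poincar\'e duality and Artin's comparison theorem between \'etale and singular cohomology:
\[
|\operatorname{Hur}^{G,c}_n(\mathbb F_q)| = \sum_i (-1)^i \tr\bigl(\operatorname{Frob}_q \mid H^i_{\et,c}((\operatorname{Hur}^{G,c}_n)_{\overline{\mathbb F}_q}, \mathbb Q_\ell)\bigr).
\]
Since $\operatorname{Hur}^{G,c}_n$ is smooth of complex dimension $n$, the top-degree compactly-supported cohomology contributes $q^n$ times the Frobenius trace on $H_0$ of the complex-analytic Hurwitz space, while degree-$j$ contributions are bounded by $C_j \, q^{n-j/2}$ via Deligne's Riemann hypothesis. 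Dividing by $q^n - q^{n-1}$, the lower-order terms vanish in the $n \to \infty$ limit once $q > C$ is fixed large enough (so that the geometric series in negative powers of $q^{1/2}$ converges rapidly), and the limit in \eqref{equation:moments} reduces to computing the Frobenius-equivariant stable rational $H_0$ of $\operatorname{Hur}^{G,c}_n$.

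The main obstacle, and the paper's central contribution, is this topological computation for the \emph{non-splitting} involution class. Unlike the splitting classes that fit most cleanly into the EVW framework, products of two elements of $c$ land in $H \subset G$ rather than in a free-abelian piece, so the component structure and the image of the stabilization map are governed by a nontrivial quotient of $H$; the resulting stable $H_0$ therefore has a subtle Frobenius action which must be pinned down explicitly, and the higher rational homology must be shown to vanish stably. Granted this topological input, the values $1$ and $1/|H|$ in \eqref{equation:moments} fall out of a direct calculation of the stable $H_0$ weighted by the Step 1 combinatorial factor, with the two parities of $n$ corresponding to the ``imaginary'' (ramified at $\infty$) and ``real'' (split at $\infty$) cases. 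The hypothesis $\gcd(|H|, q(q-1)) = 1$ enters to guarantee that $c$ is preserved by Frobenius and that the relevant cyclotomic characters controlling the $H_0$ computation are trivial, while $q > C$ is needed both for the Riemann-hypothesis estimate and for the stabilization range to reach each fixed $n$.
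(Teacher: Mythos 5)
Your overall strategy is the same as the paper's: reduce the moment to an $\mathbb F_q$-point count on the Hurwitz space for $G = H \rtimes \ZZ/2\ZZ$ and the involution class $c$ via class field theory, apply Grothendieck--Lefschetz with Deligne's bounds, and feed in the stable rational homology of non-splitting Hurwitz spaces as the key (deferred) topological input. That reduction chain is exactly what the paper carries out.

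However, there is a concrete error in your trace-formula bookkeeping that, as written, produces the wrong constant. You assert that after dividing by $q^n - q^{n-1}$ the positive-degree contributions ``vanish in the $n\to\infty$ limit,'' so that the limit ``reduces to computing the Frobenius-equivariant stable rational $H_0$,'' and correspondingly that ``the higher rational homology must be shown to vanish stably.'' Neither is right. What vanishes in the limit is only the \emph{unstable} tail, i.e.\ the contributions in degrees $i \geq (n-J)/I$, whose sum is controlled by the tail of the geometric series $\sum_{i \geq (n-J)/I}(C/\sqrt q)^i$; the contributions in the stable range persist for all $n$ and must be computed exactly. In particular, the stable $H^1$ of each component is \emph{not} zero: the paper's main topological theorem shows the stable homology of each component of $\cphurc{G}{n}{c}$ agrees with that of $\conf_n$, which is $\QQ$ in degrees $0$ and $1$. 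The degree-one class is of Tate type and contributes $-q^{n-1}$ per geometrically irreducible component to the point count, so that
\begin{align*}
\left|Y_n(\mathbb F_q)\right| = r\left(q^n - q^{n-1}\right) + (\text{unstable error}),
\end{align*}
and it is precisely this $-rq^{n-1}$ that cancels the subleading term of the denominator $q^n - q^{n-1}$ to give the clean limit $r$. If, as your plan states, only the stable $H_0$ survived, the limit would come out to $r\cdot q/(q-1)$, i.e.\ $q/(q-1)$ and $q/\bigl((q-1)|H|\bigr)$ rather than $1$ and $1/|H|$. So the topological input you need is not ``higher homology vanishes stably'' but the more precise statement that each component's stable homology maps isomorphically to that of configuration space, degree-$1$ part included. (A smaller point: counting surjections rather than isomorphism classes of covers means no $|\Aut(H)|$ factor appears; the only combinatorial factors are the $1/|H|$ from the marked point over $\infty$ and the sum over the $|H|$ possible boundary monodromies when $n$ is odd.)
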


\autoref{theorem:moments} is the special case of
\autoref{theorem:moments-roots-of-unity}, stated below, where we take the number
$h$, defined there, to be $1$.

\begin{remark}
\label{remark:}
Even though for any given $H$, we compute the $H$-moment of the class group of
quadratic fields over $\mathbb F_q(t)$ for sufficiently large $q$, we
do not prove the Cohen--Lenstra heuristics. Although it is true that knowledge of
all moments do determine the Cohen--Lenstra distribution, for a fixed value of
$q$, we are only able to
compute moments associated to sufficiently small groups $H$ relative to $q$.
We do so by computing the {\em stable} homology of related Hurwitz spaces.
If one were also able to obtain a better understanding of the
{\em unstable} homology of these Hurwitz spaces, together with the eigenvalues
of the Frobenius
action,
one might be able to
verify the entire predictions of the Cohen--Lenstra heuristics over $\mathbb
F_q(t)$.
\end{remark}

\begin{remark}
	It is possible to make the constant $C$ in \autoref{theorem:moments}
	explicit: see the discussion in \autoref{remark:explicitconstant}.
\end{remark}

\begin{remark}
	\label{remark:}
	Since it was geometrically more natural, we took a limit in
	\eqref{equation:moments} of the $H$-moments of the class group as $K \in
	\mathcal{MH}_{n,q}$, where $K$ has fixed log discriminant degree $n$. However,
	from this we can easily also deduce
	\begin{align}
		\label{equation:moments-sum}
		\lim_{\substack{n \to \infty \\ n \equiv i \bmod 2 }}
		\frac{\sum_{K \in \cup_{j \leq n, j \equiv n \bmod 2} \mathcal{MH}_{j,q}}
\left|\surj(\cl(\mathscr O_K), H) \right|} {\sum_{K \in \cup_{j \leq n, j \equiv n
	\bmod 2}\mathcal{MH}_{j,q}} 1} =
	\begin{cases}
		1 & \text{ if } i =1 \\
		\frac{1}{|H|} & \text{ if } i = 0
	\end{cases}
	\end{align}
	by summing the terms in \eqref{equation:moments} over all
	integers up to $n$ with the same parity as $n$.
\end{remark}
\begin{remark}
\label{remark:non-monic}
One may wonder whether the analog of \autoref{theorem:moments} would hold if one
averaged over all smooth hyperelliptic curves instead of just monic ones.
We explain in \autoref{remark:odd-degree-independence} why the resulting
averages will be the same in the case $n$ is odd if one takes non-monic smooth hyperelliptic curves ramified
over $\infty$ in place
of monic ones ramified over $\infty$.
In the case that $n$ is even, the statistics of non-monic smooth hyperelliptic curves
seem somewhat more subtle.
We believe it would be quite interesting and doable to work this case out in detail.
\end{remark}

\subsection{Results toward the Cohen--Lenstra heuristics with roots of unity}
We note that \autoref{theorem:moments} assumes $\gcd(q-1,|H|) = 1$, so that
there are no roots of unity in the base field of order dividing $|H|$. 
There has been a significant amount of work toward stating and proving the correct version
of the Cohen--Lenstra heuristics in the presence of such roots of unity, see
\cite{
malle2008cohen,
Malle:distribution,
adamM:a-class-group-heuristic,
garton:random-matrices-the-cohen-lenstra-heuristics-and-roots-of-unity,
lipnowskiST:cohen-lenstra-roots-of-unity,sawinW:conjectures-for-distributions-containing-roots-of-unity}. To this
end, we prove the following generalization, which removes the condition that
$\gcd(q-1,|H|) = 1$.
For $G$ a finite abelian group and $h \in \mathbb Z_{>0}$, we use $G[h]$ to denote
the $h$-torsion subgroup of $G$.

\begin{theorem}
		\label{theorem:moments-roots-of-unity}
	Suppose $H$ is a finite abelian group of odd order. 
	Fix $q$ an odd prime power with 
	$\gcd( |H|, q) = 1$ 
	and let 
	$h := \gcd(|H|, q - 1)$.
	There is an integer $C$ depending only on $H$ so that,
	if $q > C$ and $i \in \{0,1\}$,
	\begin{align}
		\label{equation:moments-with-roots-of-unity}
		\lim_{\substack{n \to \infty \\ n \equiv i \bmod 2 }}
		\frac{\sum_{K \in \mathcal{MH}_{n,q}}
		\left|\surj(\cl(\mathscr O_K), H)\right|} {\sum_{K \in \mathcal{MH}_{n,q}} 1} =
	\begin{cases}
		|\wedge^2 H[h]| & \text{ if } i =1 \\
		\frac{|\wedge^2 H[h]|}{|H|} & \text{ if } i = 0.
	\end{cases}
	\end{align}
\end{theorem}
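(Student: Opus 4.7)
The plan is to translate the moment in \eqref{equation:moments-with-roots-of-unity} into a point count on a suitable Hurwitz stack, and then apply the Grothendieck--Lefschetz trace formula together with the paper's main theorem on the stable rational homology of non-splitting Hurwitz spaces. First I would set up the class-field-theoretic dictionary: let $G := H \rtimes \mathbb{Z}/2\mathbb{Z}$ with the involution acting on $H$ by inversion, and let $c \subset G$ be the (non-splitting) conjugacy class of involutions lying in $G \setminus H$. For $K \in \mathcal{MH}_{n,q}$, class field theory identifies the surjections $\cl(\mathscr{O}_K) \twoheadrightarrow H$ with connected unramified $H$-covers of the smooth projective model of the hyperelliptic curve; composed with the hyperelliptic double cover, these become connected $G$-covers of $\mathbb{P}^1_{\mathbb{F}_q}$ with inertia in $c$. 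Such covers are parametrized (up to $H$-automorphism factors) by the non-splitting Hurwitz stack $\cphurc{G}{n}{c}$, and so both the numerator and denominator of \eqref{equation:moments-with-roots-of-unity} become point counts of Hurwitz-type stacks over $\mathbb{F}_q$, with the parity separation $n \equiv i \pmod 2$ encoding whether the hyperelliptic cover is ramified at $\infty$ (odd $n$) or unramified there (even $n$).

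Next I would apply Grothendieck--Lefschetz to $\cphurc{G}{n}{c}$ over $\overline{\mathbb{F}}_q$, writing
\[
|\cphurc{G}{n}{c}(\mathbb{F}_q)| = \sum_{i \geq 0} (-1)^i \tr\bigl(\frob_q \mid H^i_c(\cphurc{G}{n}{c}, \mathbb{Q}_\ell)\bigr),
\]
and likewise for $|\conf_n(\mathbb{F}_q)|$. The paper's stable homology theorem computes this rational cohomology for $i$ below a linear function of $n$: it is concentrated in a single degree (dually, the top compactly supported cohomology) and is identified, after twisting, with the permutation module on the stable set of geometric components. Deligne purity, combined with the explicit stability range, bounds the unstable contribution by $O_H(q^{-1/2}) \cdot |\conf_n(\mathbb{F}_q)|$; this is where the hypothesis $q > C(H)$ enters and gives the desired limit as $n \to \infty$ with $q$ fixed.

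The heart of the proof, and the step responsible for the factor $|\wedge^2 H[h]|$, is the identification of the stable $\pi_0$ of $\cphurc{G}{n}{c}$ together with its Frobenius action. For $H$ abelian and $c$ as above, the stable component set is a torsor under $\wedge^2 H$, reflecting the Schur multiplier obstruction to lifting $c$-valued inertia tuples through the universal central extension of $G$. Frobenius acts on this $\wedge^2 H$-torsor via the cyclotomic character on the inertia generators, i.e.\ by multiplication by $q$; hence the set of Frobenius-fixed components has cardinality $|\wedge^2 H[q-1]| = |\wedge^2 H[h]|$, using that $|H|$ is odd and coprime to $q$ so that the $h$-torsion of $\wedge^2 H$ coincides with its $(q-1)$-torsion. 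Combining this with the Grothendieck--Lefschetz input, and accounting for the additional $H$-torsor of rational splittings over an unramified $\infty$ (which produces the factor $1/|H|$ in the even-$n$ case), yields exactly \eqref{equation:moments-with-roots-of-unity}.

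The main obstacle is the stable homology theorem itself, and specifically the identification of stable $\pi_0$ with an $\wedge^2 H$-torsor together with its Galois action. This is the key new topological and arithmetic input of the paper, and is what distinguishes \autoref{theorem:moments-roots-of-unity} from \autoref{theorem:moments}: the roots-of-unity phenomenon is not visible from the class-field-theoretic setup alone, but only emerges once one understands how cyclotomic twists intertwine with the braid-group action on the Hurwitz stack. Once this is in hand, the remainder of the argument is a fairly standard assembly of Grothendieck--Lefschetz, Deligne purity, and bookkeeping of boundary conditions at $\infty$ for the two parities of $n$.
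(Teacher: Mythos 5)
Your proposal follows essentially the same route as the paper: class field theory translates the $H$-moment into a point count on pointed Hurwitz spaces for $(H\rtimes\mathbb Z/2\mathbb Z,\,c)$; Grothendieck--Lefschetz together with the stable homology theorem and Deligne's bounds reduces that count to the number of Frobenius-fixed geometric components; the component set is a $\wedge^2 H$-torsor on which Frobenius acts through the cyclotomic character, yielding $|\wedge^2 H[h]|$; and the two parities of $n$ are handled by the boundary monodromy at $\infty$ exactly as in the paper's Theorem \ref{theorem:moments-with-error}. One minor inaccuracy: the stable rational cohomology of each component agrees with that of $\Conf_n$ and so lives in degrees $0$ and $1$ (producing the factor $1-\tfrac 1 q$ that cancels against the same factor in the denominator), rather than being concentrated in a single degree as you state, but this does not affect the final answer.
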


\autoref{theorem:moments-roots-of-unity} is an immediate consequence of
\autoref{theorem:moments-with-error}, 
which proves a refined statement about the rate of convergence of the limit.

\subsection{Results toward the non-abelian Cohen--Lenstra heuristics}

Having focused on the abelian Cohen--Lenstra heuristics above, 
in recent years, there has been significant progress toward generalizing these
heuristics to the non-abelian case. This involves replacing the class group (the
maximal {\em abelian} unramified extension) by the maximal unramified extension.

For quadratic extensions, these conjectures for pro-$p$ groups were formulated in
\cite{bostonBH:heuristics-imaginary} and \cite{bostonBH:heuristics-real},
and function field versions for pro-odd groups were proven in a suitable large $q$ limit in
\cite{bostonW:non-abelian-cohen-lenstra-heuristics} when there are no additional
roots of unity in the base field.
This conjecture was further generalized to certain even order groups and the
case when there are roots of unity in the
base field (with an additional data of a lifting invariant to keep track of the the fact
that the moments may no longer equal $1$) in
\cite{wood:nonabelian-cohen-lenstra-moments}.
We will next prove many cases of \cite[Conjecture
5.1]{wood:nonabelian-cohen-lenstra-moments},
the nonabelian Cohen--Lenstra heuristics when there are roots of unity in the
base field, keeping track of the additional data of a lifting invariant.

We note that the above statements were all about quadratic extensions, and they
have since been generalized to conjectures about the maximal unramified
extension of a $\Gamma$ extension, for $\Gamma$ an arbitrary finite group.
Namely, this generalization was made in
\cite{liuWZB:a-predicted-distribution}
for the case that there are few roots of unity in the base field and it was made
in
\cite{liu:non-abelian-cohen-lenstra-in-the-presence-of-roots}
when there are additional roots of unity in the base field, where the author
also keeps track of the additional data of a lifting invariant.

To state our results proving many 
cases of \cite[Conjecture
5.1]{wood:nonabelian-cohen-lenstra-moments},
we will introduce some notation.
These cases we prove are also cases of
\cite[Conjecture 1.2]{liu:non-abelian-cohen-lenstra-in-the-presence-of-roots}.
Because the relevant notation in 
\cite{liu:non-abelian-cohen-lenstra-in-the-presence-of-roots}
appears to be a bit simpler for the cases we will prove, we follow the notation
there.
\begin{notation}
	\label{notation:non-abelian}
	Let $K$ be a quadratic extension of $\mathbb F_q(t)$, let
$K^\sharp_\emptyset/K$ denote the maximal unramified extension of $K$ that is
split completely over $\infty$, in the sense of
\autoref{definition:split-completely},
and has degree prime to $2q$.
Define
$G^\sharp_\emptyset(K) := \gal(K^\sharp_\emptyset/K)$.
Then, $G^\sharp_\emptyset(K)$ has an action of $\mathbb Z/2 \mathbb Z$ coming
from the $\mathbb Z/2 \mathbb Z = \gal(K/\mathbb F_q(t))$ action on $K$.
This action is well-defined using the Schur--Zassenhaus lemma; for further
explanation, see the paragraph prior to \cite[Definition
2.1]{liuWZB:a-predicted-distribution}.
For $H$ a group with a $\mathbb Z/2 \mathbb Z$ action,
we use $\surj_{\mathbb Z/2 \mathbb Z}(G^\sharp_\emptyset(K), H)$ to denote the
set of $\mathbb Z/2 \mathbb Z$ equivariant surjections $G^\sharp_\emptyset(K)
\to H$.
We use
$\hat{\mathbb Z}(1)_{(2q)'}$ to denote the pro-prime to $2q$ completion of
for $\widehat{\mathbb Z}(1) := \lim_n \mu_n(\overline{\mathbb F}_q)$.
For $L/K$ an extension of fields, there is a certain map
$\omega_{L/K} : \hat{\mathbb Z}(1)_{(2q)'} \to H_2(\gal(L/\mathbb F_q(t)),
\mathbb Z)_{(2q)'}$ defined in \cite[Definition
2.13]{liu:non-abelian-cohen-lenstra-in-the-presence-of-roots}.
For $\pi \in \surj_{\mathbb Z/2 \mathbb Z}(G^\sharp_\emptyset(K), H)$, we use
$\pi_* : H_2( \gal(K^\sharp_\emptyset/\mathbb F_q(t)), \mathbb Z)_{(2q)'} \to
H_2(H \rtimes \mathbb Z/2 \mathbb Z, \mathbb Z)_{(2q)'}$ to denote the corresponding map
induced by $\pi$.
We say that $H$ is {\em admissible} for the action of $\mathbb Z/2 \mathbb Z$
if $H$ has odd order and is generated 
by elements of the form $h^{-1} \cdot \iota(h)$ for $h \in H$ and
$\iota$ the nontrivial element of $\mathbb Z/2 \mathbb Z$.
\end{notation}

\begin{theorem}
		\label{theorem:non-abelian-moments-roots-of-unity}
	Using notation from \autoref{notation:non-abelian},
	suppose $H$ is an admissible group with a $\mathbb Z/2 \mathbb Z$ action.
	Fix $q$ an odd prime power with 
	$\gcd( |H|, q) = 1$.
Let $\delta: \hat{\mathbb Z}(1)_{(2q)'} \to H_2(H \rtimes \mathbb Z/2 \mathbb Z, \mathbb
Z)_{(2q)'}$ be a group homomorphism with $\on{ord}(\im \delta) \mid q-1.$
There is an integer $C$, depending only on $H$ so that,
if $q > C$,
\begin{equation}
	\begin{aligned}
		\label{equation:non-abelian-moments-with-roots-of-unity}
		\lim_{\substack{n \to \infty \\ n \equiv 0 \bmod 2 }}
		\frac{\sum_{K \in \mathcal{MH}_{n,q}}
		\left| \{ \pi \in \surj_{\mathbb Z/2 \mathbb
			Z}\left(G^\sharp_\emptyset(K), H \right): \pi_* \circ
	\omega_{K^\sharp/K} = \delta \}\right|} {\sum_{K \in \mathcal{MH}_{n,q}} 1} =
	\frac{1}{[H:H^{\mathbb Z/2 \mathbb Z}]},
	\end{aligned}
\end{equation}
where $[H: H^{\mathbb Z/2 \mathbb Z}]$ denotes the index of the $\mathbb Z/2
\mathbb Z$ invariants of $H$ in $H$.
\end{theorem}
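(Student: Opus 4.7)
The plan is to imitate the strategy already established for the abelian \autoref{theorem:moments-roots-of-unity}, replacing the class group $\cl(\mathscr O_K)$ with $G^\sharp_\emptyset(K)$, and to reduce the computation to the $\mathbb F_q$-point count on a suitable Hurwitz space. Let $G := H \rtimes \mathbb Z/2\mathbb Z$ and let $c \subset G$ be the union of the non-identity conjugacy classes contained in the non-trivial coset $H\iota$. A $\mathbb Z/2\mathbb Z$-equivariant surjection $G^\sharp_\emptyset(K) \twoheadrightarrow H$ for $K \in \mathcal{MH}_{n,q}$ corresponds to a (prime-to-$2q$) $G$-cover of $\mathbb P^1_{\mathbb F_q}$ branched only over the $n$ Weierstrass points of the hyperelliptic cover with inertia in $c$, unramified over $\infty$, and whose restriction to $\gal(K/\mathbb F_q(t))$ is the quadratic cover. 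Thus the numerator of \eqref{equation:non-abelian-moments-with-roots-of-unity} is the number of $\mathbb F_q$-points of (the relevant open-and-closed substack of) the Hurwitz stack $\cphur{G}{n}{c}{\emptyset}$ of pointed $G$-covers branched over $n$ points of $\mathbb A^1$ with inertia in $c$, split at $\infty$; and the admissibility hypothesis on $H$ ensures that $c$ is a \emph{non-splitting} set of conjugacy classes in the sense required by the stable homology theorem.

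Next I would cut out the components selected by the lifting-invariant condition. The connected components of $\cphur{G}{n}{c}{\emptyset}_{\overline{\mathbb F}_q}$ are classified, once $n$ is sufficiently large, by the image of the boundary monodromy in $H_2(G,\mathbb Z)_{(2q)'}$, and the constraint $\pi_* \circ \omega_{K^\sharp/K} = \delta$ is exactly the requirement that this image match $\delta$. Because $\on{ord}(\im \delta) \mid q-1$, the Frobenius orbit of any such component is trivial and each such component is individually defined over $\mathbb F_q$; the number of such components, divided by the total number of components of $\cphur{G}{n}{c}{\emptyset}$, is (for $n$ large) a rational number which one can extract combinatorially from the Schur multiplier of $G$.

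The heart of the argument is then the Grothendieck--Lefschetz trace formula applied component-by-component. Since the stable rational homology of the non-splitting Hurwitz spaces $\cphur{G}{n}{c}{\emptyset}$ is the topological input computed in this paper, in the stable range one has $H^0_c = \QQ_\ell(-d_n)$ (a single Tate twist class corresponding to the ambient dimension) and all higher $H^i_c$ have bounded dimension with Frobenius eigenvalues of strictly smaller absolute value for $q$ sufficiently large relative to $H$. Thus the number of $\mathbb F_q$-points of each selected component is $q^{d_n}(1 + O(q^{-1/2}))$. Summing over the selected components and dividing by the denominator $\#\mathcal{MH}_{n,q}(\mathbb F_q) = q^n - q^{n-1}$, the $q^{d_n}$ factors cancel and one obtains the stated limit.

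The main obstacle, and the step I expect to require the most care, is identifying the correct set of components and showing that the component count produces precisely $\frac{1}{[H:H^{\mathbb Z/2\mathbb Z}]}$ rather than some cohomologically equivalent but differently normalized constant. Concretely this means matching the Hurwitz-space parametrization of covers (where an unordered $n$-tuple of conjugacy-class representatives in $c$ acts via the braid group) with the arithmetic parametrization of equivariant surjections modulo $H$-conjugation: the orbit of any $g \in c$ under $H$-conjugation has size $|H|/|H^{\mathbb Z/2\mathbb Z}|$, which is exactly where the factor $[H:H^{\mathbb Z/2\mathbb Z}]$ enters. Once this combinatorial bookkeeping is carried out and married to the stable homology input, together with the already-established asymptotic for the denominator, the identity \eqref{equation:non-abelian-moments-with-roots-of-unity} follows.
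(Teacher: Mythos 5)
Your overall strategy is the same as the paper's (reduce to an $\mathbb F_q$-point count on a pointed Hurwitz space via the stable homology computation and Grothendieck--Lefschetz, identify the components cut out by the lifting invariant, and track a normalization factor), but there is a concrete error at the very first step: your choice of $c$. You take $c$ to be all non-identity elements of the coset $H\iota$, but a $\mathbb Z/2\mathbb Z$-equivariant surjection $G^\sharp_\emptyset(K)\to H$ corresponds to an \emph{unramified} $H$-cover of the hyperelliptic curve, so the inertia of the composite $G$-cover at each Weierstrass point must have order exactly $2$; elements of $H\iota$ of order $2m$ with $m>1$ produce covers whose $H$-part is ramified and which do not contribute to the numerator. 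The correct $c$ is the single conjugacy class of order-$2$ elements of $G$ (all conjugate by Sylow, which is also what makes $(G,c)$ non-splitting). With your $c$, which for genuinely non-abelian admissible $H$ is a union of several conjugacy classes, $(G,c)$ is \emph{never} non-splitting in the sense of \autoref{definition:nonsplitting} (take $G'=G$), so \autoref{theorem:stable-homology-intro} does not apply and homological stability itself is not known; admissibility does not rescue this. So as written both the arithmetic identification of the numerator and the topological input fail.

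The second issue is that the step you defer as ``combinatorial bookkeeping'' is where essentially all of the remaining content lies, and your sketch of it is not quite right. The limit $1/[H:H^{\mathbb Z/2\mathbb Z}]$ does not arise as a ratio of component counts extracted from the Schur multiplier: for $n$ even there is a \emph{unique} geometrically irreducible component of $\boundarycphur G n c {\mathbb F_q} {\id}$ with $\omega$-invariant $\delta$ (this uses the classification of components by lifting invariants together with the condition $\ord(\im\delta)\mid q-1$ guaranteeing Frobenius-invariance), so $r=1$ in the point count and the component contributes $q^n(1-1/q)(1+o(1))$, exactly cancelling the denominator $q^n-q^{n-1}$. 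The factor $[H:H^{\mathbb Z/2\mathbb Z}]$ instead comes from the passage between $\mathbb F_q$-points of the \emph{pointed} Hurwitz space and equivariant surjections out of $G^\sharp_\emptyset(K)$ (the analogue of the factor $\#H$ in the abelian case, which here becomes the size of the $H$-conjugation orbit of a marked order-$2$ element, i.e.\ $[H:H^{\mathbb Z/2\mathbb Z}]$). Your intuition that conjugation orbits of elements of $c$ are responsible for this factor is correct, but the orbit-size computation must be attached to the marking at $\infty$ rather than to a count of components, and this identification (together with the restriction to $n$ even, which you do not address) is precisely what \autoref{theorem:non-abelian-moments-with-error} supplies.
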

\autoref{theorem:non-abelian-moments-roots-of-unity} is an immediate consequence of
\autoref{theorem:non-abelian-moments-with-error}, 
which proves a refined statement about the rate of convergence of the limit.

\subsubsection{}
\label{subsubsection:homology-to-cohen-lenstra}
In order to prove \autoref{theorem:moments},
\autoref{theorem:moments-roots-of-unity}, and
\autoref{theorem:non-abelian-moments-roots-of-unity},
we adapt the general strategy outlined in
\cite{ellenbergVWhomologicalII}.\footnote{Before Ellenberg, Venkatesh, and Westerland published their preprint
\cite{ellenbergVWhomologicalII},
Oscar Randal-Williams
discovered a serious error in 
\cite{ellenbergVWhomologicalII},
which the authors were unable to address. See
\cite{ellenberg:homological-stability-withdrawn} for further details. The main
result of this paper can be viewed as a resolution of that error.
}
Namely, one constructs certain Hurwitz spaces parameterizing covers of $\mathbb
P^1$ branched at $n$ points over $\mathbb A^1$. Via the
Grothendieck--Lefschetz trace formula,
computing the above moments
amounts to obtaining a precise count of the number of $\mathbb F_q$ points of
these Hurwitz spaces.
It was shown in \cite{EllenbergVW:cohenLenstra} that the homology groups of
these spaces stabilize in a linear range. The authors in loc. cit. used this stability
to prove a weaker version of our results, which included a large $q$ limit.
In order to compute the desired moments, it suffices to compute the stable
value of these homology groups.
Therefore, after the reductions mentioned above, the key new
result of this paper is to compute the stable homology groups of certain
Hurwitz spaces, which we discuss next.

\subsection{Stable homology of Hurwitz spaces}
\label{subsection:stable-homology-intro}

	There are a number of situations in algebraic geometry where one may be
	interested in determining the stable homology or cohomology groups of a sequence of
	spaces. One natural example is the sequence of spaces $\{\mathscr
	M_g\}_{g \geq 2}$. In
	\cite{Harer1985}, Harer proved that the $i$th cohomology of $\mathscr M_g$
	stabilizes as $g$ grows, see also 
	\cite{wahl:homological-stability-for-the-mapping-class-groups} and
	\cite{Wahl:stability-handbook}.
	Later, Madsen and Weiss
	\cite{madsen-weiss:the-stable-moduli-space-of-riemann-surfaces}
	computed the stable values of these cohomology groups.

	As a companion to the moduli spaces of curves, Hurwitz stacks also form
	prominent objects of study in algebraic geometry. 
	There are a number of equivalent viewpoints on how to think about
	Hurwitz stacks.
	If one fixes a finite group $G$, a conjugacy class
	$c$, and an integer $n$, the associated Hurwitz stack parameterizes
	$G$ covers of $\mathbb A^1$ branched at a degree $n$ divisor with
	inertia at each point of the divisor lying
	in $c$. 
	
	It is natural to ask whether the $i$th homology of these Hurwitz
	stacks also stabilizes as $n$ grows.
	Ellenberg, Venkatesh, and Westerland \cite{EllenbergVW:cohenLenstra}
	showed that this is indeed the case for certain {\em non-splitting}
	$(G,c)$, 
	namely those $(G,c)$ so that $c \cap G'$ consists of at most one conjugacy class for every subgroup $G' \subset G$.
	An important example of non-splitting $(G,c)$ is the following, which is relevant for the
	Cohen--Lenstra heuristics:
	$G = H \rtimes \mathbb Z/2 \mathbb Z$, for
	$H$ an odd order abelian group, and $c$ the conjugacy class of order
	$2$ elements.
	However, the value of these stable homology groups remained open.
	In this paper, we will compute this stable value of the homology of
	Hurwitz spaces associated to non-splitting $(G,c)$.

	We will consider a slight variant of the above notion of
	Hurwitz stack, which we refer to as the pointed Hurwitz space $\phurc G n c,$
	which we define precisely in \autoref{definition:pointed-hurwitz-space}
	and \autoref{notation:complex-hurwitz}.
	This variant is a
	finite \'etale cover of the Hurwitz stack described above, and involves marking
	a point over $\infty$. We briefly mention four equivalent descriptions of this
	space. The equivalence between the descriptions given below in
	\autoref{subsubsection:hurwitz-disc},
	\autoref{subsubsection:hurwitz-quotient}, and
	\autoref{subsubsection:hurwitz-ag} is shown in
	\cite[\S2.3]{EllenbergVW:cohenLenstra}, while the equivalence between
	these and the description in
	\autoref{subsubsection:hurwitz-e2} is given in \cite[Remark
	6.4]{randal-williams:homology-of-hurwitz-spaces}.

\subsubsection{In topology as covers of the disc}
\label{subsubsection:hurwitz-disc}
	One description of our pointed Hurwitz spaces, $\phurc G n c,$ is that they
parameterize pointed branched covers of the disc, branched at $n$ points, where the
inertia type of each branch point lies in a conjugacy class $c \subset G$. 
\subsubsection{In homotopy theory as an orbit space}
\label{subsubsection:hurwitz-quotient}
The pointed Hurwitz space $\Hur^{G,c}_n$ can also be described as the homotopy quotient 
$(c^n)_{hB_n}$, where $B_n$ is the braid
group on $n$ strands. 
\subsubsection{In homotopy theory as a free $\EE_2$ algebra}
\label{subsubsection:hurwitz-e2}
Conjugation by $G$ gives a $G$-action on $\Hur_n^{G,c}$, and
$\coprod_{n\geq0}\Hur_n^{G,c}$ can also be described as the free $\EE_2$-algebra
generated by $c$ in the category of $G$-crossed spaces, see \cite[Remark 6.4]{randal-williams:homology-of-hurwitz-spaces}.
\subsubsection{In algebraic geometry as a moduli space}
\label{subsubsection:hurwitz-ag}
Algebro-geometrically, we can also think of complex valued points of $\phurc G n
c$ as
parameterizing $G$-covers $X \to \mathbb P^1_{\mathbb C}$, branched at
a degree $n$ divisor in $\mathbb A^1_{\mathbb C}$ with inertia at these branch
points lying in $c \subset G$ and with a marked point over $\infty \in \mathbb
P^1_{\mathbb C}$.\footnote{When the monodromy at $\infty$ is nontrivial of order
	$r$, we need
to replace $\mathbb P^1_{\mathbb C}$ with a root stack of order $r$ at
$\infty$ to make this technically correct.}
See \autoref{definition:pointed-hurwitz-space} for a formal algebro-geometric
definition of these pointed Hurwitz spaces.
We call them pointed because they involve making a choice of marked point of the
cover $X$ over
$\infty$.

\begin{notation}
	\label{notation:configuration-space}
	Let $\conf_{n}$ denote the configuration space of $n$
unordered, distinct points in $\mathbb A^1_{\mathbb C}$.
There is a natural map $\phurc G n c \to \conf_{n}$ which
sends a branched cover $X \to \mathbb P^1$ to the intersection of its branch
locus with $\mathbb A^1$.
\end{notation}

As setup for our main result computing the stable homology of Hurwitz spaces, following
\cite[Definition 3.1]{EllenbergVW:cohenLenstra},
we recall the notion of a non-splitting conjugacy class.
\begin{definition}
	\label{definition:nonsplitting}
	Let $G$ be a finite group and $c \subset G$ be a conjugacy
	class. We say $(G, c)$ is {\em non-splitting} if $c$ generates
$G$ and, for every subgroup $G' \subset G$, $c \cap G$ is either empty or
consists of a single conjugacy class.
\end{definition}

That is, $(G, c)$ is non-splitting if $c$ does not split into multiple conjugacy classes upon intersecting
with subgroups.
See \autoref{subsection:nonsplitting-examples} for some examples relevant to the
Cohen--Lenstra heuristics.
Our main result on the stable homology of Hurwitz spaces is the following.

\begin{theorem}
	\label{theorem:stable-homology-intro}
	Let $G$ be a finite group, $c \subset G$ a conjugacy class, and suppose
	$(G,c)$ is non-splitting.
	There are constants $I$ and $J$ depending only on $G$ so that for 
	any $i \geq 0$ and
	$n > iI + J$ and any connected component $Z \subset \phur G n c {\mathbb
	C}$, the map $H_i(Z; \mathbb Q) \to H_i(\conf_{n}; \mathbb Q)$
	is an isomorphism.
\end{theorem}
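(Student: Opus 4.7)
The plan is to combine the homological stability theorem of \cite{EllenbergVW:cohenLenstra} for non-splitting $(G,c)$ with a new computation of the stable value of the rational homology. The description in \autoref{subsubsection:hurwitz-e2} of $A := \coprod_{n\geq 0}\phurc{G}{n}{c}$ as the free $E_2$-algebra on $c$ in $G$-crossed spaces is central: collapsing $c\to\ast$ induces an $E_2$-algebra map $A \to C := \coprod_n \conf_n$ (the free $E_2$-algebra on one generator), whose restriction to each $\phurc{G}{n}{c} \to \conf_n$ is the map in the statement. The goal is to show this map is a rational homology isomorphism on each connected component in a linear range of degrees.

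First I would reduce to a stable computation. The homological stability theorem of \cite{EllenbergVW:cohenLenstra}, which rests precisely on the non-splitting hypothesis, gives that $H_i(\phurc{G}{n}{c};\mathbb{Q})$ is constant along $E_2$-stabilization maps once $n$ exceeds a linear function of $i$; Arnold's theorem gives the analogous stability for $H_i(\conf_n;\mathbb Q)$ (which rationally is $\mathbb Q$ in degrees $0$ and $1$ and vanishes otherwise); and the two stabilizations are compatible via $A \to C$. So it suffices to prove that on each component the induced map on stable rational homology is an isomorphism, which by a group completion argument amounts to showing that $\Omega B A \to \Omega BC \simeq \Omega^2 S^2$ is a rational equivalence on each component of $\Omega B A$ visible to some $Z$.

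The principal obstacle is this identification. As noted in the footnote, the previous attempt in \cite{ellenbergVWhomologicalII} contained an error found by Randal-Williams, so a new input is required. I would attack it by combining two ingredients. First, since $\phurc{G}{n}{c}\to\conf_n$ is a finite covering with $\conf_n = K(B_n,1)$, each component $Z$ is itself a $K(K,1)$ for $K\le B_n$ the stabilizer of some tuple in the corresponding single $B_n$-orbit on $c^n$; by Shapiro's lemma the claim becomes that the transfer $H_i(B_n;\mathbb Q[B_n/K]) \to H_i(B_n;\mathbb Q)$ is an isomorphism stably, equivalently that the augmentation kernel of the permutation representation has vanishing $B_n$-homology in the stable range. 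Second, I would use cellular $E_2$-algebra techniques in the style of Galatius--Kupers--Randal-Williams to control the rational $E_2$-indecomposables of $A$, using non-splitting to force them to be concentrated in the single bidegree occupied by the generator $c$. The essential use of the non-splitting hypothesis is to rigidify the orbit and lifting-invariant structure, preventing spurious rational classes beyond the trivial contribution; translating back through the stability isomorphism then produces the constants $I, J$ depending only on $G$.
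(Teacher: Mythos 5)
Your first reduction (homological stability for non-splitting $(G,c)$, plus compatibility of the stabilizations with the branch-locus map to configuration space) matches the paper's, but your second step contains the central gap: you claim that ``by a group completion argument'' it suffices to show $\Omega B A \to \Omega B C$ is a rational equivalence on components. The group completion theorem computes $H_*(A)$ localized at \emph{all} of $\pi_0 A$, i.e.\ at every class $\alpha_x$ for $x \in c$, whereas the stable homology you need is the localization at the single element $U=\sum_{x\in c}\alpha_x^{\ord(x)}$, since that is the operator along which Ellenberg--Venkatesh--Westerland prove stability. These two localizations genuinely differ: $A[U^{-1}]$ retains, for example, classes supported on components whose tuples generate proper subgroups of $G$, on which the individual $\alpha_x$ for $x$ outside that subgroup act nilpotently rather than invertibly. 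Conflating the two is essentially the issue behind the withdrawal of \cite{ellenbergVWhomologicalII} that this theorem resolves (see \autoref{remark:orw}). The paper's proof accordingly decomposes $A[U^{-1}]$ over all nonempty subracks $c'\subset c$, inverting $\alpha_{c'}$ and completing at $\alpha_{c-c'}$ (\autoref{lemma:reductionstep}); only the piece $c'=c$ is accessible to group completion and rack homology (\autoref{proposition:rackhomology}), while the remaining pieces require the rigidity of homological epimorphisms (\autoref{proposition:rigidity}) together with an explicit nullhomotopy of a two-sided bar construction (\autoref{proposition:tensorproducthomotopy}). None of this appears in your outline.

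Your two auxiliary ingredients do not fill the gap. The Shapiro's lemma reformulation --- that the augmentation kernel of $\QQ[B_n/K]$ has vanishing stable $B_n$-homology (note the relevant map is induced by the augmentation, not the transfer) --- is an exact restatement of the theorem, not a step toward it. And the cellular $E_2$-algebra machinery controls the $E_2$-indecomposables of $\coprod_n\phurc{G}{n}{c}$ as a $G$-crossed $E_2$-algebra, where it is free on $c$ by construction; deducing from non-splitting that the rational $E_2$-homology of the \emph{underlying space} is concentrated in a single bidegree, and that this pins down the stable value on every component (including those parameterizing disconnected covers), is precisely the open problem. As written, the proposal re-derives the known stability statement but does not supply an argument for the directions in which the stabilization acts nilpotently, which is where the actual content of the theorem lies.
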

We prove
\autoref{theorem:stable-homology-intro} in
\autoref{subsubsection:proof-stable-homology}.

\begin{remark}
	\label{remark:}
	\autoref{theorem:stable-homology-intro} is known to hold for certain special
	components $Z$ by work of Bianchi and Miller
	\cite[Corollary C']{bianchiM:polynomial-stability} and the text
	following it.
	Via personal communication, we learned this argument was also known to
	Ellenberg--Venkatesh--Westerland.
	In particular, this applies when $G$ is the dihedral group of order
	$2n$, for $n$ odd, to components $Z$ whose boundary monodromy
	(see \autoref{definition:boundary-monodromy}) is a generator of the
	index $2$ abelian subgroup of $G$.
	However, it does not apply, for example, when $G$ is the dihedral group
	of order $2n$ and the boundary monodromy of $Z$ is trivial.
	See also \cite[\S6]{tietz:thesis} for additional computations of the
	homology of Hurwitz spaces in other special cases similar to the above, via essentially
	the same strategy.

	Additionally, we mention that the results of 
	\cite[Corollary C']{bianchiM:polynomial-stability} do not require 
	a non-splitting hypothesis on $(G,c)$. As a trade off, without the
	non-splitting hypothesis, they obtain bounds on the stability range
	which are much worse than linear in the homological degree;
	see also \cite[\S1.2]{bianchiM:polynomial-stability}.
\end{remark}

\begin{remark}
	\label{remark:orw}
	The subtlety in understanding homological stability for $\Hur_{n}^{G,c}$ (which includes points corresponding to disconnected
	covers) is that it does not stabilize with respect to the 
	operator $V\in H_0(\phur G {\ord(c) \cdot |c|} c {\mathbb
		C};
	\mathbb Q)$ corresponding to $\prod_{g \in c} [g]^{\ord(g)}$. Rather,
	\cite{EllenbergVW:cohenLenstra} show that the operator $\sum_{g \in c}
	[g]^{\ord(g)}$ stabilizes the homology, which is not an operation that exists at the level of spaces. 
	Oscar Randal-Williams mentions in his Bourbaki survey article that
	``homological stability of the spaces $\mathrm{CHur}_{n}^{G,c}$ with respect to the maps
$V$ should serve as a guiding problem for mathematicians working in this
subject.''
\cite[p. 24]{randal-williams:homology-of-hurwitz-spaces}.

On the subspace $\mathrm{CHur}_{n}^{G,c}\subset \Hur_{n, \mathbb
	C}^{G,c}$ corresponding to $G$-covers that are connected, it follows from \autoref{theorem:stable-homology-intro}
	that for $(G,c)$ non-splitting,
	$V$ does stabilize the homology. Moreover, we show the stable value on each component agrees with that of $\Conf_n$.
\end{remark}

\subsection{The stable homology for $S_3$ and degree $3$ covers}
Before proceeding further, we pause to highlight a seemingly elementary case of
\autoref{theorem:stable-homology-intro} which, surprisingly, was previously unknown.
Consider the special case that $G = S_3$, and the conjugacy class $c \subset G$
consists of transpositions. The pointed Hurwitz space $\cphurc G n c $ 
associated to this parameterizes
connected degree $3$ simply branched covers of $\mathbb P^1$,
branched over a degree $n$ divisor in $\mathbb A^1$,
see \autoref{notation:connected-hurwitz-spaces} for a more formal definition.
Simply branched triple covers have been studied extensively in algebraic
geometry, and it is natural to ask about the homology of the space of such
covers as the number of branch points grows.

\begin{corollary}
	\label{corollary:s3-homology}
	There are constants $I$ and $J$ so that for any $i \geq 0$ and $n > I i + J$
	\begin{align*}
		h^i(\churc {S_3} n { \{(12),(13),(23)\}} ; \mathbb Q) =
		\begin{cases}
			1 & \text{ if } i \in \{0,1\} \\
			0 & \text{ otherwise. }  \\
		\end{cases}
	\end{align*}
\end{corollary}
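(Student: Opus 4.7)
The plan is to read \autoref{corollary:s3-homology} as a specialization of \autoref{theorem:stable-homology-intro} to the pair $(G,c) = (S_3, \{(12),(13),(23)\})$. The first task is to verify that this pair satisfies the non-splitting hypothesis of \autoref{definition:nonsplitting}. The transpositions plainly generate $S_3$, and every subgroup $G' \subseteq S_3$ intersects $c$ in at most one $G'$-conjugacy class: the trivial subgroup and $A_3$ meet $c$ in the empty set; each order-two subgroup $\langle t\rangle$ meets $c$ in the singleton $\{t\}$, which is a single conjugacy class; and $S_3$ itself meets $c$ in $c$, which is a single conjugacy class in $S_3$. Thus $(S_3, c)$ is non-splitting, and \autoref{theorem:stable-homology-intro} applies with some constants $I, J$.

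With non-splitting in hand, \autoref{theorem:stable-homology-intro} yields, for $n > Ii + J$ and any connected component $Z$ of $\phur{S_3}{n}{c}{\mathbb C}$, an isomorphism $H_i(Z; \mathbb{Q}) \cong H_i(\mathrm{Conf}_n; \mathbb{Q})$. The target is classical: $\mathrm{Conf}_n$ is a $K(B_n,1)$, and Arnold's computation of the rational cohomology of the braid group gives $H^i(\mathrm{Conf}_n; \mathbb{Q}) = \mathbb{Q}$ for $i \in \{0,1\}$ and zero otherwise, with the degree-one generator coming from the abelianization $B_n \to \mathbb{Z}$. So per-component the Betti numbers already agree with the claim; what remains is to pass from these pointed components to the rational cohomology of the unpointed stack $\churc{S_3}{n}{c} = [\mathrm{CHur}^{S_3,c}_n / S_3]$.

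For this reduction, I would use that with rational coefficients $H^*([\mathrm{CHur}^{S_3,c}_n/S_3]; \mathbb{Q}) = H^*(\mathrm{CHur}^{S_3,c}_n; \mathbb{Q})^{S_3}$, so the question becomes one of classifying the connected components of $\mathrm{CHur}^{S_3,c}_n$, the $S_3$-action permuting them, and the induced $S_3$-action on the rational cohomology of each. Using that the boundary monodromy $g_1 \cdots g_n \in S_3$ is preserved by the Hurwitz braid action, the components of the pointed space are indexed (stably) by the value of this product in $S_3$, and a Conway--Parker-style analysis specialized to the non-splitting pair $(S_3, c_{\mathrm{trans}})$ (together with the connectedness restriction built into $\mathrm{CHur}^{S_3,c}_n$ via \autoref{notation:connected-hurwitz-spaces}) pins down the stable components exactly. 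The main obstacle is this last step: carefully organizing the $S_3$-orbits on stable components and checking that the $S_3$-invariant subspace of $H^*(\mathrm{CHur}^{S_3,c}_n; \mathbb{Q})$ is one-dimensional in degrees $0$ and $1$ and zero elsewhere, which is where all of the combinatorics of $(S_3, c_{\mathrm{trans}})$ enters.
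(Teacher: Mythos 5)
Your overall route is the intended one: the paper offers no separate argument for \autoref{corollary:s3-homology}, so it is meant to be read off from \autoref{theorem:stable-homology-intro} essentially as you propose. Your verification that $(S_3,c)$ with $c=\{(12),(13),(23)\}$ is non-splitting is correct, as is the identification $H^i(\conf_n;\mathbb Q)=\mathbb Q$ for $i\in\{0,1\}$ and $0$ otherwise, and the reduction $H^*(\churc {S_3} n c;\mathbb Q)\cong H^*(\cphurc {S_3} n c;\mathbb Q)^{S_3}$ is the right way to descend from the pointed space to the quotient stack.

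The difficulty is that the step you defer as ``the main obstacle'' is where all the content lies, and carrying it out does not produce the asserted answer for every $n$. Since $H_2(S_3;\mathbb Z)=0$ and the lifting invariant of this pair is trivial, Clebsch/Conway--Parker shows that for $n\gg 0$ the components of $\cphurc {S_3} n c$ are indexed exactly by the boundary monodromy $g_1\cdots g_n$, with $S_3$ permuting this index set by conjugation; moreover each stabilizer acts trivially on the stable $H^1$ of its component, because the isomorphism with $H^1(\conf_n;\mathbb Q)$ is induced by the $S_3$-invariant branch map. For $n$ odd the boundary is a transposition, there is a single $S_3$-orbit with stabilizer $\langle (12)\rangle$, and the invariants are indeed $1$-dimensional in degrees $0$ and $1$. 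For $n$ even, however, both the identity and the $3$-cycles occur as boundaries of generating tuples (compare $(12),(12),(13),(13)$ with $(12),(13),(12),(12)$), giving \emph{two} $S_3$-orbits of components; these are genuinely distinct substacks, as Riemann--Hurwitz shows the associated trigonal curves have different genera depending on whether there is ramification over $\infty$. The invariant cohomology is then $2$-dimensional in degrees $0$ and $1$ for $n$ even. So your plan, executed as written, establishes the stated Betti numbers only after fixing the boundary monodromy --- i.e.\ for a single piece $\boundarycphur {S_3} n c {\mathbb C} g$ modulo its stabilizer, or equivalently for covers with prescribed behavior over $\infty$ --- or for $n$ odd. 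You cannot simply assert that the orbit/invariant bookkeeping yields $1$ in all cases; you need to confront this parity issue explicitly before the corollary is proved in the generality in which you have stated it.
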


\begin{remark}
	\label{remark:}
	If one considers the space of all smooth trigonal curves, instead
of just simply branched ones, the stable cohomology of such spaces has been
computed in \cite{zheng:stable-cohomology-trigonal} using Vassiliev's method.
See \cite{tomassi:rational-cohomology-of-the-moduli-space},
	\cite{gorinov:real-cohomology}, and
\cite{vassiliev:how-to-calculuate} for references on Vassiliev's method.
However, via personal communication, we learned that it is
unclear whether a similar method can be used to compute the stable cohomology of
the space of simply branched trigonal curves.
\end{remark}
\begin{remark}
	\label{remark:}
	There is a cycle class map from the Chow ring of these Hurwitz spaces to
	the cohomology of these Hurwitz spaces. The same explicit
	parameterizations which allowed Davenport and Heilbronn
	\cite{davenportH:density-discriminants-cubica} to compute the
	number of cubic fields allowed Patel and Vakil to compute the stable
	Chow rings of these Hurwitz spaces
	\cite[Theorem C]{patel2015chow}, see also
	\cite{canningL:chow-rings-of-low-degree-hurwitz-spaces}.
\end{remark}

\subsection{Conjectures on stable homology of Hurwitz spaces}
Before proceeding to describe the new ideas in the proof of
\autoref{theorem:stable-homology-intro},
we give a conjecture for what we think the stable homology of Hurwitz spaces
should look like in general.

\begin{notation}
	\label{notation:conjecture-setup}
	To set up notation, let $G$ be a finite group and $c \subset G$ denote a union of conjugacy
	classes. That is, $c = c_1 \cup \cdots c_k$, where each $c_i \subset G$ is a conjugacy class.
	Let $\cphurc G {n_1, \ldots, n_k} {c_1,
\ldots, c_k}$ denote the Hurwitz space parameterizing connected $G$ covers of
$\mathbb A^1_{\mathbb C}$ with a trivialization at $\infty$ and $n_i$ branch
points over $\mathbb A^1_{\mathbb C}$ with inertia in the conjugacy class of any element
of $c_i$. 
We also let $\conf_{n_1, \ldots, n_k}$ denote the multi-colored
configuration space in $\mathbb A^1_{\mathbb C}$ parameterizing $n_i$ points of
color $i$ so that all $n_1 + \cdots + n_k$ points in $\mathbb A^1_{\mathbb C}$ are distinct.
That is, 
$\conf_{n_1, \ldots, n_k}$ is the quotient of ordered configuration space of
$n_1 + \cdots + n_k$ points by the action of $S_{n_1} \times \cdots \times
S_{n_k}$, where $S_{n_i}$ permutes points $n_1 + \cdots + n_{i-1} + 1, \ldots,
n_1 + \cdots + n_i$.
\end{notation}
\begin{remark}
\label{remark:union-of-conjugacy-classes}
One might require a weaker notion that $c$ is only conjugation invariant, in the
sense that for any 
$x, y \in c$, we also have $x^{-1}yx \in c$.
However, if $c$ generates $G$, such a subset must
actually be a union of conjugacy classes.
In general, without this assumption,
such a conjugation invariant subset can be viewed as a union of conjugacy
classes in the subgroup it generates.
\end{remark}

\begin{remark}
	\label{remark:}
	Because there will not be any {\em connected} $G$ covers with inertia in $c$
	unless $c$ generates
	$G$, using notation from \autoref{notation:conjecture-setup},
$\cphurc G {n_1, \ldots, n_k} {c_1,
\ldots, c_k} $ will be the empty set unless the conjugacy classes $c_1, \ldots, c_k$
jointly generate $G$.
\end{remark}

Our main conjecture on the stable homology of Hurwitz spaces is the following,
which predicts that the only stable homology is the ``obvious'' stable homology.

\begin{conjecture}
	\label{conjecture:homology}
	Fix an integer $i$. With notation as in
\autoref{notation:conjecture-setup}, suppose
	all $n_1, \ldots, n_k$ are sufficiently large (with how large they must
	be depending on $i$) and let $Z
	\subset \cphurc G {n_1, \ldots, n_k} {c_1,\ldots, c_k} $ denote
	a connected component. Then, the natural map sending a cover to its
	branch locus $Z \to \conf_{n_1, \ldots,
	n_k}$ induces an isomorphism $
H_i(Z; \mathbb Q) \to
	H_i(\conf_{n_1, \ldots,
	n_k}; \mathbb Q).$
\end{conjecture}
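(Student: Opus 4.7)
The plan is to follow the strategy used for \autoref{theorem:stable-homology-intro} in the multi-color setting, reducing the general conjecture to the splitting locus via an induction on $|G|$. Using the $E_2$-algebra perspective of \autoref{subsubsection:hurwitz-e2}, the coproduct $\coprod_{n_1,\ldots,n_k} \phurc G {n_1,\ldots,n_k} {c_1,\ldots,c_k}$ is the free $E_2$-algebra on $c_1 \sqcup \cdots \sqcup c_k$ in $G$-crossed spaces, multi-graded by branch counts, while $\coprod \conf_{n_1,\ldots,n_k}$ is the free $E_2$-algebra on $k$ unlabelled points. The branch-locus map is the $E_2$-algebra map obtained by forgetting $G$-crossed data and then collapsing each $c_i$ to the $i$-th color. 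The conjecture thus asserts that this map is a rational homology equivalence in a range linear in each $n_i$ on every connected component of $\cphurc G {n_1,\ldots,n_k} {c_1,\ldots,c_k}$.

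The first step is to upgrade \autoref{theorem:stable-homology-intro} to the multi-color non-splitting case, where $(G, c_1 \cup \cdots \cup c_k)$ is non-splitting in the sense of \autoref{definition:nonsplitting}. For each color $i$ pick $g_i \in c_i$ and form the central stabilization class $V_i \in H_0(\phurc G {\ord(g_i)\cdot |c_i|} {c_i}; \mathbb Q)$ analogous to the class $V$ in \autoref{remark:orw}. The arguments proving \autoref{theorem:stable-homology-intro} should run in parallel for each color on the connected locus, once one notes (as in \autoref{remark:orw}) that $V_i$ genuinely stabilizes the homology of $\cphurc G {\bullet,\ldots,\bullet} {c_1,\ldots,c_k}$. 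This would identify the stable value on each connected component with $H_*(\conf_{n_1,\ldots,n_k};\mathbb Q)$ via the branch-locus map, proving the conjecture in the non-splitting case.

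The main obstacle is the general splitting case. When $(G, c_1 \cup \cdots \cup c_k)$ splits, proper subgroups $G' \subsetneq G$ with multiple $G'$-classes inside some $G' \cap c_i$ introduce extra stable homology on the disconnected Hurwitz space, and no single $V_i$ stabilizes there. The guiding observation is that any cover whose monodromy image lies in a proper $G' \subsetneq G$ is automatically disconnected, so contributes nothing to $\cphurc G {n_1,\ldots,n_k} {c_1,\ldots,c_k}$. To exploit this I would stratify $\phurc G {n_1,\ldots,n_k} {c_1,\ldots,c_k}$ by monodromy image: each proper stratum is itself a Hurwitz space for $(G', c_1 \cap G', \ldots, c_k \cap G')$, possibly after further refining the coloring so that $(G', \bigcup_i c_i \cap G')$ becomes non-splitting, to which the inductive hypothesis applies. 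A spectral-sequence argument, compatible with the colored $E_2$-algebra structure, should then show that the extra subgroup classes do not survive to the connected quotient. Controlling the boundary maps of this filtration compatibly with the $E_2$-algebra structure is where I expect essentially all of the real difficulty to lie.
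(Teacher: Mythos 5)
The statement you are addressing is \autoref{conjecture:homology}, which the paper states as an \emph{open conjecture} and does not prove: the only case established in the paper is $k=1$ with $(G,c_1)$ non-splitting (\autoref{theorem:stable-homology-intro}), and the paper explicitly remarks that for general $G, c_1,\ldots,c_k$ it is not even known that the homology of these Hurwitz spaces stabilizes. Your proposal is a strategy outline rather than a proof, and both of its main steps have genuine gaps. For the multi-color non-splitting step, the $k=1$ argument does not run ``in parallel for each color'': the proof of \autoref{theorem:mainhom} decomposes $A[U^{-1}]$ over subracks $c'\subset c$ via localization at $\alpha_{c'}$ and completion at $\alpha_{c-c'}$, and its key inputs --- \autoref{lemma:evwlemma}, self-normalization of subracks (\autoref{lemma:nullability}), the homological epimorphism \autoref{proposition:propsubgp}, and the group-completion computation \autoref{proposition:rackhomology} --- all exploit that $c$ is a single non-splitting rack with a single stabilization direction. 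Moreover, the assertion that each $V_i$ stabilizes the homology of the connected multi-colored space is itself part of what must be proved; \autoref{remark:orw} records this only for $k=1$ as a consequence of \autoref{theorem:stable-homology-intro}, not as an input to it.

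The second step is where the proposal actually fails. The subgroup generated by the monodromy of a tuple is invariant under the braid action, so your ``stratification by monodromy image'' is just the decomposition of $\phurc G {n_1,\ldots,n_k}{c_1,\ldots,c_k}$ into unions of connected components, and its homology is the direct sum of the homologies of the pieces; there are no boundary maps and no spectral sequence to control. The piece with full monodromy $G$ is exactly $\cphurc G {n_1,\ldots,n_k}{c_1,\ldots,c_k}$, so to extract its homology from the other pieces by induction on $|G|$ you would need to know the stable homology of the \emph{total} (disconnected) space $\phurc G {n_1,\ldots,n_k}{c_1,\ldots,c_k}$ in the splitting case --- but that is precisely the quantity that is not known to stabilize and for which no geometric stabilization operator acts invertibly (cf.\ \autoref{remark:orw}). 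Refining the coloring so that $(G',\bigcup_i c_i\cap G')$ becomes non-splitting also changes $k$ and hence the target configuration space $\conf_{n_1,\ldots,n_k}$, so the induction is not well-founded as stated. In short, your proposal correctly locates the open part of the conjecture (the splitting case and the multi-color stabilization), but the mechanism offered to close it --- a filtration spectral sequence on a decomposition that is already a disjoint union --- cannot work, and no proof of this statement exists in the paper to compare against.
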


The special case of the above conjecture where $G = S_d$, $k = 1$, and $c_1$ is the 
conjugacy class of transpositions is a conjecture of
Ellenberg-Venkatesh-Westerland \cite[Conjecture
1.5]{EllenbergVW:cohenLenstra}.
Even in this special case, our conjecture is slightly stronger because
Ellenberg-Venkatesh-Westerland do not conjecture that the stability
occurs in a linear range.
The above conjecture was suggested to us by Melanie Wood, who in turn pointed
out to us that a statement very closely related to this conjecture was formulated in
\cite[Corollary 5.8]{ellenbergVWhomologicalII}.

\begin{remark}
	\label{remark:}
	Admittedly, the evidence for \autoref{conjecture:homology} in the literature is somewhat limited.
The case that $G$ is abelian is fairly trivial, since in that case, the relevant
Hurwitz space of
$G$-covers is precisely identified with the corresponding configuration space:
indeed, in the case $G$ is abelian, all elements correspond to distinct conjugacy
classes, so one can read off the total monodromy of the cover from the conjugacy
classes of the inertia and the locations of the branch points, and therefore recover the cover.
To our knowledge, the only nonabelian cases of this conjecture which are known
are those in the present paper, namely those verified in
\autoref{theorem:stable-homology-intro}.
However, this conjecture is largely motivated by Malle's conjecture on counting
$G$ extensions of number fields.
See \cite{ellenbergV:statistics-of-number-fields-and-function-fields} for a
detailed description of this connection.
\end{remark}


\begin{remark}
	\label{remark:}
	Note that we make \autoref{conjecture:homology} for arbitrary $G,c_1,
	\ldots, c_k$
	even though it is not
known whether the homology of Hurwitz spaces stabilize at all, except in the
case that $k = 1$ and $c_1$
is non-splitting.
\end{remark}

Perhaps more ambitiously,
We ask whether the analogous statement could possibly also hold for 
Chow groups of Hurwitz spaces.
\begin{conjecture}
	\label{conjecture:chow-stabilize}
	Fix an integer $i$. With notation as in
	\autoref{notation:conjecture-setup}, suppose
	all $n_1, \ldots, n_k$ are sufficiently large (with how large they must
	be depending on $i$).
	Then, the $i$th rational Chow groups of 
	$\cphurc G {n_1, \ldots, n_k} {c_1,\ldots, c_k} $ stabilize in the sense
	that they are independent of
	$n_1, \ldots, n_k$.
\end{conjecture}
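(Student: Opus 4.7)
The plan is to transport \autoref{theorem:stable-homology-intro} from rational homology to rational Chow groups via the cycle class map. I focus first on the single-color non-splitting setting $k=1$, where the requisite homological input is available; for the other cases of \autoref{conjecture:chow-stabilize}, one would in addition need to establish homological stability, which is itself an open problem. For each connected component $Z \subset \cphurc{G}{n}{c}$, the branch morphism $\pi \colon Z \to \conf_n$ is finite \'etale, so rationally the projector $\tfrac{1}{\deg(\pi)}\pi^*\pi_*$ realizes $\mathrm{CH}^*(\conf_n)_{\mathbb{Q}}$ as a direct summand of $\mathrm{CH}^*(Z)_{\mathbb{Q}}$. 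The conjecture then reduces to showing that this inclusion is surjective in each fixed codimension for $n$ sufficiently large.

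Next I would combine the cycle class map $\mathrm{cl}\colon \mathrm{CH}^i(Z)_{\mathbb{Q}} \to H^{2i}(Z; \mathbb{Q})$ with the cohomology version of \autoref{theorem:stable-homology-intro}, which identifies $H^{2i}(Z; \mathbb{Q})$ with $H^{2i}(\conf_n; \mathbb{Q})$ via pullback along $\pi$ in the stable range. Since the rational cohomology of $\conf_n$ is generated by Arnold-type tautological classes that lift to algebraic cycles, the cycle class map on $\conf_n$ is surjective. Combining these facts, every class in $\mathrm{CH}^i(Z)_{\mathbb{Q}}$ agrees modulo homological equivalence with the pullback of a cycle from $\conf_n$. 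Consequently the conjecture reduces to showing that the kernel of $\mathrm{cl}$ on $Z$ is contained in the image of $\pi^*$, that is, that there are no stable Griffiths-type phenomena on $Z$.

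The hard part will be precisely this control of $\ker \mathrm{cl}$. One possible route is motivic: since $\pi$ is finite \'etale and the Chow motive of $\conf_n$ is pure Tate (in particular, its cycle class map is an isomorphism), one may hope that the rational Chow motive of $Z$ also decomposes as a direct sum of Tate motives, which would force $\mathrm{cl}$ to be injective on $Z$ in the stable range. A second, more ambitious route that would parallel the topological proof most closely is to build a Chow-theoretic analog of the $\mathbb{E}_2$-algebra and scanning machinery underlying \autoref{theorem:stable-homology-intro}, working within a category of motives over the configuration space; this would require substantial new foundational input. A further technical point is that these Hurwitz spaces are not proper, so one may need to formulate the stability statement using operational or higher Chow groups rather than ordinary Chow groups.
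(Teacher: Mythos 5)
The statement you are trying to prove is \autoref{conjecture:chow-stabilize}, which the paper explicitly presents as an open conjecture ("Perhaps more ambitiously, we ask whether..."); there is no proof of it anywhere in the paper, and the authors offer it precisely because they do not know how to establish it. So there is no "paper's proof" to compare against, and your proposal should be judged on whether it closes the gap on its own. It does not.

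Two concrete problems. First, the conjecture is stated for an arbitrary finite group $G$ and an arbitrary tuple of conjugacy classes $c_1,\ldots,c_k$; you restrict at the outset to $k=1$ with $(G,c)$ non-splitting, which is the only case where the homological input (\autoref{theorem:stable-homology-intro}) exists. For $k>1$, or for splitting classes, even homological stability of these Hurwitz spaces is unknown (the paper itself flags this after \autoref{conjecture:homology}), so your strategy has no starting point there. Second, and more fundamentally, your own argument identifies its missing step correctly and then does not supply it: knowing $H^{2i}(Z;\mathbb{Q})\cong H^{2i}(\conf_n;\mathbb{Q})$ says nothing about $\mathrm{CH}^i(Z)_{\mathbb{Q}}$ unless you can control $\ker(\mathrm{cl})$, i.e.\ rule out stable Griffiths-type phenomena or show the motive of $Z$ is Tate. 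That is not a technical loose end; it is essentially the entire content of \autoref{conjecture:chow-stabilize} (and of the stronger \autoref{conjecture:chow-stable-value}). A finite \'etale map $\pi\colon Z\to\conf_n$ gives you $\mathrm{CH}^*(\conf_n)_{\mathbb{Q}}$ as a summand of $\mathrm{CH}^*(Z)_{\mathbb{Q}}$, but the complementary summand is exactly what must be shown to vanish stably, and homology alone cannot do this. Note also that the remark following the conjecture records that $\mathrm{CH}^i(\conf_{n_1,\ldots,n_k})_{\mathbb{Q}}=0$ for $i>0$ (and $H^{2i}(\conf_n;\mathbb{Q})=0$ for $i>0$ as well), so your appeal to "Arnold-type tautological classes" lifting to cycles is vacuous rather than load-bearing: the real assertion is that $Z$ has \emph{no} nontrivial stable Chow classes at all, which your argument never addresses. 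In short, the proposal is an honest research plan, not a proof, and the statement remains open.
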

Even more ambitiously, we conjecture that the Chow rings stabilize to those of
configuration space.
\begin{conjecture}
\label{conjecture:chow-stable-value}
In the setting of \autoref{conjecture:chow-stabilize},
let $Z \subset \cphurc G {n_1, \ldots, n_k} {c_1,\ldots, c_k}$ denote
	a connected component.
	Then, the natural map $Z \to \conf_{n_1, \ldots,
	n_k}$ sending a cover to its branch locus
	induces an isomorphism on $i$th rational Chow groups.
\end{conjecture}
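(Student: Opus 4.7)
The conjecture asserts that for a connected component $Z$ of the connected Hurwitz space, the branch-locus map $\pi \colon Z \to \conf_{n_1, \ldots, n_k}$ induces an isomorphism on rational Chow groups once $n_1, \ldots, n_k$ are sufficiently large. The first observation I would exploit is that the target is especially simple: $\conf_{n_1, \ldots, n_k}$ is naturally an open subvariety of $\mathbb{A}^{n_1+\cdots+n_k}$ (via multi-variable elementary symmetric functions, the complement of the coincidence-discriminant loci), so the localization sequence for Chow groups shows that its rational Chow group is $\mathbb{Q}$ in codimension zero and vanishes in every positive codimension. The conjecture therefore reduces to showing that $CH^i(Z)\otimes\mathbb{Q} = 0$ for all $i > 0$ in the stable range; the $i=0$ statement holds automatically because $Z$ is connected.

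The easy half, injectivity of $\pi^*$, is essentially unconditional. The map $\pi$ is a finite \'etale cover of smooth Deligne--Mumford stacks of the same dimension, so $\pi_* \circ \pi^*$ equals multiplication by $\deg \pi$ on rational Chow groups and $\pi^*$ is rationally injective for all $n_1, \ldots, n_k$. All of the content is thus in surjectivity, equivalently in the vanishing of positive-codimension rational Chow groups on $Z$.

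My first attempt at surjectivity would be to construct an explicit affine-like stratification of $Z$. This is essentially the strategy of Patel--Vakil \cite{patel2015chow} in the $S_3$ case with transposition conjugacy class: an explicit parameterization of simply branched triple covers via the Tschirnhausen bundle reduced the Chow computation to that of an affine bundle over a configuration-type base. One could try to rigidify the moduli problem using the marked point at $\infty$ and then parameterize covers by combinatorial-linear data depending on only finitely many invariants of the cover. The serious obstacle is that such clean geometric parameterizations really seem to require low degree, and for a general non-splitting $(G,c)$ there is no structural reason to expect a usable analog.

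A more structural plan would be to lift the proof of \autoref{theorem:stable-homology-intro} from singular cohomology to a motivic setting, exploiting the free $\mathbb{E}_2$-algebra description of $\coprod_n \Hur_n^{G,c}$ recalled in \autoref{subsubsection:hurwitz-e2}. One would attempt to replay the non-splitting stability argument inside a suitable category of motivic spaces or spectra with transfers -- for instance using a motivic $\mathbb{E}_n$-algebra formalism in the style of Bachmann--Hoyois -- and then read off the Chow-group statement from the motivic stable value in weight equal to degree. The main obstacle, and presumably the reason this is posed as a conjecture rather than a theorem, is that the scanning and group-completion tools underlying topological homological stability have no uniformly clean motivic analog in this generality; and even granted such a motivic stability theorem one must control weight-filtration and Tate-twist contributions invisible to singular cohomology before extracting a statement about rational Chow groups.
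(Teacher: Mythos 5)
The statement you were asked to prove is \autoref{conjecture:chow-stable-value}, which the paper states as a \emph{conjecture} and does not prove; there is no argument in the paper to compare yours against. Your write-up is honest on this point --- it is a survey of possible strategies together with the obstacles each one faces, and it does not claim to be a proof. Your preliminary reductions are correct and consistent with what the paper records: the observation (attributed in the paper to Dan Petersen) that the rational Chow groups of $\conf_{n_1,\ldots,n_k}$ vanish in positive codimension and equal $\mathbb Q$ in codimension zero, and the standard fact that for the finite \'etale map $\pi: Z \to \conf_{n_1,\ldots,n_k}$ the composite $\pi_*\circ\pi^*$ is multiplication by $\deg\pi$, so $\pi^*$ is rationally injective. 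Your reduction of the conjecture to the vanishing of $CH^i(Z)\otimes\mathbb Q$ for $i>0$ in the stable range is the right reformulation.

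The genuine gap is simply that neither of your two proposed routes is carried out, and you correctly diagnose why: the Patel--Vakil-style explicit parameterization is only known in very low degree, and a motivic lift of the homological stability argument of \autoref{theorem:stable-homology-intro} would require machinery (motivic $\EE_2$-stability, control of weights) that does not currently exist in this generality. Since the paper itself offers no proof, there is nothing further to reconcile; but be aware that, as submitted, your text establishes only the injectivity half and the triviality of the target, not the conjecture itself. If you intend to pursue this, note also that even the weaker \autoref{conjecture:chow-stabilize} (stabilization of Chow groups, without identifying the stable value) is open, so any progress on surjectivity would already be new.
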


\begin{remark}
	\label{remark:}
	As pointed out to us by Dan Petersen,
	the rational Chow group of 
	$\conf_{n_1, \ldots, n_k}$
	is $0$ in positive degrees and $\mathbb Q$ in degree $0$,
	so if \autoref{conjecture:chow-stable-value} were true,
	this would give an especially simple description of the stable Chow
	groups of Hurwitz spaces.
\end{remark}

\subsection{Idea of the Proof}
\label{subsection:proof-outline}

We now describe the new ideas going into our proof of \autoref{theorem:moments},
and more generally \autoref{theorem:moments-roots-of-unity},
the computation of the moments
predicted by the Cohen--Lenstra conjectures over function fields.
A standard reduction also outlined in
\autoref{subsubsection:homology-to-cohen-lenstra}, which we carry out in
\autoref{section:cohen-lenstra},
reduces us to proving
\autoref{theorem:stable-homology-intro}.

We next explain the idea of the proof of \autoref{theorem:stable-homology-intro},
which
computes the stable value of the homology for certain Hurwitz spaces.
We fix a finite group $G$ and a conjugacy class $c \subset G$ so that $(G,c)$ non-splitting.
We use $\Hur^{G,c}$ to denote the union over $n$ of the Hurwitz spaces
$\Hur^{G,c}_{n}$ and $\Conf$ to denote the union over $n$ of $\Conf_n$. We use $\Conf^{G,c}$ to denote $\Conf\times_{\pi_0\Conf}\pi_0\Hur^{G,c}$; 
in other words, it has the same
set of components as $\Hur^{G,c}$, but we replace each component of
$\Hur^{G,c}_{n}$ with $\Conf_n$.
There is a projection $v: \Hur^{G,c} \to \Conf^{G,c}$, which can be thought of
as sending a cover to
its branch locus, together with the data of which component the cover came from.
If we use $U$ to denote the stabilization map $\Hur^{G,c}$ (the map under which
	the homology of Hurwitz space was shown to stabilize in \cite[Theorem 6.1]{EllenbergVW:cohenLenstra})
we can rephrase our goal as showing $v[U^{-1}] : H_*(\Hur^{G,c};\QQ)[U^{-1}] \to
H_*(\Conf^{G,c};\QQ)[U^{-1}]$
is an equivalence. 

Considering $\Hur^{G,c}$ and $\Conf^{G,c}$ as associative monoids (or
$\EE_1$-algebras), our goal is equivalent to showing that the map $v[U^{-1}]:C_*(\Hur^{G,c};\QQ)[U^{-1}] \to
C_*(\Conf^{G,c};\QQ)[U^{-1}]$ is an equivalence of associative ring spectra\footnote{Since we are working over $\QQ$, these associative ring spectra can be viewed as the dga computing the homology with its multiplication given via the Pontryagin product.}, where $C_*(X;\QQ)$ denotes the chains of a space $X$ with $\QQ$-coefficients, viewed as either a spectrum, or in the derived category of $\QQ$-vector spaces.

We can view the center $Z(R)$ of $R:=H_0(\Hur^{G,c};\QQ)$ as a commutative ring
acting on the source and target of the map $v$, and we study the map $v[U^{-1}]$
via decomposing it geometrically along open subsets covering $\Spec(Z(R))$. It
then suffices to show that locally on each of these subsets the map $v[U^{-1}]$
becomes an equivalence. Algebraically, the way we implement restricting to an
open subset of $\Spec(Z(R))$ is by using localizations in the setting of ring
spectra (such as in \cite[Section 7.2.3]{HA}), and completions along a collection of elements,
which we show in our case happens to also be a localization. Combining this
strategy with a group theoretic argument shows that $v[U^{-1}]$ is an
equivalence if $v$ is an equivalence after inverting 
$\langle c' \rangle$, and then completing at $c - c'$,
for every subset $c' \subset c$ closed under conjugation
(i.e., $x, y \in c' \implies x^{-1}yx \in c'$).

When $c' = c$, this localized map was already known to be an equivalence. This is because inverting all elements of $c$ gives the group completion of $\Hur^{G,c}$, whose homology can be understood via its classifying space $B\Hur^{G,c}$. The homology of the classifying space of is the rack homology of the rack $c$, which was
computed in \cite{etingof2003rack}, see \cite[Corollary 5.4]{randal-williams:homology-of-hurwitz-spaces}.

When $c'$ is a proper subset of $c$, the key notion helping us show the map
becomes an equivalence is the notion of a homological epimorphism. A map $R \to
S$ of $\EE_1$-rings is called a homological epimorphism if the multiplication
map $S\otimes_RS \to S$ is an equivalence in the $\infty$-category of spectra. A
key result we use is a rigidity property of homological epimorphisms, which is
that a homological epimorphism of connective associative ring spectra that is an
isomorphism on $\pi_0$ is an equivalence. 
Applying the above results to the subgroup $\langle c' \rangle \subset G$
generated by $c'$, we can reduce to showing that a certain `restriction map' between the localized homology of Hurwitz spaces is a homological epimorphism. This fact is something that can be verified at the level of pointed spaces, by showing that a certain map between two sided bar constructions is a homotopy equivalence. We use a convenient topological model for these two sided bar constructions, the details of which are carried out in \autoref{appendix:topological-model}. Using this topological model, we write down an explicit homotopy to prove this homotopy equivalence;
see \autoref{figure:nullhomotopy} for a pictorial depiction.

\begin{remark}
	\label{remark:otherproof}
	It is also possible to prove \autoref{theorem:stable-homology-intro}
	without the
machinery of higher algebra. We originally came up with a more elementary argument but found that by translating things to higher
algebra, the argument became significantly cleaner and more conceptual. 
A version of our elementary argument will appear in
\cite{landesmanL:an-alternate-computation}.
\end{remark}

\subsection{Outline}
The sections of this paper are organized as follows.
First, we set up our notation for Hurwitz spaces in
\autoref{section:hurwitz-space-notation}.
In \autoref{section:algebra-background}, we introduce notation from higher
algebra relevant to our paper, and prove some facts about homological
epimorphisms, which will be crucial to our main results.
In \autoref{section:computing-stable-homology}, we prove our main topological result,
\autoref{theorem:stable-homology-intro},
computing the stable homology of non-splitting Hurwitz spaces. We in fact
compute the stable homology more generally of Hurwitz spaces associated to
non-splitting racks.
In \autoref{section:cohen-lenstra}, we prove our main result
toward the Cohen--Lenstra heuristics,
\autoref{theorem:moments-roots-of-unity},
by deducing it from our topological
results about the stable homology of Hurwitz spaces.
We conclude with \autoref{appendix:topological-model},
which uses various scanning arguments to construct convenient topological models for certain bar constructions on Hurwitz spaces.

\subsection{Acknowledgements}

We thank Jordan Ellenberg and Craig Westerland for sharing a number of
the ideas they tried while working on this problem.
The ideas in paper build heavily upon ideas in the retracted
preprint
\cite{ellenbergVWhomologicalII}, and we were able to draw much inspiration from
the ideas presented there.
We thank Melanie Wood for 
suggesting \autoref{conjecture:homology}, listening to a detailed explanation of
the argument, and for helpful comments on the paper.
We thank Andrea Bianchi for an extremely helpful suggestion which allowed us to
remove an extraneous condition we had on \autoref{theorem:mainhom}.
We thank Yuan Liu for explaining to us the correct version of the Cohen--Lenstra
heuristics in the function field case, and for help understanding the
non-abelian Cohen--Lenstra heuristics with roots of unity.
We also thank 
Samir Canning,
Kevin Chang, 
Elia Gorokhovsky,
Ben Knudsen, 
Jef Laga,
Jeremy Miller,
Sun Woo Park,
Anand Patel, 
Sam Payne,
Dan Petersen,
Eric Rains,
Oscar Randal-Williams,
Will Sawin,
Mark Shusterman,
Agata Smoktunowicz,
Nathalie Wahl,
and Angelina Zheng
for helpful conversations.
Landesman 
was supported by the National Science
Foundation 
under Award No.
DMS 2102955.
Levy was supported by the NSF Graduate Research
Fellowship under Grant No. 1745302, and by the Clay Research Fellowship.

\section{Notation for Hurwitz spaces}
\label{section:hurwitz-space-notation}

\subsection{The definition of Hurwitz spaces}
\label{subsection:hurwitz-definition}

We begin by carefully defining the Hurwitz spaces we will work with. We start
by defining the Hurwitz stack
$\hur G n c B$.
This will be a quotient by an action of $G$ of the pointed Hurwitz space
$\phur G n c B$, which we will define next, and this explains why there is a
quotient by $G$ in the notation for 
$\hur G n c B$.

\begin{definition}
	\label{definition:hurwitz-stacks}
	Let $G$ be a finite group, $c\subset G$ be a union of conjugacy classes.
	Let $B$ be a scheme with $|G|$ invertible on $B$.
	Assume either that $c$ is closed under invertible powering ($g \in c
		\implies g^t
	\in c$ for any $t$ prime to $|G|$,) or that $B$ is henselian with residue
	field $\spec \mathbb F_q$ and
	$c$ is closed under under $q$th powering ($g \in c \implies g^q \in c$).
	Let $\hur G n c B$ denote the {\em Hurwitz stack}
	whose $T$ points $\hur G n c B(T)$ is the groupoid
	\begin{align*}
		\left( D, i: D \to \mathbb P^1_T, X, h: X \to \mathbb P^1_T  \right)
	\end{align*}
		satisfying the following conditions:
		\begin{enumerate}
			\item $D$ is a finite \'etale cover of $T$ of degree $n$.
	        \item $i$ is a closed immersion $i: D \subset \mathbb A^1_T
			\subset \mathbb P^1_T$.
		\item $X$ is a smooth proper relative curve over $T,$ not
		necessarily having geometrically
		connected fibers.
		\item $h: X \to \mathbb P^1_T$ is a finite locally free Galois $G$-cover,
			(meaning that $G$ acts simply transitively on the
			geometric generic fiber of $h$,) which is
			\'etale away from $\infty_T \cup i(D) \subset \mathbb
			P^1_T$.
		\item The inertia of $X \to \mathbb P^1_T$ over any geometric
			point of $i(D)$ lies in $c \subset
			G$.
	\item The morphisms between two points $(D_i, i_i, X_i, h_i)$ for $i \in
		\{1,2\}$ are given by $(\phi_D, \psi_X)$ where $\phi_D: D_1
		\simeq D_2$ is an isomorphism so that $i_2 \circ \phi_D = i_1$
		and $\psi_X:X_1 \simeq X_2$ is an isomorphism such $h_2 \circ
		\psi_X = h_1$ and $\psi_X
		= g^{-1} \psi_X g$ for every $g \in G$.
	\end{enumerate}
\end{definition}

\begin{remark}
	\label{remark:}
	When $c = G - \id$, the definition of Hurwitz spaces we give here is a special case of the
definition in \cite[Definition
2.4.2]{ellenbergL:homological-stability-for-generalized-hurwitz-spaces}. 
See \cite[Remark
2.4.3]{ellenbergL:homological-stability-for-generalized-hurwitz-spaces} for an
explanation of why these Hurwitz stacks are indeed stacks. It essentially
follows from
\cite[\S1.3.2 and Appendix B]{abramovichCV:twisted-bundles}.

The above paragraph shows that
$\hur G n {G-\id} B$ is an algebraic stack, but does not yet justify the
existence of the stacks
$\hur G n c B$ under the suitable powering hypotheses, depending on $B$.
First, if $|G|$ is invertible on $B$, and $c$ is closed under
invertible powering,
then
$\hur G n c B$ is a union of components of
the Hurwitz space
$\hur G n {G-\id} B$
by
\cite[Corollary
12.2]{liuWZB:a-predicted-distribution}.
Second, we explain why
$\hur G n c B$ is a union of components of
the Hurwitz space
$\hur G n {G-\id} B$
when $B$ is a Henselian dvr with finite
residue field.
We can first reduce to the case that $B = \mathbb F_q$ using that $B$ is Henselian 
and 
$\hur G n {G-\id} B$ has a smooth proper compactification by
\cite[Corollary B.1.4]{ellenbergL:homological-stability-for-generalized-hurwitz-spaces}.
Since $c$ is closed under $q$th powering, 
the result then holds using by
\cite[Theorem 12.1(2)]{liuWZB:a-predicted-distribution}.
\end{remark}

One important idea in this paper is to work with pointed Hurwitz spaces, which deal with the case of
``real quadratic'' or ``ramified at $\infty$'' quadratic fields. 
Variants of these were also used in \cite{ellenbergL:homological-stability-for-generalized-hurwitz-spaces},
but prior work in the context of the Cohen--Lenstra heuristics appears to
primarily focus on the case over covers unramified at $\infty$.
These Hurwitz spaces parameterize covers of a stacky $\mathbb P^1$, with a root
stack at $\infty$ of order $2$.
We now define these pointed Hurwitz spaces.

\begin{definition}
	\label{definition:pointed-hurwitz-space}
	Fix an integer $w$ and define $\mathscr P^w$ to be the root stack of order $w$ along
	$\infty$ of $\mathbb P^1$. (See \cite[Definition
		2.2.4]{cadman:using-stacks-to-impose-tangency}, where this is
	notated as $\mathbb P^1_{(\infty, w)}$.) 
	Using \cite[Theorem 10.3.10(ii)]{olsson2016algebraic}, 
	the fiber of this root
	stack over $\infty$ is the stack quotient 
$[\left(\spec_B \mathscr O_B[x]/(x^r)\right)/ \mu_r]$ of the relative spectrum $\spec_B \mathscr
O_B[x]/(x^r)$ by $\mu_r$. Let $\widetilde{\infty} : B \to \mathscr P^w$ denote the section over $\sigma$ corresponding to map $B
	\to [\left(\spec_B \mathscr
	O_B[x]/(x^r) \right)/ \mu_r]$ given by the trivial $\mu_r$ torsor over
	$B$, $\mu_{r} \to
	B$, and the $\mu_r$ equivariant map $\mu_{r} \to B \to \spec_B \mathscr
	O_B[x]/(x^r)$.
	We use notation as in \autoref{definition:hurwitz-stacks},

	Define the {\em $w$-pointed Hurwitz space}, $\left(\phur G n c B
	\right)^w$, to be the algebraic space
	whose $T$ points are the set parameterizing data of the form
	\begin{align*}
		\left( D, h': X \to \mathscr
		P^w_T, t: T \to X
		\times_{h',\mathscr P^w_T, \widetilde{\infty}_T} T, i:
	D \to \mathbb P^1_T, X, h: X \to \mathbb P^1_T  \right),
	\end{align*}
	where $D, i,X,$ and $h$ satisfy the properties listed in
\autoref{definition:hurwitz-stacks}. We also assume the order of inertia of $h$
	along $\infty$ is $w$ and define
	$\widetilde{\infty}_T$ to be the base change of the section
	$\widetilde{\infty}$ defined above to $T$.
	We additionally impose the condition that
	$h'$ is a finite locally free $G$-cover, \'etale over
	$\widetilde{\infty}_T$, such that the composition of $h': X \to \mathscr
		P^w_T$ with the coarse
		space map $\mathscr P^w_T \to \mathbb P^1_T$ is $h$,
		and $t: T \to X \times_{h',\mathscr P^w_T,
		\widetilde{\infty}_T} T$
		is a section of $h'$ over $\widetilde{\infty}_T$.

		In general, we define the {\em pointed Hurwitz space} as $\phur G
		n c B := \coprod_{w \geq 1} \left(\phur G n c B\right)^w.$
\end{definition}
\begin{remark}
	\label{remark:}
	We note that 
	$\phur G n c B$, defined as an algebraic space, is in fact a scheme. 
	Indeed, $\phur G n c B$ is a finite
	\'etale cover of the configuration space of $n$ unordered points in $\mathbb A^1_B$, as
	can be verified in an analogous fashion to \cite[Proposition
	11.4]{liuWZB:a-predicted-distribution}.
\end{remark}

\begin{remark}
	\label{remark:section-independent}
	In the definition of pointed Hurwitz space,
	\autoref{definition:pointed-hurwitz-space}, we choose a particular section
	$\widetilde{\infty} :B \to \mathscr P^w$ over $\infty: B \to \mathscr
	P^1_B$. However, one can show
	that if we chose a different section $\widetilde{\infty}' :B \to \mathscr
	P^w$, the resulting pointed Hurwitz space would be isomorphic.
	The idea is that the sections $\widetilde{\infty}$ and
	$\widetilde{\infty}'$ factor respectively through maps $\alpha: B \to [B/
	\mu_w]$ and $\alpha' : B \to [B/\mu_w]$, and so it suffices to produce a
	map $\beta: [B/\mu_w] \to [B/\mu_w]$ so that $\alpha \circ \beta =
	\alpha'$, as this map will induce a map taking $\widetilde{\infty}$ to
	$\widetilde{\infty}'$, which then induces an isomorphism of these two
	Hurwitz spaces.
	The desired map $\beta$ is simply given as the composition of the
	structure map $[B/\mu_w] \to B$ with $\alpha'$.
\end{remark}

\begin{remark}
	\label{remark:odd-degree-independence}
	Note that monic smooth hyperelliptic curves $y^2 = f(x)$ with $f$ of odd degree correspond to the choice of
	marked section given in \autoref{definition:pointed-hurwitz-space} while
	non-monic smooth hyperelliptic curves $y^2 = f(x)$ with $f$ of with the same odd degree
	correspond to a different choice of marked section, see
	\autoref{lemma:hyperelliptics-split-over-infinity}.
	Hence, the two relevant
	Hurwitz spaces will be isomorphic by
	\autoref{remark:section-independent}. From this, one can obtain 
	the fact that the distributions of class groups of monic smooth hyperelliptic
	curves of odd degree are isomorphic to the distribution of class groups of non-monic
	smooth
	hyperelliptic curves of odd degree. This was mentioned in
	\autoref{remark:non-monic}.

	There is also a more elementary way to see this equivalence,  as we learned from 
\cite[Corollary
6.5]{bostonW:non-abelian-cohen-lenstra-heuristics}.
This can be viewed as a reformulation of the argument in the previous paragraph.
Namely, there is a bijection between monic smooth hyperelliptic curves and
non-monic smooth
hyperelliptic curves which additionally preserves their isomorphism type, and
hence their class groups. 
For any $\alpha \in \mathbb F_q -(\mathbb F_q)^2$
every non-monic smooth hyperelliptic curve can be written as
$y^2 = f(x)$, where $f(x)$ has leading coefficient $\alpha$.
Using this representation of such non-monic smooth hyperelliptic curves, the map is given by 
sending a quadratic extension corresponding to a monic
smooth
hyperelliptic curve
$K$ over $\mathbb F_q(t)$ to the quadratic extension corresponding to a
non-monic smooth hyperelliptic curve $K
\otimes_{\mathbb F_q(t)} \mathbb F_q(s)$ where $t \mapsto \alpha s$.
\end{remark}

\begin{notation}
	\label{notation:connected-hurwitz-spaces}
We use $\cphur G n c B$ to denote the open and closed
subscheme of $\phur G n c B$ parameterizing covers $X \to \mathbb P^1_T$ so that
the geometric fibers of $X$ over $T$ are connected.
We use $\chur G n c B$ to denote the open and closed
substack of $\hur G n c B$ parameterizing covers $X \to \mathbb P^1_T$ so that
the geometric fibers of $X$ over $T$ are connected.
\end{notation}

\begin{notation}
	\label{notation:complex-hurwitz}
	We will often work over the complex numbers, and hence it will be convenient to
assume $B = \spec \mathbb C$. Therefore, we define
$\hurc G n c := \hur G n c {\spec \mathbb C},
\phurc G n c := \phur G n c {\spec \mathbb C},
\churc G n c := \chur G n c {\spec \mathbb C},
\cphurc G n c := \cphur G n c {\spec \mathbb C}$.
When $R$ is a ring, we often use use
$\phur G n c R$ in place of $\phur G n c {\spec R},$ and use analogous notation for other variants of this Hurwitz space.
\end{notation}

\subsection{Notation for components of Hurwitz spaces}

We now introduce some notation for the Hurwitz spaces we will work with.
First, we describe the notion of boundary monodromy of a component.
If we think of the component as parameterizing
certain covers $X \to \mathbb P^1$, the boundary monodromy corresponds to the
inertia of the cover over $\infty$.

\begin{definition}
	\label{definition:boundary-monodromy}
	With notation as in \autoref{definition:pointed-hurwitz-space},
	there is a map from the set of components of $\phur G n c B$ to $G$
	defined as follows. We can represent connected components of $\phur G n
	c B$ as tuples $(g_1, \ldots, g_n)$ modulo the action of the braid group
	$B_n$, and we associate to this the element $g_1 \cdots g_n \in G$.
	For such a component mapping to $g \in G$, we say the {\em boundary
	monodromy}
	of this component is $g$.
	We define $\boundarycphur G n c B g$ denote the open and closed
	subscheme of $\cphur G n c B$ consisting of those components 
	whose boundary monodromy is $g$.
\end{definition}

\subsection{Covers split completely at infinity}
\begin{definition}
\label{definition:split-completely}
Let $X \to \mathbb P^1_{\mathbb F_q}$ be a Galois $G$ cover 
with branch divisor of degree $n$ and
inertia over
$\mathbb A^1_{\mathbb F_q}$ all lying in a
union of conjugacy classes $c \subset G$.
Hence, this cover corresponds to a point $\spec \mathbb F_q \to \hur G n c{\mathbb F_q}$.
We say this cover 
is 
{\em completely split over $\infty$} if it is in the image of the map $\phur G n
c {\mathbb F_q} (\mathbb F_q) \to \hur G n c{\mathbb F_q}(\mathbb F_q)$.
\end{definition}

Next, we wish to verify that monic smooth hyperelliptic curves precisely correspond to
smooth covers split over infinity. This is fairly straightforward in the case $n$ is
even. It is a bit more involved when $n$ is odd, but we are able to verify this
via an explicit blow-up computation.
For the next lemma, recall that a monic smooth hyperelliptic curve over $\mathbb F_q$ is by definition a hyperelliptic curve defined affine locally 
by an equation of the form $y^2 =f(x)$ with 
$f(x) =  x^n + a_{n-1}x^{n-1} + \cdots + a_1x + a_0$ for $a_i \in \mathbb F_q$.
Explicitly, we view the hyperelliptic curve as a cover of $\mathbb P^1_{\mathbb
F_q}$ with $x$ as above the coordinate on $\mathbb A^1_{\mathbb F_q}$, viewed as a
subscheme of $\mathbb P^1 = \mathbb P (\mathbb F_q[X,Y])$ with $x = X/Y$.
\begin{lemma}
\label{lemma:hyperelliptics-split-over-infinity}
Take $G = \mathbb Z/2 \mathbb Z$ and $c$ to be the nontrivial element of
$\mathbb Z/2 \mathbb Z$ in \autoref{definition:split-completely}.
The $\mathbb Z/2 \mathbb Z$ smooth covers of $\mathbb P^1_{\mathbb F_q}$ split completely over the
point $\infty \in \mathbb P^1_{\mathbb F_q}$
precisely correspond to 
monic smooth hyperelliptic curves $y^2 = f(x)$ over
$\mathbb F_q$.
\end{lemma}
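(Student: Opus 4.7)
The plan is to prove two matching inclusions: every monic smooth hyperelliptic $y^2 = f(x)$ yields a cover lying in the image of $\phur{\mathbb{Z}/2\mathbb{Z}}{n}{c}{\mathbb{F}_q}(\mathbb{F}_q) \to \hur{\mathbb{Z}/2\mathbb{Z}}{n}{c}{\mathbb{F}_q}(\mathbb{F}_q)$, and conversely every smooth $\mathbb{Z}/2\mathbb{Z}$-cover of $\mathbb{P}^1_{\mathbb{F}_q}$ split completely over $\infty$ admits a unique monic representation $y^2 = f(x)$.

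For the forward direction, I would fix $y^2 = f(x)$ with $f$ monic of degree $n$ and carry out local computations at $\infty$, splitting by parity. If $n$ is even, the substitution $u = 1/x$, $v = y/x^{n/2}$ yields $v^2 = 1 + a_{n-1} u + \cdots + a_0 u^n$ near $u = 0$, so the cover is \'etale over $\infty$ (with $w = 1$) and has two $\mathbb{F}_q$-rational preimages $v = \pm 1$; either one provides the required section over $\widetilde{\infty} = \infty$. If $n = 2k+1$ is odd, the substitution $u = 1/x$, $v = y/x^{k+1}$ gives $v^2 = u(1 + a_{n-1}u + \cdots + a_0 u^n)$, which is ramified at $u = 0$ with inertia of order $2$ (so $w = 2$). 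Pulling back to the root stack $\mathscr{P}^2$ by setting $u = s^2$, or equivalently resolving via an explicit blow-up at the node of the double cover, yields $v^2 = s^2 h(s^2)$ with $h(0) = 1$; introducing $\widetilde{v} := v/s$ one obtains $\widetilde{v}^2 = h(s^2)$, which is \'etale at $s = 0$ with two $\mathbb{F}_q$-rational preimages $\widetilde{v} = \pm 1$. Either choice provides the marked section over $\widetilde{\infty}$, so the cover is split completely over $\infty$.

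For the converse, any smooth $\mathbb{Z}/2\mathbb{Z}$-cover of $\mathbb{P}^1_{\mathbb{F}_q}$ can be written birationally as $y^2 = g(x)$ for some squarefree $g(x) \in \mathbb{F}_q[x]$ with leading coefficient $c$. Running the local analysis of the forward direction in reverse shows that the existence of an $\mathbb{F}_q$-rational preimage of $\widetilde{\infty}$ in the relevant root stack is equivalent to $c$ being a square in $\mathbb{F}_q$. Writing $c = a^2$ and substituting $y \mapsto y/a$ transforms the equation into $y^2 = a^{-2} g(x)$, which is monic. Uniqueness of the monic representation follows because any further rescaling $y \mapsto \lambda y$ preserving monic-ness forces $\lambda^2 = 1$, and the two values $\lambda = \pm 1$ yield the same polynomial $f$.

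The main technical hurdle is the odd-$n$ case in the forward direction: one must carefully perform the blow-up (or, equivalently, the root-stack pullback) to exhibit rational preimages of $\widetilde{\infty}$, since the cover has genuine ramification of order $2$ there. This is precisely the "explicit blow-up computation" to which the excerpt alludes; all other steps are routine local coordinate manipulations together with the fact that monic-ness determines the $y$-scaling up to sign.
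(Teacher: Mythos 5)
Your proposal is correct and follows essentially the same route as the paper: a local computation at $\infty$ via $u = 1/x$ and the appropriate rescaling of $y$, split by the parity of $n$, with the odd case handled by pulling back to the root stack (your substitution $u = s^2$, $\widetilde v = v/s$ is exactly the paper's normalization-via-blow-up yielding $(Z/U)^2 = \alpha + \cdots$), and the conclusion in both cases that splitting over $\infty$ is equivalent to the leading coefficient being a square. The only cosmetic difference is that you phrase the argument as two inclusions, whereas the paper runs a single computation parametrized by the leading coefficient $\alpha \in \{1, \text{fixed nonsquare}\}$.
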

\begin{proof}
First, we describe the case that $n$ is even.
Any hyperelliptic curve in characteristic not $2$ which is unramified over
$\infty$ can be uniquely written in the form
$f(x) =  \alpha x^n + a_{n-1}x^{n-1} + \cdots + a_1x + a_0$
where either $\alpha = 1$ or $\alpha \in \mathbb F_q^\times$ is a fixed
non-square.
Such a curve is split completely over infinity if and only if its
fiber over $\infty \in \mathbb P^1_{\mathbb F_q}(\mathbb F_q)$ consists of two
$\spec \mathbb F_q$ points.
The fiber over $\infty$ of this curve can be identified with the fiber over $0$
of the curve
$z^2 =  \alpha + a_{n-1}u + \cdots + a_1u^{n-1} + a_0u^n$, where we have set $u
= 1/x$ and then $z = y/x^{n/2}$.
This is the same as $\spec \mathbb F_q[y]/(y^2 - \alpha)$, which will be two
copies of $\mathbb F_q$ when $\alpha$ is a square and a single copy of $\mathbb
F_q^2$ when $\alpha$ is a non-square.
Therefore, the curves where $\alpha$ is a square (which are all isomorphic to
such curves with $\alpha = 1$) precisely correspond to those completely split over
$\infty$.

We next consider the case that $n$ is odd.
As above, we can still write our hyperelliptic curve in the form
$f(x) =  \alpha x^n + a_{n-1}x^{n-1} + \cdots + a_1x + a_0$
with $\alpha$ either $1$ or a fixed non-square.
Changing coordinates to $u = 1/x$, we can write the curve in
the form
$z^2 =  u(\alpha + a_{n-1}u + \cdots + a_1u^{n-1} + a_0u^n)$
where $u = 1/x$ and $z = y/x^{n+1/2}$.
Let $X$ denote the proper regular curve corresponding to the above equation.
We can identify the stack quotient $[X/(\mathbb Z/2 \mathbb Z)]$ Zariski locally
around the residual gerbe at $\infty$
with a Zariski open of the root stack $\mathscr P$, which is a
root stack of $\mathbb P^1$ over the divisor $\infty$, so $\mathscr P$ has a single stacky point of order
$2$.

Let $\widetilde{\infty} : \spec \mathbb F_q \to \mathscr P$ denote the map which
factors through the residual gerbe $B\mu_2$ at infinity and corresponds to the
map $\spec \mathbb F_q \to B\mu_2$ associated to the trivial double cover $\spec
\mathbb F_q \coprod \spec \mathbb F_q \to \spec \mathbb F_q$.
We wish to show that the fiber product
$X \times_{\mathscr P, \infty} \spec \mathbb F_q$ consists of two copies of
$\spec \mathbb
F_q$.
The fiber product above is identified with the fiber product
$X \times_{\mathscr P} \mathbb P^1 \times_{\mathbb P^1, \infty} \spec \mathbb F_q$, where the
map $\mathbb P^1 \to \mathscr P$ is the double cover branched over $0$. (In
	particular, on
coarse spaces $\mathbb P^1 \to \mathscr P$ corresponds to the map given in local
coordinates by $x \mapsto x^2$.)
The above fiber product $X \times_{\mathscr P} \mathbb P^1$ is a normal curve in
a neighborhood over
$\infty$, and hence
it is, locally around $\infty$, the normalization of 
$X \times_{\mathbb P^1_u} \mathbb P^1_t$, where again the map $\mathbb P^1_t \to
\mathbb P^1_u$ is induced by the double cover $u \mapsto t^2$.
In a neighborhood of $\infty$, we can express the projection map
$X \times_{\mathbb P^1_u} \mathbb P^1_t \to X$ as
\begin{align*}
	&\spec k[u,z,t]/(t^2 - u,z^2 -  u(\alpha + a_{n-1}u + \cdots + a_1u^{n-1} +
a_0u^n) \\
&\to \spec k[u,z]/(z^2 -  u(\alpha + a_{n-1}u + \cdots + a_1u^{n-1} +
a_0u^n).
\end{align*}
The source is then isomorphic to 
\begin{align*}
	\spec k[t,z]/(z^2 -  t^2(\alpha + a_{n-1}t^2 + \cdots + a_1t^{2(n-1)} +
a_0t^{2n}).
\end{align*}
This is not normal, but its blow up at the origin is normal in a
neighborhood of the origin, and its blow up is isomorphic to
\begin{align*}
\spec k[Z/U, t]/( (Z/U)^2 - (\alpha + a_{n-1}t^2 + \cdots + a_1t^{2(n-1)} +
a_0t^{2n}).
\end{align*}
The fiber of this over the point $z = t = 0$ is then simply
$k[Z/U]/( (Z/U)^2 - \alpha),$ which is a copy of $\mathbb F_{q^2}$ if $\alpha$
is a non-square, while it is two copies of $\mathbb F_q$ if $\alpha$ is a
square.
This fiber product is precisely the fiber product
$X \times_{\mathscr P, \infty} \spec \mathbb F_q$ we wished to compute, and so
we are done.
\end{proof}

\section{Algebra setup}
\label{section:algebra-background}
The goal of this section is to recall and set up some of the algebraic machinery
that we use. After introducing some of our notation that we use from higher
category theory in \autoref{subsection:algebra-notations}, we study the notion
of a homological epimorphism in \autoref{subsection:hom-epi}, which is key to our study of stable homology.
Throughout the rest of the section, we collect a number of basic facts in higher
algebra, primarily related to
homological epimorphisms. In 
\autoref{subsection:localization} we check localization and certain completions
are homological epimorphisms.
In \autoref{subsection:tools} we record a rigidity property for homological
epimorphisms, which shows in many cases that they are actually equivalences.
Finally, in \autoref{subsection:nilpotence}
we prove some well known lemmas about nilpotence of endomorphisms.

An alternate, more elementary perspective
on the calculations in this section and the next section is given in
\cite{landesmanL:an-alternate-computation} in the case that $G$ is a dihedral
group of order twice an odd prime, and $c$ is the conjugacy class of order $2$
elements.
This may be helpful for the reader to consult, especially if they are less
familiar with higher algebra.

\subsection{Algebra notations and recollections}
\label{subsection:algebra-notations}

Here, we recall some basic notions from higher algebra that we use here to study the rational homology of Hurwitz spaces. A convenient foundation for studying objects in homotopy theory is that of $\infty$-categories, as developed by Joyal and Lurie (see \cite{HTT}, \cite{HA} or \cite{kerodon}). If the reader prefers, they may also interpret the results below as about differential graded algebras: the use of spectra is not necessary to what we are doing here, and working with the derived category of the rational numbers suffices.

\subsubsection{The infinity category of spaces}
\label{subsubsection:spaces}
We use $\Spc$ to denote the $\infty$-category of spaces, which we view as symmetric monoidal with respect to the product. We use $\Spc_*$ to denote the $\infty$-category of pointed spaces, which is symmetric monoidal with respect to the smash product $\wedge$. Recall that for $(X,x)$ and $(Y,y)$ two pointed spaces, $X \wedge Y$ is defined as the quotient of $X \times Y$ by collapsing the subspace
$x \times Y \cup y \times X$. We have a symmetric monoidal functor $(-)_+:\Spc \to \Spc_*$ given by adding a disjoint base point to a space.

There is a functor $L:\Top \to \Spc$ from the category of topological spaces to the $\infty$-category of spaces, obtained by universally inverting all morphisms that are weak homotopy equivalences.

\subsubsection{$\EE_1$ algebras}
Given a symmetric monoidal $\infty$-category $\cC$, we can form the symmetric
monoidal $\infty$-category $\Alg(\cC)$
of associative (or $\EE_1$) 
algebras in $\cC$. 

\subsubsection{Stable $\infty$ categories}
We recall that an $\infty$-category $\cC$ is {\em stable} if it admits finite limits and finite colimits, a zero object, and pullback squares and pushout squares coincide. 
We use $\Sp$ to denote the symmetric monoidal stable $\infty$-category of spectra. Given a stable $\infty$-category $\cC$ and objects $X,Y \in \cC$, we use $\map_{\cC}(X,Y)$ to denote the spectrum of maps between $X$ and $Y$.

\subsubsection{$\EE_1$ rings}
We refer to an $\EE_1$-algebra in spectra as an $\EE_1$-ring. An ordinary associative ring $R$ can be viewed as an $\EE_1$-ring via the lax symmetric monoidal inclusion of abelian groups into spectra. All of the $\EE_1$-rings that we will be interested in are actually $\EE_1$-$\QQ$-algebras, so the reader may interpret the phrase $\EE_1$-ring to mean differential graded algebra up to quasi-isomorphism.

Given an $\EE_1$-ring, we can form the stable $\infty$-categories $\Mod_{\Sp}(R)$ and
$\Mod_{\Sp}(R^{\on{op}})$ 
of left and right $R$-modules in spectra respectively. 
For example, when $R$ is an ordinary associative ring, then $\Mod_{\Sp}(R)$ agrees with the derived $\infty$-category of $R$-modules, so that objects can be presented as (unbounded) chain complexes of $R$-modules. More generally given a DGA $R$, objects of $\Mod_{\Sp}(R)$ can be presented as differential graded modules over $R$ up to quasi-isomorphism.

Given $M \in \Mod_{\Sp}(R)$, we use either $\pi_iM$ or $H_iM$ depending on the context to refer to $\pi_i$ of the underlying spectrum of $M$. In the case $R$ is an ordinary ring and $M$ is presented as a chain complex of $R$-modules, this agrees with the $i^{\mathrm{th}}$ homology of the chain complex.

\subsubsection{$\EE_2$-central elements}
Let $R$ be an $\EE_1$ ring.
We say $r \in \pi_iR$ is {\em $\EE_2$-central} if there is an $R$-bimodule map $\Sigma^iR \to R$ sending $1$ to $r$. 
In particular, if $r$ is $\EE_2$-central, it is also graded commutative in the homotopy ring $\pi_*R$ of $R$. 
Given an $\EE_2$-central element $r$, we implicitly associate to it a choice of $R$-bimodule map refining $r$.

\subsubsection{Chains functor}
We have a symmetric monoidal functor $\tilde{C}_*(-;\QQ): \Spc_*\to \Sp$, 
taking $A$ to its reduced chains with coefficients in $\QQ$, with the comultiplication
coming from the diagonal map, which is dual to the cup product. Composing
$\tilde{C}_*(-;\QQ)$ with the symmetric monoidal functor $(-)_+$, we get the symmetric monoidal functor $C_*(-;\QQ): \Spc \to \Sp$. 
For $X$ a space, $H_i(X;\QQ) = \pi_iC_*(X;\QQ)$. 

\subsection{Homological epimorphisms}
\label{subsection:hom-epi}

Our goal is to study the algebra of chains on Hurwitz spaces, which forms an $\EE_1$-ring. We will study this $\EE_1$-ring by forming certain completions and localizations of it. An important property of these operations that makes them well behaved in this setting is that they are \textit{homological epimorphisms}. To define this notion, we recall the following lemma:
\begin{lemma}[{\cite[Lemma 2.9.8]{sheavesonmanifolds}}]\label{lemma:homepi}
	For a map $R \to S$ of $\EE_1$-rings with fiber $I$, the following conditions are equivalent:
\end{lemma}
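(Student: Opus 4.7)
The plan is to prove the equivalence of the standard characterizations of a homological epimorphism by pivoting everything around the cofiber sequence produced by the defining map. Starting from the fiber sequence $I \to R \to S$ of $R$-bimodules in spectra, I would tensor over $R$ with $S$ from the right to obtain the fiber sequence
\[
I \otimes_R S \longrightarrow R \otimes_R S \simeq S \xrightarrow{\;\eta\;} S \otimes_R S,
\]
where $\eta$ is the coaction induced by the unit $R \to S$. The key observation is that $\eta$ is a section of the multiplication $\mu: S \otimes_R S \to S$, since the composite $S \xrightarrow{\eta} S \otimes_R S \xrightarrow{\mu} S$ equals the identity on $S$. Consequently, $\mu$ is an equivalence if and only if $\eta$ is an equivalence, which by the fiber sequence above happens if and only if $I \otimes_R S \simeq 0$. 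Tensoring on the left instead gives the symmetric statement that $\mu$ is an equivalence if and only if $S \otimes_R I \simeq 0$.

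Next, I would translate these tensor-vanishing conditions into module-theoretic ones. Given any left $S$-module $M$, smashing the fiber sequence $I \otimes_R S \to S \to S \otimes_R S$ with $M$ over $S$ (on the left) shows that the natural map $M \to M \otimes_R S$ has fiber $M \otimes_S (I \otimes_R S)$. Thus if $I \otimes_R S \simeq 0$, then for every $S$-module $M$ the restriction-extension unit $M \to S \otimes_R M$ is an equivalence, which is the same as saying the extension-of-scalars functor $S \otimes_R (-): \Mod_{\Sp}(R) \to \Mod_{\Sp}(S)$ is left adjoint to a fully faithful restriction functor $\Mod_{\Sp}(S) \to \Mod_{\Sp}(R)$. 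Conversely, if the restriction functor is fully faithful, then applying this to $M = S$ recovers $S \otimes_R S \simeq S$, closing the loop.

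The main obstacle, in my view, is purely bookkeeping: keeping track of left versus right module structures on $S \otimes_R S$ and ensuring that the section $\eta$ and multiplication $\mu$ are treated as $R$-bimodule maps (or as maps of spectra, whichever level is needed) so that ``fiber'' is computed in the correct $\infty$-category. Once one fixes this, each implication is a short manipulation with the cofiber sequence and the fact that tensor products preserve colimits. No deep input beyond the stability of $\Sp$ and the existence of the relative tensor product $\otimes_R$ on $\Mod_{\Sp}(R)$, as constructed in \cite{HA}, is required.
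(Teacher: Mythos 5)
The paper itself does not prove this lemma; it cites it as \cite[Lemma 2.9.8]{sheavesonmanifolds}, so your proposal is being measured against the standard argument rather than an in-text proof. Your handling of conditions (1), (2), (3) is correct and is the standard argument: the unit-induced map $\eta: S \simeq R\otimes_R S \to S\otimes_R S$ is a section of the multiplication $\mu$, so $\mu$ is an equivalence iff $\eta$ is, iff its fiber $I\otimes_R S$ vanishes; and full faithfulness of the forgetful functor is equivalent to the counit $S\otimes_R M \to M$ being an equivalence for every $S$-module $M$, which by the same section trick is equivalent to $\fib(M \to S\otimes_R M) \simeq I\otimes_R M \simeq (I\otimes_R S)\otimes_S M$ vanishing, and this reduces to the case $M = S$. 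The left/right bookkeeping slips you flag yourself (e.g.\ writing $M\otimes_R S$ and $M\otimes_S(I\otimes_R S)$ for a left $S$-module $M$, where the factors should be reversed) are harmless.

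The genuine gap is that the lemma asserts the equivalence of \emph{four} conditions and your argument never touches condition (4), that the multiplication $I\otimes_R I \to I$ is an equivalence. As written, the proof is incomplete. The missing step is one more application of your basic move: tensor the fiber sequence $I \to R \to S$ of $R$-bimodules with $I$ over $R$ to get a fiber sequence $I\otimes_R I \to I\otimes_R R \simeq I \to I\otimes_R S$, and check that the first map, under the identification $I\otimes_R R \simeq I$, is the multiplication map (it is induced by the inclusion $I \to R$ in the second factor). Hence $I\otimes_R I \to I$ is an equivalence if and only if $I\otimes_R S \simeq 0$, giving $(3)\iff(4)$ and closing the remaining equivalence. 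You should add this paragraph; without it the proof does not establish the statement as given.
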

\begin{enumerate}
	\item The forgetful functor $\Mod_{\Sp}(S) \to \Mod_{\Sp}(R)$ is fully faithful.
	\item The multiplication map $S \otimes_RS \to S$ is an equivalence.
	\item $I\otimes_RS =0$ 
	\item The multiplication map induces an equivalence $I\otimes_RI\cong I$.
\end{enumerate}

\begin{definition}\label{definition:homepi}
	A map $R \to S$ of $\EE_1$-rings is
	called a {\em homological epimorphism} (or hom. epi) 
	if any of the equivalent conditions of \autoref{lemma:homepi} are satisfied.
\end{definition}

\subsection{Localization and completion}
\label{subsection:localization}

Two sources of hom. epis for us will be certain localization and completion
maps. We recall below that given an $\EE_1$-ring $R$, some $i \in \mathbb Z$, and $r \in \pi_iR$,
we can form the completion $M^{\wedge}_r$, and localization $M[r^{-1}]$, of an
$M \in \Mod_{\Sp}R$, and that this forms an $\EE_1$-ring when $M=R$.

$\Mod_{\Sp}(R[r^{-1}])$ is defined as the presentable localization (or Bousfield localization) of the $\infty$-category $\Mod_{\Sp}R$ away from $R/r = \cof(\Sigma^iR\xrightarrow{r}R)$. In other words, it is the full subcategory of $\Mod_{\Sp}(R)$ such that $\map(R/r,-)$ vanishes, and the left adjoint of the inclusion of this subcategory sends $M$ to $M[r^{-1}]$. 

\begin{example}
	\label{example:localization-hom-epi}
	The map $R \to R[r^{-1}]$ is always a hom. epi \cite[Example 2.9.14]{sheavesonmanifolds}. More generally for a set of elements $S$ in the homotopy groups of $R$, we can form $M[S^{-1}]$, which can be described as the localization away from $R/r, r \in S$, and when $S$ is a finite set, agrees with iteratively localizing away from each element of $S$ in any order.
\end{example}

\begin{remark}
	\label{remark:ore-implies-colimit}
	When $r\in \pi_iR$ satisfies the left Ore condition, as defined in \cite[Definition
	7.2.3.1]{HA},
	the underlying spectrum of $M[r^{-1}]$ can be computed as the colimit of $M$ along the left multiplication by $r$ \cite[Proposition 7.2.3.20]{HA}. 
\end{remark}

We will also want to show certain completions are homological epimorphisms. We
next define completions, and later, in 
\autoref{lemma:completionhomepi}, we will show certain completions are
homological epimorphisms by showing they are localizations.
\begin{definition}
	\label{definition:} For $r \in \pi_iR$, the {\em $r$-completion}
of the $\infty$-category of $R$-modules is defined as the presentable localization of $\Mod_{\Sp}R$ away from $R[r^{-1}]$. In other words, $\Mod_{\Sp}(R)^{\wedge}_r$ is the 
full
subcategory of $R$-modules such that $\map(R[r^{-1}],-)$ vanishes. The inclusion of this subcategory 
$\Mod_{\Sp}(R)^{\wedge}_r \subset \Mod_{\Sp}R$
has a left adjoint sending $M$ to $M^{\wedge}_r$. \end{definition}

There are natural $\EE_1$-ring maps $R\to R^{\wedge}_r$, $R\to R[r^{-1}]$ induced by taking the map induced on endomorphism rings of the $R$-module $R$ via the completion and localization functor. Note also that $R[r^{-1}]=R[(r^{n})^{-1}]$ and $R^{\wedge}_r=R^{\wedge}_{r^n}$ for each $n\geq1$.

The following lemma is a standard manipulation of Bousfield localizations. See
\cite[Lemma 1.34]{ravenel1984localization} for a closely related statement.
\begin{lemma}\label{lemma:invertandcomplete}
	Let $R$ be an $\EE_1$-ring.	
	Suppose that $f:M \to N$ is a map of $R$-modules and $r \in \pi_iR$ is an element. The following are equivalent:
	
	\begin{enumerate}
		\item $f$ is an equivalence.
		\item $f[\frac 1 r]$ and $f^{\wedge}_r$ are equivalences.
		\item $f[\frac 1 r]$ and $R/r\otimes_Rf$ are equivalences.
	\end{enumerate}
\end{lemma}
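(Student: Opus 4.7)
The plan is to reduce all three conditions to statements about $F := \fib(f)$ in a uniform way. Since the functors $(-)[1/r]$, $(-)^\wedge_r$, and $R/r \otimes_R (-)$ are all exact, the three conditions on $f$ translate respectively to: $F \simeq 0$; $F[1/r] \simeq 0$ and $F^\wedge_r \simeq 0$; and $F[1/r] \simeq 0$ and $R/r \otimes_R F \simeq 0$.

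The heart of the argument will be the claim that for any $R$-module $F$, the three conditions $R/r \otimes_R F \simeq 0$, $F$ is $r$-local (i.e., the unit $F \to F[1/r]$ is an equivalence), and $F^\wedge_r \simeq 0$ are mutually equivalent. The equivalence of the first two is immediate from the cofiber sequence $\Sigma^i F \xrightarrow{r} F \to R/r \otimes_R F$: the cofiber vanishes precisely when multiplication by $r$ is an equivalence on $F$. For the equivalence with the third, I would identify the acyclic subcategory of the $r$-completion Bousfield localization with the localizing subcategory $\langle R[1/r] \rangle$, a standard feature of Bousfield localizations in presentable stable $\infty$-categories. Since $(-)[1/r]$ is smashing, the $r$-local modules are closed under colimits in $\Mod_{\Sp}(R)$; combined with the fact that $R[1/r]$ is itself $r$-local and that every $r$-local module arises as $M[1/r] = M \otimes_R R[1/r]$, this identifies $\langle R[1/r] \rangle$ with the full subcategory of $r$-local modules.

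With these equivalences in hand, the implications $(1) \Rightarrow (2)$ and $(1) \Rightarrow (3)$ are immediate from functoriality, while for $(2) \Rightarrow (1)$ and $(3) \Rightarrow (1)$ the common hypothesis $F[1/r] \simeq 0$ combined with either of the remaining conditions forces $F$ to be $r$-local, so $F \simeq F[1/r] \simeq 0$. The main obstacle I anticipate is the identification of the acyclic subcategory of the $r$-completion with the $r$-local modules; once this is in hand, the rest of the argument is a formal manipulation of the defining cofiber sequence for $R/r$.
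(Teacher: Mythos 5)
Your proof is correct and follows essentially the same route as the paper's: both are formal manipulations of the two Bousfield localizations, resting on the identification of the $r$-completion acyclics with the $r$-local (equivalently, $R/r\otimes_R$-acyclic) modules and on the decomposition of any $R$-module into an $r$-local piece and a piece built from $R/r$. The only cosmetic difference is that you phrase the final implication as ``the fiber of $f$ is simultaneously $r$-local and $r$-acyclic, hence zero,'' whereas the paper maps the cofiber sequence $\fib(R \to R[r^{-1}]) \to R \to R[r^{-1}]$ against $f$; your formulation is, if anything, slightly cleaner.
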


\begin{proof}
	We have $(1) \implies (2)$ because localizations and completions preserve
	equivalences. Next,  $(2)\iff (3)$ follows since $M$ is an $R$ module with $r$
	acting invertibly if and only if $(R/r)\otimes_RM=0$. 
	
	Lastly, we show $(3) \implies (1)$. If $(3)$ is true, then
	$\map_{\Mod_{\Sp}(R)}(R/r,M) \to \map_{\Mod_{\Sp}(R)}(R/r,N)$ is an equivalence,
	as is $\map_{\Mod_{\Sp}(R)}(R[r^{-1}],M) \to \map_{\Mod_{\Sp}(R)}(R[r^{-1}],N)$.
	Hence, $R$ is an extension of $R[r^{-1}]$ by $\fib(R \to R[r^{-1}])$, which is
	generated under colimits by $R/r$, by definition of the Bousfield localization $(-)[r^{-1}]$.
	So, it follows that $M\cong \map_{\Mod_{\Sp}(R)}(R,M) \cong \map_{\Mod_{\Sp}(R)}(R,N) \cong N$ is an equivalence.
\end{proof}

The following lemma gives a situation under which completion happens to invert a central idempotent. 

\begin{lemma}\label{lemma:completionhomepi}
	Let $R$ be an $\EE_1$-ring, and $x\in\pi_0R$.
	Suppose there is some integers $n, m$ with $n > m$ and some $z \in
	\pi_0R$ so that $zx^n=x^m$. Also assume $x^{m'}$ is central for some $m'>0$.
	
	Then, $e:=z^{nm'}x^{n(n-m)m'}$ is a central idempotent in $\pi_*R$ so
	that $R\cong R[e^{-1}]\times R[(1-e)^{-1}]$ as an $\EE_1$-ring. 
	We also have $R^{\wedge}_x\cong R[(1-e)^{-1}], R[e^{-1}]\cong R[x^{-1}]$, so that $R \to R^{\wedge}_x$ is in particular a localization. Moreover, $\pi_*R^{\wedge}_x$ is the quotient of $\pi_*R$ by the two sided ideal generated by $x^m$.
\end{lemma}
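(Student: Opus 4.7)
The strategy will be to construct $R$ as a product of two $\EE_1$-rings: the localization $R[x^{-1}]$, where $e$ becomes $1$, and a factor where $x$ is nilpotent, which will be identified with $R^{\wedge}_x$. The key computational input is the identity
\[
(1-e)\, x^k = 0 \quad \text{in } \pi_0 R, \text{ for every } k \geq m.
\]
To prove this, I will iterate the hypothesis $zx^n = x^m$ to obtain $z^{\ell} x^N = x^{N-\ell(n-m)}$ whenever $N \geq \ell n - (\ell-1)m$; specializing to $\ell = nm'$ and $N = n(n-m)m' + k$, the assumption $k \geq m$ is exactly what is needed for the bound to be valid. Since $x^{n(n-m)m'} = (x^{m'})^{n(n-m)}$ is central, the calculation rearranges to $e \cdot x^k = z^{nm'} x^{n(n-m)m' + k} = x^k$, which is the desired identity.

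With the identity in hand, the centrality of $x^{n(n-m)m'}$ will immediately yield $(1-e)\, a\, e = 0$ and $e\, a\, (1-e) = 0$ for every $a \in \pi_0 R$: one simply commutes the central factor $x^{n(n-m)m'}$ of $e$ through $a$ and then invokes $(1-e)\, x^{n(n-m)m'} = 0$ (which is valid since $n(n-m)m' \geq m+1 > m$). Taking $a = 1$ (and handling $e(1-e)$ analogously by moving $x^{n(n-m)m'}$ past $z^{nm'}$) gives $e^2 = e$, while the two identities combined give $ae = eae = ea$ for all $a$. Thus $e$ is a central idempotent in $\pi_0 R$, which in the $\EE_1$-formalism produces the splitting $R \cong R[e^{-1}] \times R[(1-e)^{-1}]$.

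To finish, I will identify the two factors. On the first, $z^{nm'}$ provides an inverse for the central element $x^{n(n-m)m'}$ after inverting $e$, so $x$ becomes a unit in $R[e^{-1}]$; combined with the observation that $e$ itself maps to $1$ in $R[x^{-1}]$ (where $zx^n=x^m$ forces $z = x^{m-n}$), universal properties give $R[e^{-1}] \cong R[x^{-1}]$. On the second factor, $(1-e)\, x^m = 0$ together with the invertibility of $1-e$ forces $x^m = 0$, so $x$ is nilpotent, the factor is already $x$-complete, and its $x$-localization vanishes. Computing $R^{\wedge}_x$ factor-by-factor on the splitting (the first factor completes to $0$, the second is already complete) then identifies $R^{\wedge}_x$ with $R[(1-e)^{-1}]$, and the kernel of $\pi_* R \twoheadrightarrow \pi_* R[(1-e)^{-1}]$ is the central ideal $(e)$, which coincides with $(x^m)$ via the inclusions $e \in (x^m)$ and $x^m = e\, x^m \in (e)$. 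The main technical hurdle will be the very first identity: the hypothesis $zx^n = x^m$ is noncentral and $z$ need not commute with $x$, so the bookkeeping in the iteration of this relation must be done carefully; once that identity is in place, the rest consists of standard higher-algebraic manipulations with central idempotents and arithmetic fracture squares.
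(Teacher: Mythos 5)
Your proposal is correct and follows essentially the same route as the paper: define $e$, verify it is a central idempotent via iterated applications of $zx^n=x^m$ together with the centrality of $x^{m'}$, split $R$ accordingly, and identify the two factors with $R[x^{-1}]$ and $R^{\wedge}_x$, with the kernel computation matching as well. Your organization around the single identity $(1-e)\,x^k=0$ for $k\ge m$ is a slightly cleaner packaging of the paper's direct computations of idempotency and centrality; the only cosmetic point is that the commutation argument should be stated for $a\in\pi_*R$ rather than just $\pi_0R$ (it extends verbatim), since the lemma asserts centrality in $\pi_*R$.
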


\begin{proof}
	To see that $e=z^{nm'}x^{n(n-m)m'}$ is an idempotent, we have
	\begin{align*}
	(z^{nm'}x^{n(n-m)m'})^2=z^{2nm'}x^{2n(n-m)m'}=z^{nm'}x^{n(n-m)m'},
	\end{align*}
	where, the first equality uses that $x^{m'}$ is central and the second
	equality uses the equation $zx^n=x^m$ a total of $nm'$-times and the
	assumption that $n(n-m)m'\geq n$.
	
	We next show that $e$ is central. Indeed, for an arbitrary element $y \in \pi_*R$, we have 
	\begin{align*}
		ey&=z^{nm'}x^{n(n-m)m'}y \\
		&=z^{nm'}yx^{n(n-m)m'} \\
		&=z^{nm'}yz^{nm'}x^{2n(n-m)m'}\\
		&=z^{nm'}x^{2n(n-m)m'}yz^{nm'}\\
		&=x^{n(n-m)m'}yz^n\\
		&=ye.
	\end{align*}
	It follows that the localizations $R[e^{-1}],R[(1-e)^{-1}]$ are Ore localizations, so localizations can be computed as in \autoref{remark:ore-implies-colimit}.
	
	Since $R$ is the pullback
	$R[e^{-1}]\times_{R[e^{-1},(1-e)^{-1}]}R[1-e^{-1}]$, and
	$R[e^{-1},(1-e)^{-1}]=0$, we have $R \cong R[e^{-1}]\times R[(1-e)^{-1}]$.
	Since $e$ is sent to $1$ in $R[x^{-1}]$, and $x$ is invertible in
	$R[e^{-1}]$, we have $R[x^{-1}]=R[x^{-1},e^{-1}]=R[e^{-1}]$. Since the
	completion is the localization away from $R[x^{-1}]$, it follows that
	the completion is the other summand $R^{\wedge}_x \cong R[(1-e)^{-1}]$. To see that $x^m=0$ in the completion, we have 
	\begin{align*}
	(1-e)x^m=x^m-z^{nm'}x^{n(n-m)m'+m}=z^{nm'}x^{n(n-m)m'+m}-z^{nm'}x^{n(n-m)m'+m}=0.
	\end{align*}
	Because $x^m$ maps to $0$ in the component $R[(1-e)^{-1}] \simeq
	R^\wedge_x$, the kernel of surjective map $\pi_*R \to \pi_*R^{\wedge}_x$
	contains two sided ideal generated by $x^m$.
	From the above, this kernel is equal to 
	$\pi_*R[x^{-1}]$.
	Since $\pi_*R[x^{-1}]$ is contained in the two sided ideal generated by
	$x^m$ since any $y \in \pi_*R[x^{-1}]$ can be written as $yx^{-m} x^m$, 
	we obtain that the kernel is equal to the two sided ideal generated by
	$x^m$.
	\end{proof}

\begin{notation}\label{notation:completioncommute}
	Suppose that $R$ is an $\EE_1$-ring and $X: = \{x_1, \ldots,
	x_{|X|}\}$ is a finite set of elements
	in $\pi_0R$ each independently satisfying the conditions of \autoref{lemma:completionhomepi}.
	That is, for each $i$ with $1 \leq i \leq |X|$, there are some $n_i,
	m_i$ and $m_i'$ with $n_i > m_i$ and $z \in \pi_0 R$ so that $zx^{n_i} = x_{m_i}$
	and $x^{m_i'}$ is central.
	Then we use $R^{\wedge}_X$ to denote the iterated completion $(\cdots
	(R^{\wedge}_{x_1})\cdots)^{\wedge}_{x_{|X|}}$. By \autoref{lemma:completionhomepi}, this doesn't depend on the choice of ordering, since each completion is also a localization (see \autoref{example:localization-hom-epi}).
\end{notation}

\subsection{Tools for homological epimorphisms}
\label{subsection:tools}

\begin{lemma}\label{lemma:factorhomepi}
	Suppose we have maps $R \to S \to S'$ such that $R \to S$ is a hom. epi. Then $R \to S'$ is a hom. epi iff $S \to S'$ is a hom. epi.
\end{lemma}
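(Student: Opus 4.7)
The plan is to use characterization (2) of \autoref{lemma:homepi}, which says that a map $T \to T'$ of $\EE_1$-rings is a homological epimorphism precisely when the multiplication map $T'\otimes_T T' \to T'$ is an equivalence of spectra. The advantage of this characterization is that it reduces the claim to a clean manipulation of relative tensor products, which behaves well with respect to changing the base ring.

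The key computation I would carry out is the chain of equivalences
\begin{equation*}
S'\otimes_R S' \;\simeq\; S'\otimes_S (S\otimes_R S)\otimes_S S' \;\simeq\; S'\otimes_S S\otimes_S S' \;\simeq\; S'\otimes_S S'.
\end{equation*}
Here the first equivalence uses that $S' \simeq S'\otimes_S S$, together with associativity of the relative tensor product. The second equivalence is the one place where the hypothesis that $R \to S$ is a homological epimorphism is used: applying characterization (2) to $R \to S$, the multiplication $S\otimes_R S \to S$ is an equivalence, and tensoring with $S'$ on both sides (as an $(S,S)$-bimodule operation) preserves this equivalence. The third equivalence again uses that $S$ is the monoidal unit for $\otimes_S$.

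The next step is to check that this chain of equivalences is compatible with the respective multiplication maps to $S'$, i.e. that the diagram in which the horizontal arrow is the composite equivalence above and the two maps to $S'$ are induced by multiplication commutes. This holds essentially formally from naturality of the unit isomorphisms and the definition of the multiplication maps on relative tensor products. I expect this compatibility to be the only subtle point in the proof, since one has to keep track of bimodule structures carefully, but it is routine in the setting of $\EE_1$-algebras in a symmetric monoidal stable $\infty$-category.

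Combining these, the map $S'\otimes_R S' \to S'$ is an equivalence if and only if $S'\otimes_S S' \to S'$ is an equivalence, which by characterization (2) is exactly the statement that $R \to S'$ is a homological epimorphism if and only if $S \to S'$ is one, completing the proof. As an alternative framing, one could equivalently use characterization (1) and the general fact that if $G$ is a fully faithful functor, then $F$ is fully faithful if and only if $G\circ F$ is fully faithful, applied to the forgetful functors $\Mod_{\Sp}(S')\to \Mod_{\Sp}(S)\to \Mod_{\Sp}(R)$; the tensor-product approach above is simply a more explicit incarnation of this.
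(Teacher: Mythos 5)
Your proof is correct, but your primary argument is not the one the paper uses. The paper's proof is the one-line version of what you relegate to your final sentence: it invokes characterization (1) of \autoref{lemma:homepi} and the elementary $2$-out-of-$3$ property for fully faithful functors applied to $\Mod_{\Sp}(S')\to\Mod_{\Sp}(S)\to\Mod_{\Sp}(R)$. Your main route instead works with characterization (2) and the chain of equivalences $S'\otimes_RS'\simeq S'\otimes_S(S\otimes_RS)\otimes_SS'\simeq S'\otimes_SS'$; this is a valid argument --- the multiplication $S\otimes_RS\to S$ is an $S$-bimodule map, so tensoring with $S'$ on both sides preserves the equivalence, and the compatibility with the multiplication maps to $S'$ is formal as you say. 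The trade-off is that your computation requires some care with associativity of relative tensor products and bimodule structures (the ``only subtle point'' you flag), whereas the module-category argument sidesteps all of that; on the other hand, your version makes explicit where the hypothesis on $R\to S$ enters at the level of the tensor product, which is closer in spirit to how the lemma is actually applied elsewhere in the paper (e.g.\ in the proof of \autoref{proposition:rigidity}, where one passes from the hom.\ epi property to the vanishing of $I\otimes_SS'$).
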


\begin{proof}
	We use the description in \autoref{lemma:homepi}(1) of hom. epi. We have the composite of forgetful functors
	$$\Mod(S') \to \Mod(S) \to \Mod(R),$$
	where the second is fully faithful. It follows that the composite is fully faithful iff the first functor is.
\end{proof}
The following proposition is the key to relate the notion of a hom. epi to homological stability. Heuristically, it says that hom. epis to connective rings are determined by their $\pi_0$. See \cite[Theorem B]{hebestreit2024notehigherringtheory} for a generalization of this.
\begin{proposition}[Rigidity of homological epimorphisms]\label{proposition:rigidity}
	Consider a diagram of $\EE_1$-rings
	\begin{center}
		\begin{tikzcd}
			R	\ar[rr]\ar[dr] & & S'\\
			& \ar[ur] S,& 
		\end{tikzcd}
	\end{center}
	where $S \to S'$ is a map of connective $\EE_1$-rings that is an isomorphism on $\pi_0$, and where $R \to S$ and $R \to S'$ are homological epimorphisms. Then the map $S \to S'$ is an equivalence.
\end{proposition}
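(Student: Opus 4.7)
The plan is to eliminate $R$ entirely using \autoref{lemma:factorhomepi} and then bootstrap on the connectivity of $\fib(S \to S')$. First, applying \autoref{lemma:factorhomepi} to the composite $R \to S \to S'$, the hypothesis that $R \to S$ and $R \to S'$ are both homological epimorphisms forces $S \to S'$ to be a homological epimorphism as well. Thus $R$ drops out of the problem, and it suffices to prove the following reduced statement: any homological epimorphism $S \to S'$ of connective $\EE_1$-rings that is an isomorphism on $\pi_0$ is an equivalence.

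For this reduced statement, set $I := \fib(S \to S')$, viewed as an $S$-bimodule. Since $S$ and $S'$ are connective and $\pi_0 S \xrightarrow{\sim} \pi_0 S'$, the long exact sequence in homotopy gives $\pi_i I = 0$ for $i \leq 0$, so $I$ is $1$-connective. By condition (4) of \autoref{lemma:homepi} applied to $S \to S'$, the multiplication map induces an equivalence $I \otimes_S I \simeq I$ of $S$-bimodules. I would then invoke the standard fact that, over a connective $\EE_1$-ring $S$, the relative tensor product of a $k$-connective right module with an $\ell$-connective left module is $(k+\ell)$-connective. Combined with $I \otimes_S I \simeq I$, this yields: if $I$ is $k$-connective, then $I$ is $2k$-connective. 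Bootstrapping from $k = 1$ shows that $I$ is $2^n$-connective for every $n \geq 0$, so $\pi_* I = 0$ and hence $I \simeq 0$ in spectra. Therefore $S \to S'$ is an equivalence.

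The only nontrivial input is the connectivity estimate for the relative tensor product over a connective $\EE_1$-ring; this is standard, since it follows from the $t$-structure on $\Mod_{\Sp}(S)$ together with the bar construction description of $- \otimes_S -$, and the main bookkeeping task is to pin down a precise citation in \cite{HA}. Everything else in the argument is a formal consequence of \autoref{lemma:factorhomepi} and \autoref{lemma:homepi}, so no further input should be required.
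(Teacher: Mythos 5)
Your overall strategy is the same as the paper's: use \autoref{lemma:factorhomepi} to see that $S \to S'$ is itself a homological epimorphism, then kill the fiber $I$ by a connectivity argument. The paper runs an induction one homotopy degree at a time using condition (3) of \autoref{lemma:homepi} ($I \otimes_S S' = 0$), whereas you use condition (4) ($I \otimes_S I \simeq I$) and a connectivity-doubling bootstrap. Both are fine in principle, but your version has a gap at the base case.

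The gap is the assertion that ``the long exact sequence in homotopy gives $\pi_i I = 0$ for $i \leq 0$, so $I$ is $1$-connective.'' This does not follow from $S, S'$ being connective and $\pi_0 S \to \pi_0 S'$ being an isomorphism. The long exact sequence
\[
\pi_1 S \to \pi_1 S' \to \pi_0 I \to \pi_0 S \to \pi_0 S'
\]
only shows that $\pi_0 I \cong \operatorname{coker}(\pi_1 S \to \pi_1 S')$, which need not vanish; from the stated hypotheses you only get that $I$ is $0$-connective. This matters for your bootstrap: starting from $k=0$, the doubling $k \mapsto 2k$ gives nothing, and the identity $\pi_0 I \otimes_{\pi_0 S} \pi_0 I \cong \pi_0 I$ does not force $\pi_0 I = 0$ (idempotent bimodules exist, e.g.\ $\QQ$ over $\ZZ$). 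The fix is easy and is essentially what the paper does in low degrees: use condition (3) of \autoref{lemma:homepi}. Since $I$ and $S'$ are connective, $0 = \pi_0(I \otimes_S S') \cong \pi_0 I \otimes_{\pi_0 S} \pi_0 S' \cong \pi_0 I$ (using $\pi_0 S \cong \pi_0 S'$), so $I$ really is $1$-connective, and from there your doubling argument with $I \otimes_S I \simeq I$ and the standard connectivity estimate for relative tensor products over a connective ring goes through and gives $I = 0$.
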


\begin{proof}
	Using \autoref{lemma:factorhomepi}, we learn that $S \to S'$ is a homological epimorphism. 	
	Letting $I$ be the fiber of $S \to S'$, we have $I \otimes_S S' = 0$. $I$ is the fiber of a
	map of connective $\EE_1$-rings, so it is $-1$-connective. We will
	inductively show on $i\geq-1$ that $\pi_iI$ is $0$. The inductive
	hypothesis implies that $I$ is $i$-connective, so and since $S \to S'$
	is an equivalence on $\pi_0$, we claim 
	\begin{align*}
		\pi_i(I) \cong \pi_i(I \otimes_S S) \cong
		\pi_i(I\otimes_S\pi_0(S)) \cong \pi_i(I\otimes_S\pi_0 (S'))
		\cong \pi_i(I\otimes_SS') \cong 0.
	\end{align*}
	Indeed, the second and fourth equivalences come from the fact that $I$ is
	$i$-connective and the maps $S' \to \pi_0S$ and $S \to \pi_0S$ are
	$1$-connective, so the map on tensor products is $i+1$-connective (see
	\cite[Lemma 3.1]{kk1}). The first equivalence is tautological, and the third and fifth equivalences follow by our
	assumptions. 
	Thus $\pi_i I=0$ for all $I$, so $I=0$, and hence $S \to S'$ is an isomorphism.
\end{proof}

The next lemma gives a criterion for being a hom. epi which is often easier to check in practice.
\begin{lemma}\label{lemma:homepipi0}
	Let $f:R \to S$ be a map of connective $\EE_1$-rings, and suppose that the map $\pi_0S\otimes_R\pi_0S \to \pi_0S\otimes_S\pi_0S$ is an equivalence. Then $f$ is a hom. epi.
\end{lemma}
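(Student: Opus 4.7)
The plan is to show that the multiplication map $\mu: S \otimes_R S \to S$ is an equivalence, which by \autoref{lemma:homepi} is equivalent to $f$ being a homological epimorphism. Let $K := \fib(\mu)$, viewed as an $S$-bimodule. First, I verify that $K$ is connective: on $\pi_0$, the Tor spectral sequence gives $\pi_0(S \otimes_R S) = \pi_0 S \otimes_{\pi_0 R} \pi_0 S$ and $\pi_0(\pi_0 S \otimes_S \pi_0 S) = \pi_0 S$, so the hypothesis specializes to the statement that $\pi_0 S \otimes_{\pi_0 R} \pi_0 S \to \pi_0 S$ is an isomorphism. Combined with the connectivity of $S \otimes_R S$ and $S$, this makes $K$ in fact $1$-connective.

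Next, I apply the functor $F := \pi_0 S \otimes_S (-) \otimes_S \pi_0 S$, from $S$-bimodules to $\pi_0 S$-bimodules, to the fiber sequence $K \to S \otimes_R S \xrightarrow{\mu} S$. Since $F$ preserves fiber sequences, and the canonical identifications give $F(S \otimes_R S) \simeq \pi_0 S \otimes_R \pi_0 S$ and $F(S) \simeq \pi_0 S \otimes_S \pi_0 S$, the induced fiber sequence has $F(\mu)$ equal to the map in the hypothesis, which is an equivalence. Hence $F(K) = 0$.

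The main step is to show that $F$ is conservative on connective $S$-bimodules, which together with the $1$-connectivity of $K$ then forces $K = 0$. By symmetry this reduces to showing $\pi_0 S \otimes_S (-)$ is conservative on connective left $S$-modules. Suppose $M$ is $k$-connective with $\pi_0 S \otimes_S M = 0$; tensoring the cofiber sequence of $S$-bimodules $\tau_{\geq 1} S \to S \to \pi_0 S$ over $S$ with $M$ yields $M \simeq \tau_{\geq 1} S \otimes_S M$. Using the bar resolution to compute this tensor product, each term $\tau_{\geq 1} S \otimes_{\QQ} S^{\otimes n} \otimes_{\QQ} M$ is $(k+1)$-connective since $\tau_{\geq 1} S$ is $1$-connective, $S$ is connective, and $M$ is $k$-connective; hence $\tau_{\geq 1} S \otimes_S M$ is $(k+1)$-connective, so $M$ is $(k+1)$-connective. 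Iterating shows $M$ is $\infty$-connective and thus zero. The only real obstacle is this Nakayama-style connectivity induction at the end; the rest is routine bookkeeping with fiber sequences and the Tor spectral sequence.
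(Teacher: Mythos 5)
Your proof is correct, but it takes a genuinely different route from the paper's. The paper argues by generation: it considers the collection $C$ of pairs $(N,M)$ of right/left modules for which the comparison map $N\otimes_R M \to N\otimes_S M$ is an equivalence, observes that $C$ is closed under colimits and desuspensions in each variable and, for connective pairs, under passage to Postnikov limits, and then reduces the pair $(S,S)$ to the pair $(\pi_0 S,\pi_0 S)$ — which lies in $C$ by hypothesis — using that the truncations $\tau_{\leq n}S$ are built from the heart, which is generated by $\pi_0 S$. You instead isolate the fiber $K$ of the multiplication map $\mu: S\otimes_R S\to S$, use the hypothesis (and exactness of the tensor product) to show $\pi_0 S\otimes_S K\otimes_S\pi_0 S=0$, and then kill the connective $K$ by a graded Nakayama argument: $\pi_0 S\otimes_S(-)$ is conservative on connective modules, via the cofiber sequence $\tau_{\geq1}S\to S\to\pi_0 S$ and a bar-resolution connectivity estimate. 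Your mechanism is essentially the one the paper itself uses for \autoref{proposition:rigidity} (a connectivity induction on a fiber that is killed by base change to $\pi_0$), transplanted to prove the hom.\ epi.\ criterion; the paper's version instead leans on convergence of Postnikov towers through the tensor product. Both ultimately rest on the same connectivity estimates for relative tensor products of connective modules, so neither is substantially shorter, but yours has the virtue of making the "Nakayama" content explicit and reusable.

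Two small points, neither a real gap. First, $1$-connectivity of $K$ does not follow merely from connectivity of source and target together with $\pi_0\mu$ being an isomorphism: the long exact sequence only gives $\pi_{\leq -1}K=0$ and $\pi_0 K=\coker(\pi_1\mu)$. Either invoke the section $S\to S\otimes_R S$, $s\mapsto s\otimes 1$, of $\mu$ (so $\pi_*\mu$ is always surjective), or simply note that plain connectivity of $K$ — which does follow from surjectivity of $\pi_0\mu$ — is all your conservativity argument requires. Second, since the lemma is stated for arbitrary connective $\EE_1$-rings, the bar resolution should be taken over the ambient base (the sphere) rather than over $\QQ$; the connectivity count is unaffected.
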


\begin{proof}
	Consider the subcategory $C$ of pairs $(N,M)
	\in\Mod_{\Sp}(S^{\on{op}})\times\Mod_{\Sp}(S)$ such that $N\otimes_RM
	\cong N\otimes_SM$. Observe that $C$ is closed under colimits and desuspensions in each variable.
	
	Note that if $N,M$ are connective, then the pair $(N,M)$ is contained in
	$C$ if $\tau_{\leq n}N,\tau_{\leq n}M$ is contained in $C$ for all $n\in\NN$. This is because $\lim_n\tau_{\leq n}N\otimes_R\tau_{\leq n}M\cong N\otimes_RM$, and similarly for $S$.
	
	Thus to show that $f$ is a hom. epi., i.e the pair $(S,S)$ is in $C$, it
	suffices to show that $(\tau_{\leq n}S,\tau_{\leq n}S)$ is for each
	$n\in\NN$. However, $\tau_{\leq n}S$ is bounded in the the $t$-structure on $\Mod_{\Sp}(S)$ 
	and
	$\Mod_{\Sp}(S^{\on{op}})$, and $\pi_0S$ generates the hearts of these categories, so the result follows since we know by assumption that $(\pi_0S,\pi_0S) \in C$.
\end{proof}

The following lemma allows us to commute quotients by $\EE_2$-central elements with homological epimorphisms.
\begin{lemma}\label{lemma:e2centralhomepi}
	If $f:R \to S$ is a hom. epi, 
	and $x:R \to R$ is a bimodule map, then there is a natural isomorphism of $R$-modules $\cof(x)\otimes_R(S\otimes_RM) \cong S\otimes_R\cof(x)\otimes_RM$
\end{lemma}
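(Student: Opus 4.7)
The plan is to reduce to comparing two natural $R$-bimodule endomorphisms of $S$ built from $x$, and then use the homological epimorphism hypothesis to identify them. By associativity of $\otimes_R$ together with the fact that $\otimes_R$ preserves cofibers in each variable, applied to the $R$-bimodule cofiber sequence $R\xrightarrow{x}R\to\cof(x)$, both sides rewrite as cofibers of self-maps of $S\otimes_R M$:
\[
\cof(x)\otimes_R(S\otimes_RM)\simeq\cof\bigl((x\otimes_R S)\otimes_R M\bigr),\quad S\otimes_R\cof(x)\otimes_RM\simeq\cof\bigl((S\otimes_R x)\otimes_R M\bigr),
\]
where $x\otimes_R S,\ S\otimes_R x\colon S\to S$ are obtained by applying $(-)\otimes_R S$ and $S\otimes_R(-)$ to the bimodule map $x$. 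So it suffices to construct a natural equivalence $x\otimes_R S\simeq S\otimes_R x$ as $R$-bimodule maps $S\to S$, and then tensor it with $M$ on the right.

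To produce this equivalence I would first upgrade the hypothesis from modules to bimodules: the map $R\otimes R^{\op}\to S\otimes S^{\op}$ is again a homological epimorphism, since
\[
(S\otimes S^{\op})\otimes_{R\otimes R^{\op}}(S\otimes S^{\op})\simeq (S\otimes_R S)\otimes(S^{\op}\otimes_{R^{\op}}S^{\op})\simeq S\otimes S^{\op}
\]
by applying $S\otimes_R S\simeq S$ twice. By \autoref{lemma:homepi}(1), the restriction functor $\Mod_{\Sp}(S\otimes S^{\op})\to\Mod_{\Sp}(R\otimes R^{\op})$ is then fully faithful, so the mapping spectrum of $R$-bimodule endomorphisms of $S$ agrees with the mapping spectrum of $S$-bimodule endomorphisms of $S$, which is the $\EE_2$-center $Z(S)$, with an endomorphism corresponding to its value on the unit $1_S$. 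Both $x\otimes_R S$ and $S\otimes_R x$ are $R$-bimodule endomorphisms of $S$ (the former is a priori $(R,S)$-bilinear, the latter $(S,R)$-bilinear, and both forget to $R$-bimodule maps); evaluating at $1_S$, both correspond to the image of $x$ under $\pi_*R\to\pi_*S$. Hence they are canonically equivalent as $R$-bimodule self-maps of $S$.

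Tensoring this equivalence with $M$ on the right produces the claimed natural $R$-module isomorphism, and naturality follows since every step in the construction is functorial. The step requiring most care is the identification of the $R$-bimodule endomorphism spectrum of $S$ with $Z(S)$ at the level of mapping spectra in the $\infty$-categorical setting, but this is formal once one has the upgraded hom. epi for $R\otimes R^{\op}\to S\otimes S^{\op}$ and the standard description of self-endomorphisms of a ring as a bimodule over itself.
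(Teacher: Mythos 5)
Your reduction is sound and the route is genuinely different from the paper's: the paper constructs the comparison map $S\otimes_RN\otimes_RM \to N\otimes_R(S\otimes_RM)$ for all $R$-bimodules $N$ in the localizing subcategory generated by $R$ (using that $N\otimes_R(-)$ preserves the full subcategory of $R$-modules underlying $S$-modules) and checks it is an equivalence for $N=R$, whereas you reduce everything to identifying the two endomorphisms $x\otimes_RS$ and $S\otimes_Rx$ of $S$. That reduction is correct, and your observation that $R\otimes R^{\op}\to S\otimes S^{\op}$ is again a homological epimorphism is right and does give $\map_{R\text{-bimod}}(S,S)\simeq\map_{S\text{-bimod}}(S,S)=Z(S)$.

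The gap is in the final identification. $Z(S)$ is the Hochschild cohomology spectrum of $S$, not $S$ itself, and a bimodule endomorphism of $S$ is \emph{not} determined up to homotopy by its value on the unit: ``evaluation at $1_S$'' is precomposition with the counit $S\otimes S^{\op}\to S$ from the free bimodule, i.e.\ the standard map $Z(S)\to S$, whose failure to be an equivalence is exactly measured by higher Hochschild cohomology. So knowing that both endomorphisms send $1_S$ to the image of $x$ does not imply they are homotopic. The step can be repaired inside your framework: precomposition with the unit $f\colon R\to S$ induces an equivalence $\map_{R\text{-bimod}}(S,S)\to\map_{R\text{-bimod}}(R,S)$, because its fiber is $\map_{R\text{-bimod}}(\cof(f),S)$, which by the adjunction for the fully faithful restriction along $R\otimes R^{\op}\to S\otimes S^{\op}$ equals $\map_{S\text{-bimod}}(S\otimes_R\cof(f)\otimes_RS,S)=0$, since $S\otimes_R\cof(f)=\cof(S\to S\otimes_RS)=0$. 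Both of your endomorphisms restrict along $f$ to $f\circ x$ by functoriality of the tensor product, so they agree. (More cheaply, one can avoid bimodule mapping spectra altogether: $x\otimes_RS$ and $S\otimes_Rx$ are each conjugate to $S\otimes_Rx\otimes_RS$ via the two unit maps $S\to S\otimes_RS$, and these two units are homotopic because each is a section of the equivalence $\mu\colon S\otimes_RS\to S$.)
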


\begin{proof}
	Consider the full subcategory of $R$-bimodules generated under colimits by $R$. 
	If $N$ is such a bimodule, then
	the functor $M\otimes_R(-)$ preserves the subcategory of $R$-modules that are $S$-modules, because it is true for the $R$-bimodule $R$ itself, and the subcategory is closed under colimits. Thus there is a natural transformation from $S\otimes_RN\otimes_RM \to N\otimes_R(S\otimes_RM)$ defined when $N$ is in this subcategory. To see that this map is an equivalence, we observe it is an equivalence for $N=R$, and the condition is closed under colimits.
\end{proof}

\subsection{Nilpotence of actions}
\label{subsection:nilpotence}
Here we record a few well known lemmas we use to understand the nilpotence of actions of $\EE_2$-central elements:

\begin{lemma}\label{lemma:squarecoflemma}
	Let $\cC$ be a stable $\infty$-category, and consider a diagram as below:
	\begin{center}
		\begin{tikzcd}
			X\ar[d,"f"]\ar[r,"0"]  & X'\ar[d,"f'"]\ar[r] & X''\ar[d,"f''"]\\
			Y\ar[r] & Y'\ar[r,"0"]& Y''.
		\end{tikzcd}
		\end{center}
	Then the composite of the associated maps on vertical cofibers $\cof(f) \to \cof(f') \to \cof(f'')$ is nullhomotopic. 
	
	In particular, if a natural endomorphism $g$ of the identity functor on $\cC$ acts by $0$ on $X$ and $Y$, then $g^2$ acts by $0$ on $\cof(f)$ for any map $X \to Y$.
\end{lemma}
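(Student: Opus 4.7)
The plan is to produce a null-homotopy of the composite $\cof(f) \to \cof(f'')$ by inserting factorizations through $Y'$ and exploiting the null-homotopies of the two extreme horizontal maps. Decomposing the composite as $\cof(f) \to \cof(f') \to \cof(f'')$, I would first observe that since $X \to X'$ is null, commutativity of the left square gives a null-homotopy of the composite $Y \to Y'$ precomposed with $f$, namely $(Y \to Y') \circ f \simeq f' \circ 0 \simeq 0$. By the universal property of $\cof(f)$ as the pushout $Y \cup_X 0$, this null-homotopy extends the map $Y \to Y'$ to a map $\tilde b : \cof(f) \to Y'$, factoring the first arrow as $\cof(f) \xrightarrow{\tilde b} Y' \to \cof(f')$. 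Second, since $Y' \to Y''$ is null, the composite $Y' \to Y'' \to \cof(f'')$ is null-homotopic, so the full composite $\cof(f) \to \cof(f'')$ factors through the null-homotopic map $Y' \to \cof(f'')$.

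To upgrade this factorization to an actual null-homotopy of the composite requires verifying that the null-homotopy of $Y' \to \cof(f'')$ is compatible with the descent data coming from $X' \to Y' \to \cof(f'')$. This compatibility amounts to matching two natural null-homotopies of the composite $X \to \cof(f'')$: one via $X \xrightarrow{0} X' \to X'' \to \cof(f'')$ (null since $X \to X'$ is) and one via $X \to Y \to Y' \xrightarrow{0} Y'' \to \cof(f'')$ (null since $Y' \to Y''$ is); the commutativity data of the two squares precisely provides the required higher homotopy identifying them.

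The main obstacle is to carry out this coherence check rigorously in the $\infty$-categorical setting, where all ``null-homotopies'' are higher-cell data that must be tracked and composed carefully. The cleanest execution likely proceeds entirely via the universal property of pushouts applied iteratively: a map out of $\cof(f)$ is a map out of $Y$ together with a null-homotopy of its restriction to $X$, and null-homotopies of such maps are further coherences between these data, so assembling the final null-homotopy is a matter of organizing these coherences from the pieces listed above.

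For the ``in particular'' claim, given a natural endomorphism $g$ of the identity with $g_X = g_Y = 0$, I would apply the lemma to the diagram whose top row is $X \xrightarrow{g_X} X \xrightarrow{g_X} X$ and bottom row is $Y \xrightarrow{g_Y} Y \xrightarrow{g_Y} Y$ with all three vertical maps equal to $f$; naturality of $g$ ensures the squares commute, and the conclusion gives that the induced composite on vertical cofibers, namely $g^2 : \cof(f) \to \cof(f)$, is null-homotopic.
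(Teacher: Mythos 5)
Your proposal is correct and follows essentially the same route as the paper: factor $\cof(f)\to\cof(f')$ through $Y'$ using the nullity of $X\to X'$, then kill the composite using the nullity of $Y'\to Y''$, and the ``in particular'' clause is exactly the specialization $X=X'=X''$, $Y=Y'=Y''$, $f=f'=f''$ with the squares coming from naturality of $g$. The one remark worth making is that the ``coherence check'' you flag as the main obstacle is not actually needed: once the composite is exhibited as $\cof(f)\to Y'\to\cof(f'')$ with the second map null-homotopic (it factors through the null map $Y'\to Y''$), whiskering that null-homotopy finishes the argument, and the paper sidesteps even the small identification issue in your first step (matching your pushout-induced extension with the canonical map on cofibers) by instead observing that $\cof(f)\to\cof(f')\to\Sigma X'$ is null and using the fiber sequence $Y'\to\cof(f')\to\Sigma X'$ to factor the canonical map through $Y'$ directly.
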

\begin{proof}
	For the first claim, the composite $\cof(f) \to \cof(f') \to \Sigma X'$ is null since $X \to X'$ is null, so we obtain a factorization of $\cof(f) \to \cof(f')$ through $Y'$. Composing with the second horizontal map, since $Y' \to Y''$ is $0$, we obtain a nullhomotopy of the desired map.
	
	For the second claim, we apply the first claim when $X = X' = X'', Y =
	Y' = Y''$ and $f = f' = f''$ and the squares come from the natural endomorphism $g$.
\end{proof}

For the next lemma, we say that a 
monoidal stable $\infty$-category is {\em exactly monoidal} if the tensor product is
exact in each variable.
\begin{lemma}\label{lemma:squarenull}
	Let $\cC$ be an exactly
	monoidal stable $\infty$-category, let $f:\unit\to \Sigma^i\unit$ be a map in $\cC$, where $\unit$ is the monoidal unit.
	Then the map $\cof(f)\otimes f^{\otimes 2}$ is null. 
	
	In particular, if $R$ is an $\EE_1$-algebra, taking $\cC$ to be the category of $R$-bimodules, if $f:R \to \Sigma^iR$ a
	bimodule map, and $M \in \Mod_{\Sp}(R)$, then $f^{\otimes 2}$ acts by $0$ on $\cof f\otimes_RM$ as a map of $R$-modules.
\end{lemma}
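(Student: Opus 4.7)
The plan is to deduce the second claim from the first by specialization, and to prove the first claim via an application of \autoref{lemma:squarecoflemma} to a carefully constructed $2\times 3$ commutative grid in $\cC$.

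First, I rewrite the map $\cof(f)\otimes f^{\otimes 2}:\cof(f)\to\Sigma^{2i}\cof(f)$ as the composition $\cof(f)\xrightarrow{\cof(f)\otimes f}\Sigma^i\cof(f)\xrightarrow{\Sigma^i\cof(f)\otimes f}\Sigma^{2i}\cof(f)$, i.e., two successive actions of $f$ via the monoidal structure. I plan to realize this composition as the one induced on vertical cofibers by a $2\times 3$ commutative diagram whose three columns have vertical cofibers $\cof(f)$, $\Sigma^i\cof(f)$, and $\Sigma^{2i}\cof(f)$. The null horizontal arrows demanded by the hypotheses of \autoref{lemma:squarecoflemma} (top-left and bottom-right) will come from the extended cofiber sequence $\unit\xrightarrow{f}\Sigma^i\unit\xrightarrow{\pi}\cof(f)\xrightarrow{\delta}\Sigma\unit\xrightarrow{-\Sigma f}\Sigma^{i+1}\unit$: consecutive compositions such as $\pi\circ f$ and $\delta\circ\pi$ vanish canonically, producing maps which, together with their canonical nullhomotopies, serve as the required zero arrows. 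Once such a grid is in place, \autoref{lemma:squarecoflemma} immediately delivers the desired nullhomotopy of $\cof(f)\otimes f^{\otimes 2}$.

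Second, to obtain the application in the $R$-module setting, I specialize the first claim to the exactly monoidal stable $\infty$-category of $R$-bimodules, whose monoidal unit is $R$ under $\otimes_R$. The hypothesis that $f:R\to\Sigma^iR$ is a bimodule map makes $f$ a morphism $\unit\to\Sigma^i\unit$ in this $\cC$, so the first claim yields that $\cof(f)\otimes_R f^{\otimes 2}:\cof(f)\to\Sigma^{2i}\cof(f)$ is null as an $R$-bimodule map. Tensoring this null map on the right with the left $R$-module $M$ over $R$ then shows that $f^{\otimes 2}$ acts as zero on $\cof(f)\otimes_R M$ as a map of $R$-modules, which is the second claim.

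The main obstacle will be pinning down the precise $2\times 3$ grid: a naive grid assembled directly from the cofiber sequences (rows consisting of $\unit\xrightarrow{f}\Sigma^i\unit\xrightarrow{\pi}\cof(f)$ and its shift with vertical maps given by multiplication by $f$) has rows whose compositions are null but whose individual arrows are not, so it does not satisfy the hypotheses of \autoref{lemma:squarecoflemma}. I expect to resolve this by placing $\cof(f)$ itself (rather than only $\unit$ and $\Sigma^i\unit$) in appropriate positions of the grid, so that the null composites $\pi f:\unit\to\cof(f)$ and its shifts appear as single horizontal arrows in the top-left and bottom-right slots, while the vertical columns (built from the shifted maps $\Sigma^{ki} f$) retain the desired cofibers and induce the composition of the two copies of $\cof(f)\otimes f$ on vertical cofibers.
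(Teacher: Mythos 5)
Your reduction of the second claim to the first---working in the exactly monoidal stable $\infty$-category of $R$-bimodules and then tensoring the resulting null bimodule map with $M$---is fine and matches the paper. The gap is in the first claim. You correctly observe that the naive $2\times 3$ grid (columns the shifted maps $\Sigma^{ki}f$, horizontal arrows also shifts of $f$) does not satisfy the hypotheses of \autoref{lemma:squarecoflemma}, since the top-left arrow is $f$ itself rather than a null map; but your proposed repair does not work. If you place $\cof(f)$ in the top row so that the top-left arrow becomes the null composite $\pi\circ f:\unit\to\cof(f)$, the middle column now has source $\cof(f)$, and there is no vertical map out of $\cof(f)$ whose cofiber is $\Sigma^i\cof(f)$ and which makes the induced maps on vertical cofibers compose to $\cof(f)\otimes f^{\otimes 2}$ (for instance, $\cof(f)\otimes f$ has cofiber $\cof(f)\otimes\cof(f)$, and $\delta$ has cofiber $\Sigma^{i+1}\unit$). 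More fundamentally, the only vanishing available here is that of composites of two \emph{consecutive} maps in the rotated cofiber sequence of $f$---no single arrow of that sequence is null---whereas \autoref{lemma:squarecoflemma} requires individual horizontal arrows to be null. So the statement is not an instance of that lemma, and the grid you are hoping for does not exist.

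The intended argument uses the cofiber sequence directly. Let $\delta:\cof(f)\to\Sigma\unit$ be the boundary map. By naturality of tensoring with $f$, the composite $\Sigma^i\delta\circ(\cof(f)\otimes f)$ agrees with $\Sigma f\circ\delta$, which is null because $\delta$ and $\Sigma f$ are consecutive maps in the rotated cofiber sequence. Hence $\cof(f)\otimes f$ factors through the fiber of $\Sigma^i\delta$, namely through $\Sigma^{2i}\unit\xrightarrow{\Sigma^i\pi}\Sigma^i\cof(f)$. Precomposing the second application of $f$ with $\Sigma^i\pi$ gives, again by naturality, the shift of $\pi\circ f$, which is null. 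Composing the two steps shows $\cof(f)\otimes f^{\otimes2}$ is null. This has the same flavor as the proof of \autoref{lemma:squarecoflemma}---factor through an intermediate object, then hit it with a zero map---but the nullity it exploits is that of consecutive cofiber-sequence composites, not of the horizontal arrows of a commutative grid.
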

\begin{proof}
	By identifying the category of $R$-bimodules with the category of colimit preserving endofunctors of $\Mod_{\Sp}(R)$, which is a monoidal stable $\infty$-category, the second statement follows from the first.
	
	We turn to proving the first statement. Let $\delta:\cof(f) \to \Sigma^{1}\unit$ be the boundary map in the cofiber sequence. The map $\delta\circ (\cof(f)\otimes f)$ is null since it agrees with $\cof(f) \xrightarrow{\delta} \Sigma^{1} \unit \xrightarrow{f} \Sigma^{1+i} \unit$. 
	It follows that $\cof(f)\otimes f$ factors through $\unit$. But then since $\unit \xrightarrow{f}\Sigma^i\unit \to \cof(f)$ agrees with $\unit \to \Sigma^{-i}\cof(f) \xrightarrow{f}\cof(f)$, it follows that $f^{\otimes2}$ vanishes on $\cof(f)$.
\end{proof}

\section{Computing stable homology}
\label{section:computing-stable-homology}
The goal of this section is to compute the stable homology of the Hurwitz spaces of interest.
First, in
\autoref{subsection:racks},
we give some background on racks, 
which is the natural setting for
our main stability result.
Next, in \autoref{subsection:main-stability} we state the main result,
\autoref{theorem:mainhom}, and explain how this implies our main results on
stable homology.
Then, in \autoref{subsection:centrality} we prove that
multiplication by certain components of Hurwitz spaces act centrally on homology.
In \autoref{subsection:comparison-map} we reduce the proof of
\autoref{theorem:mainhom} to showing that a certain map of pointed bar constructions is a homotopy equivalence. We prove this equivalence in
\autoref{subsection:equivalence}, relying on
a topological model for the relevant spaces provided by \autoref{appendix:topological-model}.

\subsection{Background on racks}
\label{subsection:racks}
Although we will ultimately only be interested in working with Hurwitz spaces where
the monodromy lives in a certain conjugacy class in a group, all of the
arguments go through just as easily when working with racks.
Hence, for this section, we will work with racks.
One concise reference for the basics of racks is
\cite{etingof2003rack}.
\begin{definition}
	\label{definition:rack}
	A {\em rack} is a set $c$ with a binary operation $\triangleright : c \times c
	\to c$
such that the following two properties hold:
\begin{enumerate}
	\item For every $x \in c$, the map $\phi_x : c \to c, y \mapsto x
		\triangleright y$ is a bijection.
	\item For any $x,y,z \in c$, $x \triangleright (y\triangleright z) =(x\triangleright y)\triangleright
		(x\triangleright z)$.
\end{enumerate}
\end{definition}

\begin{remark}\label{remark:rackdefinition}
	We may view the map $\triangleright : c \times c \to c$ as a map $\phi: c \to
	\hom(c,c), x \mapsto \phi_x$.
	In this setting, \autoref{definition:rack}(2) is equivalent to 
	the condition that $\phi$ is equivariant for the actions of $c$, where
	$z \in c$ acts on the source via $x \mapsto z \triangleright x$ and the
	action of $z \in c$ on the target
	is given by sending $\alpha(\bullet) \mapsto z \triangleright \alpha( (z
		\triangleright)^{-1}
	\bullet)$, for $\alpha \in \hom(c,c)$.
	Then, \autoref{definition:rack}(1) is the condition that these
	endomorphisms $\phi_x$ are automorphisms.
\end{remark}

\begin{example}
	\label{example:group-rack}
	The only type of rack we will need for our application to the
	Cohen--Lenstra heuristics and to prove
	\autoref{theorem:stable-homology-intro} is a rack which is a conjugacy
	class $c$ in a group $G$, in which case we take $x \triangleright y :=
	x^{-1}yx$,
	where, on the right hand side, $x^{-1}yx$ is viewed as multiplication in the group.
\end{example}

We now introduce some further notation we will use for racks.
\begin{notation}
	\label{notation:rack-iterate}
	For $c$ a rack and $g, g_1, \ldots, g_n \in c$, we will use the notation 
	\begin{align*}
		(g_1\cdots g_n) \triangleright g := (g_n \triangleright ( \cdots (g_2 \triangleright (g_1
		\triangleright g)) \cdots )).
	\end{align*}
	Note the reversal of order in the iteration of the $g_i$, which is
	compatible 
	with the convention from \autoref{example:group-rack}.
\end{notation}


\begin{notation}
	\label{notation:hurwitz-union}
	For $c$ a rack, we use $\Hur^c_n$ to denote the {\em pointed Hurwitz
	space} of degree $n$ for $c$, which is by definition the homotopy quotient
	$(c^n)_{hB_n}$ of $c^n$ by the action of $B_n$, where the positive generator of $B_2 \simeq \mathbb Z$ acts
	via $(g, h) \mapsto (h, h \triangleright g)$.
We use $\Hur^{c}$ to refer to the union of the Hurwitz spaces
$\coprod_n\Hur_n^{c}$, and let $\Conf$ refer to the union of the configuration
spaces $\coprod_n\Conf_n$, for $\Conf_n$ denoting the configuration space
parameterizing unordered sets of $n$ distinct points in $\mathbb C$.
\end{notation}
\begin{notation}
	\label{notation:component-rack}
We can identify the connected components of the pointed Hurwitz space
$\Hur^c_n$ with orbits of the $B_n$ action on $c^n$.
Under this identification, for
$x_1, \ldots, x_n \in c$ we use $[x_1] \cdots [x_n]$ to denote the
connected component of $\phur G n c {\mathbb C}$ corresponding to the
$B_n$ orbit of the tuple $(x_1, \ldots, x_n)$.
We will also use $\alpha_x$ as equivalent notation for $[x] \in \pi_0 \phur G n
c {\mathbb C}$.
See also \autoref{notation:rack}.
\end{notation}

%

The following definition is a generalization of the non-splitting condition 
to the setting of racks.
\begin{definition}\label{defn:nonsplitting}
	We say a rack $c$ is \textit{connected} if there is a single orbit under the operations $x\triangleright(-)$ for each $x \in c$.
	We say that a rack $c$ is \textit{non-splitting} if every nonempty subrack $c' \subset c$ is connected.
\end{definition}

\begin{remark}
	\label{remark:}
	It follows from the definitions that
	if $c$ comes from a conjugacy class in a group $G$ in the sense of
	\autoref{example:group-rack}, then $(G,c)$ is
non-splitting in the sense of \autoref{definition:nonsplitting} if $c$ is
a non-splitting rack.
\end{remark}

Our main result computes the stable homology of Hurwitz spaces associated
to non-splitting racks $c$.
See \autoref{subsection:nonsplitting-examples} for some examples of
non-splitting racks $c$ relevant
for the Cohen--Lenstra heuristics.

\subsection{The main stability result}
\label{subsection:main-stability}
We now set up some notation we will use in this section and state our main
result.
The map $\Hur^{c} \to \Conf$ is a map in $\Alg(\Spc)$.
To do this, we introduce a suitable comparison map:

\begin{notation}\label{construction:homologystructure}
	For a rack $c$, let $\Conf^{c}$ be the pullback $\Conf\times_{\pi_0\Conf}\pi_0\Hur^{c}$, 
	so that there is a map $\Hur^{c} \to \Conf^{c}$ in $\Alg(\Spc)$. 
	Let $A:=C_*(\Hur^{c};\QQ)$ and $A':=C_*(\Conf^{c};\QQ)$.
	Applying $C_*(-;\QQ)$ to the map $\Hur^{c} \to \Conf^{c}$,
	we obtain a map $v: A \to A'$ of $\EE_1$-algebras 
	in spectra, i.e in the $\infty$-category $\Alg(\Sp)$.
\end{notation}
The next result implies our main theorem on stable homology. After stating it,
we will explain why it implies \autoref{theorem:stable-homology-intro} from the
introduction. 
\begin{theorem}\label{theorem:mainhom}
	With notation as in \autoref{construction:homologystructure},
	suppose $c$ is a finite non-splitting rack.
	Let $U \in \pi_0A$ be $\sum \alpha_i$, where each $\alpha_i$ comes from a class in $\pi_0\Hur_n^c$ for $n>0$ which is central.
	Then, the map $v[U^{-1}]: A[U^{-1}] \to A'[U^{-1}]$ is an equivalence.
\end{theorem}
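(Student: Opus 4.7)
The plan is to implement the strategy outlined in \autoref{subsection:proof-outline}. Set $R := \pi_0 A = H_0(\Hur^c;\QQ)$, so that $Z(R)$ acts on both $A$ and $A'$, and $U \in Z(R)$ by hypothesis. For each nonempty subrack $c' \subseteq c$ (that is, each subset closed under $\triangleright$), I form the central element $U_{c'} := \sum_{x \in c'} \alpha_x$ and consider the combined operation of inverting $U_{c'}$ and completing at the collection $\{\alpha_x : x \in c \setminus c'\}$. A repeated application of \autoref{lemma:invertandcomplete} reduces \autoref{theorem:mainhom} to checking that for each such subrack $c'$, the map $v$ becomes an equivalence after this combined operation; this reduction uses that these loci form a geometric cover of $\Spec Z(R)$ indexed by the subracks of $c$.

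For the top case $c' = c$, inverting the full central element identifies $A[U^{-1}]$ with the chains on the group completion $\Omega B\Hur^c$. The homology of $B\Hur^c$ is by definition the rack homology of $c$, which was computed in \cite{etingof2003rack} (and reinterpreted in this language in \cite[Corollary 5.4]{randal-williams:homology-of-hurwitz-spaces}); because $c$ is connected, this computation gives the same answer as the homology of $B\Conf$, so the target side is automatic and $v[U^{-1}]$ is an equivalence at this outermost stratum.

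For a proper subrack $c' \subsetneq c$, I use \autoref{lemma:completionhomepi} to recognise the completion at $\{\alpha_x\}_{x \notin c'}$ as a localization, hence as a homological epimorphism, using that in the localization at $U_{c'}$ each $\alpha_x$ with $x \notin c'$ satisfies the requisite identity coming from the rack action of $c'$ on $c$. Applying \autoref{lemma:factorhomepi} together with the rigidity principle \autoref{proposition:rigidity}, it then suffices to verify an equivalence on $\pi_0$ and a homological epimorphism statement at the level of $\EE_1$-rings. Invoking the criterion of \autoref{lemma:homepipi0}, I reduce further to showing that a natural comparison map between certain two-sided bar constructions $B(\unit, \Hur^{c'}, \Hur^{c})$ and $B(\unit, \Hur^{c'}, \Hur^{c'})$ is a homotopy equivalence of pointed spaces; using the explicit topological model developed in \autoref{appendix:topological-model}, I will exhibit a concrete homotopy witnessing this, as indicated by \autoref{figure:nullhomotopy}.

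The main obstacle is the construction of the explicit homotopy in the last step. The earlier algebraic reductions are largely formal consequences of the higher-algebraic results collected in \autoref{section:algebra-background}; by contrast, the homotopy must respect the rack action of $c'$ on all of $c$, and it is precisely the non-splitting hypothesis that makes this possible, by guaranteeing that $c'$ can move any label uniformly within its orbit. This is where the geometric heart of \autoref{theorem:mainhom} resides.
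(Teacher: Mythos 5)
Your overall architecture is the paper's: decompose along subracks $c'$ by localizing and completing, handle $c'=c$ via group completion and rack homology, and handle proper $c'$ by rigidity of homological epimorphisms reduced to a bar-construction comparison proved with an explicit homotopy in the topological model. But three concrete points need repair. First, for a proper subrack $c'$ of a non-splitting rack, $U_{c'}:=\sum_{x\in c'}\alpha_x$ is \emph{not} central in $\pi_0A$: the braid relation gives $U_{c'}\alpha_y=\alpha_y\sum_{x\in y\triangleright c'}\alpha_x$, and since $c'$ is self-normalizing (\autoref{lemma:nullability}) there are $y\in c\setminus c'$ with $y\triangleright c'\neq c'$. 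The paper instead inverts the \emph{set} $\{\alpha_x\}_{x\in c'}$ (a Bousfield localization away from the $A/\alpha_x$), and the centrality needed to run \autoref{lemma:completionhomepi} is supplied by the $\EE_2$-centrality of the powers $\alpha_x^{\ord(x)}$ (\autoref{lemma:centrality}), not by any sum over $c'$. Second, your bar constructions have the wrong algebra in the middle: the homological epimorphism being tested is the restriction map $A=C_*(\Hur^{c};\QQ)\to C_*(\Hur^{c'};\QQ)[\alpha_{c'}^{-1}]$, so by \autoref{lemma:homepipi0} one must compare $X_+\otimes_{\Hur^{c}_+}X_+$ with $X_+\otimes_{\Hur^{c'}_+}X_+$, where $X=\pi_0\Hur^{c'}[\alpha_{c'}^{-1}]$ sits on the outside and the two \emph{different} Hurwitz monoids sit in the middle; your $B(\unit,\Hur^{c'},\Hur^{c})$ versus $B(\unit,\Hur^{c'},\Hur^{c'})$ fixes the middle term and varies an outer one, which is not the condition $S\otimes_RS\simeq S$.

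Third, and most substantively, the proper-subrack case does not close. Rigidity identifies $A[\alpha_{c'}^{-1}]^{\wedge}_{\alpha_{c-c'}}$ with $C_*(\Hur^{c'};\QQ)[\alpha_{c'}^{-1}]$, but you must still (i) make the analogous identification of $A'[\alpha_{c'}^{-1}]^{\wedge}_{\alpha_{c-c'}}$ with $C_*(\Conf^{c'};\QQ)[\alpha_{c'}^{-1}]$, and (ii) prove that $C_*(\Hur^{c'};\QQ)[\alpha_{c'}^{-1}]\to C_*(\Conf^{c'};\QQ)[\alpha_{c'}^{-1}]$ is an equivalence — the group-completion/rack-homology computation applied to the \emph{connected} rack $c'$. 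Connectedness of every nonempty subrack is the second place the non-splitting hypothesis enters, alongside self-normalization, and your sketch uses it nowhere. Relatedly, in the $c'=c$ stratum the claim that "the target side is automatic" is not: identifying $\Omega B\Conf^{c}$ and upgrading a homology equivalence of classifying spaces to one of loop spaces requires the weakly-simple hypothesis (\autoref{lemma:simplespacesloopspace}); and the initial reduction silently needs the vanishing of the $c'=\emptyset$ stratum (a pigeonhole argument showing a power of $U$ lies in the nilpotent ideal) and of the strata indexed by subsets that are not subracks.
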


We prove this in \autoref{subsubsection:mainhom}.
From this, we easily deduce 
\autoref{theorem:stable-homology-intro}
from the introduction, computing the stable homology of our Hurwitz spaces.
\subsubsection{Proof of \autoref{theorem:stable-homology-intro} assuming
\autoref{theorem:mainhom}}
	\label{subsubsection:proof-stable-homology}

	First, note that there is an identification between the pointed Hurwitz space
	$\Hur_n^{G,c}$ and $\Hur_n^{c}$ as both can be described as the homotopy
	quotient $(c^n)_{hB_n}$, see \autoref{subsubsection:hurwitz-quotient}.
	Let us fix a value of $i\geq0$. 
	By \cite[Theorem 6.1]{EllenbergVW:cohenLenstra},
	there exist constants
	$I,J$, and $D$\footnote{In fact, one may take $D = 1$ by 
\cite[Proposition
A.3.1]{ellenbergL:homological-stability-for-generalized-hurwitz-spaces}.}
	so
	that for any $i \geq 0$ and $n > Ii + J$ and $U := \sum_{h
\in c} \alpha_h^{D\ord(h)}$,
$\pi_iA_n$ agrees with $\pi_iA[U^{-1}]_n$.

	Because of the definition of $\Conf^{c}$, it follows that $\pi_iA'_n$
	also agrees with $\pi_iA'[U^{-1}]_n$ as soon as $\pi_0A$ has stabilized.
	Thus it follows from \autoref{theorem:mainhom} that for any $i \geq 0$ and $n > Ii + J$, the map $\pi_iA_n \to \pi_iA'_n$
	is an isomorphism, since it agrees with the map induced by $v[U^{-1}]$
	on $\pi_i$. But
	this is exactly saying that $H_i(\Hur_n^{c};\QQ) \cong
	H_i(\Conf_n^{c};\QQ)$ for $n\gg0$, which is a reformulation of the
	statement that for each component $Z$ of $\Hur_n^{c}$, we have
$H_i(Z;\QQ) \cong
H_i(\Conf_n;\QQ)$, proving \autoref{theorem:stable-homology-intro}.
\qed

\subsection{Centrality}
\label{subsection:centrality}
In this subsection, we let $c$ be an arbitrary rack. The goal of this subsection is to show that if $x \in \pi_0\Hur^c$ is central, then $x$ is actually central in a much stronger sense. Namely, it is $\EE_2$-central in $\Hur^c$, meaning that multiplication by $x$ can be refined to a $\Hur^c$-bimodule map.

Informally, in the case of Hurwitz spaces associated
to groups, the reason for this stronger form of centrality is that there is a homotopy
$[\alpha] [\beta] \simeq [\beta] [\beta^{-1} \alpha \beta]$ obtained by changing the direction from which one
concatenates $G$-covers. Thus if $\beta$ centralizes $c$, this identifies left and
right multiplication by $\beta$. 
The above argument is not enough to make multiplication by $\beta$ into a bimodule map, but it can be made as such. We now argue this below and further generalize from groups to racks:
\begin{lemma}\label{lemma:centralityspace}
	Using notation as in \autoref{notation:completioncommute},
	we let $c$ be a rack.
	Let $\gamma\in \pi_0\Hur^{c}$ 
	be central. If we
	view $\Hur^{c}$ as an $\EE_1$-algebra in the $\infty$-category
	$\Spc_{/\Conf}$, then multiplication by $\gamma$ in $\Hur^{c}$ can be refined to the structure of an 
  $\Hur^{c}$-bimodule map.
\end{lemma}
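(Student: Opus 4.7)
The plan is to construct the bimodule map via a topological insertion that exploits the two-dimensional nature of Hurwitz spaces. The configuration space $\Conf$ is naturally an $\EE_2$-space (as the configuration space of points in $\RR^2$), so the monoidal structure on $\Spc_{/\Conf}$ is in fact braided; equivalently, $\Hur^c$ carries additional $\EE_2$-type structure coming from the little $2$-discs operad action on configurations of branch points, as in \autoref{subsubsection:hurwitz-e2}.

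I would begin by representing $\gamma$ as a concrete point $\gamma_0 \in \Hur^c_n$ for some $n$, i.e., as a branched cover of a small disc with $n$ labeled branch points. The candidate bimodule map $\mu_\gamma: \Hur^c \to \Hur^c$ is defined topologically: given a configuration representing a class in $\Hur^c_m$ inside a unit disc, produce a new configuration in a slightly larger ambient disc by placing a rescaled copy of $\gamma_0$ in the region \emph{above} the existing configuration rather than to the left or right. Because placement ``above'' does not interfere with horizontal concatenation on either side, this insertion is manifestly compatible with both left and right multiplication, yielding a candidate $\Hur^c$-bimodule structure on $\mu_\gamma$.

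I would then verify that $\mu_\gamma$ corresponds to multiplication by $\gamma$ at the level of $\pi_0$. A priori, deforming $\gamma_0$ from the ``above'' position to the right of the existing configuration produces a conjugate of $\gamma$, with the conjugation arising from the monodromy picked up during the deformation: this is the braiding identity $[\alpha][\beta] \simeq [\beta][\beta^{-1}\alpha\beta]$ mentioned in the paragraph preceding the lemma. The centrality hypothesis on $\gamma$ is precisely the statement that all such conjugates agree with $\gamma$ in $\pi_0\Hur^c$, so $\mu_\gamma$ refines multiplication by $\gamma$.

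The main obstacle is checking the required $\infty$-categorical coherences, i.e.\ that the homotopies provided by the planar geometry assemble into a genuine bimodule map in $\Alg(\Spc_{/\Conf})$ rather than merely a chain of homotopic maps. I would address this by invoking the topological model for the relevant bar constructions developed in \autoref{appendix:topological-model}, in which the bimodule structure can be exhibited as an honest continuous map of explicit topological spaces; alternatively, one may leverage the $\EE_2$-algebra structure on $\Hur^c$ in the category of $G$-crossed spaces and verify that, for $\gamma$ central, the resulting bimodule refinement is $G$-equivariant and therefore descends to a bimodule map in $\Spc_{/\Conf}$.
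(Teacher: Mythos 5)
Your geometric idea is the right one and matches the paper's: use the two‑dimensionality of the branch locus (equivalently, the braided/$\EE_2$ structure underlying $\Spc_{/\Conf}$) to insert $\gamma$ in a way that commutes with concatenation on both sides, and use centrality of $\gamma$ in $\pi_0\Hur^c$ to identify the resulting operator with multiplication by $\gamma$. The $\pi_0$ check you describe — that moving $\gamma$ past a configuration conjugates it, and centrality kills the conjugation — is exactly the check the paper performs (there phrased as: the twist sends $(x,\gamma)$ to $(\gamma,x)$ because $(\gamma\triangleright)$ acts trivially on $c$).

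The gap is in the step you yourself flag as the main obstacle, and neither of your two proposed resolutions closes it. The paper's device is to descend to a setting where there are no higher coherences at all: $\Hur^c$ is the colimit of the multiplicative local system $(c^i)_{i\in\NN}$ on $\Conf\simeq\coprod_n BB_n$, and since this is a local system of \emph{sets}, one can work in the ordinary braided monoidal category $\Fun(\coprod_n BB_n,\Set)$. There the bimodule map is an honest map of local systems, $(c^i)_{i\in\NN}\otimes m_!(*)\to(c^i)_{i\in\NN}$, where the bimodule structure on the source is defined via the braiding, and "bimodule map" reduces to the commutativity of a single triangle of maps of sets, verified using centrality. Applying the $\EE_2$-monoidal functor $\colim:\Spc^{\Conf}\to\Spc_{/\Conf}$ then produces the required refinement. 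By contrast, your first fallback (the topological model of \autoref{appendix:topological-model}) is built to model two‑sided bar constructions $M\otimes_{\Hur^c_+}N$ for pointed \emph{sets} $M,N$; it does not supply the coherence data needed to exhibit a self‑map of $\Hur^c$ as a bimodule map in $\Alg(\Spc_{/\Conf})$. Your second fallback ($G$-equivariance in the category of $G$-crossed spaces) does not apply as stated — the lemma is proved for arbitrary racks, where there is no ambient group — and even for conjugacy classes, equivariance of the map is not the same as providing the bimodule structure. So the proposal needs the reduction to local systems of sets (or an equivalent discretization) to be complete.
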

\begin{proof}
	We can consider the $\EE_2$-monoidal $\infty$-category of local systems
	on $\Conf$, \linebreak$\Spc^{\Conf}$. This can be identified with the
	$\infty$-category of tuples $(X_i)_{i \in \NN}$, where $X_i\in \Spc^{BB_i}$ is a space with an action of the braid group on $i$-letters. Indeed this follows because $\Conf$ is the free $\EE_2$-algebras in spaces on a point, and is $1$-truncated, and $\coprod_n BB_n$ is the free $\EE_2$-monoidal category on an object \cite[Theorem 4]{joyal1986braided}, and is a groupoid, so they agree.

	We can view $\Hur^{c}$ as the colimit of the 
	multiplicative local system given by the tuple $(c^i)_{i \in \NN}$, 
	where the action of the braid group is determined by the fact that the
	twist sends a pair $(x,y) \in c\times c$ to $(y,y \triangleright x)$, 
	and the multiplication is given by concatenation. Note that since this is a local system of sets, we may view $\Hur^{c}$ as an object in the (ordinary) braided monoidal category $\Fun(\coprod_n BB_n,\Set)$. Here the tensor product of $(X_i)_{i \in \NN},(Y_i)_{i \in \NN}$ is given by $(\coprod_{i+j=n}\Ind^{B_n}_{B_i\times B_j}(X_i\times Y_j))_{n \in \NN}$, and the braiding is determined by the formula given in \cite[p7]{joyal1986braided}.
	
	Let $m$ be such that $\gamma \in \Hur_m^c$.
	Consider $m_!(*)$, where $m$ is considered as the inclusion of the base point $* \to BB_m$. Explicitly, $m_!(*)$ is the local system 
	which is $B_{m}$ in degree $m$ 
	and empty otherwise. There is a left module map $(c^i)_{i \in \NN} \otimes m_!(*)\to
	(c^i)_{i \in \NN}$, given by right multiplying by the class $\gamma$ on the identity element of $B_m$. Here we realize $(c^i)_{i \in \NN}\otimes m_!(*)$ as a bimodule via the $\EE_2$-monoidal structure, i.e using the twist map $(c^i)_{i \in \NN}\otimes m_!(*) \to m_!(*)\otimes (c^i)_{i \in \NN}$. The fact that this multiplication map is a bimodule map follows if the diagram
	
	\begin{center}
		\begin{tikzcd}
			(c^i)_{i \in \NN}	\otimes m_!(*)\ar{rr}{\bullet_r}
			\ar{dr}{\on{tw}} & &(c^i)_{i \in \NN} \\
			& \ar{ur}{\bullet_l}m_!(*)\otimes(c^i)_{i \in \NN} & 
		\end{tikzcd}
	\end{center}
	commutes, where $\bullet_l$ is left multiplication, $\bullet_r$ is right
	multiplication, and $\on{tw}$ is the twist map.
	Unraveling the definitions, this holds because the twist sends a tuple
	$(x,\gamma),x \in c^i$ to $(\gamma,x)$, which uses that $\gamma$ is central in $\pi_0\Hur^c$, so that $(\gamma\triangleright)$ acts trivially on $c$.
	
	Thus we have constructed a bimodule map of multiplicative local systems
	on $\Conf$ induced by multiplication by $\gamma$. By applying the $\EE_2$-monoidal functor $\colim:\Spc^{\Conf} \to \Spc_{/\Conf}$, (where the map to $\Conf$ is obtained via the colimit of the terminal local system) 
	we see that this bimodule map refines the multiplication by $\gamma$ map.
\end{proof}

\begin{corollary}
	\label{lemma:centrality}
		Let $\gamma\in \pi_0\Hur^{c}$ 
		be central. Then the associated class $[\gamma] \in \pi_0A$ is $\EE_2$-central.
\end{corollary}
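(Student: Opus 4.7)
The plan is to deduce the corollary directly from \autoref{lemma:centralityspace} by applying the chains functor $C_*(-;\QQ)$. Recall from \autoref{subsubsection:spaces} that $C_*(-;\QQ)\colon \Spc \to \Sp$ is strong symmetric monoidal, and hence sends $\EE_1$-algebras to $\EE_1$-algebras and, crucially, bimodule maps to bimodule maps. The $\EE_1$-structure on $A = C_*(\Hur^c;\QQ)$ set up in \autoref{construction:homologystructure} is by definition the one obtained by applying this functor to the $\EE_1$-algebra $\Hur^c$.

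With this in hand, the argument reduces to a single step. By \autoref{lemma:centralityspace}, since $\gamma$ is central, multiplication by $\gamma$ refines to an $\Hur^c$-bimodule map in $\Spc_{/\Conf}$, and hence, via the forgetful functor to $\Spc$, a bimodule map on the level of underlying spaces. Applying $C_*(-;\QQ)$, and using that $\gamma \in \pi_0\Hur^c$ so that the source sits in homological degree $0$ and need not be suspended, we obtain an $A$-bimodule map $A \to A$ whose effect on the unit $1 \in \pi_0 A$ is exactly $[\gamma]$ by construction. By the definition of $\EE_2$-central elements recalled in \autoref{subsection:algebra-notations}, this exhibits $[\gamma]$ as $\EE_2$-central.

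There is essentially no obstacle beyond \autoref{lemma:centralityspace} itself, which carried out all the topological work of exhibiting the bimodule refinement at the level of spaces; the corollary is then a formal consequence of transporting that structure across the symmetric monoidal chains functor.
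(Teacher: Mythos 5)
Your proof is correct and takes exactly the same route as the paper, which likewise deduces the corollary by applying $C_*(-;\QQ)$ to the bimodule map produced in \autoref{lemma:centralityspace}. The extra remarks about the chains functor being symmetric monoidal and hence preserving bimodule maps are a fine (and accurate) elaboration of the same one-step argument.
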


\begin{proof}
	 This follows by applying the functor $C_*(-;\QQ)$ to the bimodule map produced in \autoref{lemma:centralityspace}.
\end{proof}

\subsection{Decomposition of the comparison map}
\label{subsection:comparison-map}

To check that the comparison map $v$ is an equivalence, we geometrically decompose the algebra $A$ based on central elements acting on it.
We accomplish this decomposition in \autoref{lemma:reductionstep}, which roughly
says it suffices to check $v[U^{-1}]$ is an equivalence after suitable 
localizations and completions.

As a first step toward this, we note that certain completions of localization of $A$ are well behaved in the sense of \autoref{lemma:completionhomepi}:

\begin{notation}
	\label{notation:rack}
	Let $c$ be a finite rack.
	Given an element $h \in c$, let us use $\alpha_h$ as alternate notation
	for the	element $[h]$ in $\pi_0\Hur_1^{c}$, as defined in
\autoref{notation:component-rack}. 
	For a subset $S \subset c$, we use $\alpha_S$ to denote the set
	$\{\alpha_s:s\in S\}$.
	We use $M[\alpha_S^{-1}]$ to denote the localization of some $A$-module $M$ at the
	set $\{\alpha_s,s\in S\}$ and we similarly use
	$M^\wedge_{\alpha_S}$ to denote the completion of $M$ at $\{\alpha_s : s \in
	S\}$ (when this doesn't depend on an order of $S$).
\end{notation}

The following lemma is a generalization of \cite[Proposition
3.4]{EllenbergVW:cohenLenstra} from groups to racks, whose proof requires no new
ideas.
\begin{lemma}\label{lemma:evwlemma}
	Let $c$ be a finite connected rack, and let $g \in c$. For every
	$N$ sufficiently large, any tuple of elements $(g_1,\ldots,g_N) \in c^N$
	generating the rack $c$ is equivalent under the braid group action to
	some tuple of the form $(g,g_1',\ldots,g_{N-1}')$ where $g_1',\ldots,g_{N-1}'$ generates $c$.
	Similarly, $(g_1,\ldots,g_N)$ is equivalent under the braid group action
	to some tuple of the form $(g_1'',\ldots,g_{N-1}'',g)$ where
	$g_1'',\ldots,g_{N-1}''$ generates $c$.
\end{lemma}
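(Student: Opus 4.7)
The plan is to adapt the proof of \cite[Proposition 3.4]{EllenbergVW:cohenLenstra} from non-splitting conjugacy classes in finite groups to finite connected racks; as the statement itself advertises, no new ideas should be needed. The argument rests on two key points: invariance of the generated subrack under the braid action, and a reachability principle that lets one place any specified element of that subrack at position 1 using bounded-range braid moves.

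First I would verify that the subrack $\langle g_1, \ldots, g_N\rangle$ generated by the entries is preserved by the braid action. The generator $\sigma_i$ replaces $(g_i, g_{i+1})$ by $(g_{i+1}, g_{i+1}\triangleright g_i)$, and both entries clearly lie in $\langle g_i, g_{i+1}\rangle$. For the other inclusion one has $g_i = \phi_{g_{i+1}}^{-1}(g_{i+1}\triangleright g_i)$; since $c$ is finite, the bijection $\phi_{g_{i+1}}$ of $c$ has finite order, so $\phi_{g_{i+1}}^{-1}$ is a positive power of $\phi_{g_{i+1}}$. Because subracks are closed under iterated application of $\phi_z$ for $z$ in the subrack, this shows $g_i \in \langle g_{i+1}, g_{i+1}\triangleright g_i\rangle$. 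Consequently any braid equivalent tuple to a generating tuple is again generating.

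Next I would establish a reachability principle: the word $\sigma_{i-1}\sigma_{i-2}\cdots\sigma_1$ brings $g_i$ to position 1 while leaving positions $i+1, \ldots, N$ unchanged, and iterated application of $\sigma_1$ on the first two positions replaces the leading entry by elements of the form $y\triangleright x$ where $x, y$ are the current first two entries. Combining these two kinds of moves and using that $c$ is finite and connected (giving a bounded ``rack-theoretic diameter''), one shows that using only braid moves on the first $K$ positions, any element of the subrack generated by the first $K$ entries can be brought to position 1. In particular, for $K$ large enough so that the first $K$ entries already generate $c$, the element $g$ can be placed at position 1.

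Finally, to guarantee that the remaining $(N-1)$-tuple still generates, the plan is a two-stage braiding: first arrange, via braid moves, for some generating sub-tuple of size $N_0$ (depending only on $c$) to occupy the tail positions $N-N_0+1, \ldots, N$; then confine all subsequent braid moves to the first $N-N_0$ positions, applying the reachability principle to place $g$ at position 1. The tail stays untouched and, by braid invariance of the generated subrack, still generates $c$, so certainly the whole truncated tuple $(g_1', \ldots, g_{N-1}')$ generates. The main obstacle is bookkeeping: one must ensure that the first $N-N_0$ positions after stage one still contain enough structure to synthesize $g$ via reachability, which requires taking $N$ larger than an explicit constant depending on $|c|$ and the diameter of $c$. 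I would follow the counting argument of \cite[Proposition 3.4]{EllenbergVW:cohenLenstra} to obtain such an explicit bound. The second statement, bringing $g$ to the last position, follows by a mirror argument using inverse braid moves (equivalently, by reversing the tuple).
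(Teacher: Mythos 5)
Your opening observation (braid-invariance of the generated subrack, using finiteness of $c$ to write $\phi_{g_{i+1}}^{-1}$ as a positive power) is correct and is indeed needed. But the heart of your argument is the ``reachability principle'': that braid moves confined to the first $K$ positions can place \emph{any} element of the subrack generated by the first $K$ entries at position $1$. This is asserted, not proved, and it is essentially the statement of the lemma itself. The two moves you combine (shuffling an entry to the front, and iterating $\sigma_1$) do not obviously yield it: to realize a conjugation $\phi_z$ on the leading entry you must first physically synthesize $z$ as an adjacent entry, and doing so conjugates other entries you will need later; ``bounded rack-theoretic diameter'' does not resolve this circularity. The paper's proof supplies the missing mechanism: by pigeonhole, for $N$ large some $g'$ occurs at least $n+1$ times, where $\phi_{g'}^n=\id$; these copies are moved to the front, and then the full twist of a later entry $h$ around the block gives the relation $(g',\ldots,g',h)\sim(h\triangleright g',\ldots,h\triangleright g',h)$ on $n$ copies of $g'$. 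This conjugates the entire block by $\phi_h$ \emph{without altering $h$ or the rest of the tail}, and since the entries generate $c$ and $c$ is connected, iterating such twists walks the block from $g'$ to $g$ while the spare copy of $g'$ and the tail still generate. None of this is present in your sketch.

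Your two-stage bookkeeping plan has a second problem: after parking a generating sub-tuple of size $N_0$ in the tail and confining all further moves to the first $N-N_0$ positions, there is no reason the head entries generate $c$, nor even that the subrack they generate is connected --- the lemma assumes only that $c$ itself is connected, not that $c$ is non-splitting, so proper subracks may be disconnected and your reachability principle (even granting it for generating tuples) would not apply. The paper's argument sidesteps this entirely because the full-twist move borrows conjugators from the tail without consuming or modifying them. Your remark that the second statement follows by a mirror argument is fine.
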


\begin{proof}
	We summarize the proof, leaving details to the reader, which can also be found in \cite[Proposition
	3.4]{EllenbergVW:cohenLenstra} in the case that $c$ is a conjugacy class in a
	group.
	We will only prove the first statement regarding equivalence to
	$(g,g_1',\ldots,g_{N-1}')$. The second statement regarding
	equivalence to $(g_1'',\ldots,g_{N-1}'',g)$ has an analogous proof.

	Choose $N$ large enough so that some $g' \in c$ appears $n+1$ times,
	where $n$ is such that $\phi_{g'}^n=\id$. Using the action of the braid
	group, we can move all of these copies of $g'$ to the left to see that
	$(g_1,\ldots,g_N)$ is equivalent under the braid group action to
	$(\underbrace{g',\ldots,g'}_{n+1},h_1,\ldots,h_{N-n-1})$, where $g',h_1, \ldots, h_{N-n-1}$
	still generate the rack. Because $\phi_{g'}^n=\id$, by using the element
	of the braid group that does a full twist of the last element around the
	rest, we see that $(\underbrace{g',\ldots,g'}_n,h) \in c^{n+1}$ is equivalent to
	$(\underbrace{h\triangleright g',\ldots,h\triangleright g'}_n,h) \in
	c^{n+1}$, where there are $n$ copies of $g'$. Using this along with the
	fact that $g',h_1,\ldots,h_{N-n-1}$ generate the rack and the rack is connected,
	shows that we can modify $(\underbrace{g',\ldots,g'}_{n+1},h_1,\ldots,h_{N-n-1})$ by an element of the braid group to change the first $n$ copies of $g'$ into $n$ copies of $g$, while keeping the fact that the rest of the elements generate the rack.
\end{proof}

\begin{lemma}\label{lemma:completionhomepihur}
	With notation as in \autoref{notation:rack}, let $c$ be a finite nonempty
	non-splitting rack
	and let $c' \subset c$ be a subrack. In
	$A[\alpha_{c'}^{-1}]$, any element $\alpha_{\beta}$ with $\beta \in c-c'$ satisfies the conditions of \autoref{lemma:completionhomepi}. 
	Thus for an $A$-module $M$, we can form
	$M[\alpha_{c'}^{-1}]^{\wedge}_{\alpha_{c-c'}}$ as in \autoref{notation:completioncommute}.
\end{lemma}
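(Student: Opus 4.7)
The plan is to verify the two conditions of \autoref{lemma:completionhomepi} for $x = \alpha_\beta$ in the ring $R := A[\alpha_{c'}^{-1}]$, separately for each $\beta \in c - c'$: namely, existence of $m' > 0$ with $\alpha_\beta^{m'}$ central, and existence of integers $n > m$ together with $z \in \pi_0 R$ satisfying $z\alpha_\beta^n = \alpha_\beta^m$. The nontrivial case is $c' \neq \emptyset$, where the localization produces enough invertible elements to build $z$.

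For centrality, I would take $m' := \ord(\phi_\beta)$, the order of the rack automorphism $\phi_\beta \in \Aut(c)$ from \autoref{remark:rackdefinition}, which is finite since $c$ is. A direct calculation with the braid action $(g,h) \mapsto (h, h \triangleright g)$ shows that its inverse sends $(h_1,h_2) \mapsto (\phi_{h_1}^{-1}(h_2), h_1)$; successively moving an element $g$ from position $m'+1$ to position $1$ in the tuple $(\beta,\ldots,\beta,g)$ therefore replaces $g$ by $\phi_\beta^{-m'}(g) = g$. Hence $(\beta,\ldots,\beta,g) \sim (g,\beta,\ldots,\beta)$ as braid orbits, yielding $\alpha_\beta^{m'}\alpha_g = \alpha_g\alpha_\beta^{m'}$ in $\pi_0 A$. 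Since every component of $\Hur^c$ is a product of $\alpha_g$'s and commutation with generators passes through products, $\alpha_\beta^{m'}$ will be central in $\pi_0 A$, and hence in $\pi_0 R$.

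For the relation, I will use \autoref{lemma:evwlemma} applied to the subrack $c'' := \langle c' \cup \{\beta\} \rangle$, which is connected by the non-splitting of $c$. Fix an ordering of $c'$ and set $W := \prod_{h \in c'} \alpha_h \in \pi_0 A$; this $W$ becomes invertible in $R$. Choose $m := m'$ and $n := 2m'$, so that both $\alpha_\beta^m$ and $\alpha_\beta^n$ are central in $\pi_0 R$, and pick $K$ large enough that the tuple of length $L := K|c'| + m$ underlying $W^K\alpha_\beta^m$ satisfies $L \geq n + N - 1$, where $N$ is the threshold from \autoref{lemma:evwlemma} applied to the connected rack $c''$. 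Since this tuple consists of elements of $c' \cup \{\beta\}$, it generates $c''$. I will then iteratively apply \autoref{lemma:evwlemma} $n$ times---each time to the current residual tail, which continues to generate $c''$ by the conclusion of that lemma and which has length at least $N$ by the choice of $K$---to produce an equivalent tuple of the form $(\beta,\ldots,\beta,h_1,\ldots,h_{L-n})$ with $n$ leading copies of $\beta$. At the level of $\pi_0 A$ this reads $W^K \alpha_\beta^m = \alpha_\beta^n \cdot Y$, where $Y$ is the component of the residual tail. Inverting $W$ in $R$ and using centrality of $\alpha_\beta^n$ to commute it past $W^{-K}$ and $Y$ will give $\alpha_\beta^m = \alpha_\beta^n(W^{-K}Y)$, so $z := W^{-K}Y$ is the desired witness.

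Both hypotheses of \autoref{lemma:completionhomepi} will thus be verified for each $\alpha_\beta$ with $\beta \in c - c'$, and the iterated completion $M[\alpha_{c'}^{-1}]^{\wedge}_{\alpha_{c-c'}}$ will be well-defined via \autoref{notation:completioncommute}. The main technical step, and the place where non-splitting is essential, will be the iterated application of \autoref{lemma:evwlemma}: non-splitting guarantees that $c''$ is a connected rack, so \autoref{lemma:evwlemma} applies at every step of the iteration and successively creates the required leading $\beta$'s even though the original tuple of $W^K\alpha_\beta^m$ only contains $m < n$ copies of $\beta$.
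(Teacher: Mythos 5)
Your argument for the relation $z\alpha_\beta^n = \alpha_\beta^m$ is correct and is essentially the paper's: the paper also applies \autoref{lemma:evwlemma} to the subrack generated by $c'$ and $\beta$ (connected by non-splitting) and uses invertibility of $\alpha_{c'}$ to solve for $z$; it just does so more economically, obtaining $n=2$, $m=1$ from two applications of the lemma (writing $\alpha_\gamma^N\alpha_\beta = \alpha_{y_1}\cdots\alpha_{y_{N-1}}\alpha_\beta^2$ for $\gamma\in c'$) rather than your $n=2m'$, $m=m'$ obtained by pulling $\beta$ to the left $n$ times. Your iterated use of \autoref{lemma:evwlemma} is valid since each application leaves a tail that still generates $\langle c'\cup\{\beta\}\rangle$ and your length bound $L\geq N+n-1$ keeps every tail long enough.

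The gap is in the centrality step. Your braid computation only establishes that $\alpha_\beta^{m'}$ is central in $\pi_0A$, but the hypothesis of \autoref{lemma:completionhomepi} requires $x^{m'}$ to be central in the full homotopy ring $\pi_*R$: the proof of that lemma commutes $x^{n(n-m)m'}$ past an \emph{arbitrary} element $y\in\pi_*R$ to show $e$ is a central idempotent in $\pi_*R$, which is what yields the splitting $R\cong R[e^{-1}]\times R[(1-e)^{-1}]$ and the identification of the completion with a localization. Centrality in $\pi_0$ does not formally imply centrality in higher homotopy: left and right multiplication by the component $\alpha_\beta^{m'}$ are two self-maps of $\Hur^c$ which agree on $\pi_0$, but one needs them to induce the same map on all of $H_*(\Hur^c;\QQ)$. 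The paper supplies this via \autoref{lemma:centralityspace} and \autoref{lemma:centrality}, which upgrade a central element of $\pi_0\Hur^c$ to an $\EE_2$-central element of $A$ (in particular graded-central in $\pi_*A$) by producing an explicit bimodule refinement of the multiplication map, using the $\EE_2$-monoidal structure on local systems over $\Conf$ and the triviality of the action $(\gamma\triangleright)$ on $c$. You need to either invoke this result or reprove it; the $\pi_0$ computation alone does not suffice. (Separately, note that the case $c'=\emptyset$ is not "trivial" but genuinely fails — in $\pi_0A$ a relation $z\alpha_\beta^n=\alpha_\beta^m$ with $n>m$ is impossible for degree reasons — so the lemma should be read, as the paper implicitly does, with $c'$ nonempty.)
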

\begin{proof}
	By \autoref{lemma:centrality}, $\alpha_{\beta}^{\ord(\beta)}$ is $\EE_2$-central where $\ord(\beta)$ is the order of $\phi_{\beta}$ as an automorphism of $c$. It remains to prove that
	$y\alpha_{\beta}^2=\alpha_{\beta}$ for some $y \in \pi_*A[\alpha_{c'}^{-1}]$.
	To do this, choose $\gamma\in c'$. 
	Note that because $c$ is non-splitting, we have $\langle c', \beta
	\rangle$ is connected.
	Applying \autoref{lemma:evwlemma} to the subrack
	$\langle c', \beta \rangle \subset c$ generated by $c'$ and $\beta$,
	, 
	for $N$
	sufficiently large, 
	we can write $
\alpha_{x_1}
	\cdots \alpha_{x_N}\alpha_{\beta} =
	\alpha_\gamma^N\alpha_\beta$,
	where $x_1, \ldots, x_N \in c$ together generate the same
	subrack as $\gamma, \beta$.
	Applying \autoref{lemma:evwlemma} 
again, we can write
	$\alpha_{y_1} \cdots
	\alpha_{y_{N-1}}\alpha_{\beta} =\alpha_{x_1} \cdots \alpha_{x_N}$.
	Taking $y := \alpha_\gamma^{-N} \alpha_{y_1} \cdots \alpha_{y_{N-1}}$, we therefore have
	$y\alpha_\beta^2 = \alpha_\beta$. This verifies the conditions of
	\autoref{lemma:completionhomepi}.
	\end{proof}

	The following reduces one to considering stable homology classes on
	which $\alpha_{c'}$, for a subrack $c' \subset c$, act by isomorphisms, and the complementary set act nilpotently.
\begin{lemma}\label{lemma:reductionstep}
	Let $c$ be a finite non-splitting rack. Using notation from \autoref{notation:rack} and
	\autoref{construction:homologystructure}, and letting $v,U$ be as in \autoref{theorem:mainhom},
	the map $v[U^{-1}]$
	is an equivalence if, for every nonempty
	subrack $c' \subset c$, $(v[\alpha_{c'}^{-1}])^\wedge_{\alpha_{c-c'}}$ is an
	equivalence, where the completion is taken as an $A$-module.
\end{lemma}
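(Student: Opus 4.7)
The plan is to iteratively apply \autoref{lemma:invertandcomplete} to the elements $\{\alpha_h : h \in c\}$, building a binary decision tree whose $2^{|c|}$ leaves are indexed by subsets $S \subset c$ (the elements inverted along the path). To show $v[U^{-1}]$ is an equivalence, it suffices to verify it at each leaf; this splits into three cases according to whether $S$ is a nonempty subrack, empty, or nonempty and not a subrack. At each node of the tree we have free choice of both the element and the ordering, which allows us to tailor the strategy to each leaf.

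For $S = c'$ a nonempty subrack, \autoref{lemma:completionhomepihur} combined with \autoref{lemma:completionhomepi} implies that after inverting $\alpha_{c'}$, every completion at $\alpha_h$ with $h \in c-c'$ is itself a localization at a central idempotent. These operations commute with one another and with inversions, so regardless of the interleaving in the tree, this leaf simplifies to $\bigl((v[\alpha_{c'}^{-1}])^{\wedge}_{\alpha_{c-c'}}\bigr)[U^{-1}]$, which is an equivalence by the hypothesis, since further localization preserves equivalences.

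For $S = \emptyset$ we complete at every $\alpha_h$; by \autoref{lemma:completionhomepi} each $\alpha_h$ becomes nilpotent in the iterated completion, with $\alpha_h^{m_h} = 0$ for some $m_h$. Each central summand $\alpha_i$ of $U$ lies in $\pi_0 \Hur_n^c$ for some $n > 0$, so it is represented by a tuple $(g_1,\ldots,g_n) \in c^n$ giving $\alpha_i$ as a product of $\alpha_{g_j}$'s. For any sufficiently large power, the concatenated tuple contains many copies of some $h \in \langle g_1, \ldots, g_n \rangle$; since this subrack is connected (as $c$ is non-splitting), \autoref{lemma:evwlemma} allows us to rearrange by braid moves so as to place $m_h$ copies of $h$ consecutively, and then $\alpha_h^{m_h} = 0$ forces this power to vanish. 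Hence each $\alpha_i$, and thus $U$, is nilpotent (as a sum of central nilpotents); since $U$ is also invertible in $A[U^{-1}]$, the ring and the leaf map are zero.

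For $S$ nonempty and not a subrack, $\langle S \rangle \supsetneq S$. The braid relation $\alpha_g \alpha_h = \alpha_h \alpha_{h \triangleright g}$ in $\pi_0 A$ shows inductively that $\alpha_h$ is invertible in $\pi_0 A[\alpha_S^{-1}]$ for every $h \in \langle S \rangle$. I choose the ordering in this branch so that all elements of $\alpha_S$ are inverted first, and the first subsequent completion is performed at some $h_0 \in \langle S \rangle - S \subset c-S$; since $\alpha_{h_0}$ is already a unit in the intermediate ring, its completion vanishes (as $R^{\wedge}_x = 0$ whenever $x$ is a unit), so this leaf is zero. The main technical obstacle is the empty case $S = \emptyset$: showing that $U$ becomes nilpotent in the full completion, which requires combining centrality of the $\alpha_i$ with the braid rearrangement from \autoref{lemma:evwlemma}, and ultimately rests on the non-splitting hypothesis guaranteeing connectedness of the relevant subracks of $c$.
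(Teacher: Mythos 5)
Your overall architecture---reduce via \autoref{lemma:invertandcomplete} to $2^{|c|}$ leaves indexed by subsets $S\subset c$, then split into the cases $S$ empty, $S$ a nonempty subrack, and $S$ nonempty but not a subrack---is the same as the paper's, and your empty case (pigeonhole plus braid rearrangement to make $U$ nilpotent) and non-subrack case (some $\alpha_{b\triangleright a}$ with $b\triangleright a\notin S$ is forced to be invertible) contain the right ideas. But there is a genuine gap: you run the entire reduction through \emph{completions} (condition (2) of \autoref{lemma:invertandcomplete}) and then assert that these completions commute with the localizations occurring elsewhere along the branch. That commutation is only available once the completion is known to be a localization at a central idempotent, i.e.\ via \autoref{lemma:completionhomepihur} and \autoref{lemma:completionhomepi}---and those apply to $\alpha_\beta$ only \emph{after} a nonempty subrack has already been inverted. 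In a decision tree the root element is processed first on every branch, so for every nonempty leaf $S$ not containing the root element some completion necessarily precedes some inversion; at that stage the completion is just an abstract Bousfield localization and need not commute with the subsequent $[\alpha_g^{-1}]$. The same obstruction defeats your non-subrack case, where you ``choose the ordering so that all of $\alpha_S$ is inverted first'': the processing order along the path to a leaf cannot be chosen independently for each leaf. It also undermines the empty case: for a bare completion $A^{\wedge}_{\alpha_h}$ with nothing inverted, $\alpha_h$ need not become nilpotent (compare $\mathbb{Z}^{\wedge}_p=\mathbb{Z}_p$), so the appeal to \autoref{lemma:completionhomepi} there is unfounded.

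The paper avoids all of this by using condition (3) of \autoref{lemma:invertandcomplete} instead: each node branches into $f[\alpha_\beta^{-1}]$ and $A/\alpha_\beta^{\ord(\beta)}\otimes_A f$. Since $\alpha_\beta^{\ord(\beta)}$ is $\EE_2$-central (\autoref{lemma:centrality}), these quotients commute with arbitrary localizations by \autoref{lemma:e2centralhomepi}, so every leaf can be rewritten with all localizations performed first, regardless of the interleaving; nilpotence of $\alpha_\beta$ on the iterated quotient follows from \autoref{lemma:squarenull} and \autoref{lemma:squarecoflemma} rather than from \autoref{lemma:completionhomepi}; and the empty and non-subrack leaves vanish because some element acts both invertibly and nilpotently---an order-independent argument. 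Only at the end, once a nonempty subrack $c'$ has been inverted, does one convert the quotients back into the completion $(-)^{\wedge}_{\alpha_{c-c'}}$ via \autoref{lemma:completionhomepihur} and \autoref{lemma:invertandcomplete}. Replacing your completions by these $\EE_2$-central quotients throughout would repair the argument.
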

\begin{proof}
	By repeatedly applying \autoref{lemma:invertandcomplete}, we learn that it
	is enough to show that for every subset $c'\subset c$,
	\begin{align}
		\label{equation:localize-and-quotient-map}
		(A/\alpha^{\ord(\beta_{|c-c'|})}_{\beta_{|c-c'|}})\otimes_A\cdots
		\otimes_A
		(A/\alpha^{\ord(\beta_1)}_{\beta_1})\otimes_Av[U^{-1}][\alpha_{c'}^{-1}] 
	\end{align}
is an equivalence, where $\coprod_{i=1}^{|c-c'|}\beta_i=c-c'$. Here, we have used \autoref{lemma:e2centralhomepi} to commute the localization with the quotienting,
and \autoref{lemma:centrality} to make sense of the iterated quotient. Namely,
because $\alpha_{\beta_i}^{\ord(\beta_{i})}$ is a bimodule map by
\autoref{lemma:centralityspace}, we can tensor with the $A-A$-bimodule $(A/\alpha^{\ord(\beta_1)}_{\beta_1})$ to make sense of the tensor product above as an $A$-module.

	Note that on the source and target of this map, for each $\beta \in c-c'$,
	$\alpha_{\beta}^{\ord(\beta)n_{\beta}}$ acts by 0 for some $n_{\beta}$. This
	follows from applying \autoref{lemma:squarenull} to see that after
	quotienting out by $\alpha_{\beta}^{\ord(\beta)}$,
	$\alpha_{\beta}^{2\ord(\beta)}$ acts by $0$, and using
	\autoref{lemma:squarecoflemma} to see that after each further quotient,
	the smallest power of $\alpha_{\beta}^{\ord(\beta)}$ that acts by $0$ gets multiplied by at most $2$.
	
	If $c'$ is empty, then we claim that both the source and target of the map 
	\eqref{equation:localize-and-quotient-map} 
	are $0$. 
	We claim that some power of $U$ acts by $0$ on the source and target, since a large
	enough power of $U$, $U^j$, is in the two sided ideal of $\pi_0A$ generated by
	$\alpha_{\beta}^{\ord(\beta)n_{\beta}}$. This is because by the
	pigeonhole principle, one can see that any element of
	$\pi_0\Hur^{G,c}_n$, written as a product of elements in $c$ for $n$
	large enough contains $\max_{\beta \in c}(\ord(\beta)n_{\beta})$ many copies of a single $\beta\in c$. By pulling these to the left using the braid group action, we see the element is divisible by $\alpha_{\beta}^{\ord_{\beta}n_{\beta}}$.
	Since $U^j$
	is in the ideal generated by the smallest powers of $\alpha_{\beta}^{\ord(\beta)}$ that act trivially, and hence it acts trivially.
	Since $U$ acts both as an isomorphism and nilpotently, the source and
	target of \eqref{equation:localize-and-quotient-map} with $c' =
	\emptyset$ are both $0$.
	
We next claim that 
the source and target of 
	\eqref{equation:localize-and-quotient-map}
	vanish unless $c' \subset c$ is a subrack.
	Indeed, observe that
	$\alpha_a\alpha_b=\alpha_b \alpha_{b \triangleright a}$. Hence, we learn that if $\alpha_a$ and $\alpha_b$ act
	invertibly, then so does $\alpha_{b \triangleright a}$. Thus the source and target of 
	\eqref{equation:localize-and-quotient-map}
	vanish unless $c'$ is closed under the operation $\triangleright$, i.e.,
	$c'$ is a subrack.

	Suppose the map
	\begin{align}
		\label{equation:localize-and-quotient-invert-u}
(A/\alpha^{\ord(\beta)}_{\beta_{|c-c'|}})\otimes_A\cdots \otimes_A
(A/\alpha^{\ord(\beta)}_{\beta_1})\otimes_Av[\alpha_{c'}^{-1}]
	\end{align}
	(which is similar to \eqref{equation:localize-and-quotient-map} but here
	we have not inverted $U$)
	is an equivalence for each subrack $c' \subset c$.
	Using \autoref{lemma:e2centralhomepi} to commute the
	localization and quotienting, we find that if
	\eqref{equation:localize-and-quotient-invert-u}
	is an equivalence for each
	subrack $c' \subset c$, \eqref{equation:localize-and-quotient-map} will also be an
	equivalence.
	Hence, it is enough to show
	\eqref{equation:localize-and-quotient-invert-u} is an equivalence
	for each subrack $c' \subset c$.
	Finally, we note that
	\eqref{equation:localize-and-quotient-invert-u} is an equivalence
	for a fixed $c' \subset c$ if and only if
	$(v[\alpha_{c'}^{-1}])^\wedge_{\alpha_{c-c'}}$ is an
	equivalence by
	using \autoref{lemma:completionhomepihur} and \autoref{lemma:invertandcomplete}.
	\end{proof}

\subsection{Showing the map is an equivalence}
\label{subsection:equivalence}

In this section we show that the maps in \autoref{lemma:reductionstep} are an
equivalence for each subrack $c'\subset c$. In the case that $c'=c$, the fact
that the map in \autoref{lemma:reductionstep} is an equivalence comes from the fact
that the homology of the group completion of $\Hur^{c}$ can be identified with
the rack homology of the rack $c$, which is computed in \cite{etingof2003rack}.
We verify this in \autoref{proposition:rackhomology}.
In general, we show that completion at $\alpha_{c-c'}$ (see \autoref{notation:rack}) corresponds to projecting in
pointed spaces to the components of $\Hur^{c'}$ at the
level of homology, allowing us to again apply \autoref{proposition:rackhomology} for the rack $c'$ in this case.
Identifying the completion in terms of $\Hur^{c'}$ is at the core of our proof, and is proven using a homological epimorphism argument whose main input is a topological one identifying two bar constructions in pointed spaces as homotopy equivalent.

\begin{proposition}\label{proposition:rackhomology}
	Use notation from \autoref{construction:homologystructure} and
	\autoref{notation:rack}, let $c$ be a finite connected rack.
	The map $v[\alpha_{c}^{-1}]$ is an equivalence.
\end{proposition}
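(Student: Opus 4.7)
The plan is to identify both sides of the comparison map, after inverting $\alpha_c$, with the rational chains on group completions, and then to invoke Etingof--Gra\~na's computation of the rack homology of a connected rack in the form stated in \cite[Corollary 5.4]{randal-williams:homology-of-hurwitz-spaces}.

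First I would check that the localization $A[\alpha_c^{-1}]$ (resp.\ $A'[\alpha_c^{-1}]$) computes the rational chains on the group completion $\Omega B \Hur^c$ (resp.\ $\Omega B \Conf^c$). Each $\alpha_h$, $h\in c$, is $\EE_2$-central by \autoref{lemma:centrality}, so the localizations are Ore (\autoref{remark:ore-implies-colimit}) and can be computed as a sequential colimit of left multiplication by the $\alpha_h$'s. Because $c$ is connected, \autoref{lemma:evwlemma} shows that after multiplying by a sufficiently large product of generators from $\alpha_c$, every component of $\Hur^c_n$ can be shifted to a power of a single fixed $\alpha_h$; hence $\pi_0$ becomes a group (isomorphic to $\mathbb Z$ via the degree map to $\pi_0\Conf$) after inverting $\alpha_c$. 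This is exactly the hypothesis under which the rational McDuff--Segal group-completion theorem applies to the $\EE_1$-algebra $\Hur^c$ and to $\Conf^c$, yielding equivalences $A[\alpha_c^{-1}] \simeq C_*(\Omega B \Hur^c;\mathbb Q)$ and $A'[\alpha_c^{-1}] \simeq C_*(\Omega B \Conf^c;\mathbb Q)$ compatible with $v$ and $Bv$.

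Next, the rational homology of $B\Hur^c$ is, by definition, the rational rack homology $H^R_*(c;\mathbb Q)$, since $\coprod_n \Hur^c_n$ is the free $\EE_1$-algebra in spaces built from the rack $c$ (equivalently, the nerve of the free monoidal category on $c$ with its rack-twisted braiding restricted to $\EE_1$). Since $c$ is connected, Etingof--Gra\~na's computation, reformulated in \cite[Corollary 5.4]{randal-williams:homology-of-hurwitz-spaces}, says that $Bv$ induces an isomorphism $H_*(B\Hur^c;\mathbb Q)\xrightarrow{\cong}H_*(B\Conf^c;\mathbb Q)$. Looping once and applying the previous paragraph then shows that $v[\alpha_c^{-1}]$ is an equivalence.

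The only real obstacle is verifying that the group-completion-theorem hypotheses hold in the generality of $\EE_1$-algebras in spaces with a twisted multiplication; the needed centrality of the multiplying classes on homology follows from \autoref{lemma:centrality}, and connectedness of the rack supplies the grouplike hypothesis on $\pi_0$ after inversion, so this step is essentially bookkeeping. Everything else is a direct appeal to \cite[Corollary 5.4]{randal-williams:homology-of-hurwitz-spaces}.
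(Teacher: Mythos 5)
Your overall strategy is the same as the paper's: use group completion to identify $v[\alpha_c^{-1}]$ with $\Omega B\Hur^c \to \Omega B\Conf^c$ on rational homology, prove that $B\Hur^c \to B\Conf^c$ is a rational homology equivalence via the Etingof--Gra\~na rack (co)homology computation, and then deloop. But the step you dismiss as "looping once" is precisely where the genuine difficulty lies, and as written it is a gap. A rational homology equivalence $X \to Y$ of spaces does \emph{not} in general induce a rational homology equivalence $\Omega X \to \Omega Y$ on path components when the spaces are not simply connected (e.g.\ $X \to *$ for $X$ acyclic with nontrivial $\pi_1$). The paper explicitly flags this as a small gap in the first line of \cite[p.~15]{randal-williams:homology-of-hurwitz-spaces} and repairs it: one needs the spaces to be \emph{weakly simple} (the $\pi_1$-action on higher homotopy groups is trivial) and the map to be a $\pi_1$-isomorphism; see \autoref{lemma:simplespacesloopspace}. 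The inputs are that rack spaces are weakly simple by \cite[Proposition 5.2]{fennRS:the-rack-space}, that $B\Conf^c$ inherits weak simplicity via \autoref{lemma:targetgroupcompletion}, and that $\pi_1 B\Hur^c \to \pi_1 B\Conf^c$ is an isomorphism by construction of $\Conf^c$. Without this (or some substitute such as nilpotence), your final sentence does not follow.

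Two smaller points. First, your parenthetical that $\pi_0$ group-completes to $\mathbb Z$ is not right in general: $\Omega B\pi_0\Hur^c$ surjects onto $\mathbb Z$ with a finite kernel $G$ that can be nontrivial; this is harmless for the grouplike hypothesis but matters later (the paper uses finiteness of $G$ and a Serre spectral sequence to compare $B\Conf^c$ with $B\Conf$, an intermediate reduction you skip but implicitly need, since \cite[Corollary 5.4]{randal-williams:homology-of-hurwitz-spaces} compares $B\Hur^c$ with $B\Conf$, not with $B\Conf^c$). Second, the identification of $H_*(B\Hur^c)$ with rack homology is not quite "by definition"; the paper justifies it via a filtration on an explicit topological model for the bar construction (\autoref{lemma:rackhomologyequivalence} and \autoref{proposition:modelquotdescription}). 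These are fixable, but the delooping step needs the weak simplicity argument to be a proof.
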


\begin{remark}
	\label{remark:}
	\autoref{proposition:rackhomology} was essentially already known, and an
	alternate proof in the case that $c$ is a quandle with a single orbit
	under the action on itself
	proof can be found in \cite[\S
	6.5]{bianchi:deloopings-of-hurwitz-spaces}.\footnote{We note that the argument presented in 
\cite[\S 6.5]{bianchi:deloopings-of-hurwitz-spaces}
describing the homology of the group completion isn't quite correct, because it
is claimed there that a certain algebra $\mathcal A(\mathcal Q)$ is given by
$\mathbb Q[x_S| S \in \on{conj}(\mathcal Q_+)]/(x^2_S)$, but it should really be
$\mathbb Q[x_S| S \in \on{conj}(\mathcal Q_+)]/(x_S)^2$. In other words,
the algebra written there is the homology of a product of $2$-spheres, while it
should be the homology of a wedge of $2$-spheres. One can fairly easily correct
this error and obtain a correct statement. We note that in the case there is
only a single conjugacy class in $\mathcal Q_+$, which is the case relevant to
the present article, the argument given in
\cite[\S 6.5]{bianchi:deloopings-of-hurwitz-spaces}
is correct.
}
	
	The proof we give here is modeled off of the argument in \cite[Corollary
	5.4]{randal-williams:homology-of-hurwitz-spaces}. We note that the
	argument given there has a small gap, 
	where it is claimed on the first line of \cite[p.
	15]{randal-williams:homology-of-hurwitz-spaces} that $B\Hur^c_G \to
	B\Conf$ being a rational homology equivalence implies $\Omega B \Hur^c_G \to \Omega B
	\Conf$ is a rational homology equivalence on each path component.
	In general this does not hold, but it does hold when the spaces are \textit{weakly simple} (see \autoref{lemma:simplespacesloopspace} below), as was very helpfully pointed out to us by Oscar
	Randal-Williams.
	This observation allows us to fix the small gap.
\end{remark}

\begin{definition}\label{definition:weaklysimple}
	A connected space $X$ is \textit{weakly simple} if $\pi_1X$ acts trivially on $\pi_nX$ for $n>1$.
\end{definition}

\begin{lemma}\label{lemma:simplespacesloopspace}
	Suppose that $k$ is a localization of the integers and $X \to Y$ is a $k$-homology equivalence of weakly simple spaces that is also a $\pi_1$-equivalence. Then $\Omega X \to \Omega Y$ is also a $k$-homology equivalence.
\end{lemma}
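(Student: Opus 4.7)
The plan is to reduce to the simply-connected case via universal covers. Since $f$ is a $\pi_1$-equivalence, the loop space $\Omega X$ decomposes as $\bigsqcup_{g\in\pi}\Omega_g X$ indexed by $\pi:=\pi_1(X)=\pi_1(Y)$, with each component $\Omega_g X$ homotopy equivalent (as an unpointed space) to $\Omega\widetilde X$; the same holds for $Y$. Under $\Omega f$ these decompositions correspond bijectively, and on each component the map is homotopic to $\Omega\widetilde f:\Omega\widetilde X\to\Omega\widetilde Y$, where $\widetilde f$ is the lift to universal covers. Hence $\Omega f$ is a $k$-homology equivalence if and only if $\Omega\widetilde f$ is. Since $\widetilde X$ and $\widetilde Y$ are simply connected (hence nilpotent) and Bousfield $k$-localization commutes with $\Omega$ on nilpotent spaces, it further suffices to prove that $\widetilde f$ itself is a $k$-homology equivalence.

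To establish this, I would compare the Serre spectral sequences of the fibrations $\widetilde X\to X\to B\pi$ and $\widetilde Y\to Y\to B\pi$. The essential input is that the deck-transformation action of $\pi$ is trivial on $H_*(\widetilde X;k)$ and on $H_*(\widetilde Y;k)$ for weakly simple spaces. This can be established by induction along the Postnikov tower of $X$: at stage $n$, the principal fibration $K(\pi_n X,n)\to\widetilde{P_n X}\to\widetilde{P_{n-1}X}$ has $\pi$-action trivial on the fiber (by weak simplicity, since the action on $\pi_n X$ is trivial and a $K(\pi_n X,n)$ is determined up to homotopy by $\pi_n X$) and, by induction, trivial on the base's $k$-homology; the Serre spectral sequence of this principal fibration with untwisted coefficients then gives the action on the $k$-homology of the total space.

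With untwisted coefficients on both sides and the same base $B\pi$, the map of spectral sequences induced by $f$ acts as the identity on the base. Combined with the hypothesis that $H_*(X;k)\to H_*(Y;k)$ is an isomorphism on abutments, Zeeman's comparison theorem, applied to this map of first-quadrant spectral sequences with K\"unneth-type $E_2$-pages, yields the desired isomorphism $H_*(\widetilde X;k)\to H_*(\widetilde Y;k)$. The main obstacle is rigorously establishing the triviality of the $\pi$-action on $H_*(\widetilde X;k)$: one cannot simply deduce this from the triviality of the action on $\pi_*$, since a self-map of a simply connected space can induce the identity on all homotopy groups without inducing the identity on homology, so the Postnikov induction must be carried out carefully, checking that trivial $\pi$-action on the associated graded of the Serre filtration actually lifts to trivial action on the full $k$-homology (where the fact that $k$ is a localization of $\mathbb{Z}$ and that the filtration is bounded at each homological degree is used to handle potential extensions).
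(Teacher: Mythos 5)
Your opening reduction (decomposing $\Omega X$ into components each equivalent to $\Omega\widetilde X$, and reducing to showing $\widetilde f$ is a $k$-homology equivalence) is fine and matches the paper. But the engine of your argument — that weak simplicity forces the deck action of $\pi=\pi_1X$ on $H_*(\widetilde X;k)$ to be trivial, so that the Serre spectral sequence of $\widetilde X\to X\to B\pi$ has untwisted coefficients and Zeeman comparison applies — rests on a false claim, and you have correctly located the exact point where it fails but your proposed repair does not work. Trivial action on the associated graded of a bounded filtration does \emph{not} imply trivial action on the filtered object: the action can be unipotent and nontrivial, and $\pi_1$ need not be finite, so there is no Maschke-type argument even over $\QQ$. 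Concretely, let $F=K(\ZZ,2)\times K(\ZZ,4)$ and let $\phi\colon F\to F$ be $(x,y)\mapsto (x,\,y\cdot\mu(x))$, where $\mu\colon K(\ZZ,2)\to K(\ZZ,4)$ represents $\iota_2^2$ and $\cdot$ is the H-space multiplication on $K(\ZZ,4)$. Then $\phi$ induces the identity on $\pi_2$ and $\pi_4$ (hence on all homotopy groups), but $\phi^*(\iota_4)=\iota_4+\iota_2^2$ in $H^4(F;\QQ)$, so $\phi_*\neq\id$ on $H_4(F;\QQ)$. The mapping torus $X=T_\phi$ is weakly simple with $\pi_1X=\ZZ$ and $\widetilde X\simeq F$, yet the generator of $\pi_1X$ acts on $H_4(\widetilde X;\QQ)$ by a nontrivial unipotent matrix whose restriction to the Serre filtration steps is the identity — exactly the failure mode you flagged. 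So your Postnikov induction cannot establish the untwistedness you need, and the Zeeman comparison step is not available as stated.

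The paper's proof avoids this issue entirely by never making any claim about $H_*(\widetilde X;k)$ in degrees above the Hurewicz range. It inducts up the Whitehead tower of $X$ itself, using the fibrations $\tau_{\geq i+1}X\to X\to\tau_{\leq i}X$: the fiber is $i$-connected, so the only homology of the fiber entering the spectral sequence in total degree $i+1$ is its bottom group $H_{i+1}(\tau_{\geq i+1}X;k)\cong\pi_{i+1}X\otimes k$, where the $\pi_1$-action is literally the action on homotopy groups and hence trivial by weak simplicity. This yields $\pi_{i+1}X\otimes k\cong\pi_{i+1}Y\otimes k$, i.e.\ that $\widetilde f$ is a $k$-\emph{homotopy} equivalence, which for simply connected spaces gives the $k$-homology equivalence of $\Omega\widetilde X\to\Omega\widetilde Y$ that you and the paper both want. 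If you wish to keep your framework, you would need to replace "the action on $H_*(\widetilde X;k)$ is trivial" by this degreewise Hurewicz argument, at which point you have essentially reconstructed the paper's proof.
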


\begin{proof}
	$\Omega X \to \Omega Y$ can be identified with the map $\pi_1X\times
	\Omega \tilde{X} \to \pi_1X \times \Omega \tilde{Y}$, 
	where
	$\tilde{X},\tilde{Y}$ are the universal covers of $X,Y$ respectively.
	For simply connected spaces, $k$-homology equivalences and $k$-homotopy
	equivalences are the same, so it is enough to show that
	$\tilde{X}\to\tilde{Y}$ is a $k$-homotopy equivalence. We will show
	inductively on $i\geq 1$ that it is a $k$-homotopy equivalence in degrees
	$\leq i$. 
	The base case $i = 1$ holds by assumption.
	For the inductive step, if we know the result in degrees $\leq
	i$, then we aim to show it holds in degree $i+1$. Consider the fiber sequences $\tau_{\geq i+1}X \to X \to \tau_{\leq i}X$ and $\tau_{\geq i+1}Y \to Y \to \tau_{\leq i}Y$. The $k$-homology of $\tau_{\leq i}X$ and $\tau_{\leq i}Y$ agree by the inductive hypothesis. By the Hurewicz theorem and our assumption that $X,Y$ are weakly simple, 
	the action of $\pi_1X$ on $H_{i+1}(\tau_{\geq i+1}X;k) = \pi_{i+1}X\otimes k$ and
	the action of $\pi_1Y$ on $H_{i+1}(\tau_{\geq i+1}Y;k) = \pi_{i+1}Y\otimes k$ are trivial. It follows by examination of the map of Serre spectral sequences of these fiber sequences and the fact that $X \to Y$ is a $k$-homology equivalence that the map $H_{i+1}(\tau_{\geq i+1}X;k) \to H_{i+1}(\tau_{\geq i+1}Y;k)$ is an equivalence. 
	Using the Hurewicz theorem to identify
$\pi_{i+1}(\tilde{X})\otimes k \simeq
H_{i+1}(\tau_{\geq i+1}X;k)$, and similarly for $Y$ in place of $X$,
	we have $\pi_{i+1}(\tilde{X})\otimes k \simeq
	H_{i+1}(\tau_{\geq i+1}X;k) \simeq 
H_{i+1}(\tau_{\geq i+1}Y;k)
	\simeq
	\pi_{i+1}(\tilde{Y})\otimes k$, completing the induction.
\end{proof}

The following lemma identifies the target of the comparison map after group completion:
\begin{lemma}\label{lemma:targetgroupcompletion}
	The map
	\begin{align}
		\label{equation:psi-definition}
		\psi:	\Omega B\Conf^{c} \to
		\Omega B\Conf \times_{\Omega B\pi_0\Conf}\Omega
		B\pi_0\Conf^{c}
	\end{align}
	is an equivalence. 
\end{lemma}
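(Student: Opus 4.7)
The plan is to show that applying the classifying space functor $B \colon \Alg(\Spc) \to \Spc_*$ to the tautological pullback square defining $\Conf^c$ yields a Cartesian square of pointed spaces; then applying the loop functor $\Omega$, which preserves pullbacks as a right adjoint, will yield the equivalence $\psi$.

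By \autoref{construction:homologystructure}, $\Conf^c = \Conf \times_{\pi_0 \Conf} \pi_0 \Hur^c$ as an $\EE_1$-monoid in $\Spc$, and $\pi_0 \Conf^c = \pi_0 \Hur^c$. Thus the square
\begin{center}
\begin{tikzcd}
\Conf^c \ar[r] \ar[d] & \Conf \ar[d] \\
\pi_0 \Conf^c \ar[r] & \pi_0 \Conf
\end{tikzcd}
\end{center}
is Cartesian in $\Alg(\Spc)$, and $\psi$ is---by construction---the comparison map from $\Omega B$ of this pullback to the pullback of $\Omega B$'s.

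To show this comparison is an equivalence, I would compute $B$ via the bar construction, with $B_k M = M^k$. Since pullbacks commute with finite products independently in each variable, there is a levelwise Cartesian square of simplicial spaces $B_\bullet \Conf^c \simeq B_\bullet \Conf \times_{B_\bullet \pi_0 \Conf} B_\bullet \pi_0 \Conf^c$. It then remains to verify that geometric realization preserves this particular pullback. The favorable structural feature in our setting is that $B_\bullet \pi_0 \Conf$ and $B_\bullet \pi_0 \Conf^c$ are simplicial sets (discrete at each level, since $\pi_0 \Conf = \NN$ and $\pi_0 \Conf^c$ are discrete), and at each level $k$ the map $B_k \Conf = \Conf^k \to \NN^k$ is a Serre fibration, namely a projection of a disjoint union of connected components onto its indexing set. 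Under such fibrancy, standard results on the realization of simplicial spaces ensure that realization preserves the pullback.

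The main obstacle is this last step---rigorously verifying that geometric realization commutes with this particular pullback, via a Reedy-type argument confirming that the map $B\Conf \to B\pi_0 \Conf$ is a quasi-fibration in the appropriate sense. An alternative route that avoids this technicality is to compute rationally: by the group completion theorem, one identifies $H_*(\Omega B \Conf^c;\QQ)$ with the localization $H_*(\Conf^c;\QQ)[(\pi_0 \Conf^c)^{-1}]$ and likewise for the right-hand side, so that the Künneth-style identity $H_*(\Conf^c;\QQ) \cong H_*(\Conf;\QQ) \otimes_{H_*(\pi_0 \Conf;\QQ)} H_*(\pi_0 \Conf^c;\QQ)$ directly yields a rational homology equivalence, which then upgrades to a weak equivalence via \autoref{lemma:simplespacesloopspace} since both sides are loop spaces, hence weakly simple, after matching $\pi_0$ and $\pi_1$.
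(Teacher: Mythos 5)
Your fallback route is essentially the paper's proof, and it is the part of your proposal that actually works. The paper first observes that $\psi$ is an isomorphism on $\pi_0$ and $\pi_1$, reduces to showing $\psi$ is a homology equivalence (both sides are loop spaces, hence weakly simple), and then computes both sides via the group completion theorem: $H_i(\Omega B\Conf^c;\ZZ)\cong H_i(\Conf^c;\ZZ)[\alpha_c^{-1}]\cong\oplus_{\Omega B\pi_0\Conf^c}\colim_n H_i(\conf_n;\ZZ)$, which matches the homology of the right-hand side because the fiber product there is taken over the \emph{discrete} space $\Omega B\pi_0\Conf$ and is therefore just a disjoint union of copies of the fiber of $\Omega B\Conf\to\ZZ$ indexed by $\Omega B\pi_0\Conf^c$. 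Two small imprecisions in your sketch of this route: the group completion theorem does not apply directly to the right-hand side of $\psi$ (it is not presented as $\Omega B$ of a monoid — one computes its homology by the disjoint-union observation just made, not by a derived tensor product identity); and \autoref{lemma:simplespacesloopspace} as stated gives that looping preserves homology equivalences of weakly simple spaces, whereas what you need is the (standard, and contained in that lemma's proof) fact that a homology equivalence of weakly simple spaces which is also a $\pi_0$- and $\pi_1$-isomorphism is a weak equivalence.

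Your primary route — showing that $B$ applied to the defining pullback square of $\Conf^c$ is again a pullback — is genuinely different and, if completed, would prove a stronger delooped statement. But the step you flag as "the main obstacle" is a real gap, not a technicality, and your proposed justification (each $B_k\Conf=\Conf^k\to\NN^k$ is a projection onto a discrete indexing set) is not sufficient. Geometric realization of a levelwise homotopy pullback of simplicial spaces need not be a homotopy pullback even when the map is a levelwise fibration: the levelwise pullback of $*\to \Delta^1/\partial\Delta^1\leftarrow *$ realizes to a point rather than to $\Omega S^1=\ZZ$. The standard sufficient criterion (Bousfield--Friedlander) requires, beyond $\pi_0B_\bullet\Conf\to\pi_0B_\bullet\pi_0\Conf$ being a fibration of simplicial sets (which holds here, as it is the identity on $B_\bullet\NN$), that $B_\bullet\Conf$ satisfy the $\pi_*$-Kan condition. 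This is not automatic: the levels $\Conf^k$ are disconnected, and the inner face maps multiply configurations, inducing the far-from-surjective maps $B_a\times B_b\to B_{a+b}$ on $\pi_1$, so the surjectivity onto matching objects demanded by that condition requires a genuine argument. Until that is supplied, the primary route does not constitute a proof; the paper's homological argument sidesteps the issue entirely.
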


\begin{proof}
	We will first show that 
	Note that $\pi_1(\psi) : \pi_1(\Omega B\Conf^{c}) \to
	\pi_1(\Omega B\Conf \times_{\Omega B\pi_0\Conf}\Omega
	B\pi_0\Conf^{c})$
	is an equivalence. Therefore, in order to show $\psi$ is an equivalence,
	it suffices to show $\psi$
	is an equivalence on homology.
	By the group completion theorem, 
	\begin{align}
		\label{equation:group-completion-to-inversion}
		H_i(\Omega B\Conf^{c};\mathbb Z) \cong
		H_i(\Conf^c;\mathbb Z)[\alpha_c^{-1}].
	\end{align}
	Since $H_i(\Conf^c_n;\mathbb Z) \cong \oplus_{\pi_0
		\Hur^c_n} H_i(\conf_n;\mathbb Z)$,
	taking the sum over $n$ and inverting $\alpha_c$, 
	and using that $\pi_0
	\Hur^c[(\pi_0\Hur^c)^{-1}] \cong \Omega B \pi_0\Conf^c$, we obtain
	\begin{align}
		\label{equation:inversion-to-sum}
		H_i(\Conf^c;\mathbb Z)[\alpha_c^{-1}] \cong \oplus_{\Omega
			B\pi_0\Conf^c} \colim_n H_i(\conf_n;\mathbb Z).
	\end{align}
	Note here that the colimit in $n$ is given by the stabilization map
	$\alpha:\Conf_n \to \Conf_{n+1}$, since each $\alpha_x$ for $x \in c$ acts
	through the same map, which can be identified with $\alpha$ on each component
	of $\Conf^c$.
	Applying the group completion theorem again to $\Conf$, we learn that
	\begin{align}
		\label{equation:sum-to-completion}
		H_i(\Omega B\Conf) \cong 	H_i(\Conf;\ZZ)[\alpha^{-1}] \cong \oplus_{\Omega B \pi_0 \Conf}\colim_n H_i(\conf_n;\mathbb Z).
	\end{align}
	so combining
	\eqref{equation:group-completion-to-inversion},\eqref{equation:inversion-to-sum},
	and \eqref{equation:sum-to-completion}, we find
	\begin{align*}
		H_i(\Omega B\Conf^c;\ZZ) \cong \oplus_{\Omega B\pi_0\Conf^c} \colim_n H_i(\conf_n;\mathbb Z) 
		\cong 
		H_i(\Omega B\Conf \times_{\Omega B\pi_0\Conf}\Omega
		B\pi_0\Conf^{c};\mathbb Z).
	\end{align*}
	Hence, $\psi$ is an equivalence.
\end{proof}

The lemma below is the key ingredient in the proof of the proposition. Recall that $c/c$ is the set of components of the rack $c$, i.e the orbits of the action of $c$ on itself. There is a natural surjective map of racks $c \to c/c$, where $c/c$ is given the trivial rack structure.
\begin{lemma}\label{lemma:rackhomologyequivalence}
	For any finite rack $c$, the map 
	$$B\Hur^c \to B\Hur^{c/c}$$
	induced by $c \to c/c$ is a $\ZZ[\frac 1 {|G^0_c|}]$-homology equivalence.
\end{lemma}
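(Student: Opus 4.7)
The plan is to identify both sides of the displayed map with rack homology of the respective racks and then invoke the structural computation of rack homology of \cite{etingof2003rack}. Concretely, I would show that the map $H_*(B\Hur^c;\ZZ) \to H_*(B\Hur^{c/c};\ZZ)$ is an isomorphism after inverting $|G^0_c|$ by comparing both sides to rack chain complexes, where the Etingof--Grana theorem provides the required algebraic input.

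First, using the description of $\coprod_n \Hur^c_n$ as the free $\EE_2$-algebra on $c$ in the category of $c$-crossed spaces recalled in \autoref{subsubsection:hurwitz-e2}, and retaining only the underlying $\EE_1$-algebra structure, the bar construction $B\Hur^c$ has integral homology naturally identified (up to degree shift) with the rack homology $H^{\on{rack}}_*(c)$. This identification is the substance of \cite[Corollary 5.4]{randal-williams:homology-of-hurwitz-spaces}. Note that it does not pass through the group completion step, so the gap noted in the remark preceding the lemma does not affect us here (the loop space discussion only becomes relevant when one later wants to compare $\Omega B\Hur^c$ with $\Omega B\Conf$, which is the subject of \autoref{lemma:simplespacesloopspace}, not of this lemma). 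The same identification applied to the trivial rack $c/c$ yields $H_*(B\Hur^{c/c};\ZZ) \cong H^{\on{rack}}_*(c/c)$. Moreover, both identifications are functorial in the rack argument, so the map under study is identified with the natural map of rack chain complexes induced by the rack homomorphism $c \to c/c$.

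Second, the lemma is reduced to the purely algebraic statement that the map $H^{\on{rack}}_*(c;\ZZ[\tfrac{1}{|G^0_c|}]) \to H^{\on{rack}}_*(c/c;\ZZ[\tfrac{1}{|G^0_c|}])$ is an isomorphism. This is precisely the content of the main rack homology decomposition in \cite{etingof2003rack}: the difference between the rack homology of $c$ and that of its ``trivialization'' $c/c$ is controlled by torsion whose order divides $|G^0_c|$, and hence vanishes upon inverting $|G^0_c|$. Applying this with coefficients in $\ZZ[\tfrac{1}{|G^0_c|}]$ concludes the proof.

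The main technical burden is the identification in the first step, together with its naturality in the rack. Both are essentially standard once one has the $\EE_2$-algebra description of $\Hur^c$, and the naturality is manifest since the whole construction $c \mapsto \Hur^c$ is functorial in maps of racks. The genuine content of the lemma is thus the Etingof--Grana computation, which we take as a black box.
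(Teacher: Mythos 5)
Your proposal is correct and follows essentially the same route as the paper: identify the homology of $B\Hur^c$ with rack homology of $c$ (the paper does this concretely via the topological model of \autoref{proposition:modelquotdescription} with $M=N=*_+$ and a filtration by the number of interior labels, whose spectral sequence gives the rack cochain complex), and then invoke \cite[Theorem 4.2]{etingof2003rack} to conclude after inverting $|G^0_c|$. The only difference is that you cite \cite[Corollary 5.4]{randal-williams:homology-of-hurwitz-spaces} as a black box for the identification where the paper supplies its own model, which is a presentational rather than a mathematical distinction.
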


\begin{proof}
		 This lemma follows as in the proof of \cite[Corollary
		 5.4]{randal-williams:homology-of-hurwitz-spaces}. Using
		 \autoref{proposition:modelquotdescription} for $X = c$, $M=N=*_+$,
		 we obtain an explicit topological model for $B\Hur^c$. By
		 filtering this space by the number of labels on the interior,
		 we get a filtration such that the associated cohomology
		 spectral sequence is the cochain complex computing rack
		 cohomology of $c$, see \cite{etingof2003rack}. It follows from \cite[Theorem 4.2]{etingof2003rack} that the map $c \to *$ induces an isomorphism on rack cohomology with $\ZZ[\frac 1 {G^0_c}]$-coefficients, which completes the proof.
\end{proof}

\begin{proof}[Proof of \autoref{proposition:rackhomology}]
	By the group completion theorem, $v[\alpha_{c}^{-1}]$ can be identified with
	the map $\Omega B\Hur^{c} \to \Omega B\Conf^{c}$ on homology. It is then enough to show that the map $\Omega B\Hur^{c} \to \Omega B\Conf^{c}$ is a rational homology equivalence.

	We next wish to reduce to showing 
$B \Hur^c \to B \Conf^c$ is a
rational homology equivalence.
It is shown in \cite[Proposition 5.2]{fennRS:the-rack-space}
that for any rack $X$, $BX$ is weakly simple. By \autoref{lemma:targetgroupcompletion}, $B\Conf^c$ is also weakly simple, since the action of $\pi_1$ on higher homotopy groups factors through $\pi_1B\Conf$ acting on the higher homotopy groups of $B\Conf$, and $B\Conf$ is the classifying space of a rack.

Applying \autoref{lemma:simplespacesloopspace} to the map
$B \Hur^c \to B \Conf^c$, we learn that if it is a
a rational homology equivalence, then
$\Omega B\Hur^{c} \to \Omega B\Conf^{c}$
is also a rational homology equivalence.
Here, we are also using that
the map $\pi_1 B \Hur^c \to \pi_1 B \Conf^c$ is an equivalence by construction of $\Conf^c$. 
	
We next claim that $B\Conf^{c} \to B\Conf$ is an equivalence on rational
homology.
	The map $\Omega B\pi_0\Hur^{c} \to \Omega B\pi_0\Conf$ is surjective
	with kernel a group we call $G$. 
	Here, $G$ is finite because of the fact that $|\pi_0\Conf^c_n| = |\pi_0\Hur^c_n|$
	stabilizes as $n$ grows, which is a consequence of
	\autoref{lemma:evwlemma} and the fact that a filtered colimit of finite
	sets along surjections is finite. It follows from this and the fact that $\psi$ from
	\eqref{equation:psi-definition} is an equivalence that the fiber of the map $B\Conf^{c}
	\to B\Conf$ is $BG$. 
	We then obtain our claim that $B\Conf^{c} \to B\Conf$ is an equivalence on rational homology
	using the Serre spectral sequence for this
	fiber sequence, since the rational homology of $G$ is just $\QQ$ in
	degree $0$ with trivial action of $B\Conf$.
	
	It remains then to show that the map $B\Hur^c \to B\Conf$ is an equivalence on rational homology, but this follows from applying \autoref{lemma:rackhomologyequivalence}, since $c/c=*$ since $c$ is connected, and $\Hur^*$ is $\Conf$.
\end{proof}

\begin{remark}
	In fact, the proof above (as well as this whole section) doesn't just work rationally, but also after inverting the order of $G$ showing up in the proof and the order of the `reduced structure group' of the rack $c$ \cite[Section 2]{etingof2003rack}.
	In the case $c$ comes from a conjugacy class in a group $\langle c
	\rangle$, 
	all primes dividing $|G|$ appearing in the proof (which is potentially
		different from $\langle c \rangle$ and all primes dividing the reduced
	structure group of the rack are contained in the set of primes dividing
	$\langle c\rangle$, by \cite[Theorem 2.5]{wood:an-algebraic-lifting-invariant}. So, we can compute stable homology after one inverts the order
	of $\langle c \rangle$.
\end{remark}

We continue to use notation from \autoref{notation:hurwitz-union},
\autoref{construction:homologystructure}, and \autoref{notation:rack}.
\begin{notation}
	\label{notation:hur-c-to-c-prime}
	For $c' \subset c$ a subrack,
there is a map of $\EE_1$-algebras $\overline{i}_{c'}^{c}:\Conf^{c'} \to \Conf^{c}$ that is
an inclusion of components.
Similarly, there is a map $\tilde{i}_{c'}^{c}:\Hur^{c'} \to \Hur^{c}$.
We have a section of the induced map $\Conf^{c'}_+ \to \Conf^{c}_+$ of
pointed $\EE_1$-algebras $\overline{r}_{c}^{c'}:\Conf^{c}_+\to \Conf^{c'}_+$ 
obtained by sending all components not in image of $\Conf^{c}$ to the base
point. Similarly there is a map $\widetilde{r}_{c}^{c'}:\Hur^{c}_+\to \Hur^{c'}_+$ obtained the same way.
Let $r_{c}^{c'}$ denote the associated $\EE_1$-algebra map $A \to C_*(\Hur^{c'})$ obtained by taking reduced chains.
In particular, this map of algebras sends all generators $\alpha_x$, for $x \in
c-c'$, to $0$.
\end{notation}

\autoref{proposition:propsubgp} is the key to proving the case when $c'$ is a proper
subrack of $c$. To state it, we need to introduce the following condition on the subrack:

\begin{definition}\label{defn:self-normalizing}
	For $c$ a rack and $c' \subset c$ a subset, we define the
	{\em normalizer} of $c$ in $c'$ to be
	$N_c(c') := \{x \in c | x\triangleright c' = c'\}$.
	We say that a subrack $c' \subset c$ is \textit{self-normalizing} if $c'
	= N_c(c')$.
\end{definition}

\begin{proposition}\label{proposition:propsubgp}
	Let $c$ be a rack.
	With notation as in \autoref{notation:hur-c-to-c-prime},
	let $c'$ be a self-normalizing subrack of $c$. Then the
	map $r_c^{c'}$ induces an equivalence
	$A[\alpha_{c'}^{-1}]^{\wedge}_{\alpha_{c-c'}} \to
C_*(\Hur^{c'};\QQ)[\alpha_{c'}^{-1}]$.
\end{proposition}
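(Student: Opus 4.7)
The plan is to prove the equivalence via rigidity of homological epimorphisms (Proposition \ref{proposition:rigidity}). The map $A \to A[\alpha_{c'}^{-1}]^\wedge_{\alpha_{c-c'}}$ is a composition of a localization (Example \ref{example:localization-hom-epi}) with a completion that is itself a localization by Lemma \ref{lemma:completionhomepi}, hence is a homological epimorphism. If I can also show that the composite $A \to C_*(\Hur^{c'};\QQ)[\alpha_{c'}^{-1}]$ is a homological epimorphism, then rigidity reduces the proposition to verifying that the induced comparison map is an isomorphism on $\pi_0$.

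For the $\pi_0$ check, Lemma \ref{lemma:completionhomepi} identifies $\pi_0 A[\alpha_{c'}^{-1}]^\wedge_{\alpha_{c-c'}}$ with the quotient of $\pi_0 A[\alpha_{c'}^{-1}]$ by the two-sided ideal generated by the classes $\alpha_x$ for $x \in c-c'$. Since $c'$ is a subrack, the braid action on $c^n$ preserves $(c')^n$, so the natural map $\pi_0 \Hur^{c'} \to \pi_0 \Hur^c$ is injective on components; the self-normalizing hypothesis $c' = N_c(c')$ then guarantees that any braid orbit in $c^n$ not already contained in $(c')^n$ must involve some letter from $c-c'$, yielding surjectivity onto the relevant quotient. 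For the homological epimorphism, Lemma \ref{lemma:homepipi0} lets me further reduce to a statement at the level of pointed spaces: since $C_*(-;\QQ):\Spc_* \to \Sp$ is symmetric monoidal, the tensor product over $A$ of modules coming from pointed Hurwitz spaces is computed by a two-sided bar construction in pointed spaces. It therefore suffices to show that a certain comparison map of two-sided bar constructions of pointed Hurwitz spaces becomes a homotopy equivalence after stabilizing by $\alpha_{c'}$, for which I would employ the concrete topological model developed in Appendix \ref{appendix:topological-model}, in which bar constructions are realized as moduli of labeled configurations in a disk with prescribed boundary data.

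The main obstacle is constructing and verifying this topological equivalence. Concretely, I would produce an explicit homotopy in the topological model that slides labels lying in $c-c'$ out to the boundary and absorbs them using the invertibility of $\alpha_{c'}$, realizing the nullhomotopy indicated in Figure \ref{figure:nullhomotopy}. The self-normalizing hypothesis is indispensable at this step: it ensures that labels in $c-c'$ cannot be braided into $c'$ within the bar construction, so the sliding homotopy is globally well defined and compatible with the boundary monodromy data. Once the pointed-space equivalence is in hand, the algebraic framework from Section \ref{section:algebra-background} packages it into the desired equivalence of $\EE_1$-rings via Proposition \ref{proposition:rigidity}.
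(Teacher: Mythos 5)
Your proposal is correct and follows essentially the same route as the paper: rigidity of homological epimorphisms (Proposition \ref{proposition:rigidity}) plus the $\pi_0$ identification from Lemma \ref{lemma:completionhomepi}, with the homological epimorphism condition reduced via Lemma \ref{lemma:homepipi0} to a two-sided bar construction of pointed spaces, which is then handled by the explicit nullhomotopy of Figure \ref{figure:nullhomotopy} in the topological model of Appendix \ref{appendix:topological-model}. The only cosmetic difference is that the self-normalizing hypothesis enters only in the bar-construction nullhomotopy (Proposition \ref{proposition:tensorproducthomotopy}), not in the $\pi_0$ check as your write-up suggests.
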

\begin{proof}
	We first note that since $\alpha_{c-c'}$ acts by $0$ on
	$C_*(\Hur^{c'};\QQ)[\alpha_{c'}^{-1}]$, 
	completion doesn't change it, so the map
	$A[\alpha_{c'}^{-1}]^{\wedge}_{\alpha_{c-c'}} \to
	C_*(\Hur^{c'};\QQ)[\alpha_{c'}^{-1}]$
in the proposition statement is obtained by taking $r_c^{c'}$, localizing at
$[\alpha_{c'}^{-1}]$ and completing at $\alpha_{c-c'}$.
	
	We will prove the proposition by applying \autoref{proposition:rigidity} 
	(rigidity of hom. epis)
	to the triangle
	\begin{center}
	\begin{tikzcd}
		A\ar[rr]\ar[dr] & &  C_*(\Hur^{c'};\QQ)[\alpha_{c'}^{-1}]\\
		& \ar[ur,"h"] A[\alpha_{c'}^{-1}]^{\wedge}_{\alpha_{c-c'}}.& 
	\end{tikzcd}
\end{center}
	The diagram is a triangle of $\EE_1$-algebras since the completion on the source is obtained by inverting a central idempotent by \autoref{lemma:completionhomepihur}.

	Note that all of the objects in the triangle are connective. By
	\autoref{lemma:completionhomepi},
	$\pi_0A[\alpha_{c'}^{-1}]^{\wedge}_{\alpha_{c-c'}}$ is the quotient of
	$\pi_0A[\alpha_{c'}^{-1}]^{\wedge}$ by the two sided
	ideal generated by $\alpha_{c-c'}$. 
	This ideal is exactly the classes coming from components not in $\Hur^{c'}$. Thus the map 
$h$
	is an equivalence on $\pi_0$.
	Moreover, $A \to A[\alpha_{c'}^{-1}]^{\wedge}_{\alpha_{c-c'}}$ is a hom. epi. by
	\autoref{lemma:completionhomepihur} and
	\autoref{example:localization-hom-epi}.

	It remains to see that the map $A \to
	C_*(\Hur^{c'};\QQ)[\alpha_{c'}^{-1}]$ is a hom. epi. Let $X =
	\pi_0\Hur^{c'}[\alpha_{c'}^{-1}]$, so that $C_*(X;\QQ) \cong
	\pi_0(C_*(\Hur^{c'};\QQ)[\alpha_{c'}^{-1}])$.
		By \autoref{lemma:homepipi0}, it is enough to check that the map 
		\begin{align*}
	C_*(X;\QQ)\otimes_AC_*(X;\QQ)
	\rightarrow C_*(X;\QQ)\otimes_{C_*(\Hur^{c'};\QQ)[\alpha_{c'}^{-1}]}C_*(X;\QQ)	
		\end{align*}
	 is an equivalence. Since $C_*(\Hur^{c'};\QQ) \to
	 C_*(\Hur^{c'};\QQ)[\alpha_{c'}^{-1}]$ is a hom. epi, we may replace the target
	 with  $C_*(X;\QQ)\otimes_{C_*(\Hur^{c'};\QQ)}C_*(X;\QQ)$. Then this
	 comparison is now induced from taking reduced chains of a map at the
 level of spaces. In what follows, the tensor product is the two sided bar
 construction of pointed spaces and we use notation from
 \autoref{subsubsection:spaces}:
	\begin{align}
		\label{equation:hom-epi-spaces}
	 X_+\otimes_{\Hur^{c}_+}X_+ \to X_+\otimes_{\Hur^{c'}_+}X_+.
	\end{align}
	So, it is enough to verify the map \eqref{equation:hom-epi-spaces} is a
	homology
	equivalence, which follows from
	\autoref{proposition:tensorproducthomotopy} below,
	using that $c'$ is self-normalization so $N_c(c') = c'$.
\end{proof}

\begin{figure}
	\includegraphics[scale=.6]{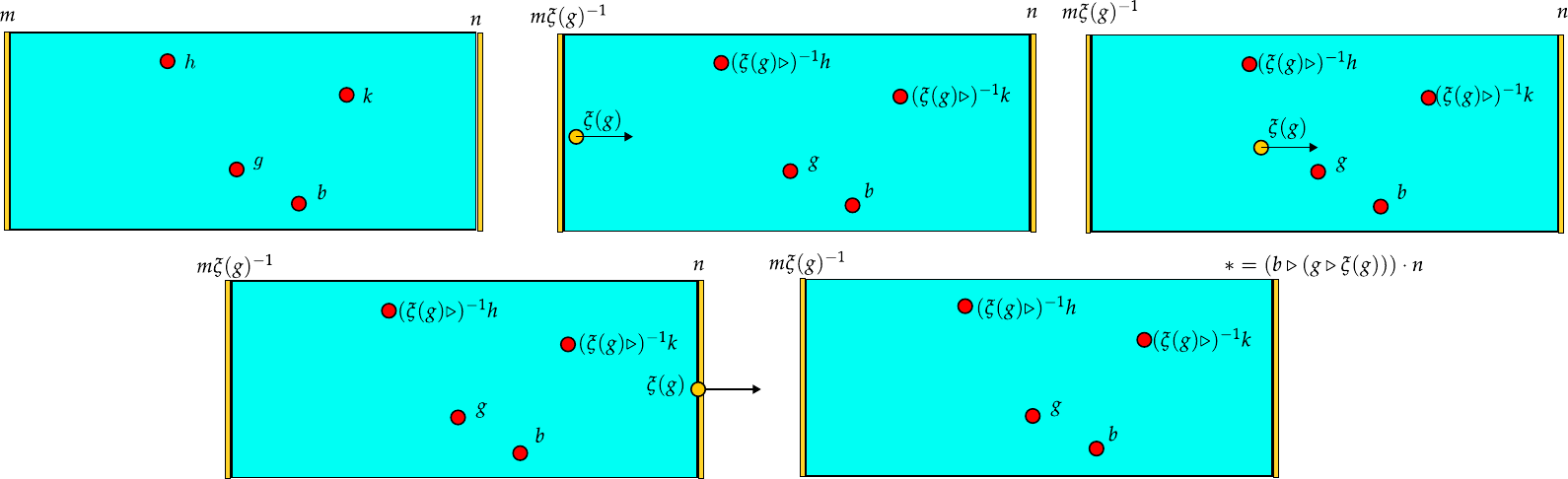}
	\caption{This is a pictorial description of the homotopy from
		\autoref{proposition:tensorproducthomotopy}.
We take $b \in c'$ but $g \notin c'$.
We pass the element $\xi(g)$ from the left to the right. 
When it meets the right boundary, it acts on the boundary by
$b \triangleright (g\triangleright \xi(g)))$. 
By assumption $g\triangleright \xi(g) \notin c'$.
Since $b \in c'$, 
$b \triangleright (g\triangleright \xi(g)))$
is also not in $c'$.
Therefore, the right boundary becomes the base point, and this whole
configuration is then identified
with the base point.}
\label{figure:nullhomotopy}
\end{figure}

The final hurdle before proving our main result is the 
following key result, which was
needed for the proof of \autoref{proposition:propsubgp}.

\begin{proposition}\label{proposition:tensorproducthomotopy}
	Let $c$ be a rack, $c' \subset c$ be a subrack with normalizer $N_c(c')$,  
	and $X_+$ be $\pi_0\Hur^{c'}[\alpha_{c'}^{-1}]_+$, viewed as a $\Hur^{c'}_+$-bimodule. 
	Then the map
\begin{align}
	\label{equation:c-to-prime-equivalence}
	X_+\otimes_{\Hur^{c}_+}X_+ \to X_+\otimes_{\Hur^{N_c(c')}_+}X_+ 
\end{align}
	is a homology equivalence.
\end{proposition}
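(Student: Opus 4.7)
The plan is to work at the level of pointed spaces, using the topological model for two-sided bar constructions provided in \autoref{appendix:topological-model}, and to exhibit \eqref{equation:c-to-prime-equivalence} as a pointed homotopy equivalence via an explicit deformation retract. First, I would use the topological model to realize both sides as spaces of labeled configurations on a strip, with a left and a right boundary label from $X_+$ and finitely many interior labels from $c$ (source) or $N_c(c')$ (target), modulo the standard identifications that encode the rack braiding $\alpha_a\alpha_b \sim \alpha_b \alpha_{b\triangleright a}$, the absorption of interior labels into the boundary $X$-labels via the $\Hur^{c'}$-action, and base-point collapsing whenever a boundary label represents a class outside $\pi_0\Hur^{c'}[\alpha_{c'}^{-1}]$. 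The comparison map \eqref{equation:c-to-prime-equivalence} is then the natural projection that collapses to the base point any configuration carrying an interior label in $c\setminus N_c(c')$.

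The main step is an explicit nullhomotopy showing that the subspace of source configurations containing at least one interior label from $c\setminus N_c(c')$ is pointed contractible. For each $g \in c \setminus N_c(c')$, by definition of the normalizer there exists $\xi(g) \in c'$ with $g\triangleright \xi(g) \notin c'$; fix such a choice once and for all. Given a configuration with interior label $g$ of this kind, use the invertibility of $\alpha_{\xi(g)}$ in $X$ to introduce an interior label $\xi(g) \in c'$ immediately to the left of $g$, compensated by a corresponding adjustment of the left-boundary $X$-label. Now slide this $\xi(g)$ label rightward: upon crossing $g$ it becomes $g\triangleright \xi(g) \notin c'$ by the braiding, and upon reaching the right boundary it modifies the first letter $b \in c'$ of a representative word on $c'$ of the right-boundary element, replacing it with $b \triangleright (g\triangleright \xi(g))$. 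Since $c'$ is a (finite) subrack, conjugation by $b \in c'$ preserves $c\setminus c'$, so this new letter lies outside $c'$; hence the right-boundary word no longer represents a class in $\pi_0\Hur^{c'}[\alpha_{c'}^{-1}]$, and the entire configuration collapses to the base point. See \autoref{figure:nullhomotopy}.

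Performing this slide one offending label at a time in a fixed left-to-right order gives a pointed nullhomotopy of the inclusion of the ``excess'' subspace, so that the source of \eqref{equation:c-to-prime-equivalence} deformation retracts onto the subspace where every interior label lies in $N_c(c')$, which is precisely the target. The resulting pointed homotopy equivalence is in particular a homology equivalence. The main obstacle is verifying that the slide homotopy descends to the quotient space defining the bar construction, respecting the braid identifications, the boundary-absorption identifications, and the base-point collapsing simultaneously. This is precisely what the topological model of \autoref{appendix:topological-model} is engineered to encode; granted that model, the sliding homotopy descends automatically, and the argument goes through.
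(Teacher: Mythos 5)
Your geometric mechanism is the right one and is exactly the paper's: the choice function $\xi$ with $g\triangleright\xi(g)\notin c'$, the insertion of $\xi(g)$ at the left boundary using invertibility of $\alpha_{c'}$ on $X$, the horizontal slide to the right boundary, and the resulting collapse of the right-hand module element (compare \autoref{figure:nullhomotopy}). But there are two genuine gaps. First, it matters \emph{which} offending label you slide past. When the inserted point reaches the right boundary it is conjugated by \emph{all} interior labels lying below it in the vertical ordering (relation $(ii)$ of \autoref{definition:quotientmodel}), not merely by $g$ and one letter of the boundary word. Only elements of $N_c(c')$ are guaranteed to preserve $c\setminus c'$; if another label from $c\setminus N_c(c')$ sits between your chosen $g$ and the bottom, the conjugated element can land back in $c'$ and the configuration fails to collapse. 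The paper therefore always inserts next to the \emph{lowest} label not in $N_c(c')$, so that everything below it normalizes $c'$. Your ``fixed left-to-right order'' does not address this.

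Second, and more seriously, the claim that ``the sliding homotopy descends automatically'' to the quotient is false. The homotopy depends on the chosen label $l(p_v)$ through $\xi(l(p_v))$, and $\xi$ is an arbitrary choice function, not equivariant for the rack action. When a non-offending point lying below $p_v$ is absorbed into the left boundary (relation $(i)$), the label of $p_v$ is replaced by a conjugate, and $\xi$ of the conjugate need not equal the conjugate of $\xi$; the two ways of computing the homotopy genuinely disagree. The paper repairs this by filtering the excess subspace by the position (from the bottom) of the lowest offending label and constructing the nullhomotopy only on the associated graded pieces, where exactly these problematic configurations are already identified with the base point; it also must shrink $\epsilon$ to make room for the inserted point and phrase everything as an ind-weak homology equivalence. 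These are not bookkeeping conveniences supplied for free by the appendix model — they are the additional ideas needed to make your homotopy well defined, and your proposal is missing them.
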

\begin{remark}
	\label{remark:}
	In fact, one can show that the map
	\eqref{equation:c-to-prime-equivalence} is not only a homology
	equivalence but even an equivalence, by verifying that the fundamental groupoids of the two spaces are isomorphic.
	However, we will not need this stronger statement, so we do not prove it.
\end{remark}
\begin{remark}
	\label{remark:}
	We note that the idea to define the filtration $F_\bullet$
in the following proof was communicated to us by Andrea Bianchi.
This allowed us to remove an additional condition 
we had previously imposed on the subrack $c'$, and thereby simplify
the following proof.
We would like to thank Andrea Bianchi for this very helpful observation.
\end{remark}
\begin{proof}
	The map of interest $X_+\otimes_{\Hur^{c}_+}X_+ \to
	X_+\otimes_{\Hur^{N_c(c')}_+}X_+ $ has a section induced from the inclusion
	of racks $N_c(c') \to c$. It thus suffices to show that this section induces
	a homology equivalence.
	
	We use the notation $\overline{Q}^*_\epsilon[M,\Hur_+^{N_c(c')},N]$, as	defined in \autoref{definition:quotientmodel} and \autoref{lemma:modelquotdescriptionpointed}, which we describe below.
	By \autoref{lemma:modelquotdescriptionpointed}, in order to prove this section is a
	homology equivalence it suffices to show the inclusion
	$$\iota_{\epsilon}:\overline{Q}^*_\epsilon[X_+,\Hur_+^{N_c(c')},X_+] \to
	\overline{Q}^*_\epsilon[X_+,\Hur_+^{c},X_+]$$ is an ind-weak homology equivalence (see \autoref{definition:indweakeq}) as $\epsilon$ approaches $0$ where $0<\epsilon<1$.
	
	We now recall the description of these spaces, which follows from \autoref{remark:eqrelation}.
	First, ${Q}_{\epsilon}[X_+,\Hur^{c}_+,X_+]$ is the space whose points
	consist of tuples
	$(m,n,x,l:x \to c)$ with the following properties:
	$m \in X_+, n \in X_+$, $x$ is a finite 
	subset $x$ of
	$[0,1]\times [\epsilon,1-\epsilon]$ such that if $\pi_2$ is the projection onto the
	second coordinate, then $\pi_2$ is injective on $x$, and the points in
	$\pi_2(x)$ are distance at least $\epsilon$ from each other,
	and $l:x \to c$ is a map of sets, which we colloquially refer to as a labeling function.
	
	Viewing $X_+$ as having a left and right action of $\Hur^c$, 	
	$\overline{Q}^*_{\epsilon}[X_+,\Hur^{c}_+,X_+]$
	is the quotient of 
${Q}_{\epsilon}[X_+,\Hur^{c}_+,X_+]$
	by the following three relations:
		\begin{itemize}
			\item[$(i)$] 			Suppose $x = \{p_1, \ldots, p_r,\ldots, p_k\}$ with $\pi_2(p_1) >
			\cdots > \pi_2(p_k)$. If $\pi_1(p_r) = 0$, so $p_r$ lies
			on the left boundary, the data ($m,n,x,l:x \to c)$ is
			equivalent to $(m', n, x' := \{p_1, \ldots, p_{r-1},  p_{r+1},
			\ldots, p_k\}, l': x'\to c)$, where $m' = ml(p_r)$,
			$l'(p_i) := l(p_r)\triangleright l(p_i)$ for $1
			\leq i < r$, $l'(p_i) := l(p_i)$ for $i > r$.

			\item[$(ii)$] Suppose $x = \{p_1, \ldots, p_r,\ldots, p_k\}$ with
			$\pi_2(p_1) >
			\cdots > \pi_2(p_k)$. If $\pi_1(p_r) = 1$, so $p_r$ lies
			on the right boundary, the data ($m,n,x,l:x \to c)$ is
			equivalent to $(m, n', x' := \{p_1, \ldots, p_{r-1},  p_{r+1},
			\ldots, p_k\}, l': x'\to c)$, where $n' = ((l(p_{r+1})\cdots l(p_{k}))\triangleright l(p_r))n$, $l'(p_i) := l(p_i)$ for $i \neq r$.
			\item[$(iii)$] All points where either $n$ or $m$ are the base point are identified to the base point.
		\end{itemize}
		
		The space $\overline{Q}^*_{\epsilon}[X_+,\Hur^{N_c(c')}_+,X_+]$ is
		defined similarly, except the the labelling function $l$ must
		land in $N_c(c')\subset c$.

	At this point, it may be useful to refer to the notion of two spaces being
	ind-weakly equivalent, defined in 
\autoref{definition:indweakeq}.
			Let $S^{c,c'}_{\epsilon}$ denote the quotient of the inclusion $\overline{Q}^*_{\epsilon}[X_+,\Hur^{N_c(c')}_+,X_+]\to  \overline{Q}^*_{\epsilon}[X_+,\Hur^{c}_+,X_+]$.

	To show that the
	$\iota_{\epsilon}$ is an ind-weak homology equivalence, using that the
	inclusion has the homotopy extension property,  (\autoref{lemma:cofibration}), it suffices to show the
	following:
	\begin{itemize}
		\item[$\star'$] 
			The space $S^{c,c'}_{\epsilon}$ is ind-weakly homology equivalent to a point.
	\end{itemize}

Note that any point in $S^{c,c'}_{\epsilon}$ that is not the base point can be represented by a point in $\overline{Q}^*_{\epsilon}[X_+,\Hur^{c}_+,X_+]$ where some $l(p_i)$ is not in $N_c(c')$.

It will be easier to prove the desired homology equivalence at the level of the
associated graded of a certain filtration, which we define next.
We define the filtration $F_\bullet S^{c,c'}_{\epsilon}$ on
$S^{c,c'}_{\epsilon}$,
where, for $j \geq 0$, $F_j S^{c,c'}_{\epsilon}$ 
is the subset of $S^{c,c'}_{\epsilon}$
consisting of those points where one of $l(p_{k-j}), \ldots, l(p_k)$ lies in
$N_c(c') - c$, together with the base point.
We set $F_{-1}S^{c,c'}_{\epsilon}$ to be the basepoint, and use $G_j S^{c,c'}_{\epsilon}$ to denote the associated graded of the filtration $F_j S^{c,c'}_{\epsilon}/ F_{j-1} S^{c,c'}_{\epsilon}$, obtained by identifying $F_{j-1} S^{c,c'}_{\epsilon}$ with the base point.

 It is easy to see that this is an exhaustive filtration by sub CW-complexes, so
 that it suffices to prove the following associated graded version of $\star'$:

\begin{itemize}
	\item[$\star$] For each $j\geq0$, $G_jS^{c,c'}_{\epsilon}$ is ind-weakly equivalent to a point.
\end{itemize}

	Fixing $j\geq0$ and $\epsilon>0,$ 
we will find a smaller $\epsilon'$ such that the map
$G_jS^{c,c'}_{\epsilon} \to G_jS^{c,c'}_{\epsilon'}$ is nullhomotopic.

We may choose a function
$\xi:c-N_c(c') \to c'$ such that 
$g\triangleright \xi(g) \notin c'$ for each $g \in c-N_c(c')$, since $g\triangleright c' \neq c'$
from the definition of the normalizer of $c'$.

We use ${Q}_{\epsilon}^{c,c'}$ as notation to shorten ${Q}_{\epsilon}[X_+,\Hur^{c}_+,X_+]$.
Let $\theta:{Q}_{\epsilon}^{c,c'}\to S_{\epsilon}^{c,c'}$ be the projection map.
We define a filtration on $Q^{c,c'}_{\epsilon}$ by $F_\bullet Q^{c,c'}_{\epsilon} :=
\theta^{-1}(F_\bullet S^{c,c'}_{\epsilon})$ and use $G_{\bullet}Q^{c,c'}_{\epsilon}$ to denote the associated graded.

Fix a point $y \in Q^{c,c'}_{\epsilon}$ given by the data \begin{itemize}
	\item[$(\dagger)$] $(m,n,x = \{p_1, \ldots, p_k\},l:x \to c)$ with $\pi_2(p_1)> \cdots > \pi_2(p_k)$. 
\end{itemize} 

Suppose $y$ as above lies in $F_jQ_{\epsilon}^{c,c'}$. Define $v$ to be the largest number
so that $l(p_v) \notin N_c(c')$. By definition of the filtration $F_j Q_{\epsilon}^{c,c'}$, we have $v\geq k-j$.
	Let $\epsilon' := \frac {\epsilon}{2}$.
	We now construct a continuous homotopy $H : F_jQ^{c,c'}_\epsilon  \times I \to
	G_j S^{c,c'}_{\epsilon'}$ 
	as follows:
	If either $m,n$ is the base point, the point $y$ is equivalent to the
	base point under relation $(iii)$, and we choose the constant homotopy
	at the base point. So now assume this is not the case. If $v>k-j$, then
	the point is in $F_{j-1}Q_{\epsilon}^{c,c'}$, so goes to the base point in the
	associated graded $G_j Q_{\epsilon}^{c,c'}$. Thus, we may choose the constant homotopy on such points as well. So now we may assume as well that $v=k-j$.
	
	Using relation $(i)$, we can
	change the label of $m$ to $m \xi(l(p_v))^{-1}$, add a point $s$ at $(0,
	\pi_2(p_v)+\frac {\epsilon} {2})$ labeled
	$\xi(l(p_v))$, 
	and replace the label $l(p_u)$ by $(\xi(l(p_v) \triangleright)^{-1}
		l(p_u)$ for $1 \leq u < v$
	to get an equivalent point of $G_jS_{\epsilon'}^{c,c'}$. 

	Now, define the homotopy $H(y,t)$ by continuously changing the location of the point $s$ to $(t,
	\pi_2(p_v)+\frac {\epsilon} {2})$ at time $t$.
	At $t=0$, $H$ is given by the composite $F_j Q_{\epsilon}^{c,c'}\rightarrow G_j
	S_{\epsilon}^{c,c'} \rightarrow G_j
	S_{\epsilon'}^{c,c'}$. Thus it will suffice to show:
\begin{itemize}
	\item[(1)] At $t=1$, $H$ is the constant map to the base point.
	\item[(2)] $H$ descends to a continuous map
		$G_j S^{c,c'}_\epsilon \times I \to
		G_j S^{c,c'}_{\epsilon'}$.
\end{itemize}
Indeed, if these are shown, then $H$ descends to a nullhomotopy of the inclusion
$G_j S^{c,c'}_{\epsilon} \to G_j
S^{c,c'}_{\epsilon'}$. 

We first check $(1)$ 
straightforwardly. For the point $y$, at $t=1$, the coordinates of $s$ are
$(1,\pi_2(p_j)+\frac {\epsilon } {2}))$, and the points with second
projection smaller than it in decreasing order are $p_v,\ldots, p_k$. Using
relation $(ii)$, 
(which we used to define $\overline{Q}^*_{\epsilon}[X_+,\Hur^{c}_+,X_+]$ as a quotient of
$Q_{\epsilon}[X_+,\Hur^{c}_+,X_+]$,) this is equivalent to a point where $s$ is removed, and $n$
changes to $n' :=[(l(p_v)\cdots l(p_k))\triangleright l(s)] \cdot n$. But
$l(p_v)\triangleright l(s) = l(p_v) \triangleright \xi(l(p_v)) \notin c'$ by
definition of $\xi$. Since the action of $l(p_u)$ for $u>v$ preserves the
property of not
being in $c'$, it follows that $((l(p_v)\cdots l(p_k))\triangleright l(s))
\notin c'$. Hence, $n'$ is the base point. Thus, by relation $(iii)$, this point
is the base point of $G_j S^{c,c'}_{\epsilon'}$, showing $(1)$.

	It remains to show $(2)$ for each $j\geq0$. It is easy to see that the equivalence relation induced by $(i),(ii),(iii)$ when restricted to the subspace $F_jQ_{\epsilon}^{c,c'}$ is the same as the one induced by the map $F_jQ_{\epsilon}^{c,c'} \to \overline{Q}^*_{\epsilon}[X_+,\Hur^{c}_+,X_+]$.
	It then suffices to show that $H$ is compatible with $(i),(ii),(iii)$ as well as the relation sending $F_{j-1}Q_{\epsilon}^{c,c'}$ to the base point.
	
	By construction,
	$H$ sends $F_{j-1}Q_{\epsilon}^{c,c'}$ to the constant homotopy at the base point, and is also compatible with the equivalence relation
	$(iii)$.

	We next check $H$ is compatible with the equivalence
	relation $(ii)$.
	Indeed, the value $l(s)$ remains unchanged under the
	identification from $(ii)$, unless $(ii)$ is applied to the point $p_v$. 
	In this case, the homotopy is the constant homotopy at the base point.
	Hence, the value of $l(s)$ is compatible with $(ii)$.

	It remains to check that $H$ is compatible with the equivalence relation $(i)$. 
	We first deal with the case that the point $p_r$ 
	being removed in $(i)$ has a label $l(p_r) \notin c'$. In this case the homotopy is the constant homotopy at the base point, 
	because for any time $t$, the point $p_r$ remains on the left hand side
	and one can use relations $(i)$ and $(iii)$ to identify it with the base
	point.
	Therefore, the nullhomotopy is compatible with
	the relation in this case.

	It only remains to consider then the case that
	the point $p_r$ being removed in $(i)$ has $l(p_r) \in c'$. 
	Note that we may assume $r \neq v$ because $l(p_v) \notin c'$.
	Because we are working on the subquotient
	$G_j S^{c,c'}_{\epsilon'}$, if we apply
	$(i)$ for some point $p_r$ with $r > v$, the point becomes identified
	with the base point, and so $H$ is the constant homotopy on the base point. (This is the reason for introducing the
	filtration $F_j$.)
	Hence, it only remains to deal with the case that $r < v$ at a point
	$y$ presented as $(m,n,x,l:x \to
	c)$, as in $(\dagger)$ above.

	We now verify relation 
	$H$ is compatible with the equivalence relation $(i)$ when $r < v$ and
	$l(p_r) \in c'$.
	If we first apply the nullhomotopy before applying $(i)$, $m$ changes to
	$m \cdot [\xi(l(p_v))^{-1}]$, we add a point $s$ at 
$(t, \pi_2(p_v) + \frac{\varepsilon}{2})$
	labeled by $\xi(l(p_v))$,
	and the points $p_u$ for $1 \leq u < v$ have labels which change from
	$l(p_u)$ to
	$(\xi(l(p_v)) \triangleright )^{-1} l(p_u)$.
	If we now apply $(i)$, $m$ then becomes $m \cdot [\xi(l(p_v))]^{-1} \cdot
	[(\xi(l(p_v))
	\triangleright )^{-1} l(p_r)]$, and 
	the label on $p_w$ for $1 \leq w < r$ becomes $((\xi(l(p_v))
	\triangleright )^{-1} l(p_r)) \triangleright ((\xi(l(p_v))
	\triangleright )^{-1} l(p_w))$.

	If instead we apply $(i)$ before applying the nullhomotopy, $m$ changes
	to $m\cdot [l(p_r)]$ and the points $p_w$ with $1 \leq w < r$ have label
	which changes from $l(p_w)$ to
	$l(p_r) \triangleright l(p_w)$.
	When we next apply the nullhomotopy, 
	we introduce a point $s$ at $(t, \pi_2(p_v) + \frac{\varepsilon}{2})$ labeled by $\xi(l(p_v))$,
	we change $m
	\cdot [l(p_r)]$ to $m \cdot [l(p_r)] \cdot [\xi(l(p_v))]^{-1}$,
	we change the label on 
$p_w$ with $1 \leq w < r$ from $l(p_r) \triangleright l(p_w)$ to
$(\xi(l(p_v)) \triangleright)^{-1} (l(p_r) \triangleright l(p_w))$,
and we change the label on $p_u$ with $r < u < v$ from $l(p_u)$ to
$(\xi(l(p_v)) \triangleright )^{-1} l(p_u)$.

	To conclude, we only need observe that the two points obtained above
	agree we need to verify the two relations
	\begin{align}
		\label{equation:m-relation}
		m \cdot [\xi(l(p_v))]^{-1} \cdot [(\xi(l(p_v))
		\triangleright )^{-1} l(p_r)] &=
		m \cdot [l(p_r)] \cdot [\xi(l(p_v))]^{-1} \\
		\label{equation:label-relation}
		((\xi(l(p_v))
	\triangleright )^{-1} l(p_r)) \triangleright ((\xi(l(p_v))
	\triangleright )^{-1} l(p_w))
	&= (\xi(l(p_v)) \triangleright)^{-1} (l(p_r) \triangleright
	l(p_w)).
	\end{align}
	First, recall the defining relation of Hurwitz space from
	\autoref{notation:hurwitz-union} implies that the positive generator of the
	braid group identifies $[g] \cdot [h] = [h] \cdot [h
	\triangleright g]$. 
	Therefore, taking $h = \xi(l(p_v))$, and using that 
	$[\xi(l(p_v))]$ acts invertibly on $M$, the action on $M$ factors
	through a group where the relation
	$[\xi(l(p_v))]^{-1} \cdot [g]  = [\xi(l(p_v))
	\triangleright g]\cdot [\xi(l(p_v))]^{-1}$
	is satisfied for each $g \in c$.
	Hence, \eqref{equation:m-relation} holds by taking
	$g = (\xi(l(p_v))\triangleright )^{-1} l(p_r)$.
Finally, \eqref{equation:label-relation} is equivalent to property $(2)$ in the
definition of rack \autoref{definition:rack}.
\end{proof}

Finally, we verify that in the non-splitting case, every non-empty subrack is self-normalizing:
\begin{lemma}\label{lemma:nullability}
	Let $c$ be a non-splitting rack, and $c'$ a nonempty subrack. Then $c'$ is self-normalizing.
\end{lemma}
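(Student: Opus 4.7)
The plan is to prove the two inclusions $c' \subseteq N_c(c')$ and $N_c(c') \subseteq c'$ separately. For the first, I would observe that for any $x \in c'$, the subrack condition gives $x \triangleright c' \subseteq c'$; since $\phi_x$ is a bijection of $c$ by \autoref{definition:rack}(1), it is injective on the finite set $c'$, so $x \triangleright c' = c'$, and hence $x \in N_c(c')$.

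For the reverse inclusion, fix $x \in N_c(c')$ and let $c''$ denote the subrack of $c$ generated by $c' \cup \{x\}$. The first key step would be to verify that $N_c(c')$ is itself a subrack of $c$. To see this, I would take $a, b \in N_c(c')$ and check that $(a \triangleright b) \triangleright c' = c'$: for any $z \in c'$, write $z = a \triangleright z'$ with $z' \in c'$ (using that $a \triangleright c' = c'$), and apply the self-distributivity axiom to get
\[
(a \triangleright b) \triangleright z \;=\; (a \triangleright b) \triangleright (a \triangleright z') \;=\; a \triangleright (b \triangleright z') \;\in\; a \triangleright c' \;=\; c'.
\]
Combined with injectivity of $\phi_{a \triangleright b}$ on the finite set $c'$, this yields $(a \triangleright b) \triangleright c' = c'$. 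Since $N_c(c')$ contains both $c'$ (by the first inclusion) and $x$, it therefore contains the subrack $c''$.

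The final step is to apply the non-splitting hypothesis: by \autoref{defn:nonsplitting}, $c''$ is connected, so the subgroup of $\on{Aut}(c'')$ generated by $\{\phi_a : a \in c''\}$ acts transitively on $c''$. Fix any $z \in c'$ and any $y \in c''$; then $y$ can be obtained from $z$ by applying a sequence of operations $\phi_a^{\pm 1}$ with $a \in c''$. Each such $\phi_a$ preserves $c'$ by the previous step, and hence so does $\phi_a^{-1}$ (by finiteness of $c'$). Therefore $y \in c'$, showing $c'' \subseteq c'$, which forces $x \in c'$ and completes the proof. I do not anticipate a serious obstacle here; the only subtlety is recognizing that $N_c(c')$ is already a subrack, after which the non-splitting hypothesis does all the work.
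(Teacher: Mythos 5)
Your proof is correct and follows essentially the same route as the paper's: both rest on the observations that $N_c(c')$ is a subrack containing $c'$, that $c'$ is preserved by the operations $\phi_a$ for $a \in N_c(c')$, and that the non-splitting hypothesis then forces connectivity and hence $N_c(c') = c'$. The only cosmetic differences are that you apply non-splitting to the subrack generated by $c' \cup \{x\}$ for each individual $x$ rather than once to all of $N_c(c')$, and that you spell out the verification that $N_c(c')$ is a subrack, which the paper asserts without proof.
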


\begin{proof}
	Let $S$ be the set of elements $y \in c$ such that $y\triangleright c'=
	c'$. Then, $S\subset c$ is a subrack of $c$ containing $c' \subset S$ as a
	subrack.
	Further, $c'$ is 
	closed under the operations $x\triangleright$ for each $x \in S$. 
	The non-splitting hypothesis applied to $S$ implies $S$ is connected,
	but we also find that $c' \subset S$ is a connected subrack. It follows
	that $S=c'$.
\end{proof}

\subsubsection{Proof of \autoref{theorem:mainhom}}
\label{subsubsection:mainhom}
	By \autoref{lemma:reductionstep}, it is enough to show that for each nonempty
	subrack $c' \subset c$, $v[\alpha_{c'}^{-1}]^{\wedge}_{\alpha_{c-c'}}$ is
	an equivalence. When $c' = c$, the result is
	\autoref{proposition:rackhomology}. When $c'\subset c$ is a proper
	subrack, we have a commutative square
	\begin{center}
	\begin{tikzcd}
		A[\alpha_{c'}^{-1}]^{\wedge}_{\alpha_{c-c'}} \ar[r]\ar[d]
	&C_*(\Hur^{c'};\QQ)[\alpha_{c'}^{-1}] \ar[d]\\
	A'[\alpha_{c'}^{-1}]^{\wedge}_{\alpha_{c-c'}}\ar[r] &
C_*(\Conf^{c'};\QQ)[\alpha_{c'}^{-1}].
	\end{tikzcd}
		\end{center}

	We wish to show the left vertical map is an equivalence, and so we will
	show the other three maps in the diagram are equivalences.
	The upper horizontal map is an equivalence by
	\autoref{proposition:propsubgp}, since the subrack is self-normalizing by \autoref{lemma:nullability}. The lower horizontal map is an
	equivalence because any homology class in $A'[\alpha_{c'}^{-1}]$ supported
	on a component that does not come from $\Conf^{c'}$
	is divisible by $\alpha_{c-c'}$, and the completion is projection onto
	the summand where $\alpha_{c-c'}$ acts by $0$ by \autoref{lemma:completionhomepi}.
	Finally, the right vertical map is an equivalence by
	\autoref{proposition:rackhomology}, where $c$ is replaced with $c'$.
	\qed

\section{Deducing the moments of the Cohen--Lenstra heuristics}
\label{section:cohen-lenstra}

In this section, we explain how we deduce 
\autoref{theorem:moments-roots-of-unity}, and hence
\autoref{theorem:moments}, from our computation of the stable homology of
Hurwitz spaces.
The structure of this section is as follows.
We first give some examples of nonsplitting conjugacy classes relevant to the
Cohen--Lenstra heuristics in \autoref{subsection:nonsplitting-examples}.
We then prove \autoref{lemma:formalization-count}, which gives a
criterion for computing the finite field point counts of a space in terms of its
stable homology.
In \autoref{proposition:hurwitz-space-point-count}, we apply this lemma to
Hurwitz spaces, and obtain a point count in terms of the number of geometrically
irreducible components of that Hurwitz space.
In \autoref{proposition:component-count}, we count the components of the
relevant Hurwitz spaces.
In \autoref{subsection:proof-moments}, we deduce
\autoref{theorem:moments} from the finite field point counts of these Hurwitz
spaces.
Finally, in \autoref{subsection:non-abelian},
we compute the moments predicted by the non-abelian Cohen--Lenstra heuristics
\autoref{theorem:non-abelian-moments-roots-of-unity}.

\subsection{Examples of non-splitting conjugacy classes}
\label{subsection:nonsplitting-examples}
An important example of a non-splitting $(G,c)$ for our applications to the
Cohen--Lenstra heuristics is the following.
\begin{example}
	\label{example:generalized-dihedral-nonsplitting}
	Let $H$ be a finite abelian group of odd order.
	Write $G = H \rtimes \mathbb Z/2
\mathbb Z$, with $H$ a finite abelian group of odd order and the nontrivial
element of $\mathbb Z/2 \mathbb Z$ acting on
$H$ via inversion.
	Let $c \subset G$ denote the conjugacy class of order $2$ elements, which is 
the preimage of the nontrivial element of $\mathbb Z/2 \mathbb Z$ under the
surjection $G \to G/H \simeq \mathbb Z/2 \mathbb Z$.
That such $(G,c)$ is non-splitting follows from Sylow's theorems, since elements
of order $2$ in any subgroup $H \subset G$ generate a Sylow subgroup, and hence are all conjugate.
This was also observed in \cite[Lemma 3.2]{EllenbergVW:cohenLenstra}.
\end{example}

The next example will not be used in this paper, but we mention it to give
another example relevant to a generalization of the Cohen--Lenstra heuristics.
\begin{example}
	\label{example:cubic-cohen-lenstra}
	Let $H$ be a finite abelian group of order prime to $6$ with an $S_3$
	action. Let $G := H
	\rtimes S_3$.
	Let $c \subset G$ denote the conjugacy class of elements which map to
	transpositions in $S_3$.
	A similar argument to
	\autoref{example:generalized-dihedral-nonsplitting} by considering
	$2$-Sylow subgroups shows that $(G,c)$ is again non-splitting.
\end{example}
\begin{remark}
	\label{remark:}
	The verification of \autoref{theorem:stable-homology-intro} for $(G,c)$
	as in \autoref{example:cubic-cohen-lenstra} 
	is closely connected to the Cohen--Lenstra--Martinet heuristics for
	torsion in class groups of cubic fields of
	squarefree discriminant, see
	\cite[\S6.1]{ellenbergVWhomologicalII}.
\end{remark}

\subsection{A preliminary lemma on point counting}
The next lemma is quite standard, and its proofs essentially appears in many places, such as 
\cite[Theorem 8.8]{EllenbergVW:cohenLenstra} and also
\cite[Theorem
9.2.4]{ellenbergL:homological-stability-for-generalized-hurwitz-spaces}.
The next lemma is a formal consequence of the
Grothendieck--Lefschetz trace formula and Deligne's bounds on eigenvalues of
Frobenius.

\begin{remark}
	\label{remark:}
	We prove the following lemma in the more general setting of stacks, but we will
only apply it in the context of schemes.
\end{remark}
In what follows, if $x$ is a complex number, we use $\| x \|$ to denote its
absolute value, while if $X$ is a set, we use $|X|$ to denote its cardinality.
For $\mathscr X$ a Deligne--Mumford stack, we follow the standard convention that 
$| \mathscr X(\mathbb F_q)|$ denotes $\sum_{x \in \mathscr X(\mathbb F_q)}
\frac{1}{|\aut(x)|}$, so points are weighted inversely proportionally to their
automorphism groups.
Of course, when $\mathscr X$ is a scheme, this agrees with the usual notion of
$|\mathscr X(\mathbb F_q)|$.

\begin{lemma}
	\label{lemma:formalization-count}
	Fix a prime power $q$ and a 
	sequence $\{Y_n\}_{n \in S}$ of nonempty smooth Deligne--Mumford
	stacks over $\spec \mathbb F_q$ with
	$Y_n$ of pure dimension $n$, where $n$ traverses over an infinite
	sequence of increasing integers $S \subset \mathbb Z_{\geq 0}$. Suppose that the following cohomological conditions are satisfied:
	\begin{enumerate}
		\item[(1) Stability:]
		There are real numbers $I$ and $J$, only depending on the sequence
		$\{Y_n\}_{n \in S}$ with the following property.
		Suppose there is some fixed $\ell$ and a collection of complex
		numbers
		$t_{i}$ so that
%
		whenever $n > I i + J$, 
		$\on{tr}(\frob_q^{-1} | H^i(Y_{n,\overline{\mathbb F}_q};
		\mathbb Q_{\ell})) = t_i$. 
				\item[(2) Exp. Bound:] There are constants $C,C'$ independent of $n$ so that for any $i$,
			$\dim H^i(Y_{n, \overline{\mathbb F}_q}; \mathbb
		Q_{\ell}) \leq C' C^{i}$.
			\end{enumerate}
	Then, if $q > C^2$,
	\begin{align*}
		\left \|  \frac{\left|Y_n (\mathbb F_q)\right|}{q^n} -
		\sum_{i=0}^\infty (-1)^i t_i 
		\right \| \leq 
		\frac{2C'}{1- \frac{C}{\sqrt{q}}}
		\left(\frac{C}{\sqrt{q}}\right)^{\frac{n-J}{I}}.
	\end{align*}
\end{lemma}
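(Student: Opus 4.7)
The plan is to apply the Grothendieck--Lefschetz trace formula for smooth Deligne--Mumford stacks (in Behrend's formulation) together with Poincar\'e duality, and then to split the resulting cohomological sum into the stable range (controlled by hypothesis (1)) and the unstable range (controlled by hypothesis (2) and Deligne's weight bound from Weil II).

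First, since $Y_n$ is smooth of pure dimension $n$, Poincar\'e duality gives an isomorphism of $\frob_q$-modules $H^i_c(Y_{n,\overline{\mathbb F}_q};\mathbb Q_\ell)\cong H^{2n-i}(Y_{n,\overline{\mathbb F}_q};\mathbb Q_\ell(n))^\vee$, whence $\tr(\frob_q\mid H^i_c)=q^n\tr(\frob_q^{-1}\mid H^{2n-i})$. Combining this with Behrend's trace formula and reindexing by $j=2n-i$ yields
\begin{equation*}
\frac{|Y_n(\mathbb F_q)|}{q^n}=\sum_{j\geq 0}(-1)^j\tr(\frob_q^{-1}\mid H^j(Y_{n,\overline{\mathbb F}_q};\mathbb Q_\ell)).
\end{equation*}

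Next, comparing termwise with $\sum_i(-1)^i t_i$, hypothesis (1) kills every term with $j<(n-J)/I$, so the error is bounded by $\sum_{j\geq\lceil(n-J)/I\rceil}\bigl(\|\tr(\frob_q^{-1}\mid H^j)\|+\|t_j\|\bigr)$. To bound each of these, I will invoke Deligne's theorem from Weil II: the cohomology $H^j$ of a smooth variety over $\mathbb F_q$ is mixed of weights $\geq j$, so every eigenvalue $\alpha$ of $\frob_q$ on $H^j(Y_{n,\overline{\mathbb F}_q};\mathbb Q_\ell)$ satisfies $\|\alpha\|\geq q^{j/2}$; this extends to smooth DM stacks via an \'etale cover. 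Combined with the dimension bound of hypothesis (2), this gives $\|\tr(\frob_q^{-1}\mid H^j)\|\leq C'(C/\sqrt{q})^j$, and the same bound holds for $t_j$ since it is the value of this trace for some sufficiently large $n$.

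Finally, since $q>C^2$ gives $C/\sqrt{q}<1$, summing the geometric series yields
\begin{equation*}
\left\|\frac{|Y_n(\mathbb F_q)|}{q^n}-\sum_{i=0}^\infty(-1)^i t_i\right\|\leq 2C'\sum_{j\geq\lceil(n-J)/I\rceil}\left(\frac{C}{\sqrt{q}}\right)^j\leq\frac{2C'}{1-C/\sqrt{q}}\left(\frac{C}{\sqrt{q}}\right)^{(n-J)/I},
\end{equation*}
which is the required estimate. The main point demanding attention is that Behrend's trace formula, Poincar\'e duality, and Deligne's weight bound all must be invoked in the generality of smooth Deligne--Mumford stacks rather than just smooth schemes; each of these extensions is standard via \'etale cover arguments.
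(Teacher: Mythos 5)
Your proposal is correct and follows essentially the same route as the paper: apply the trace formula for smooth DM stacks, split the alternating sum at $i = (n-J)/I$ using the stability hypothesis, bound both the tail of $\sum (-1)^i t_i$ and the unstable traces by $C'(C/\sqrt{q})^i$ via Deligne-type weight bounds (the paper cites Sun's extension to stacks where you invoke an \'etale cover) together with hypothesis (2), and sum the two geometric series. The only cosmetic difference is that you derive the $\frob_q^{-1}$-on-ordinary-cohomology form of the trace formula from the compactly supported version via Poincar\'e duality, whereas the paper cites Behrend's formula directly in that form.
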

\begin{proof}
	To ease notation, let $X_n := Y_n \times_{\spec \mathbb F_q} \spec \overline {\mathbb
		F}_q$.
	Note that our stability assumption holds whenever $n > I i + J$, or
	equivalently when $i < \frac{n - J}{I}$.
	Using our stability assumption (1)
	and the Grothendieck--Lefschetz trace formula
	\cite[Theorem 3.1.2]{behrend:lefschetz-trace-formula},
	we have
	\begin{equation}
		\label{equation:manipulate-point-count}
		\begin{aligned}
			\frac{\left|Y_n (\mathbb F_q)\right|}{q^n} =& \sum_{i=0}^{2n}
			(-1)^i \tr \left(
			\frob_q^{-1} | H^i(X_n; \mathbb Q_{\ell}) \right)
			\\
			=& \sum_{0 \leq i < \frac{n-J}{I}}
			(-1)^i t_i 
			+\sum_{i \geq \frac{n-J}{I}} (-1)^i\tr \left(
				\frob_q^{-1} | H^i(X_n; \mathbb Q_{\ell}).
			\right)
		\end{aligned}
	\end{equation}
	Subtracting $\sum_{i=0}^\infty (-1)^i t_i$ 
	from both
	sides of \eqref{equation:manipulate-point-count} and taking absolute
	values, we obtain
	\begin{equation}
		\label{equation:difference-bound}
		\begin{aligned}
			&\left \|\frac{\left|Y_n (\mathbb F_q)\right|}{q^n} - \sum_{i=0}^\infty
			(-1)^i t_i 
			\right\| \\
			&\leq \sum_{i \geq \frac{n-J}{I}} \left\| t_i 
			\right\| + 
\sum_{i \geq \frac{n-J}{I}} (-1)^i \left \| \tr \left(
				\frob_q^{-1} | H^i(X_n; \mathbb Q_{\ell})
			\right) \right \|
		\end{aligned}
	\end{equation}
	
	We claim that $\sum_{i=0}^\infty (-1)^i t_i $
	converges absolutely as a
	function of $q$ for $\sqrt{q} > C$.
	Indeed, using Sun's generalization of Deligne's bounds for algebraic stacks
	\cite[Theorem 1.4]{sun:l-series-of-artin-stacks} and our exponential
	bound assumption $(2)$ we
	obtain 
	$t_i \leq \frac{C'C^{i}}{q^{i/2}}$.
	It follows that 
	\begin{align*}
		\sum_{i=0}^\infty \| (-1)^i t_i
		\| \leq C'
		\sum_{i=0}^\infty \left (\frac{C}{\sqrt{q}} \right)^i = C'
		\frac{1}{1-\frac{C}{\sqrt q}}.
	\end{align*}
	We similarly find that  
	\begin{align}
		\label{equation:trace-bound}
		\sum_{i= \frac{n-J}{I}}^\infty \| (-1)^i t_i
		\| \leq C'\sum_{i= \frac{n-J}{I}}^\infty
		\left(\frac{C}{\sqrt q} \right)^i = C' \left(\frac{C}{\sqrt
			q}\right)^{\frac{n-J}{I}} \frac{1}{1-\frac{C}{\sqrt q}}.
	\end{align}
	
	This bounds the first term on the right hand side of \eqref{equation:difference-bound}. It remains to bound the second
	term.
	Again, using Sun's generalizations of Deligne's bounds and our
	exponential bound assumption
	$(2)$, we similarly obtain
	$\|\tr (\frob_q^{-1} | H^i (X_n; \mathbb Q_{\ell})) \| \leq
	\frac{C'C^{2n-j}}{q^{i/2}}$.
	Hence, so long as $C < \sqrt{q}$, we can bound
	\begin{align*}
\sum_{i \geq \frac{n-J}{I}} (-1)^i \left \| \tr \left(
				\frob_q^{-1} | H^i(X_n; \mathbb Q_{\ell})
			\right) \right \|
		&\leq C' \sum_{i \geq \frac{n-J}{I}}
		\left(\frac{C}{\sqrt q}\right)^{i}
		\\
		&= C' \left(
		\frac{C}{\sqrt{q}}\right)^{\frac{n-J}{I}}
		\frac{1}{1- \frac{C}{\sqrt{q}}}.\qedhere
	\end{align*}
\end{proof}

\subsection{Applying the lemma to count points on Hurwitz spaces}

We next show that the conditions of \autoref{lemma:formalization-count} are
satisfied for Hurwitz spaces using our computation of the stable homology.
We
use \autoref{lemma:formalization-count} to obtain a count of the finite field
valued points of relevant Hurwitz spaces in terms of the number of their
geometrically irreducible components.

\begin{proposition}
	\label{proposition:hurwitz-space-point-count}
	Suppose $G$ is a finite group, $c \subset G$ is a conjugacy class closed
	under invertible powering, and
	$(G,c)$ is non-splitting.
	Let $q$ be an prime power satisfying $\gcd(q, |G|) = 1$.
If $Y_n \subset \cphur G n c {\mathbb F_q}$ is a union of connected components which has $r$
		geometrically irreducible components for each $n$ sufficiently
		large, 
		there exist constants $C, I, J$, all independent of
		$n$ and $q$, but possibly depending on $r$, so that, as long as
		$q > C^2$,
	\begin{align}
		\label{equation:subset-components-govern-count}
		\left \|  \frac{\left|Y_n(\mathbb
		F_q)\right|}{q^n} -
		r \left(1 -
		\frac{1}{q}\right)
		\right \| \leq 
		\frac{2C}{1- \frac{C}{\sqrt{q}}}
		\left(\frac{C}{\sqrt{q}}\right)^{\frac{n-J}{I}}.
	\end{align}
\end{proposition}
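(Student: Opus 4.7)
The plan is to apply \autoref{lemma:formalization-count} to the sequence $\{Y_n\}$, which reduces the problem to verifying the two hypotheses of that lemma for this sequence of Hurwitz spaces: the stability of the Frobenius traces on $H^i(Y_{n,\overline{\mathbb F}_q};\mathbb Q_\ell)$ and a uniform exponential bound on their dimensions.

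For the stability condition, I will use \autoref{theorem:stable-homology-intro} combined with the \'etale--singular comparison theorem. For each geometrically irreducible component $Z \subset Y_{n,\overline{\mathbb F}_q}$ and each $i \geq 0$ with $n > Ii + J$, the branch-locus morphism $Z \to \conf_{n,\overline{\mathbb F}_q}$ induces an isomorphism $H^i(Z; \mathbb Q_\ell) \xrightarrow{\sim} H^i(\conf_{n,\overline{\mathbb F}_q}; \mathbb Q_\ell)$. Because this map is defined over $\mathbb F_q$, the isomorphism is $\gal(\overline{\mathbb F}_q/\mathbb F_q)$-equivariant. The \'etale cohomology of $\conf_n$ is concentrated in degrees $0$ and $1$, with $H^0 \cong \mathbb Q_\ell$ (trivial Frobenius) and $H^1 \cong \mathbb Q_\ell(-1)$ (Frobenius eigenvalue $q$). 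Thus, provided the relevant components are Galois-stable, each geometrically irreducible component will contribute $1 - 1/q$ to the alternating sum of Frobenius traces in the stable range.

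The remaining ingredient for the stability condition is to establish that all $r$ geometrically irreducible components of $Y_n$ are individually $\mathbb F_q$-rational once $n$ is large. Frobenius acts on $\pi_0(\phur G n c {\overline{\mathbb F}_q})$ through the braid-equivariant self-map of $c^n$ given by coordinatewise $q$-th powering, which is well-defined since $c$ is closed under invertible powering and $\gcd(q, |G|) = 1$. In the stable range, a pigeonhole argument along the lines of \autoref{lemma:evwlemma} produces, in any given $B_n$-orbit, a representative in which some element of $c$ appears at least $\ord(g)$ times; one can then manipulate these repeated entries via braiding to show that the $q$-th power map fixes the orbit. I anticipate that tracking this Galois-triviality carefully through the combinatorial argument will be the principal technical obstacle of the proof.

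Once condition (1) is verified, giving $t_0 = r$, $t_1 = r/q$, and $t_i = 0$ for $i \geq 2$, the exponential bound (2) follows from the uniform bounds on Hurwitz space Betti numbers implicit in \cite[Theorem 6.1]{EllenbergVW:cohenLenstra}, yielding constants $C, C'$ with $\dim H^i(Y_{n,\overline{\mathbb F}_q}; \mathbb Q_\ell) \leq C' C^i$ uniformly in $n$. Applying \autoref{lemma:formalization-count} with these values of $t_i$ then produces exactly the bound \eqref{equation:subset-components-govern-count}, completing the proof.
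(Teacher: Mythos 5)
Your overall strategy is exactly the paper's: apply \autoref{lemma:formalization-count} with $t_0 = r$, $t_1 = r/q$, $t_i = 0$ for $i \geq 2$, verify the stability hypothesis by combining \autoref{theorem:stable-homology-intro} with the comparison between singular cohomology over $\mathbb C$ and $\ell$-adic cohomology over $\overline{\mathbb F}_q$ (the paper routes this through a henselian dvr, the normal crossings compactification, and \cite[Proposition 7.7]{EllenbergVW:cohenLenstra}), observe that the resulting isomorphism is Frobenius-equivariant because the branch-locus map is defined over $\mathbb F_q$, and verify the exponential bound via the EVW Betti number estimates.

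However, the paragraph you single out as ``the principal technical obstacle'' --- proving that the $r$ geometrically irreducible components are individually $\mathbb F_q$-rational via a pigeonhole argument showing that coordinatewise $q$-th powering fixes every stable braid orbit --- is both unnecessary and unsalvageable as stated. It is unnecessary because the hypothesis already hands you $r$ \emph{geometrically irreducible} components of the $\mathbb F_q$-scheme $Y_n$: such a component is an irreducible closed subscheme defined over $\mathbb F_q$ that remains irreducible over $\overline{\mathbb F}_q$, hence corresponds to a single Frobenius-stable geometric component by definition. (The paper's only housekeeping here is to discard the irreducible components of $Y_n$ that are \emph{not} geometrically irreducible; these contribute no $\mathbb F_q$-points and, since Frobenius permutes their geometric components freely, zero trace.) It is unsalvageable because the statement your pigeonhole argument aims at --- that Frobenius fixes all geometric components of these Hurwitz spaces in the stable range --- is false in general: by \autoref{proposition:component-count}, the Frobenius-fixed components of $\boundarycphur G n c {\mathbb F_q} g$ form a torsor under the elements of $H_2(G,c) \simeq \wedge^2 H$ equal to their $q$-th power, which is a proper subset whenever $h = \gcd(|H|, q-1) < $ the exponent of $\wedge^2 H$. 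This is precisely why the moments in \autoref{theorem:moments-roots-of-unity} involve $|\wedge^2 H[h]|$ rather than $|\wedge^2 H|$. The identification of \emph{which} components are geometrically irreducible belongs to \autoref{proposition:component-count} (via the lifting invariant of \cite{wood:an-algebraic-lifting-invariant} and \cite{liuWZB:a-predicted-distribution}), not to the present proposition, whose hypotheses are arranged so that no such argument is needed.
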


An important special case of the above is when we take 
$Y_n =\boundarycphur G n c {\mathbb F_q} g$ for some $g \in G$.
It may be useful to recall
the notation 
$\boundarycphur G n c B g$ for components of 
$\cphur G n c B$ whose boundary monodromy is $g$, as defined in
\autoref{definition:boundary-monodromy}

\begin{proof}
	This follows from \autoref{lemma:formalization-count}, taking the
	constant $C'$ there to be the same as $C$, once
	we verify its hypotheses.

	To simplify matters slightly, we will assume all irreducible components of $Y_n$
	are geometrically irreducible, as components which are not geometrically
	irreducible have no $\mathbb F_q$ points, by smoothness of the $Y_n$.
	Let $B$ denote the spectrum of a henselian dvr with residue field
	$\mathbb F_q$ and generic point of characteristic $0$. 
	In this case, there is a scheme $Z_n$ which is a union of $r$
	irreducible components of $\boundarycphur G n c B g$, so that $Y_n = Z_n
	\times_{B} \spec \mathbb F_q$.
	We let $Y_{n, \mathbb C} := Z_n \times_B \spec \mathbb C$ and
	$Y_{n, \overline{\mathbb F}_q} := Y_n \times_{\spec \mathbb F_q} \spec
 \overline{\mathbb F}_q.$

	We first verify the stability assumption from
	\autoref{lemma:formalization-count}(1).
	To do this, we start by constructing numbers $t_i$, which we will
	realize as the trace of $\frob_q^{-1}$ acting on certain vector spaces.
	For $B$ a scheme, we use $\conf_{n, B}$ denote the configuration space
	parameterizing $n$ unordered points on $\mathbb A^1_{B}$. (Previously,
	we had used $\conf_n$ as notation for this when $B = \spec \mathbb C$.)
	Choose $\ell$ prime to $|G|$ and $q$.
	Define $t_i := \tr(\on{Frob}_q^{-1}| H^i(\conf_{n,\overline{\mathbb F}_q};\mathbb
	Q_\ell))^{\oplus r}$ as a
	vector space with Frobenius action,
	where $r$ is the number of geometrically irreducible components of $Y_n$
for sufficiently
	large $n$.
	Note that $t_i$ is independent of $n$ and $q$ once $n > 2$ 
	and is given by the trace of the identity matrix of
	dimension $r$ if $i = 0$, the trace of $q$ times the identity matrix of dimension $r$
	when $i = 1$, and $0$ if $i > 1$.
	In particular, we have
	$\sum_{i=0}^\infty (-1)^i t_i
	= r(1 - 1/q)$.

	To verify 
	\autoref{lemma:formalization-count}(1),
	we next need to show the $t_i$
	agrees with the trace of Frobenius on $H^i(Y_{n, \overline{\mathbb
	F}_q}; \mathbb Q_\ell)$ for $n > i I + J$.
	There is a map 
	\begin{align}
		\label{equation:iso-c}
		H^i(\conf_{n,\mathbb C};\mathbb Q_\ell)^{\oplus r} \to H^i(Y_{n,
			\mathbb C};\mathbb Q_\ell),
	\end{align}
which is an isomorphism for
	$n> iI + J$
	by \autoref{theorem:stable-homology-intro}.
	
	Note that configuration space has a normal crossings compactification by
	\cite[Corollary
	B.1.4]{ellenbergL:homological-stability-for-generalized-hurwitz-spaces}.
	It follows from \cite[Proposition
	7.7]{EllenbergVW:cohenLenstra} 
	that for $\ell$ prime to $|G|$ and $q$ and $j \geq 1$
	that
	there is an isomorphism 
\begin{align}
	\label{equation:}
	H^i(Y_{n,\overline{\mathbb F}_q}, \mathbb
	Z/\ell^j \mathbb) \simeq
	H^i(Y_{n,\mathbb C}, \mathbb Z/\ell^j \mathbb Z).
	\end{align}
	Technically speaking, 
\cite[Proposition
7.7]{EllenbergVW:cohenLenstra} assumed $j = 1$, but the same proof works for $j
> 1$.
Hence taking the limit over $j$ and tensoring with $\mathbb Q_\ell$, we obtain
	\begin{align}
		\label{equation:full-iso-hur}
		H^i(Y_{n,\overline{\mathbb F}_q}; \mathbb
	Q_\ell) \simeq
	H^i(Y_{n,\mathbb C}; \mathbb Q_\ell)
	\end{align}
	when $\ell$ is prime to $|G|$ and $q$.
	
	Therefore, \eqref{equation:full-iso-hur}
	restricts to an isomorphism
	\begin{align}
		\label{equation:iso-hur}
		H^i(Y_{n,\overline{\mathbb F}_q}; \mathbb
	Q_\ell) \simeq
	H^i(Y_{n,\mathbb C}; \mathbb Q_\ell).
	\end{align}
	An analogous argument then shows 
	\begin{align}
		\label{equation:iso-conf}
	H^i(\conf_{n,\overline{\mathbb F}_q};\mathbb Q_\ell)^{\oplus r} \simeq
	H^i(\conf_{n,\mathbb C};\mathbb Q_\ell)^{\oplus r}.
	\end{align}
	Combining \eqref{equation:iso-c}, \eqref{equation:iso-hur}, and \eqref{equation:iso-conf}, we obtain an 
	isomorphism
$U_i : 
H^i(\conf_{n,{\overline{\mathbb F}_q}};\mathbb
Q_\ell)^{\oplus r} \to H^i(Y_{n,\overline{\mathbb F}_q} ;\mathbb Q_\ell)$.
This isomorphism $U_i$ is moreover Frobenius equivariant because it is induced by a map of
schemes $Y_n \to \coprod_{i=1}^r \conf_{n,\mathbb F_q}$ over $\mathbb F_q$.
	This implies 
	$t_i = \tr(\frob_q|H^i(Y_{n, \overline{\mathbb
	F}_q}; \mathbb Q_\ell))$.

	Finally, we verify the exponential bound assumption,
	\autoref{lemma:formalization-count}(2).
	This condition with $C = C'$
	follows from an argument similar to \cite[Proposition
	7.8]{EllenbergVW:cohenLenstra}.
	(It would exactly follow from \cite[Proposition
		7.8]{EllenbergVW:cohenLenstra}, except the statement of that
	result assumes $\ell > n$, which we don't want to assume. The reason
they have that restriction is that they argue by passing to a cover where they
order the $n$ branch points and so they need to invert $n!$. 
However, we can avoid passing to such a cover and directly compare the
cohomology between in characteristic $0$ and positive characteristic, so we do not need to invert $n!$.)
	Recall
	\eqref{equation:full-iso-hur} holds,
	as we showed above.
After identifying \'etale cohomology of $Y_{n, \mathbb C}$ with singular cohomology,
the result then follows from \cite[Proposition 2.5]{EllenbergVW:cohenLenstra}.

	We note that we can take the value of $C$ to be independent of $q$ and
	$\ell$ prime to $|G|$ and $q$
	satisfying our constraints because $\dim H^i(Y_n, \mathbb Q_\ell) \leq  \dim H^i(\boundarycphur G n c
	{\mathbb C}g; \mathbb Q_\ell)$ is independent of $q$ and $\ell$.
\end{proof}

\begin{remark}\label{remark:explicitconstant}
	The value of the constant $C^2$ in
	\autoref{proposition:hurwitz-space-point-count}
	(which is also the value of the constant $C$ in
		\autoref{theorem:moments} and
	\autoref{theorem:moments-roots-of-unity}) can be
	explicitly computed in terms of the point at which the homology of a certain
	Hurwitz space stabilizes.
	The computation of this constant does not use our work, but entirely follows
	from the proof of \cite{EllenbergVW:cohenLenstra}.
	For explicitly determining the constant, it was also useful for us to consult
	\cite{randal-williams:homology-of-hurwitz-spaces} and
	\cite{ellenbergL:homological-stability-for-generalized-hurwitz-spaces}.
	It may be possible to reduce the value of the constant $C^2$,
	so what we describe below
	is really an upper bound for $C^2$.
	
	Using \cite[Proposition 4.4, Proposition 8.1, Theorem 7.1, and Corollary
	7.4]{randal-williams:homology-of-hurwitz-spaces},
	we find that the stabilization operator $U$ 
	induces an isomorphism
	$H_d(
	\cphurc G n c) \to H_d(\cphurc G {n+\deg U} c)$ once 
	$n \geq (N_0+2)(2d+1) + N_0$; here $N_0$ is the 
	is the constant
	defined in \cite[Proposition 4.4]{randal-williams:homology-of-hurwitz-spaces},
	describing the $n$ after which the image of $U$ on
	$H_0(\on{Hur}_n^c)$ stabilizes.
	Then, tracing through the proof of 
	\cite[Theorem 4.1.3, Proposition
	4.3.3, Theorem 9.2.4]{ellenbergL:homological-stability-for-generalized-hurwitz-spaces},
	we find it is sufficient to take 
	$q > 4((2 |c|)^{3N_0 +\deg U}+1)^4$.
	
	For example, in the particular case that $c$ is the conjugacy class of
	transpositions in $S_3$, $|c| = 3$, we may take $\deg U = 2$, 
	and one may explicitly compute $N_0 = 5$, so we get
	that we can take $q > 4 \cdot (6^{17}+1)^4$ in
	\autoref{proposition:hurwitz-space-point-count} for $G=S_3$ and in 
	\autoref{theorem:moments-roots-of-unity} for $H=\ZZ/3\ZZ$.
\end{remark}

\autoref{proposition:hurwitz-space-point-count} lets us count the points
of relevant Hurwitz spaces once we compute their number of geometrically
irreducible components. We do so in the next proposition.
It will be useful to recall the notion of being split completely over $\infty$
from
\autoref{definition:split-completely}.

\begin{proposition}
	\label{proposition:component-count}
	Suppose $G, H$, and $c$ are as in
\autoref{example:generalized-dihedral-nonsplitting}.
	Let $q$ be an odd prime power with $\gcd(q,|H|) = 1$. Take $h := \gcd(|H|, q-1)$, and fix $g
	\in G$.
		For $n$ sufficiently large, there are $|\wedge^2 H[h]|$ many
	geometrically irreducible components 
	of $\boundarycphur G n c {\mathbb F_q} g$,
	parameterizing $H$-covers of hyperelliptic
	curves which are split completely over $\infty$. 
	There are constants constant $C, I, J$, independent of $n$ and $q$, and
	depending only on $H$, so that for $q$ satisfying $\gcd( |H|, q) = 1$,
	$q > C^2$, 
	and any $g \in G$,
	\begin{align}
		\label{equation:hurwitz-point-count-from-components}
		\left \|  \frac{\left|\boundarycphur G n c {\mathbb F_q} g(\mathbb
		F_q)\right|}{q^n} -
		|\wedge^2 H[h]| \left(1 -
		\frac{1}{q}\right)
		\right \| \leq 
		\frac{2C}{1- \frac{C}{\sqrt{q}}}
		\left(\frac{C}{\sqrt{q}}\right)^{\frac{n-J}{I}}.
	\end{align}
	\end{proposition}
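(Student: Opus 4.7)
The plan is to combine Proposition \ref{proposition:hurwitz-space-point-count} with an exact count of the geometrically irreducible components of $\boundarycphur{G}{n}{c}{\mathbb{F}_q}{g}$. That proposition applies here because $(G,c)$ is non-splitting by Example \ref{example:generalized-dihedral-nonsplitting} and $c$ is closed under invertible powering (every element of $c$ has order $2$, so $g^t = g$ for any $t$ odd). Thus once the component count is established to be $|\wedge^2 H[h]|$, the bound \eqref{equation:hurwitz-point-count-from-components} follows immediately from \eqref{equation:subset-components-govern-count} with $r = |\wedge^2 H[h]|$.

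To count geometric components, I first use smoothness of the pointed Hurwitz space (as a finite \'etale cover of configuration space) to identify geometrically irreducible $\mathbb{F}_q$-components with Frobenius-fixed connected components of the base change $\boundarycphur{G}{n}{c}{\overline{\mathbb{F}}_q}{g}$. Via the \'etale comparison between characteristic zero and positive characteristic used already in the proof of Proposition \ref{proposition:hurwitz-space-point-count}, these correspond bijectively to the components of $\boundarycphur{G}{n}{c}{\mathbb{C}}{g}$. The latter are in turn, by the topological description in \autoref{subsubsection:hurwitz-quotient}, classified by $B_n$-orbits on tuples $(g_1,\ldots,g_n) \in c^n$ with $g_1\cdots g_n = g$ that jointly generate $G$.

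By the Conway--Parker type stabilization statement (in the form proved for non-splitting $c$ in \cite[Proposition 3.4]{EllenbergVW:cohenLenstra}, cf.\ \autoref{lemma:evwlemma} of the present paper), for $n$ sufficiently large these $B_n$-orbits are classified by a lifting invariant valued in a quotient of the Schur multiplier $H_2(G,\mathbb{Z})$. For $G = H\rtimes \mathbb{Z}/2\mathbb{Z}$ with $c$ the involutions, a direct group-theoretic computation, standard in the non-abelian Cohen--Lenstra literature (compare the discussion of lifting invariants in \cite{liu:non-abelian-cohen-lenstra-in-the-presence-of-roots} and \cite{wood:an-algebraic-lifting-invariant}), identifies the target of the lifting invariant on components with fixed boundary monodromy $g$ with the group $\wedge^2 H$, so that over $\overline{\mathbb{F}}_q$ there are exactly $|\wedge^2 H|$ geometric components with boundary monodromy $g$, parameterized via the factorization of a $G$-cover as an $H$-cover of the underlying hyperelliptic curve.

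Finally, I pass from geometric to arithmetic components. The geometric Frobenius acts on the set of components through its action on $c$ by $q$-th powering; since every element of $c$ has order $2$ and $q$ is odd, this action is trivial on $c$ itself, but it induces on the target $\wedge^2 H$ of the lifting invariant the action $x\wedge y \mapsto x^q \wedge y^q$, i.e.\ multiplication by $q$. The fixed subgroup of multiplication by $q$ on $\wedge^2 H$ is the $(q-1)$-torsion, which under the natural isomorphism $(\wedge^2 H)[h] \cong \wedge^2(H[h])$ (verified by decomposing $H$ into cyclic factors) equals $\wedge^2 H[h]$. The hardest step is this very last one: identifying the precise target of the lifting invariant and then verifying that the Frobenius action corresponds to multiplication by $q$ on $\wedge^2 H$, which is a somewhat delicate computation that must account for the $\mu$-torsor structure at $\infty$ built into the definition of the pointed Hurwitz space. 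Granting this, substituting $r = |\wedge^2 H[h]|$ into \eqref{equation:subset-components-govern-count} yields \eqref{equation:hurwitz-point-count-from-components}.
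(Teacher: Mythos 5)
Your proposal is correct and follows essentially the same route as the paper: reduce to counting geometrically irreducible components via Proposition \ref{proposition:hurwitz-space-point-count}, identify the set of components over $\overline{\mathbb F}_q$ with those over $\mathbb C$, classify the latter by a lifting invariant valued in $H_2(G,c)\simeq \wedge^2 H$, and cut down to the Frobenius-fixed (equivalently $(q-1)$-torsion) elements $\wedge^2 H[h]$. The one point you defer ("granting this") is exactly where the paper also leans on citations to Liu--Wood--Zureick-Brown and Sawin--Wood, with the additional remark that for $n$ odd those references must be adapted to covers ramified at $\infty$ via the root-stack formulation.
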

\begin{proof}
Once we verify the claim about the number of geometrically irreducible
components,
\eqref{equation:hurwitz-point-count-from-components}
follows from
\autoref{proposition:hurwitz-space-point-count}.
The remainder of the proof is essentially explained in 
\cite[Theorem
3.1]{sawinW:conjectures-for-distributions-containing-roots-of-unity},
and we now spell out the details.
In the case that $n$ is even, 
\cite[Lemma 10.3]{liuWZB:a-predicted-distribution} yields a
bijection between components of
$\boundarycphur G n c {\overline{\mathbb F}_q} g$ and components of
$\boundarycphur G n c {\mathbb C} g$.
Then, \cite[Theorem 12.1(2)]{liuWZB:a-predicted-distribution} and
\cite[Corollary 12.6]{liuWZB:a-predicted-distribution} yield that the
number of irreducible components of
$\boundarycphur G n c {\mathbb C} g$ can be identified bijectively with a torsor
for the set of elements of a certain finite group $H_2(G, c)$
which are equal to their $q$th power.
There is an identification $H_2(G, c) \simeq (\wedge^2 H)^{\mathbb Z/2 \mathbb
Z}$
as is explained in 
the course of the proof of 
\cite[Theorem
3.1]{sawinW:conjectures-for-distributions-containing-roots-of-unity}.
Moreover, in our case, $\mathbb Z/2 \mathbb Z$ acts on $H$ by inversion, and
hence it acts trivially on $\wedge^2 H$, and so 
$(\wedge^2 H)^{\mathbb Z/2 \mathbb Z} \simeq \wedge^2 H$.
In other words, the number of geometrically
irreducible components is precisely
the $q-1$ torsion subgroup of $\wedge^2 H$,
$\wedge^2 H[q-1]$. Since $h := \gcd(|H| , q-1)$, this is also equal to $\wedge^2
H[h]$, and hence there are $\left|\wedge^2 H[h]\right|$ geometrically irreducible components
of $\boundarycphur G n c {{\mathbb F}_q} g$.

We conclude by explaining why the above argument still works when $n$ is odd.
The setting of 
\cite{sawinW:conjectures-for-distributions-containing-roots-of-unity}
and
\cite{liuWZB:a-predicted-distribution} is
slightly different from ours because they make the additional assumption
that all $G$-covers are unramified over $\infty \in \mathbb P^1$.
However, with our definition of Hurwitz spaces,
\autoref{definition:pointed-hurwitz-space},
the straightforward generalizations of 
all of the results cited in the above paragraph easily carry through.
The main minor modification needed is that instead of working with marked
covers $X \to \mathbb P^1_S$ as in
\cite{wood:an-algebraic-lifting-invariant}, one should instead work
with marked covers $X \to \mathscr P_S$, where $\mathscr P_S$ is a
root stack with an order $2$ root over the $\infty$ section, $\infty: S \to
\mathbb P^1_S$.
Additionally, in \cite[Theorem 12.4]{liuWZB:a-predicted-distribution},
one no longer requires that the inertia elements $g_i$ need multiply to
$\id$, since the corresponding cover $X \to \mathbb P^1$ will be
branched at $\infty$.
With these minor modifications in place, the rest of the argument goes
through for $n$ odd and completes the proof.

At this point, the constant $C$ may depend on the values of $h$ and $g$, but
since there are at most $|G|^2$ many choices for the pair $(h,g)$, we can
replace $C$ with the constant from this finite set of values which maximizes the
right hand side of \eqref{equation:hurwitz-point-count-from-components}.
This concludes the proof.
\end{proof}

\subsection{Proof of \autoref{theorem:moments-roots-of-unity}}
\label{subsection:proof-moments}

We conclude by proving our main result, \autoref{theorem:moments-roots-of-unity}, which determines the $H$-moment of the
class group of quadratic extensions of $\mathbb F_q(t)$ for $q$ sufficiently
large relative to $H$.
It will be useful to recall our notation for the class group of a function field
extension of $\mathbb F_q(t)$ and for the set $\mathcal{MH}_{n,q}$ of monic
smooth hyperelliptic curves over $\mathbb F_q$,
defined prior to \autoref{theorem:moments}.
The proof of \autoref{theorem:moments-roots-of-unity} essentially follows immediately from
\autoref{proposition:component-count} and a little bit of class field theory.
In fact, we prove a slight refinement of \autoref{theorem:moments-roots-of-unity}
which moreover quantifies the error term in the asymptotic count of torsion in class groups
of quadratic fields.

\begin{theorem}
	\label{theorem:moments-with-error}
	Suppose $H$ is a finite abelian group of odd order. 
	Fix $q$ an odd prime power with 
	$\gcd( |H|, q) = 1$ 
	and let 
	$h := \gcd(|H|, q - 1)$.
	There is are integer constants $I$, $J$, and $C$ depending only on $H$ so that,
	if $q > C^2$,
	\begin{align}
		\label{equation:repeated-hurwitz-point-count-from-components}
		\left \|  \frac{\sum_{K \in \mathcal{MH}_{n,q}}
		\left|\surj(\cl(\mathscr O_K), H)\right|} {\sum_{K \in \mathcal{MH}_{n,q}} 1} -
		\alpha_n \left|\wedge^2 H[h] \right| 
		\right \| \leq 
		\left(\frac{1}{1-\frac{1}{q}} \right)
		\frac{2C}{1- \frac{C}{\sqrt{q}}}
		\left(\frac{C}{\sqrt{q}}\right)^{\frac{n-J}{I}},
	\end{align}
	where $\alpha_n = 1$ if $n$ is odd and $\alpha_n = \frac{1}{|H|}$ if $n$
	is even.
\end{theorem}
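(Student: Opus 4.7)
The plan is to reduce the theorem to \autoref{proposition:component-count} by identifying $\sum_{K \in \mathcal{MH}_{n,q}} |\surj(\cl(\mathscr O_K), H)|$ with a scalar multiple of $|\boundarycphur G n c {\mathbb{F}_q} g(\mathbb{F}_q)|$ for $G = H \rtimes \mathbb{Z}/2\mathbb{Z}$, $c$ the conjugacy class of order-$2$ elements, and an appropriate $g \in G$ determined by the parity of $n$. The denominator $|\mathcal{MH}_{n,q}|$ equals the number of monic squarefree polynomials of degree $n$, which is $q^n - q^{n-1} = q^n(1-1/q)$ for $n \geq 2$.

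First, I would set up the arithmetic bijection. By class field theory (as in \cite[Theorem 3.1]{sawinW:conjectures-for-distributions-containing-roots-of-unity}), surjections $\pi : \cl(\mathscr O_K) \to H$ correspond to connected unramified $H$-covers $Y \to X$ of the hyperelliptic curve split completely over $\infty$. Each such $H$-cover extends to a connected $G$-cover $Y \to \mathbb P^1$ branched at the hyperelliptic branch locus with inertia in $c$. Since the $\mathbb{Z}/2\mathbb{Z}$ quotient of $G$ acts on $H$ by inversion, the surjections $\pi$ and $-\pi$ produce isomorphic $G$-covers (via $\phi = \sigma$), and since $|H|$ is odd with $H$ nontrivial these are the only collisions; thus surjections correspond $2$-to-$1$ to isomorphism classes of connected unpointed $G$-covers. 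Using \autoref{lemma:hyperelliptics-split-over-infinity}, the monic condition matches the pointed structure at $\widetilde{\infty}$, so $\cphur G n c {\mathbb{F}_q}$ is the right pointed lift.

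Next, I would compute the pointed-to-unpointed ratio. Each unpointed $G$-cover admits $|G|$ pointings over $\widetilde{\infty}$ (these are distinct since $Z(G) = \{e\}$), and among them exactly $|Z(g)|$ have boundary monodromy $g$ in the sense of \autoref{definition:boundary-monodromy}. For $n$ odd, the hyperelliptic cover is ramified at $\infty$ so the boundary monodromy lies in $c$; since $|Z(g)| = 2$ for any $g \in c$, one gets $|\boundarycphur G n c {\mathbb{F}_q} g(\mathbb{F}_q)| = 2 \cdot (\text{num unpointed covers}) = \sum_K |\surj|$, i.e., $\alpha_n = 1$. For $n$ even, the hyperelliptic cover is split and unramified at $\infty$, forcing $g = e$ with $|Z(e)| = |G| = 2|H|$, so $|\boundarycphur G n c {\mathbb{F}_q} e(\mathbb{F}_q)| = 2|H| \cdot (\text{num unpointed}) = |H| \cdot \sum_K |\surj|$, giving $\alpha_n = 1/|H|$.

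Finally, applying \autoref{proposition:component-count} to $|\boundarycphur G n c {\mathbb{F}_q} g(\mathbb{F}_q)|$, substituting into $\sum_K |\surj| = \alpha_n |\boundarycphur G n c {\mathbb{F}_q} g(\mathbb{F}_q)|$, and dividing by $|\mathcal{MH}_{n,q}| = q^n(1-1/q)$ produces the bound in the theorem; the factor $(1-1/q)^{-1}$ arises exactly from this last division (and $\alpha_n \leq 1$ absorbs into the stated upper bound). The principal obstacle is rigorously justifying the $\pi \sim -\pi$ collapse and the pointing/centralizer count at the scheme-theoretic level, particularly tracking the $\mathbb{F}_q$-rational points of the fiber of $Y \to \mathscr P^w$ over $\widetilde\infty$ and reconciling this with the monic representation; similar bookkeeping already appears in \cite{EllenbergVW:cohenLenstra} and \cite{sawinW:conjectures-for-distributions-containing-roots-of-unity} and can be adapted directly.
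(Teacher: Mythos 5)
Your proposal is correct in outline and reduces the theorem to \autoref{proposition:component-count} in the same way the paper does: class field theory identifies $\cl(\mathscr O_K)$ with the Galois group of the maximal abelian unramified extension split completely over $\infty$, \autoref{lemma:hyperelliptics-split-over-infinity} matches monic hyperelliptic curves with covers split over $\infty$ in the stacky sense, the denominator is $q^n-q^{n-1}$, and the factor $(1-1/q)^{-1}$ in the error comes from the final division. Where you diverge is in how the identity $\sum_K |\surj(\cl(\mathscr O_K),H)| = \alpha_n^{-1}\cdot|H|^{-1}\cdot|\boundarycphur G n c {\mathbb F_q}{g}(\mathbb F_q)|$ (suitably interpreted) is justified: the paper simply cites \cite[Lemmas 9.3, 10.2, and related results]{liuWZB:a-predicted-distribution}, whereas you re-derive the combinatorial factors by hand via the $\pi\sim-\pi$ collapse (a $2$-to-$1$ map from surjections to unpointed $G$-covers) followed by a centralizer count of pointings ($|Z_G(g)|=2$ for $g\in c$, $|Z_G(e)|=2|H|$). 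Your factors agree with the paper's: for $n$ odd you get $\sum_K|\surj| = |\boundarycphur G n c {\mathbb F_q}{g}(\mathbb F_q)|$ for any single $g\in c$, which matches the paper's $\sum_K|\surj| = |\cphur G n c {\mathbb F_q}(\mathbb F_q)|/|H|$ after summing over the $|H|$ boundary monodromies; for $n$ even both give division by $|H|$ of the $g=\id$ count. What your route buys is transparency about where the $1/|H|$ comes from; what it costs is that the two steps you yourself flag as the ``principal obstacle'' carry real content. In particular you should also note (i) that a surjection onto odd $H$ extends to a $G=H\rtimes\mathbb Z/2\mathbb Z$-cover at all only because the odd part of $\cl(\mathscr O_K)$ lies in the minus eigenspace of the hyperelliptic involution (the plus part being killed by the triviality of $\cl(\mathbb F_q[t])$), and (ii) that all $|G|$ pointings over $\widetilde\infty$ are $\mathbb F_q$-rational, which follows because Frobenius commutes with $G$ on the torsor fiber and fixes one point by the split-completely hypothesis. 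With those two points supplied, your argument is a legitimate self-contained replacement for the citations to \cite{liuWZB:a-predicted-distribution}.
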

\begin{proof}
Note that the class group
$\cl(\mathscr O_K)$ is identified with the Galois group of the 
maximal abelian unramified extension
of $K$, completely split completely over $\infty \in \mathbb P^1_{\mathbb F_q}$, as follows from class field theory
\cite[15.6]{hayes:a-brief-introduction-to-drinfield-modules}.
(Also see the paragraph following 
\cite[15.6]{hayes:a-brief-introduction-to-drinfield-modules}
to show that what is called $H_A$ there
is in fact the maximal unramified abelian extension split completely over
$\infty$.)
Recall $G = H \rtimes (\mathbb Z/2 \mathbb Z)$.

We first deal with the case $n$ is even.
When $n$ is even, by \autoref{lemma:hyperelliptics-split-over-infinity},
every $K \in
\mathcal{MH}_{n,q}$ corresponds bijectively to a smooth proper curve $X$
split completely over $\infty$.
For $n$ even,
we can then identify $\sum_{K \in \mathcal{MH}_{n,q}} \left|\surj(\cl(\mathscr
O_K), H)\right|= \frac{ \left|\boundarycphur G n c {\mathbb F_q} {\id} (\mathbb F_q)
\right|}{\# H}$
by combining
\cite[Lemma 9.3 and Lemma 10.2]{liuWZB:a-predicted-distribution}.
Note that the denominator $\sum_{K \in \mathcal{MH}_{n,q}} 1 = q^n - q^{n-1}$
for $n > 1$,
as it is identified with the $\mathbb F_q$ points of unordered configuration
space of $n$ points in $\mathbb A^1_{\mathbb F_q}$.
The case that $n$ is even now follows from
\autoref{proposition:component-count} upon dividing both sides of
\eqref{equation:hurwitz-point-count-from-components} by $1
-\frac{1}{q}$.

It remains to deal with the case that $n$ is odd.
Recall notation for components with specified boundary monodromy from
\autoref{definition:boundary-monodromy}.
The case that $n$ is odd is similar to the even case, except that
we 
now claim
$\sum_{K \in \mathcal{MH}_{n,q}} \left|\surj(\cl(\mathscr O_K), H) \right|=
\frac{ \left|\cphur G n c {\mathbb F_q} (\mathbb F_q) \right|}{\# H},$
with no condition imposed on the boundary monodromy.
Indeed, any $G$
cover corresponding to a point of
$\cphur G n c {\mathbb F_q}$ has inertia type $c$ at $\infty$, and so the the $G$
extension corresponds to an $H$ extension $L/K$ followed by a quadratic
extension $K/\mathbb F_q(t)$, where the quadratic extension is ramified over $\infty$
and the $H$ extension is split completely over $\infty$.
Here we are using that the quadratic extension $K/\mathbb F_q(t)$ here is split
completely over
$\infty$ in the stacky sense of \autoref{definition:split-completely}, and we
are using \autoref{lemma:hyperelliptics-split-over-infinity} to identify 
$\mathcal {MH}_{n,q} \simeq \phur {\mathbb Z/2 \mathbb Z} n {\{1\}} {\mathbb
F_q} (\mathbb F_q)$.
Hence, via the identification 
$\phur {\mathbb Z/2 \mathbb Z} n {\{1\}} {\mathbb
F_q}$ with the configuration space of $n$ unordered points in $\mathbb
A^1_{\mathbb F_q}$, we have
$\sum_{K \in \mathcal{MH}_{n,q}} 1 = q^n - q^{n-1}$.
Altogether, there are $|H|$ possible values for the boundary
monodromy at $\infty$ when $n$ is odd, which is why the $H$-moment in
the case that $n$ is odd is $|H|$ times the moment in the case that $n$
is even. Thus, the result
again follows by summing the result of
\autoref{proposition:component-count} over those $|H|$ elements $g
\in c = G - H$.
\end{proof}

\subsection{Non-abelian Cohen--Lenstra moments}
\label{subsection:non-abelian}

We next prove a theorem implying
\autoref{theorem:non-abelian-moments-roots-of-unity}, the non-abelian
Cohen--Lenstra moments with roots of unity.

\begin{theorem}
	\label{theorem:non-abelian-moments-with-error}
	Using notation from \autoref{notation:non-abelian},
	suppose $H$ is an admissible group with a $\mathbb Z/2 \mathbb Z$ action.
	Fix $q$ an odd prime power with 
	$\gcd( |H|, q) = 1$.
Let $\delta: \hat{\mathbb Z}(1)_{(2q)'} \to H_2(H \rtimes \mathbb Z/2 \mathbb Z, \mathbb
Z)_{(2q)'}$ be a group homomorphism with $\on{ord}(\im \delta) \mid q-1.$
	There is are integer constants $I$, $J$, and $C$ depending only on $H$ so that,
	if $q > C^2$ and $n$ is even,
		\begin{equation}
	\begin{aligned}
		\label{equation:non-abelian-cl}
		&\left \|  \frac{\sum_{K \in \mathcal{MH}_{n,q}}
			\left| \{ \pi \in \surj_{\mathbb Z/2 \mathbb
			Z}\left(G^\sharp_\emptyset(K), H \right): \pi_* \circ
	\omega_{K^\sharp/K} = \delta \}\right|} {\sum_{K \in \mathcal{MH}_{n,q}} 1} -
	\frac{1}{[H: H^{\mathbb Z/2 \mathbb Z}]} \right \| \\
	&\leq 
		\left(\frac{1}{1-\frac{1}{q}} \right)
		\frac{2C}{1- \frac{C}{\sqrt{q}}}
		\left(\frac{C}{\sqrt{q}}\right)^{\frac{n-J}{I}},
	\end{aligned}
	\end{equation}
where $[H: H^{\mathbb Z/2 \mathbb Z}]$ denotes the index of the $\mathbb Z/2
\mathbb Z$ invariants of $H$ in $H$.
\end{theorem}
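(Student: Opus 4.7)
The plan is to parallel the proof of \autoref{theorem:moments-with-error}, adapting it to track the fixed lifting invariant $\delta$. Set $G := H \rtimes \ZZ/2\ZZ$ and let $c \subset G$ denote the conjugacy class of order $2$ elements; $(G,c)$ is non-splitting by \autoref{example:generalized-dihedral-nonsplitting}. Via the class field theoretic dictionary of \cite{liu:non-abelian-cohen-lenstra-in-the-presence-of-roots}, extending \cite{liuWZB:a-predicted-distribution} to incorporate lifting invariants, for each $K \in \mathcal{MH}_{n,q}$ with $n$ even, the $\ZZ/2\ZZ$-equivariant surjections $\pi : G^\sharp_\emptyset(K) \to H$ with $\pi_* \circ \omega_{K^\sharp/K} = \delta$ correspond to isomorphism classes of geometrically connected $G$-covers of $\PP^1_{\FF_q}$ with inertia in $c$, split completely over $\infty$, underlying quadratic cover isomorphic to $K$, and lifting invariant $\delta$. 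Performing the analog of the automorphism count from the proof of \autoref{theorem:moments-with-error} (combining \cite[Lemma 9.3 and Lemma 10.2]{liuWZB:a-predicted-distribution} with their generalizations in \cite{liu:non-abelian-cohen-lenstra-in-the-presence-of-roots}) yields an identity of the form
\begin{equation*}
\sum_{K \in \mathcal{MH}_{n,q}} \left|\left\{\pi : \pi_* \circ \omega_{K^\sharp/K} = \delta\right\}\right| = \frac{|Y_n(\FF_q)|}{[H : H^{\ZZ/2\ZZ}]},
\end{equation*}
where $Y_n \subset \cphur{G}{n}{c}{\FF_q}$ denotes the union of connected components cut out by the lifting invariant condition.

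The next step is to count the geometrically irreducible components of $Y_n$. The classification in \cite[Theorem 12.1]{liuWZB:a-predicted-distribution} and \cite[Corollary 12.6]{liuWZB:a-predicted-distribution}, extended in \cite{liu:non-abelian-cohen-lenstra-in-the-presence-of-roots} to incorporate lifting invariants, describes the geometrically irreducible components of $\cphur{G}{n}{c}{\overline{\FF}_q}$ as a torsor for a finite group parameterizing lifting invariants, with those defined over $\FF_q$ corresponding to lifting invariants fixed by $q$-th powering. The hypothesis $\on{ord}(\im \delta) \mid q-1$ then guarantees that the single component cut out by $\delta$ is defined over $\FF_q$, so $r = 1$. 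Applying \autoref{proposition:hurwitz-space-point-count} to $Y_n$ with $r = 1$ gives
\begin{equation*}
\left\| \frac{|Y_n(\FF_q)|}{q^n} - \left(1 - \frac{1}{q}\right) \right\| \leq \frac{2C}{1 - C/\sqrt{q}} \left(\frac{C}{\sqrt{q}}\right)^{\frac{n-J}{I}}.
\end{equation*}
Combining this with the identity from the first step and dividing through by $|\mathcal{MH}_{n,q}| = q^n - q^{n-1}$ (valid for $n \geq 2$) yields \eqref{equation:non-abelian-cl}.

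The main obstacle will be the precise bookkeeping needed to derive the combinatorial factor $[H : H^{\ZZ/2\ZZ}]$ appearing in the first step, which translates $\FF_q$-points of the pointed Hurwitz component $Y_n$ into counts of equivariant surjection pairs $(K, \pi)$. The admissibility hypothesis on $H$ is essential here: it ensures both that every geometrically connected $G$-cover of the relevant type arises from a genuine equivariant surjection (so no extraneous components contribute) and that the automorphism factor simplifies to $[H : H^{\ZZ/2\ZZ}]$ rather than $|H|$. This correctly recovers the abelian case of \autoref{theorem:moments-with-error} (upon noting that $H^{\ZZ/2\ZZ}$ is trivial when $H$ has odd order and $\ZZ/2\ZZ$ acts by inversion) and accounts for the constrained symmetry in the non-abelian setting.
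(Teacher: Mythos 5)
Your proposal is correct and follows essentially the same route as the paper's proof: reduce to the non-splitting pair $(G,c)$ with $G = H \rtimes \mathbb Z/2\mathbb Z$, show via the lifting-invariant classification of components (Liu, building on Liu--Wood--Zureick-Brown) that the condition $\operatorname{ord}(\operatorname{im}\delta)\mid q-1$ singles out exactly one geometrically irreducible component (so $r=1$ in \autoref{proposition:hurwitz-space-point-count}), and then account for the factor $[H:H^{\mathbb Z/2\mathbb Z}]$ in translating $\mathbb F_q$-points of the pointed Hurwitz space into equivariant surjections (the paper cites \cite[Lemma 4.6]{liu:non-abelian-cohen-lenstra-in-the-presence-of-roots} for exactly this). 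The only cosmetic difference is that the paper works with the component of trivial boundary monodromy $\boundarycphur{G}{n}{c}{\mathbb F_q}{\id}$, which for $n$ even is the same as your $Y_n$.
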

\begin{proof}
	Let $G := H \rtimes \mathbb Z/2 \mathbb Z$ and let 
	$c \subset G$ denote the conjugacy class of order $2$ elements.
	Note that $(G,c)$ is nonsplitting by Sylow's theorem, see
	\cite[Lemma 3.2]{EllenbergVW:cohenLenstra}.

	We claim there is a unique 
	geometrically irreducible component of
	$\boundarycphur G n c {\mathbb F_q} {\id}$,
	whose $\omega$-invariant is $\delta$ precisely when $\on{ord}(\im
	\delta) \mid q -1$ and $n$ is even, where the $\omega$-invariant of a
	component was defined (and shown to be well defined) in
	\cite[Proposition
	4.3]{liu:non-abelian-cohen-lenstra-in-the-presence-of-roots}.
	To prove this claim, we note that
	$\cphur {\mathbb Z/2 \mathbb Z} {n} {\iota} {\mathbb
	F_q}$ has a unique geometrically irreducible component, where $\iota$ is
	the nontrivial element of $\mathbb Z/2 \mathbb Z$.
	The claim then follows from \cite[Lemma
	4.4]{liu:non-abelian-cohen-lenstra-in-the-presence-of-roots},
\cite[Lemma 4.5]{liu:non-abelian-cohen-lenstra-in-the-presence-of-roots},
and \cite[Proposition 12.7]{liuWZB:a-predicted-distribution}.
Here, we are also using that there is a unique component of the Hurwitz scheme
parameterizing quadratic extensions of $\mathbb F_q(t)$
of reduced discriminant $q^n$ precisely when $n$ is even, as follows from Riemann Hurwitz, and
so the $M_a$ appearing in 
\cite[Lemma 4.4]{liu:non-abelian-cohen-lenstra-in-the-presence-of-roots}
can all be taken to be $2$ and the $E_a$ can be taken to be the singleton $0$,
using the equality in \cite[Lemma
4.5]{liu:non-abelian-cohen-lenstra-in-the-presence-of-roots}.

	The result now follows from
	\eqref{equation:subset-components-govern-count} with $r =1$
	and \cite[Lemma
	4.6]{liu:non-abelian-cohen-lenstra-in-the-presence-of-roots}, which
	accounts for the factor of $[H: H^{\mathbb Z/2 \mathbb Z}]$.
\end{proof}

\begin{remark}
	\label{remark:}
	We now explain why we make the restrictions we do in
	\autoref{theorem:non-abelian-moments-with-error}, even though
	conjectures on non-abelian Cohen--Lenstra heuristics have been posed more
	generally.
	Before explaining our restrictions, we first explain which aspects of these conjectures have greater
	generality.
	First,	
	\cite[Conjecture 5.1]{wood:nonabelian-cohen-lenstra-moments}
	is stated more generally for imaginary quadratic fields
	and for certain subgroups of $H \wr S_2$ instead of just for $H \rtimes S_2$.
	Moreover, 
	\cite[Conjecture 1.2]{liu:non-abelian-cohen-lenstra-in-the-presence-of-roots}
	(generalizing the closely related \cite[Conjecture
	1.3]{liuWZB:a-predicted-distribution})
	is stated more generally for $\Gamma$ extensions $K/\mathbb F_q(t)$,
	where $\Gamma$ is an arbitrary finite group,
	instead of just $\mathbb Z/2 \mathbb Z$ extensions.

	The reason we restrict to the case that $H$ has odd order and limit
	ourselves to counting Galois $H \rtimes \mathbb Z/2 \mathbb Z$ extensions of $\mathbb F_q(t)$
	is that in these cases, we know the conjugacy class of order $2$
	elements in $H \rtimes \mathbb Z/2 \mathbb Z$ is
	non-splitting. Hence, in this case, we are able to apply
	\autoref{theorem:stable-homology-intro}.

	We now explain why we restrict to the field extension $K/\mathbb F_q(t)$
	being a $\mathbb Z/2 \mathbb Z$ extension, while
	\cite[Conjecture
	1.2]{liu:non-abelian-cohen-lenstra-in-the-presence-of-roots} and
	\cite[Conjecture
	1.3]{liuWZB:a-predicted-distribution}
	are stated
	for $\Gamma$ extensions $K/ \mathbb F_q(t)$, for $\Gamma$ an
	arbitrary finite group.
	For $H$ a finite admissible $\Gamma$ group, let $G: = H \rtimes \Gamma$ and
	let $c \subset G$ be the set of elements of $G - \id$ which have the same order as
	their image in $\Gamma$.
	The reason that we restrict to $\Gamma = \mathbb Z/2 \mathbb Z$ is
	because this is the only case for
	which $(G,c)$ is non-splitting, and
	hence this is the only case to which
	\autoref{theorem:stable-homology-intro} will apply.

	Let us now explain why we must have $\Gamma = \mathbb Z/2 \mathbb Z$ for
	$(G,c)$ to be non-splitting.
	In order for $(G,c)$ 
	to be non-splitting, it
	must also be the case that $(\Gamma, \Gamma - \id)$ is non-splitting.
	In particular, $\Gamma - \id$ must consist of a single conjugacy class.
	If $\Gamma - \id$ is a single conjugacy class, its order must divide
	$|\Gamma|$, so $|\Gamma| - 1$ must divide $|\Gamma|$. Supposing $|\Gamma|
	> 1$, this can only happen when $|\Gamma| = 2$. 
\end{remark}
\begin{remark}
	\label{remark:imaginary-quadratic}
	One may wonder why we restrict to quadratic fields $K/ \mathbb
	F_q(t)$ unramified over infinity in \autoref{theorem:moments-with-error}, 
	as opposed to also dealing with the
	case that these extensions are ramified over infinity.
	It seems likely that the same statement could be made for ramified
	extensions, following the proof of
	\autoref{theorem:moments-with-error} in the case $n$ is odd.
	Another approach to the imaginary quadratic case could be by adapting
	the argument in the proof of
	\cite[Theorem 1.2]{wood:nonabelian-cohen-lenstra-moments}.
	We believe it would be interesting to address the imaginary quadratic case.
	This would also verify many additional cases of 
	\cite[Conjecture 5.1]{wood:nonabelian-cohen-lenstra-moments},	
	and also many cases of a function field version of
	\cite[Conjecture 1.6]{bostonW:non-abelian-cohen-lenstra-heuristics}.
\end{remark}

\appendix
\section{A topological model for the Hurwitz space bar construction}
\label{appendix:topological-model}

The goal of this appendix is to provide a good topological model for the two-sided bar construction
$M\otimes_{\Hur^{G,c}_+}N$, where $M$ is a pointed set with a right
$\Hur^{G,c}$-action and $N$ is a pointed set with a left $\Hur^{G,c}$ action. The end result that is used in the main body of the paper is \autoref{lemma:modelquotdescriptionpointed}.

\subsection{Background on Hurwitz spaces of braided sets}
A natural setting in which our constructions go through is that of braided sets,
sometimes also called ``sets equipped with an invertible solution of the
Yang--Baxter equation,'' see  \cite{drinfeld2006some,lu2000set,soloviev2000non}. We recall the definition:

\begin{definition}
	A {\em braided set} is a set $X$ equipped with an automorphism $\sigma:
	X\times X \to X\times X$ satisfying the braid relation $\sigma_{1}
	\sigma_{2}\sigma_{1} = \sigma_{2}\sigma_{1}\sigma_{2}$, where
	$\sigma_{i}$ is the automorphism of $X\times X\times X$ obtained by 
	applying $\sigma$ to factors $i$ and $i+1$ and the identity on the other factor.
\end{definition}

\begin{construction}[Hurwitz spaces for braided sets]\label{construction:hurwitzbraidedset}
	Let $X$ be a braided set. Then there is a left action of the braid group on
	$X^n$, where the $i$th standard generator $\sigma_i$ of the braid group acts by
	applying $\sigma$ to the $i$ and $i+1$ factors of $X^n$ and acts as the
	identity on all other factors. The collection $(X^n)_{n \in \mathbb
	Z_{\geq 0}}$ can be viewed as an $\EE_1$-algebra in the category of local system on $\Conf= \cup_{n\geq0}BB_n$, where the multiplication is given by concatenation. We define $\Hur^X$ to be the $\EE_1$-algebra in $\Spc$ given by the \'etale cover of $\Conf$ arising from this local system, and use $\Hur^X_n$ to denote the part lying over $\Conf_n = BB_n$. This construction is natural in maps of braided sets, and there is a natural $\EE_1$-algebra map $\Hur^{X} \to \Conf$. Explicitly, the underlying space of $\Hur^X_n$ is the homotopy quotient $(X^n)_{hB_n}$.
\end{construction}
Throughout the appendix, we use $X$ to refer to a braided set.

\begin{example}
	The reader should keep in mind the following example: If $G$ is a group and $c$ is a union of conjugacy classes, then $c$ forms a braided set, where the twist operation $\sigma$ sends $(x,y)$ to $(y,y^{-1}xy)$.
	Then the Hurwitz space $\cup_n \Hur^{G,c}_{n,\mathbb C}$ as defined in
	\autoref{subsubsection:hurwitz-quotient} agrees with $\Hur^{X}$ as defined in \autoref{construction:hurwitzbraidedset}.
	More generally, if $c$ is a rack, the twist operation on $\Hur^c$ is
	given by sending $(x,y) \mapsto (y, y \triangleright x)$.
\end{example}

The next remark will not be used in what follows, but it may help explain
why the category of braided sets is a natural setting to work in.
\begin{remark}
The category of $\EE_1$-algebras in local systems on $\Conf$ with values in sets
can be identified with the category of lax-monoidal functors from the free
$\EE_2$-monoidal category on a point into the category of sets. For a braided
set $X$, viewing $\Hur^X$ as an object in this category via \autoref{construction:hurwitzbraidedset}, gives an identification of the category of braided sets with the full subcategory consisting of monoidal functors.
\end{remark}

\subsection{A monoid model of Hurwitz spaces}

We begin by defining topological spaces that model $\Hur^{X}$ and $\Conf$:
First, we define a topological monoid, $\mathrm{conf}$, which is a model for the
$\EE_1$-algebra structure on $\Conf$.

\begin{notation}\label{definition:conftop}
	Define the topological space $\mathrm{conf}$
	as the space whose points are given by pairs $(t,x)$ where, $t \in
	\mathbb R_{\geq0}$, and $x$ is a (possibly empty) configuration of finitely many distinct unordered
	points in 
	$(0,t)\times (0,1)$, not intersecting $[0,\frac 1 2) \times [0,1]$ or
	$(t-\frac 1 2,t]\times [0,1]$. The multiplication is given by
	concatenation of configurations and adding the elements $t$. We use
	$\mathrm{conf}$ to denote this topological monoid, and $\mathrm{conf}_n$ to denote the component with configuration of $n$ points. There is a map of
	topological monoids $t:\mathrm{conf} \to \RR_{\geq0}$ given by $(t,x)
	\mapsto t$. There is also a subset $\mathrm{ord} \subset \mathrm{conf}$
	consisting of configurations such that the second coordinate of each
	point in $x$ is $\frac 1 2$. 	
	For each $n$, the intersection
	$\mathrm{ord} \cap \mathrm{conf}_n$ is contractible. We will use the
	inclusion $\mathrm{ord}_n :=\mathrm{ord}\cap \mathrm{conf}_n\subset \mathrm{conf}_n$ to view each component of $\mathrm{conf}$ as being equipped with a `base point', i.e a fixed contractible subspace. We also use $\mathrm{confbig}$ to denote the variant of this topological monoid where we remove the condition that configurations not intersect $[0,\frac 1 2)\times [0,1]$ and $(t-\frac 1 2,t]\times [0,1]$, and use $\mathrm{bigord}$ and $\mathrm{bigord}_n$ to denote the corresponding subspaces. 
	There is an inclusion of topological monoids $\mathrm{conf}\to \mathrm{confbig}$ which is clearly a homotopy equivalence.
\end{notation}

\begin{notation}\label{notation:braidgroup}
	We identify $\pi_1(\mathrm{conf}_n,\mathrm{ord}_n)$ with the braid group
	such that the elementary twist $\sigma_{i}$ (twisting strands $i$ and
	$i+1$) 
	gets sent to the homotopy class of paths from $\mathrm{ord}_n$ that swaps the $i$th and $i+1$th element of the configuration via rotating them clockwise around each other by $\pi$.
\end{notation}

\begin{convention}\label{convention:pi1}
	 By convention, the composition in the fundamental group/groupoid of a space is such that given two paths $\gamma$, $\gamma'$, we have $[\gamma][\gamma']$ is the class of the path which is first $\gamma'$ and then $\gamma$ (assuming they are composable). This convention is the one that is compatible with the orientation of the interval in that we view $\gamma'$ as a morphism from its starting point to its end point.
\end{convention}

We next define a topological monoid $\operatorname{hur}^{X}$ which models $\Hur^{X}$ as an $\EE_1$-algebra.
\begin{notation}\label{definition:hurtop}
	Since $\Hur^{X}$, viewed as an $\EE_1$-algebra, 
	is obtained as a multiplicative 
	finite cover of $\Conf$, we can
	construct a model
	$\operatorname{hur}^{X}$ for $\Hur^{X}$
	as the corresponding multiplicative finite cover of $\mathrm{conf}$ in
	topological spaces. Namely, we first use the base points 
	constructed in \autoref{definition:conftop}
	to define the monoid
	map $\NN \to \mathrm{conf}$, which allows us to identify
	the fundamental group of $\mathrm{conf}$ with $B_n$, using the
	convention of \autoref{notation:braidgroup}. Then,
	$\operatorname{hur}^X_n$, as a cover of $\mathrm{conf}_n$, can be constructed as
	the quotient of $X^n\times \widetilde{\mathrm{conf}}_n$ by the braid
	group $B_n$, where $\widetilde{\mathrm{conf}}_n$ is the universal cover
	of $\mathrm{conf}_n$;
	More explicitly, a point of
$\operatorname{hur}^X_n$
	is given by a $B_n$-equivalence class of data $(x,t,\gamma,\alpha_i)$, where
	$(x,t) \in \mathrm{conf}_n$, $\gamma$ is a homotopy class of paths from this
	point to $\mathrm{ord}$, and $\alpha_i$, for $1 \leq i \leq n$, is an
	$n$ tuple of points in $X$, where $n=|x|$. Here the braid group action
	is such that $j$th standard generator of $B_n$, $\sigma_j$, sends $(x,t,\gamma,\alpha_i)$ to $(x,t,\sigma_j\gamma,\sigma_j(\alpha_i))$ using \autoref{convention:pi1} and \autoref{notation:braidgroup} to get a left action of $B_n$ on such $\gamma$, and using the left action of the braid group on $X^n$ from \autoref{construction:hurwitzbraidedset}.
	
	To make $\operatorname{hur}^{X}:=\cup_{n\geq0}\operatorname{hur}^{X}_n$
	into a topological monoid, we use the multiplication of $\mathrm{conf}$, concatenate paths, and concatenate tuples of points in $X$.
	There is a map of topological monoids $\operatorname{hur}^{X} \to
	\mathrm{conf}$. Composing the time coordinate $t:
	\mathrm{conf}\to \mathbb R_{\geq 0}$ with this map, we obtain a
	composite
	map, (which we also call $t$,) $t:
	\operatorname{hur}^{X} \to \mathbb R_{\geq 0}$. This is a map of
	topological monoids. We also define a variant
	$\operatorname{hurbig}^{X}$ of $\operatorname{hur}^X$ by doing the same construction with $\mathrm{confbig}$ instead of $\mathrm{conf}$. 
	We note 
$\operatorname{hurbig}^{X}$
	is homotopy equivalent to $\operatorname{hur}^X$.
\end{notation}

The key construction we use for Hurwitz spaces for braided sets is the following
cutting construction.
The idea for this construction is that if we start with a configuration in
$[0,t'] \times [0,1]$ which never contains points in the vertical line $t
\times[0,1]$, we can cut it into two configurations by cutting along $t
\times [0,1]$.
\begin{construction}[Cutting]\label{construction:cutting}
	Let $t \in [0,\infty)$, and consider the open set $U(t)$ of $\operatorname{hurbig}^X$ consisting of  $(x,t',\gamma,\alpha_i)$ such that $t'>t$ and no points of the configuration $x$ lie on $t\times [0,1]$. Then there is a continuous map $U(t)\to \operatorname{hurbig}^X\times \operatorname{hurbig}^X$ which we call `cutting at $t$', given as follows: We first cut the configuration on $[0,t']$ into two configurations $x',x''$ on $[0,t]$ and $[0,t'-t]\cong [t,t']$. Choose a homotopy class of paths $\phi$ from $x$ to $\mathrm{ord}$ with a representative that doesn't cross $t\times [0,1]$. 
	Using the braid group action, the tuple $(x,t',\gamma,\alpha_i)$ is
	equivalent to $(x,t',\phi,\beta_i)$ for some $\beta_i$ obtained by
	acting on the tuple $(x_i)$ via the element of the braid group given by
	the class of $\phi \gamma^{-1}$. We can now similarly cut $\phi$ into
	homotopy classes of paths $\phi',\phi''$ from the configurations
	$x',x''$ to the respective base points, and cut the sequence $\beta_i$
	into $\alpha_j',\alpha_k''$ by unconcatenating it into two sequences corresponding to the number of points in the configuration in $[0,t]\times [0,1]$ and $[t,t']\times [0,1]$ respectively. 
	
	Then the cutting map sends $(x,t',\gamma,\alpha_i)$ to
	the pair of tuples
	$(x',t,\phi',\alpha'_j)$ and $(x'',t'-t,\phi'',\alpha''_k)$. Since the
	concatenation map $X^a\times X^{b} \to X^{a+b}$ is $B_a\times B_b$-equivariant, this doesn't depend on the choice of $\phi$.
\end{construction}

\subsection{A scanning map}
Next, we produce well behaved topological models for the source and target of
\autoref{proposition:tensorproducthomotopy}. The following definition will be
the key ingredient in constructing the models.
The reader may wish to consult \autoref{remark:b-hur-topology}
for help understanding \autoref{definition:twosidedbartop}.

\begin{notation}\label{definition:twosidedbartop}
	Let $M$ be a set with a right action of
$\Hur^{X}$ and $N$ be a set with a left action of $\Hur^X$.
	Consider the topological
	space $B[M,\Hur^{X},N]$
	consisting of points which are of the form
	\begin{align}
		\label{equation:b-point}
		(a,b, y)
	\end{align}
	where $a \in M, b \in N$ and 
	$y = (x,1,\gamma,x_i) \in \operatorname{hurbig}^X$, following notation
	from \autoref{definition:hurtop}. The topology on $B[M,\Hur^{X},N]$ has a basis given as follows. Consider the following data:
	
	\begin{enumerate}[(a)]
		\item Numbers $d_1,d_2\in (0,1)$ with $d_1 < d_2$.
		\item A finite collection of pairwise disjoint open balls
			$U_1,\ldots,U_n$ in contained in the interior of
		$[d_1,d_2]\times [0,1]$.
		\item A homotopy class of paths $\phi$ from the configuration of
			the centers of the
			balls $U_i$, viewed as an element of
			$\mathrm{confbig}_n$, to the base point.
		\item Elements $\alpha_1,\ldots,\alpha_n \in X$.
		\item Elements $a' \in M, b'\in N$.
	\end{enumerate}

	We next define subsets $\mathfrak B(d_1,d_2,U_i,\phi,\alpha_i,a',b')$
	which form a basis of the topology on $B[M,\Hur^{X},N]$. 
	A point of the form \eqref{equation:b-point} lies in
	$\mathfrak B(d_1,d_2,U_i,\phi,\alpha_i,a',b')$ 
	if the following conditions hold.
	\begin{enumerate}
		\item None of the points in $x$ lie in $[d_1,d_2]\times [0,1]- \cup_1^n U_i$, and there is a unique point from $x$ in each $U_i$.
		\item Cutting the element of $\operatorname{hurbig}^X$ twice to
			restrict it to the interval $[d_1,d_2]$ yields a point
			$y' \in \operatorname{hurbig}^X$ (see
			\autoref{construction:cutting}). Then, using the homotopy
			class of $\phi$, the corresponding tuple of elements in
			$X$ associated to $y'$ is $\alpha_1,\ldots,\alpha_n$.
		\item[(3)] Define $y_1 \in \operatorname{hurbig}^X$ to be the
		element of $\operatorname{hur}^{X}$ obtained
		by
		cutting and restricting to the interval $[0,d_1]$, and let $y_2$ be the element similarly obtained by using the interval $[d_2,1]$.
		We then require that $ay_1 =a'$ and $y_2b=b'$.
	\end{enumerate}
\end{notation}
\begin{remark}
	\label{remark:b-hur-topology}
	The topology of \autoref{definition:twosidedbartop} is such that points of
	the configuration $x$ are allowed to collide with the boundary
	$\{0,1\}\times [0,1]$, at which point they disappear and  multiply the
	elements $m,n$ using the left action of $\Hur^X$ on $M$ and the right
	action on $N$. Condition (3) guarantees that when a collection of points converges to the boundary, it multiplies $m$ or $n$ to the correct value.
\end{remark}

We next construct a homotopy equivalence showing that the topological space $B[M,\Hur^X,N]$ models the two-sided bar construction
$M\otimes_{\operatorname{hur}^{X}}N$, which we define next. Our proof is an adaptation of the proof of \cite[Theorem 5.2.3]{bergstromDPW:hyperelliptic-curves-the-scanning-map}. Indeed, the case $M=N=X=*$ corresponds to the result there.

\begin{notation}
	\label{definition:two-sided-bar}
	Let $H$ be a topological monoid
	and let $M$ be a right module for $H$ and let $N$ be
	a left module for $H$.
	View $\Delta^n$ as the collection of tuples $(y_0,y_1,\ldots,y_n)$
	with $\sum_{i=0}^n y_i=1$. 
	We will describe elements of
	$M \times N \times \Delta^n \times H^n$ via the notation
	\begin{align}
		\label{equation:bar-data}
		(a, b, (y_0,\ldots, y_n), (x_1, \ldots, x_n)).
	\end{align}
	We define the {\em two-sided bar construction}, 
	$M\otimes_{H}N$, to be the quotient of $\coprod_{n \geq 0} M \times N
	\times \Delta^n \times H^n$,
	by the following
	equivalence relations: 
	\begin{enumerate}
		\item $(a, b, (0, y_1, \ldots, y_n), (x_1, x_2,\ldots, x_n))
		\sim (ax_1, b, (y_1,\ldots, y_n), (x_2, \ldots,
		x_n))$,
		\item $(a, b, (y_0,\ldots, y_{n-1},0), (x_1, \ldots, x_{n-1},x_n)) \sim (a,
		x_n b, (y_0,\ldots, y_{n-1}), (x_1, \ldots,
		x_{n-1}))$,
		\item $(a, b, (y_0,\ldots, y_{i-1},0,y_{i+1},\ldots, y_n), (x_1,
		\ldots, x_{i-1},x_i,x_{i+1},\ldots, x_n))$
		
		$\sim
		(a, b, (y_0,\ldots,y_{i-1},y_{i+1},\ldots, y_n), (x_1,
		\ldots, x_{i-1},x_i x_{i+1}, \ldots, x_n))$,
		\item $(a, b, (y_0,\ldots, y_{i-1},y_i,y_{i+1},\ldots, y_n), (x_1,
		\ldots, x_{i-1},e,x_{i+1},\ldots, x_n))$
		
		$\sim
		(a, b, (y_0,\ldots,y_{i-1}+y_i,y_{i+1},\ldots, y_n), (x_1,
		\ldots, x_{i-1},x_{i+1}, \ldots, x_n))$,
		where $e$ denotes the identity of the monoid $H$.
	\end{enumerate}
\end{notation}

\begin{lemma}\label{lemma:maincomparison}
	Let $M$ be a set with a right action of $\operatorname{hur}^{X}$ and let
	$N$ be a set with a left action of $\operatorname{hur}^X$. Then there is a weak homotopy equivalence $\sigma: M\otimes_{\operatorname{hur}^{X}}N\to B[M,\Hur^{X},N]$, natural in $X,M,N$.
\end{lemma}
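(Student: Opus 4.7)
The map $\sigma$ I propose to build is as follows. Given a representative $(a,b,(y_0,\ldots,y_n),(h_1,\ldots,h_n))$ of an element of $M\otimes_{\operatorname{hur}^X}N$, where $h_i=(x^{(i)},t_i,\gamma_i,\alpha^{(i)})\in\operatorname{hurbig}^X$, $\sigma$ places a horizontally rescaled copy of $h_i$ inside the vertical slab $[\sum_{j<i}y_j,\sum_{j\le i}y_j]\times[0,1]$ of $[0,1]\times[0,1]$, concatenates the resulting configurations, paths to the base point, and tuples of labels, and pairs this with $(a,b)\in M\times N$. Since $\sum y_i=1$, the total width is $1$ as required by \autoref{definition:twosidedbartop}. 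When some $y_i=0$ the slab has zero width: if $i=0$, this pushes $h_1$ against the left boundary $\{0\}\times[0,1]$, which under the topology of \autoref{remark:b-hur-topology} is identified with the point represented by $(ah_1,b,(y_1,\ldots,y_n),(h_2,\ldots,h_n))$; the cases $i=n$ and interior $i$ correspond symmetrically to relations $(2)$ and $(3)$ of \autoref{definition:two-sided-bar}, while relation $(4)$ corresponds to an empty slab. A direct check thus shows $\sigma$ descends to a well-defined continuous map, and naturality in $X$, $M$, and $N$ is visible from the construction.

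\textbf{Proof of weak equivalence.} I would follow the strategy of the proof of \cite[Theorem 5.2.3]{bergstromDPW:hyperelliptic-curves-the-scanning-map}, which handles the special case $M=N=X=*$. The idea is to filter both sides by the total number $k$ of configuration points: let $F_kB[M,\Hur^X,N]$ be the closed subspace where the interior configuration has at most $k$ points, and let $F_k(M\otimes_{\operatorname{hur}^X}N)$ be the image of the subspaces where the total number of configuration points across $h_1,\ldots,h_n$ is at most $k$. The map $\sigma$ respects these filtrations, since the number of interior points in the image configuration equals the total number of configuration points across all the $h_i$, and both filtrations are exhaustive. It therefore suffices by an inductive five-lemma argument to show that $\sigma$ induces a weak homotopy equivalence on each associated graded piece $F_k/F_{k-1}$.

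\textbf{Main obstacle.} The heart of the argument is verifying this equivalence on associated gradeds, and this will be the main technical difficulty. An inverse up to homotopy is furnished by \autoref{construction:cutting}: given a configuration in $(0,1)\times(0,1)$ with $k$ points having pairwise distinct first coordinates $s_1<\cdots<s_k$, one chooses cut-times $\tau_0<s_1<\tau_1<\cdots<s_k<\tau_k$ in a tubular neighborhood of the complementary intervals, sets simplicial coordinates $y_i:=\tau_i-\tau_{i-1}$, and applies the cutting construction to produce $h_1,\ldots,h_k\in\operatorname{hurbig}^X$, each containing exactly one interior point, together with boundary pieces absorbed into $M$ and $N$. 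The hard part is to arrange these choices of cut-times continuously and coherently so as to produce a map on each associated graded piece, and to exhibit an explicit homotopy (parametrized by a deformation of the cut-times) identifying the composite with the identity. Once such a homotopy inverse is constructed on each associated graded, reassembling along the exhaustive filtration yields that $\sigma$ itself is a weak equivalence.
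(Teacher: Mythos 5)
Your construction of $\sigma$ does not descend to the two-sided bar construction, and this is the fatal gap. You place a rescaled copy of $h_i$ in a slab of width $y_i$ and claim that when $y_0=0$ the copy of $h_1$ is ``pushed against the left boundary'' and hence identified with the point where $a$ has been multiplied by $h_1$. But the identifications in $B[M,\Hur^{X},N]$ (see \autoref{definition:twosidedbartop} and \autoref{remark:b-hur-topology}) occur only when points of the configuration themselves reach $\{0,1\}\times[0,1]$; setting $y_0=0$ merely collapses the empty gap to the left of $h_1$'s slab, while the configuration points of the rescaled $h_1$ remain in the open slab $(0,y_1)\times(0,1)$. So the two sides of relation $(1)$ of \autoref{definition:two-sided-bar} are sent to genuinely different points of $B[M,\Hur^{X},N]$. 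Relation $(3)$ fails for the same reason: collapsing $h_i$'s slab to zero width puts all of its configuration points on one vertical line (possibly colliding them), which is not the point obtained by rescaling the product $h_ih_{i+1}$ into the slab of width $y_{i+1}$. The paper's $\sigma$ differs in exactly the way needed to fix this: it concatenates the $h_i$ into a single element of total width $t=\sum t_i$, forms the weighted position $t'=\sum_i y_i(\sum_{j\le i}t_j)$, keeps only the width-one window around $t'$, and absorbs the pieces to the left and right of that window into $a$ and $b$ via the module actions. That absorption is what makes relations $(1)$ and $(2)$ hold (for instance, when $y_0=0$ one has $t'\ge t_1$, so all of $x_1$ lands in the left piece and is multiplied into $a$), and the half-unit margins built into $\mathrm{conf}$ guarantee the cuts can be made.

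Separately, the part of your argument that would actually establish the weak equivalence is not carried out: you defer the construction of coherent cut-times and the homotopies on associated graded pieces, which is where all the work lies. Passing from equivalences on associated gradeds to an equivalence of the filtered spaces would also require the filtration steps to be cofibrations and an appeal to the gluing lemma, neither of which is addressed, and the topology on $B[M,\Hur^{X},N]$ is unusual enough that this is not automatic. The paper avoids filtrations entirely: after checking surjectivity on $\pi_0$, it uses a compactness and partition-of-unity argument in the style of \cite[Theorem 5.2.3]{bergstromDPW:hyperelliptic-curves-the-scanning-map} to lift maps of pairs $(D^v,S^{v-1})\to(B[M,\Hur^{X},N],M\times N)$ up to homotopy, which yields surjectivity and injectivity on all homotopy groups at once.
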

\begin{proof}
	Recall that $M\otimes_{\operatorname{hur}^{X}}N$ denotes the two-sided bar
	construction, as defined in \autoref{definition:two-sided-bar}.
	In particular, since $\operatorname{hur}^{X}$ models $\Hur^{X}$ as an $\EE_1$-algebra, $M
	\otimes_{\operatorname{hur}^{X}} N$ models the two-sided bar construction $M\otimes_{\Hur^{X}}N$.
	
	We produce a map $\sigma: M\otimes_{\operatorname{hur}^{X}}N \to B[M,\Hur^{X},N]$, as follows: 
	Given data as in \eqref{equation:bar-data} for $H = \operatorname{hur}^{X}$, and using notation from
	\autoref{definition:hurtop},
	let $t_i := t(x_i) \in \RR_{\geq 0}$ 
	denote the time associated to $x_i \in
	\operatorname{hur}^{X}$. 
	We multiply $x_1,\ldots,x_n$ to give an element $x \in
	\operatorname{hur}^{X}$ with $t := t(x) =\sum_{i=1}^n t_i$. First, extend the interval on which $x$ is defined from $[0,t]\times [0,1]$ to $[-\frac 1 2,t+\frac 1 2]\times [0,1]$.
	We then define $t' := \sum_{i=1}^n
	y_{i} \cdot (\sum_{j=1}^it_j)$. 
	At this point, it will be useful to recall the cutting construction from
	\autoref{construction:cutting}.
	Choose $\epsilon>0$ small enough so that there are no points of the
	configuration $x$ in $(t'-\frac 1 2,t'-\frac 1 2+\epsilon]\times [0,1]$ and $[t'+\frac 1 2-\epsilon,t'+\frac 1 2)\times [0,1]$. We can then cut $x$ into three pieces, $w,x'',z \in \operatorname{hurbig}^X$ coming from the intervals $[-\frac 1 2,t'-\frac 1 2+\epsilon]$, $[t'-\frac 1 2+\epsilon,t'+\frac 1 2-\epsilon], [t'+\frac 1 2-\epsilon,t+\frac 1 2]$.
 Extend $x''$ to be of length $1$ by extending the length of the interval on
 both ends by $\epsilon$, and let $x'$ denote the resulting element of $\operatorname{hurbig}^X$. Note that $x'$ doesn't depend on $\epsilon$, and neither does the class of $w,z$ in $\pi_0\Hur^X$.
	
	The then define the map $\sigma$ to sends data as in
	\eqref{equation:bar-data} to the point $(aw,zb,x')$,
	in $B[M,\Hur^{X},N]$, with notation as in \eqref{equation:b-point}. To
	see this is well defined, one must check that this glues along the
	identifications (1)-(4) in the two sided bar construction
	given in
	\autoref{definition:two-sided-bar}.
	We omit this verification.
	
	To verify that $\sigma$ is continuous, it suffices to do check
	continuity on $M\times N\times \Delta^n\times (\operatorname{hur}^X)^n$.
	We now carry out this straightforward verification.
	We choose a basic open set of the form $\mathfrak
	B(d_1,d_2,U_i,\phi,\alpha_i,a',b')$.
	We wish to verify its preimage under $\sigma$ is also open. If $(a, b,
	(y_0,\ldots, y_n), (x_1, \ldots, x_n))$ is a point in the preimage, then
	in a small neighborhood, $(1)$ and $(2)$ of
	\autoref{definition:twosidedbartop} are clearly still satisfied. The path
	components of $w$ and $z$ are also unchanged in a small neighborhood, so
	condition $(3)$ is satisfied as well.

	Having shown $\sigma$ is continuous, we next claim that $\sigma$ is surjective on path components. 
	Every point in $B[M,\Hur^{X},N]$ has a path to a point where the
	configuration on $[0,1]\times [0,1]$ is empty, by linearly pushing the
	configuration towards $0\times [0,1]$ on the boundary, and multiplying
	once points collide with the boundary. A point with an empty
	configuration is determined by $(a,b) \in M\times N$. This point is the
	image of the point $(a,b,(1),()) \in M\otimes_{
	\operatorname{hur}^{X}}N$, showing the claimed surjectivity on path components.
	We view these points as above with empty configuration as giving a copy of $M\times N \subset M\otimes_{\operatorname{hur}^{X}}N$. 

	Recall we are aiming to show $\sigma$ is a homotopy equivalence. To do
	this, we claim it suffices to show the following two statements.
	\begin{enumerate}
		\item[(i)] For every $v\geq1$, given a map of pairs
		$f:(D^{v},S^{v-1}) \to (B[M,\operatorname{hur}^{X},N],
		M\times N)$, there is a lift, up to homotopy, to a map of pairs $g:(D^{v},S^{v-1}) \to (M\otimes_{ \operatorname{hur}^{X}}N,M\times N)$
		such that $\sigma \circ g$ is homotopic to $f$.
		\item[(ii)] For every $v\geq 1$ and a map of pairs $g:(S^v,*)
			\to (M\otimes_{ \operatorname{hur}^{X}}N,M\times N)$
			such that $\sigma \circ g$ is nullhomotopic, then $g$ is nullhomotopic.
	\end{enumerate}	

	We now explain why $(i)$ and $(ii)$ above imply $\sigma$ is a homotopy
	equivalence. First, $(i)$ in the case $v=1$, shows in particular that the
	map $\sigma$ on $\pi_0$ is injective, since it shows that the equivalence relation describing $\pi_0$ as a quotient of $M\times N$ is the same for the source and target.
	Since we showed above that $\sigma$ is a surjection on $\pi_0$, it will
	follow that $\sigma$ is an isomorphism on $\pi_0$.
	Then $(i)$ also shows that on each component, the map on $\pi_v$ is
	surjective for each $v \geq 1$, and $(ii)$ shows $\pi_v$ is injective.

	We turn to proving $(i)$, and assume that we have a map $f$ as given in
	the statement of $(i)$.
	We set up notation to define the map $g$ described in $(i)$.
	Using compactness, we can find a finite open
	cover $D^v = \cup_{\lambda \in \Lambda}U_{\lambda}$ such that there is
	an $r>0$ and for each $\lambda \in \Lambda$, there is a $t_{\lambda} \in
	(0,1)$ such that for all $u \in U_{\lambda}$, the configuration associated
	to $f(u)$ has no points in $(t_{\lambda}-r,t_{\lambda}+r)\times [0,1]$.
	Choose a partition of unity $w_{\lambda}$ subordinate to the cover $U_{\lambda}$. Let
	$u$ be a point in $U_{\lambda_0}\cap\cdots\cap U_{\lambda_n}$,
	and not in $U_{\lambda}$ for $\lambda \in \Lambda - \{\lambda_0, \ldots,
	\lambda_n\}$.
	Note that for every point $u \in D^v$ satisfies the above for a unique
	choice of $\{\lambda_0, \ldots, \lambda_n\} \subset \Lambda$ 
	by taking $\lambda_0,\ldots, \lambda_n$ to be the subset of $\Lambda$
	such that $u\in U_{\lambda_i}$ for every such $i$.
	Assume that
	$t_{\lambda_0}<t_{\lambda_1}<\cdots<t_{\lambda_n}$ (which can always be
	achieved by decreasing $r$ and making all $t_{\lambda}$ distinct). Then
	we consider the $n$-tuple of elements $x_1,\ldots,x_n \in \operatorname{hur}^{X}$, where $x_i$ is obtained from the element of $\operatorname{hur}$ associated to $f(u)$ by cutting along 
	$t_{\lambda_{i-1}} \times[0,1]$ and $t_{\lambda_i} \times[0,1]$, 
	taking the configuration with points lying in
	$[t_{\lambda_{i-1}},t_{\lambda_{i}}] \times[0,1]$, and rescaling the
	intervals by $\frac 1 {2r}$ so that the time function $t$ applied to
	this new configuration is $(t_{\lambda_i}-t_{\lambda_{i-1}})\frac 1 {2r}$.
	Define $u'$ and $u'' \in \operatorname{hurbig}^{X}$
	to be the elements 
	obtained respectively by cutting to restrict to the intervals
	$[0,t_{\lambda_{0}}]$ and $[t_{\lambda_n},1]$ respectively. 
	We define the map $g:D^{v} \to M\otimes_{\operatorname{hur}^{X}}N$ by sending $u$ 
	to the tuple $$(au',u''b,(w_{\lambda_{0}(u)},\ldots,w_{\lambda_{n}(u)}),
	(x_1,\ldots,x_n)).$$

	We will show there is a homotopy between $\sigma \circ g$ and  $f$,
which will verify $(i)$.
Choose the open cover $\{U_\lambda\}_{\lambda \in \Lambda}$ of $D^v$ so that image of each component the boundary
	$S^{v-1}$ is contained in exactly one $U_{\lambda_i}$. 
	(When $v > 1$, the boundary is connected, so this is the same as saying
	the boundary $S^{v-1}$ is contained in exactly one
	$U_{\lambda_i}$.)
	Then, $g$ is a map of
	pairs because each component of the boundary is sent to a single point,
	as $M\times N$ is a discrete set.

	We now check continuity of $g$ at a point $u \in D^v$.
	Let $j,k$ be such that $w_{\lambda_i}(u)=0$ for $i <j$ and $i>k$ but not
	for $i=j$ or $i=k$. Any neighborhood of $g(u)$, contains an open set
	$W$ consisting of data of the form $(a',b',(y_0,\ldots,y_n),
	(z_1,\ldots,z_n))$ where $|y_i-w_{\lambda_i}(u)|<\epsilon$ for some
	$\epsilon>0$ small enough such that this guarantees that $y_j,y_k\neq0$,
	and such that 
	\begin{itemize}
		\item[($\star$)] $a'z_1\cdots z_{j-1}=au'x_1\cdots x_{j-1}$ and
		$z_{k+1}\cdots z_nb' = x_{k+1}\cdots x_nu''b$.
	\end{itemize}
	In order to show $g$ is continuous at $u$, it is then enough that for any such open set $W$, $g^{-1}(W)$ contains an open neighborhood 
	of $u$ in $D^v$.

	Choose a basic open set 
	$\mathfrak B(d_1,d_2,U_i,\phi,\alpha_i,a',b')$ containing $f(u)$ and a
	small open neighborhood $V$ containing $u$ with $V \subset
	f^{-1}(\mathfrak B(d_1,d_2,U_i,\phi,\alpha_i,a',b')) \cap 
	U_{\lambda_0}\cap \cdots \cap U_{\lambda_n}$.
	We may shrink 
	$\mathfrak B(d_1,d_2,U_i,\phi,\alpha_i,a',b')$,
	to assume $d_1\leq t_{\lambda_0},d_2\geq t_{\lambda_n}$,
	where, here, shrinking this neighborhood may involve increasing the
	number of $U_\lambda$ so that there is one such open containing each
	point of the configuration $x$, appearing in the data for $f(u)$, in
	$(d_1, d_2) \times[0,1]$. 
	By further shrinking $V$, we may assume the functions $w_\lambda$ for
	$\lambda \in \Lambda -\{\lambda_0, \ldots, \lambda_n\}$ vanish on $V$.
	Therefore, using identifications $(1)-(3)$ of the bar construction, for the purposes of proving continuity,
	we may assume our open cover consists solely of
	$U_{\lambda_0}, \ldots, U_{\lambda_n}$, i.e, we can ignore the other
	$U_\lambda$.
	At this point, condition 
(3) of \autoref{definition:twosidedbartop} enables us to further shrink $V$ so that
($\star$) holds for all $g(v)$ with $v \in V$.
We have therefore found an open set in $D^v$ containing $u$ and contained in
$g^{-1}(W)$ for $W$ an open set containing $g(u)$. This proves that $g$ is
continuous.
	
	For a point $z \in D^{v+1}$, $\sigma(g(z))$ is obtained from $f(z)$
	by zooming in on the configuration on the interval around $\sum_{i=0}^n w_{\lambda_i}t_{\lambda_i}$ of radius $r$, rescaling it to be in the interval $[0,1]$, and then multiplying the element of $\operatorname{hurbig}$ obtained by cutting to the left of this interval with $a$ and multiplying the element of $\operatorname{hurbig}$ obtained by cutting to the right with $b$. There is an evident continuous homotopy from $f$ to $\sigma\circ g$, given by linearly rescaling around this interval, and letting points act on the boundary as they reach it. This gives us a homotopy between $f$ and $\sigma \circ g$, which is a homotopy of pairs because of our assumption about $g$ applied to the boundary sphere (each component of which is sent to a single point). 
	Thus we have verified $(i)$.
	
	To prove $(ii)$, given a map of pairs $g:(S^v,*) \to
	(M\otimes_{\operatorname{hur}^{X}}N,M\times N)$ and a nullhomotopy of
	$\sigma \circ g$,
	the same construction used above for $(i)$ lifts the nullhomotopy of
	$\sigma \circ g$ to a pointed map $h:D^{v+1} \to
	M\otimes_{\operatorname{hur}^{X}}N$. It remains to show that $h|_{S^v}$
	is homotopic to $g$ as a pointed map. Here we follow the last paragraph
	of the proof of \cite[Theorem
	5.2.3]{bergstromDPW:hyperelliptic-curves-the-scanning-map}. Namely,
	given $u \in S^v$, suppose $u$ is in $U_{\lambda_0}\cap\cdots\cap
	U_{\lambda_n}$ and not in $U_{\lambda}$ for $\lambda \in
	\Lambda - \{\lambda_0, \ldots, \lambda_n\}$. Associated to $g(u)$ is a
	sequence of $\kappa$
	elements in $\operatorname{hur}^{X}$, for some integer $\kappa$. We can first linearly stretch
	these $\kappa$ elements each by a factor of $\frac 1 {2r}$, and then use
	the identification (3) of the 2-sided bar construction
	\autoref{definition:two-sided-bar}
to make $n$ cuts in
the places specified by $t_{\lambda_i}$ in $\sigma \circ g$. Let $\tilde{g}$ be
the function of $u$ obtained this way, so that $\tilde{g}$ is homotopic to $g$.
Then, $\tilde{g}$ agrees with $h|_{S^v}$ except for the weights defining a point in $\Delta^n$. Linearly changing these weights then gives a continuous homotopy between $\tilde{g}$ and $h|_{S^v}$. Thus $g$ is homotopic to $h|_{S^v}$.
\end{proof}
\subsection{A simple topological model}
We will carry out a scanning argument to obtain a description of the homotopy type of
$B[M,\Hur^{X},N]$ in terms of explicit quotient spaces, which we define next.
This identification is verified in
the unpointed case in
\autoref{lemma:scanningunpointed},
and in the pointed case in
\autoref{lemma:modelquotdescriptionpointed}.
In the next definition, we have two somewhat complicated relations $(i)$ and
$(ii)$.
We must impose these relations so that
the proof of
\autoref{proposition:modelquotdescription}
goes through, the key idea being depicted in
\autoref{figure:boundary}.

\begin{definition}\label{definition:quotientmodel}
	Let $M$ be a set with a right action of $\Hur^X$ and let $N$ be a set with a left action of $\Hur^{X}$. Let $0<\epsilon<1$. We define  $Q_{\epsilon}[M,\Hur^{X},N]$ 
	to be the set consisting of triples $(m,n,x,l: x \to X)$ with $m \in M,
	n \in N,$
	$x = \{p_1, \ldots, p_k\}$ a finite subset of $[0,1]\times [\epsilon,1-\epsilon]$
	(with first projection 
$\pi_1: [0,1] \times [0,1] \to [0,1]$ and
	second projection $\pi_2: [0,1] \times [0,1] \to [0,1]$)
	such that for each $i\neq j$, the distance between $\pi_2(p_i)$ and
	$\pi_2(p_j)$ is at least $\epsilon$, and $l:x \to X$ is a
	map of sets.
	
	We define $\overline{Q}_{\epsilon}[M,\Hur^{X},N]$ to be the quotient space of $Q_{\epsilon}[M,\Hur^{X},N]$
	under the equivalence relation generated by the following two
	types of equivalences.
	
	\begin{itemize}
		\item[$(i)$] Suppose $x = \{p_1, \ldots, p_r,\ldots, p_k\}$ with $\pi_2(p_1) >
			\cdots > \pi_2(p_k)$. If $\pi_1(p_r) = 0$, so $p_r$ lies
			on the left boundary, the data ($m,n,x,l:x \to X)$ is
			equivalent to $(m', n, x' := \{p_1, \ldots, p_{r-1},  p_{r+1},
			\ldots, p_k\}, l': x'\to X)$, defined as follows:
			write
			$(y_1, \ldots, y_r) :=
			\sigma_1\sigma_2\cdots\sigma_{r-1}(l(p_1),
			\ldots, l(p_r)).$
			Then $m' = my_1$, and $l'(p_i) := y_i$ for $1 \leq i < r$, $l'(p_i) := l(p_i)$ for $i > r$.
		\item[$(ii)$] Suppose $x = \{p_1, \ldots, p_r,\ldots, p_k\}$ with
			$\pi_2(p_1) >
			\cdots > \pi_2(p_k)$. If $\pi_1(p_r) = 1$, so $p_r$ lies
			on the right boundary, the data ($m,n,x,l:x \to X)$ is
			equivalent to $(m, n', x' := \{p_1, \ldots, p_{r-1},  p_{r+1},
			\ldots, p_k\}, l': x'\to X)$, defined as follows:
			write
			$(y_r, \ldots, y_k) :=
			\sigma_{k-r-1}\cdots \sigma_{1}(l(p_r),
			\ldots, l(p_k)).$
			Then $n' = y_kn$, and $l'(p_i) := l(p_i)$ for $i < r$,
			$l'(p_i) = y_{i-1}$ for $i> r$.
	\end{itemize}
\end{definition}

\begin{remark}\label{remark:eqrelation}
	In the case that the braided set $X$ comes from a rack $c$, the equivalences
	in \autoref{definition:quotientmodel} can be described more simply as
	follows.
	\begin{itemize}
		\item[$(i)$] 			Suppose $x = \{p_1, \ldots, p_r,\ldots, p_k\}$ with $\pi_2(p_1) >
		\cdots > \pi_2(p_k)$. If $\pi_1(p_r) = 0$, so $p_r$ lies
		on the left boundary, the data ($m,n,x,l:x \to X)$ is
		equivalent to $(m', n, x' := \{p_1, \ldots, p_{r-1},  p_{r+1},
		\ldots, p_k\}, l': x'\to X)$, where $m' = ml(p_r)$, $l'(p_i) :=
		l(p_r)\triangleright l(p_i)$ for $1 \leq i < r$, $l'(p_i) := l(p_i)$ for $i > r$.

\item[$(ii)$] Suppose $x = \{p_1, \ldots, p_r,\ldots, p_k\}$ with
$\pi_2(p_1) >
\cdots > \pi_2(p_k)$. If $\pi_1(p_r) = 1$, so $p_r$ lies
on the right boundary, the data ($m,n,x,l:x \to X)$ is
equivalent to $(m, n', x' := \{p_1, \ldots, p_{r-1},  p_{r+1},
\ldots, p_k\}, l': x'\to X)$, where $n' = ((l(p_{r+1})\cdots l(p_{k}))\triangleright l(p_r))n$, $l'(p_i) := l(p_i)$ for $i \neq r$.
	\end{itemize}
\end{remark}

A useful construction we will use to study the space
$B[M,\operatorname{hur}^{X},N]$ is a flow that pushes configurations outwards,
which we define next.
\begin{construction}\label{construction:flow}
	Choose a smooth function $f:[0,1] \to \RR$ such that $f([0,\frac 1 2))
	<0$ and $f(1-x)=-f(x)$. In particular, $f(1/2) = 0$. Consider the vector field $f(x)\frac{\partial}{\partial x}$ on $[0,1]\times [0,1]$, where $x$ is the first coordinate. We can then form a continuous flow $\phi_t:B[M,\Hur^{X},N]\times [0,\infty) \to B[M,\Hur^{X},N]$ by moving points in a configuration along the flow, and having them act on the elements $(m,n)$ in $M\times N$ when they reach the boundary. We can define such a flow (which we give the same name) on $\overline{Q}_{\epsilon}[M,\Hur^X,N]$ via the same procedure.
\end{construction}

The following proposition is key in relating
$\overline{Q}_{\epsilon}[M,\Hur^X,N]$ to $B[M,\Hur^X,N]$. This proposition characterizes the
image of the natural map from the first to the second. 

\begin{proposition}\label{proposition:modelquotdescription}
	Let $M$ be a set with a right action of $\Hur^X$ and $N$ be a set with a
	left action of $\Hur^{X}$. 
	Let
	$\pi_2: [0,1] \times[0,1] \to[0,1]$ be the second projection.
	There is a natural continuous injection $\overline{Q}_{\epsilon}[M,\Hur^X,N] \to B[M,\Hur^X,N]$ whose image consists of those points 
	$(a,b,y)$ where $y := (x := \{p_1, \ldots, p_k\}, 1,\gamma ,x_i)$ as in
	\autoref{definition:twosidedbartop} with the property that $\epsilon
	\leq \pi_i(p_j) \leq
	1-\epsilon$ for $1 \leq j \leq k$ and any two distinct
	elements in $\{\pi_2(p_1),
	\cdots, \pi_2(p_k)\}$ differ by at least $\epsilon$.

	Moreover, a continuous map $K \to B[M,\Hur^X,N]$ lifts to $\overline{Q}_{\epsilon}[M,\Hur^X,N]$ if its image in the image of finitely many connected components of $Q_{\epsilon}[M,\Hur^X,N]$.
\end{proposition}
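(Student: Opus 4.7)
The plan is to build the map at the level of $Q_\epsilon$, verify it descends to the quotient $\overline{Q}_\epsilon$ and is a homeomorphism onto its image, and finally address the lifting property. For the construction, given $(m, n, x, l) \in Q_\epsilon[M, \Hur^X, N]$ with $x = \{p_1, \ldots, p_k\}$ ordered by decreasing $\pi_2$-value (well-defined since the $\pi_2$-values are $\epsilon$-separated), I would define a canonical homotopy class of paths $\gamma_x$ from $x$ to the base point $\mathrm{bigord}_k$ by sequentially moving $p_i$ in vertical order to the position $(i/(k+1), 1/2)$ along a path that avoids collisions. This yields a continuous map $\psi: Q_\epsilon[M, \Hur^X, N] \to B[M, \Hur^X, N]$ sending $(m, n, x, l)$ to $(m, n, (x, 1, \gamma_x, (l(p_1), \ldots, l(p_k))))$.

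The main verification is that $\psi$ descends to $\overline{Q}_\epsilon$ with the claimed image. The topology on $B$ from \autoref{definition:twosidedbartop}, specifically condition $(3)$ in the basic open set description, forces the following boundary behavior: when a configuration point of $(x, 1, \gamma, x_i)$ touches the left boundary $\{0\} \times [0,1]$, the resulting point in $B$ is identified with one where the point is removed and $m$ is multiplied from the right by the label associated to that point, where the label is computed via $\gamma$. The braid word $\sigma_1 \sigma_2 \cdots \sigma_{r-1}$ appearing in relation $(i)$ of \autoref{definition:quotientmodel} is exactly the braid-group element needed to transport the label of the $r$-th point (from the top) along $\gamma_x$ into the first position, so that it is the element absorbed into $m$; relation $(ii)$ is handled symmetrically for the right boundary. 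Any $(a,b,(x,1,\gamma,x_i)) \in B$ whose configuration satisfies the stated $\pi_2$-constraints lies in the image: apply the braid element converting $\gamma$ into $\gamma_x$ and transport the labels accordingly. Injectivity follows because two preimages mapping to the same point of $B$ differ by braid-group actions that correspond precisely to sequences of $(i), (ii)$-moves, and comparing the basic open sets shows $\overline{\psi}$ is a homeomorphism onto its image.

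For the lifting property, suppose $f: K \to B[M,\Hur^X,N]$ has image lying in the image of finitely many components of $Q_\epsilon[M,\Hur^X,N]$. Near any $u \in K$ where the configuration associated to $f(u)$ has no points on the boundary $\{0,1\} \times [0,1]$ and has $\pi_2$-coordinates strictly $\epsilon$-separated, the lift is essentially unique. As $u$ varies, configuration points may enter or leave the left or right boundary, but the finite-components condition controls the combinatorial complexity of these events, and the equivalence relations $(i), (ii)$ are precisely what is required to glue local lifts coherently across such codimension-one strata. The main obstacle I anticipate is carefully handling the loci where a configuration point crosses the boundary, where one must verify that the braid-group twisting prescribed by $(i), (ii)$ matches the continuous transport along $f$ inside $B$; this ultimately reduces to the boundary identification established in the construction step.
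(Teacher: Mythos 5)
Your construction of the comparison map follows the same basic strategy as the paper (choose a canonical homotopy class of paths trivializing the local system, then match the boundary absorption against relations $(i)$ and $(ii)$ of \autoref{definition:quotientmodel} via an explicit braid word), but there is a gap at the boundary. Your formula $\psi(m,n,x,l)=(m,n,(x,1,\gamma_x,(l(p_i))))$ only makes sense when $x\subset(0,1)\times[\epsilon,1-\epsilon]$: configurations in $\operatorname{hurbig}^X$ live in the open rectangle, so points of $Q_\epsilon$ with $\pi_1(p_r)\in\{0,1\}$ have no image under your formula. Saying that ``the topology on $B$ forces'' the absorption behavior correctly describes condition (3) of the basic opens $\mathfrak B$, but it does not define the map on boundary configurations nor prove continuity there, and this is exactly the delicate point. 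The paper's device is to define $g$ only on the dense interior $Q^\circ$ and extend by $\tilde g=\phi_t\circ g\circ\varphi_{-t}$ using the inward/outward flows of \autoref{construction:flow}; continuity is then automatic, and the induced identifications are computed as limits by comparing two trivializing paths, $[\gamma'][\gamma]^{-1}=\sigma_{k-1}\cdots\sigma_i$ (\autoref{figure:boundary}). You would need either this flow trick or an equally concrete substitute; note also that with your different canonical path (sequentially parking points at $(i/(k+1),1/2)$ rather than straightening and rotating) the braid words in $(i)$ and $(ii)$ must actually be recomputed, not just asserted to match.

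The second, more serious problem is the lifting statement. You claim that ``comparing the basic open sets shows $\overline\psi$ is a homeomorphism onto its image''; if that were true globally, the finiteness hypothesis in the ``moreover'' clause would be superfluous, since every continuous map into the image would lift. In general $M$, $N$ need not be finite, $\overline Q_\epsilon$ carries the quotient topology, and a continuous bijection need not be a homeomorphism onto its image, so this claim is unjustified. Your replacement argument (gluing local lifts across ``codimension-one strata,'' with finiteness ``controlling combinatorial complexity'') misses what the hypothesis is actually for: each component of $Q_\epsilon$ is compact, so the image of finitely many components is a continuous bijection from a compact space to a subspace of the Hausdorff space $B[M,\Hur^X,N]$, hence a homeomorphism onto its image, and the unique set-theoretic lift of $K$ is then automatically continuous. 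Without routing the argument through compactness (or some equivalent comparison of the quotient and subspace topologies), the lifting claim is not established.
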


\begin{figure}
	\includegraphics[scale=.3]{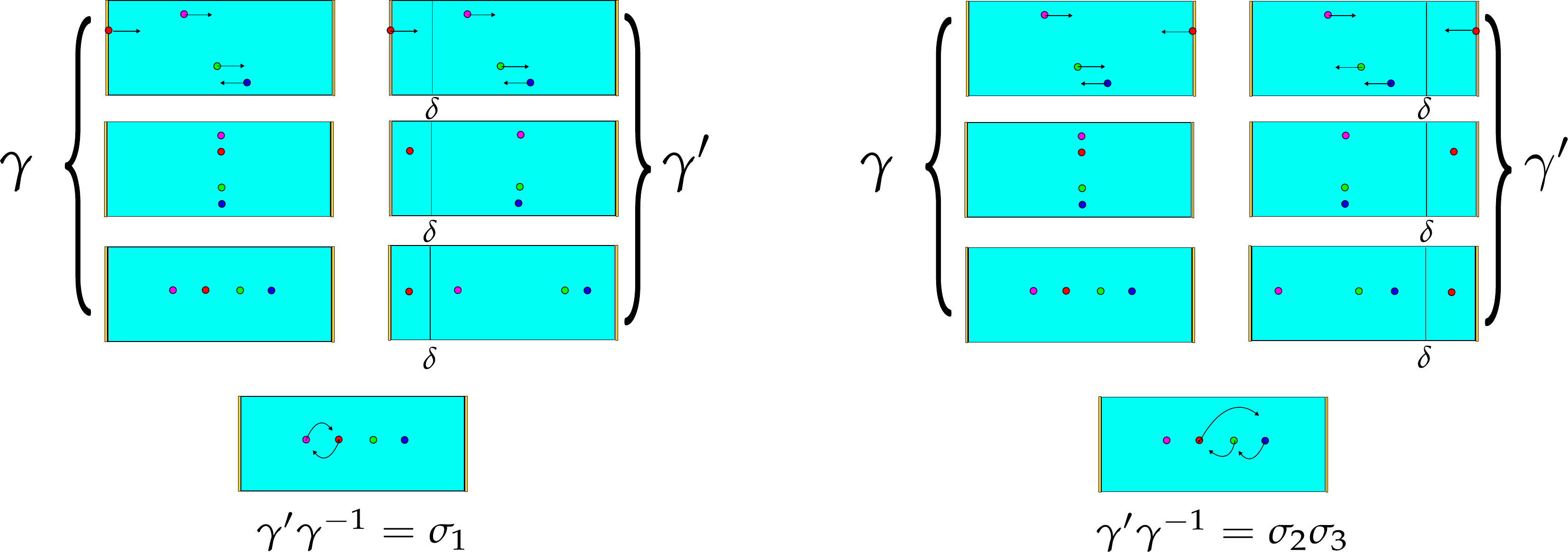}
\caption{This is a picture depicting the compatibility of the map $\tilde{g}$ in the
	proof of \autoref{proposition:modelquotdescription} with
	relations $(i)$ and $(ii)$ from \autoref{definition:quotientmodel}.
	The first two columns depict the paths $\gamma$ and $\gamma'$ the points
	take during the map $\tilde{g}$ under relation $(i)$, and the composite $\gamma'
	\gamma^{-1}$ is depicted underneath.
	The last two columns depict the paths $\gamma$ and $\gamma'$ which the
	points take under relation $(ii)$, and the composite $\gamma'
	\gamma^{-1}$ is again depicted underneath.
	In the diagrams, there are $4$ points, $p_1$ which is purple, $p_2$
	which is red, $p_3$ which is green, and $p_4$ which is blue.
	So $r = 2$ and $k = 4$ in the notation of
	\autoref{definition:quotientmodel}.
	The point $p_2$ is on the boundary. When $p_2$ is on the left boundary, the
	path is $\sigma_1 \cdots \sigma_{r-1} = \sigma_1$ since $r = 2$. 
	When $p_2$ is on the right boundary, the path is $\sigma_r \cdots
	\sigma_{k-1} = \sigma_2 \sigma_3$.
}
\label{figure:boundary}
\end{figure}

\begin{proof}
		To shorten notation, in this proof only, we will use $Q$ to denote
	$Q_{\epsilon}[M,\Hur^{X},N]$ and $\overline{Q}$ to denote $\overline{Q}_{\epsilon}[M,\Hur^{X},N]$.
	We will produce a continuous map 
	$$Q \to B[M,\Hur^{X},N]$$ which induces a map $\overline{Q} \to B[M,
	\Hur^X, N]$ satisfying the claimed properties.
	
	We define the interior $Q^{\circ}$ to be the dense subset of $Q$ on
	which the configurations are contained in $(0,1)\times [\epsilon,1-\epsilon]$. In other
	words, a point of $Q^{\circ}$ is given by $m \in M, n \in N$,
	such a configuration $x \subset (0,1) \times [\epsilon,1-\epsilon]$, and a
	function $l: x \to X$. We define a continuous function $g:Q^{\circ}\to
	B[M,\Hur^{X},N]$ as we describe next. 
	
	We first continuously choose a homotopy
	class of paths $\gamma$ from the configuration $x$ to $\mathrm{bigord}$ as
	follows: given a configuration $x$, because the projection $\pi_2|_x : x
	\to [0,1]$ to the second
	coordinate is injective, we can linearly move all points so that their
	first coordinate is $\frac 1 2$. Then, we can rotate the configuration in
	$[0,1] \times [0,1]$ counterclockwise by $\frac \pi 2$
	so that the second coordinate of all the points is
	$\frac 1 2$, so that the path ends in $\mathrm{bigord}$. This ends our description of $\gamma$.

	Let $x := \{p_1, \ldots, p_k\}$ with $\pi_2(p_1) > \cdots > \pi_2(p_k)$
	and define $x_i := l(p_i)$. 
	The map $g$ then sends the above point $(m,n,x,l:x \to X) \in Q$ to the
	point $(m,n,y) \in B[M,\Hur^{X},N]$, where $y \in
	\operatorname{hurbig}^{X}$ is given by $(x,1,\gamma,x_i)$. We omit the
	straightforward verification that
	this map bijects $Q^{\circ}$ onto the image stated in the proposition statement.
	
	We claim the above map $Q^\circ \to B[M,\Hur^X,N]$ continuously extends
to a map $Q \to B[M,\Hur^X,N]$. To see this,
	we can produce a continuous inward flow $\varphi_{-t}$ on $Q$ by pushing
	points inwards along the vector field $-f(x)\frac{\partial}{\partial x}$
	as in \autoref{construction:flow}. For positive $t$, this flow lands inside
	$Q^{\circ}$, so we can continuously extend the function $g$ to a map $\tilde{g}:Q \to B[M,\Hur^{X},N]$ 
	via the formula $\phi_t \circ g\circ \varphi_{-t}$ for any $t>0$, with
	$\phi_t$ as in \autoref{construction:flow}. It is easy to see that the
	underlying configuration associated to the map $\tilde{g}$ is similar to
	the configuration of the original point, with the only difference being
	that all of the points on the boundary of $[0,1] \times [0,1]$ are removed.
	
	Since the image of $g$ was closed, and $Q^\circ$ is dense in $Q$, every
	element of $Q$ is sent to the same point as some element of $Q^\circ$.
	To show that the equivalence relation induced by $\tilde{g}$ is exactly
	determined by $(i)$ and $(ii)$ from \autoref{definition:quotientmodel}, 
	it is then enough to show that $(i)$ and
	$(ii)$ are satisfied, because one can repeatedly use $(i)$ and $(ii)$ to
	show that every point is equivalent to a point in the image of $Q^\circ$, and the map $g$ is a bijection, so this accounts for the entire equivalence relation.
	
	To show relations $(i)$ and $(ii)$, we can reduce to the case where there is exactly one point on the boundary, since if we prove the relations in this case, by continuity, the general case will follow from continuity by perturbing the other points on the boundary and having them approach the boundary.
	
	We now establish relation $(ii)$ from
	\autoref{definition:quotientmodel}
	in this case, as $(i)$ follows from a
	similar argument. Suppose that we have a point $w \in Q$ where $y$ is
	the only point in the configuration associated to $w$ on the boundary of
	$[0,1] \times [0,1]$. Moreover, assume $y$ lies on $1\times [0,1]$. We
	need to compute the limit of $\phi_s\circ g\circ \varphi_{-t}(w)$ as $s$
	approaches $t$ from below. For the configuration associated to point
	$\varphi_{-t}(w)$, we can choose some $\delta<1$ so that the point
	$y'$ to which $y$ flows in
	in $\varphi_{-t}(w)$ has $\pi_1(y') >\delta$ (where
	$\pi_1(y')$ denotes the first coordinate of $y'$) and is the only point of the
	configuration with first 
	coordinate $\geq \delta$. Let $[\gamma]$ be
	the homotopy class of paths from the configuration in $\varphi_{-t}(w)$ to
	$\mathrm{bigord}$ that is used in defining the map $g$. We can choose a
	different homotopy class of paths by first cutting up $[0,1]\times
	[0,1]$ into $[0,\delta]\times [0,1]$ and $[\delta,1]\times[0,1]$, doing
	the construction as in the map $g$ on each piece\footnote{Really, the rotation used in the construction should be rescaled since $[0,\delta]\times [0,1]$ and $[\delta,1]\times [0,1]$ are not squares.}, 
	and concatenating to
	get a new homotopy class of paths $[\gamma']$. One
	checks that
	the class $[\gamma'][\gamma]^{-1}$ in the braid group is given by
	$\sigma_{k-1}\cdots\sigma_i$, where there are $k$ points of the configuration, and
	where $y$ has the $i$th highest second
	coordinate of all elements in the configuration. We have illustrated checking this in \autoref{figure:boundary}.	Thus $g\circ
	\varphi_{-t}(w)$ is equivalent to the point where the path $[\gamma']$ is
	used instead of $[\gamma]$, and the element of $X^k$ is changed
	according to the action of $\sigma_{k-1}\cdots\sigma_i$. Letting $s$ approach $t$, we see that this is exactly the image of the point as described in relation $(ii)$.
	
	We turn to proving the last statement of the proposition. Let
	$\overline{Q}'$ denote a subspace of $\overline{Q}$ that is the image of
	finitely many components of $Q$ so that the image of $K$ factors through
	$\overline{Q}'$. Each component of $Q$ is compact, and so $\overline{Q}'$ is compact, since it is a quotient of finitely many such components.
	Since
	$B[M,\Hur^X,N]$ is Hausdorff, 
	and a continuous bijection from a compact space to a Hausdorff space is
	a homeomorphism,
	the inclusion of $\overline{Q}' \to \overline{Q}$ is a homeomorphism onto its image. 
	Thus, the unique lift of the map from $K$ to $\overline{Q}'$ (and hence to $\overline{Q}$) is continuous.
\end{proof}

For technical reasons, we will want to work with inductive systems of
topological spaces $(X_{\epsilon})_{\epsilon > 0}$, which we view as presenting the colimit of their homotopy types. For example, we will consider $\overline{Q}_{\epsilon}[M,\Hur^{X},N]$ as an ind-system as $\epsilon$ approaches $0$.

\begin{definition}\label{definition:indweakeq}
	Let $L:\Top \to \Spc$ and $L:\Top_* \to \Spc_*$ denote the functors of $\infty$-categories sending a (pointed) topological space to its weak homotopy type.
	Given a (pointed) map $f:X \to X'$ in $\Ind(\Top)$ or $\Ind(\Top_*)$,
	we say that it is an {\em ind-weak equivalence} if $\colim Lf$ is an equivalence.
	Similarly, we say that $f$ is an {\em ind-weak homology equivalence} if $\colim Lf$
	is a homology equivalence.
\end{definition}

The following is an ind-variant of a well known lemma about weak homotopy
equivalences. In what follows, we use the smash product $\wedge$, the definition
of which is in \autoref{subsubsection:spaces}.

\begin{lemma}\label{lemma:compacthtyeq}
	Let $P$ be a filtered poset, and let $X'_\bullet \hookrightarrow X_\bullet, \bullet \in P$ be an injection of $P$-indexed diagrams of path connected pointed topological spaces (not necessarily with the subspace topology) such that for every pointed inclusion of compact spaces $K' \hookrightarrow K$, and commutative diagram 
	\begin{equation}
		\label{equation:k-inclusion}
		\begin{tikzcd}
			K'\ar[hook]{r}{i}
			\ar{d}{\psi} &K \ar[d]\\
			X'_{\alpha}\ar[r,hook] & X_{\alpha}
		\end{tikzcd}
	\end{equation}
	there is a $\beta>\alpha$ and a pointed homotopy $H:K\wedge [0,1]_+ \to
	X_\beta$ with the following two properties.
	\begin{enumerate}
		\item 	The restriction of $H$ along $i \wedge \id$ induces a pointed homotopy
	$K'\wedge[0,1]_+ \to X'_{\beta}$ whose restriction to the point $0 \in
	[0,1]_+$ is the composite $K' \xrightarrow{\psi} X'_\alpha \to X'_\beta$
	where $\psi$ is the map from \eqref{equation:k-inclusion}.
\item
	The restriction of $H$ to the point $1 \in [0,1]_+$ defines a map $K \to
	X_{\beta}$, which we assume factors continuously through $X'_{\beta}$.
	\end{enumerate}
	Then $X'_{\bullet} \to X_\bullet$ is an ind-weak homotopy equivalence.
\end{lemma}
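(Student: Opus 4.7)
The plan is to show that the map $\colim L(X'_\bullet) \to \colim L(X_\bullet)$ in $\Spc_*$ is an equivalence, i.e., an isomorphism on every $\pi_n$. Because filtered colimits in $\Spc$ commute with homotopy groups and because each $X'_\alpha, X_\alpha$ is path connected, this reduces to showing that
\begin{equation*}
\colim_{\alpha \in P} \pi_n(X'_\alpha) \longrightarrow \colim_{\alpha \in P} \pi_n(X_\alpha)
\end{equation*}
is a bijection for every $n \geq 1$. I verify surjectivity and injectivity separately, by applying the hypothesis to a carefully chosen compact pointed pair $(K', K)$. In what follows I use that, in the interpretation of the hypothesis as applied in \autoref{proposition:tensorproducthomotopy}, the homotopy $H$ has $H|_{K \wedge \{0\}_+}$ equal to the composite $K \to X_\alpha \to X_\beta$ coming from the commutative square \eqref{equation:k-inclusion}.

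For surjectivity, any element of $\pi_n(\colim L(X_\bullet))$ is represented, by compactness of $S^n$, by a pointed map $f : S^n \to X_\alpha$ for some $\alpha \in P$. I will apply the hypothesis with $K = S^n$, $K' = \{*\}$, $\psi : \{*\} \to X'_\alpha$ the basepoint, and $f$ as the lower map of \eqref{equation:k-inclusion}. The hypothesis produces $\beta \geq \alpha$ and a pointed homotopy $H : S^n \wedge [0,1]_+ \to X_\beta$ from $S^n \to X_\alpha \to X_\beta$ to a map $\widetilde f : S^n \to X_\beta$ which, by condition~(2), factors as $S^n \to X'_\beta \to X_\beta$. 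Hence $[f]$ in $\pi_n(X_\beta)$ equals the image of a class in $\pi_n(X'_\beta)$, so it is in the image of $\colim \pi_n(X'_\bullet) \to \colim \pi_n(X_\bullet)$.

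For injectivity, suppose $f, g : S^n \to X'_\alpha$ give classes that become equal in $\pi_n(\colim L(X_\bullet))$. Filteredness of $P$ and compactness of $S^n \wedge [0,1]_+$ produce $\gamma \geq \alpha$ and a pointed homotopy $H : S^n \wedge [0,1]_+ \to X_\gamma$ from $f$ to $g$ (each post-composed with $X'_\alpha \to X'_\gamma \to X_\gamma$). I will then apply the hypothesis with $K = S^n \wedge [0,1]_+$, $K' = (S^n \wedge \{0\}_+) \vee (S^n \wedge \{1\}_+) \cong S^n \vee S^n$ the two endpoint copies, and $\psi : K' \to X'_\gamma$ given by $f$ on the first wedge summand and $g$ on the second. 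The hypothesis yields $\beta \geq \gamma$ and a pointed homotopy $\widetilde H : K \wedge [0,1]_+ \to X_\beta$. By condition~(2), the restriction $\widetilde H|_{K \wedge \{1\}_+} : S^n \wedge [0,1]_+ \to X'_\beta$ is a pointed homotopy in $X'_\beta$ between two maps $f', g' : S^n \to X'_\beta$. By condition~(1), the restriction $\widetilde H|_{K' \wedge [0,1]_+} : K' \wedge [0,1]_+ \to X'_\beta$ is a pointed homotopy starting at $\psi$; restricting to each wedge summand produces homotopies in $X'_\beta$ from $f$ to $f'$ and from $g$ to $g'$. Concatenating the three homotopies in $X'_\beta$ (from $f$ to $f'$, from $f'$ to $g'$, and from $g'$ to $g$) yields a pointed homotopy $f \simeq g$ in $X'_\beta$, so the classes already agree in $\pi_n(X'_\beta)$.

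The main subtlety will be the bookkeeping in the injectivity step: one must verify that the homotopy obtained from $\widetilde H|_{K \wedge \{1\}_+}$ and the two boundary homotopies obtained from $\widetilde H|_{K' \wedge [0,1]_+}$ fit together compatibly along their shared corners (the images of $S^n \wedge \{0\}_+ \wedge \{1\}_+$ and $S^n \wedge \{1\}_+ \wedge \{1\}_+$) so that their concatenation is a well-defined pointed homotopy. Once the pair $(K', K)$ is taken to be the two ends inside the reduced cylinder, this compatibility is automatic from the square shape of $\widetilde H$, and the rest is a routine manipulation of filtered colimits.
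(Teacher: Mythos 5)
Your proof is correct and is essentially the paper's argument in unwound form: the paper first shows the hypotheses are inherited by the loop-space diagrams $\Omega X'_\bullet \hookrightarrow \Omega X_\bullet$ and inducts down to a $\pi_0$ statement, where it applies the hypothesis to $(K',K) = (*, *_+)$ for surjectivity and to $(\{0,1\}_+ \subset [0,1]_+)$ for injectivity, reading off the path along the boundary $\{0,1\}\times[0,1]\cup[0,1]\times\{1\}$ — which, after smashing with $S^n$, are exactly your pairs $(\ast, S^n)$ and $(S^n\vee S^n, S^n\wedge[0,1]_+)$ and your three-fold concatenation of boundary homotopies. Your explicit flagging of the implicit convention that $H|_{K\wedge\{0\}_+}$ is the composite $K\to X_\alpha\to X_\beta$ is a reading the paper's own proof also relies on, so there is no gap.
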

\begin{proof}
	Since $\pi_i(-) = \pi_{i-1}(\Omega -)$ and homotopy sets and based loop spaces
	commute with filtered colimits in $\Spc$, it suffices by induction on
	$i$ to show that $\Omega X'_{\bullet} \hookrightarrow{\Omega
	X_{\bullet}}$ satisfies the same hypotheses $(1)$ and $(2)$ as in the
	statement that $X'_\bullet \to X_\bullet$ satisfies
(except that $\Omega X'_\bullet$ and $\Omega X_\bullet$ need not be
	connected), and moreover that $\Omega X'_{\bullet} \hookrightarrow{\Omega
	X_{\bullet}}$ is an ind-equivalence on $\pi_0$.
	
	To see that the map
$\Omega X'_{\bullet} \hookrightarrow{\Omega
	X_{\bullet}}$
	satisfies $(1)$ and $(2)$, note that giving a pointed map $K \to \Omega
	X_{\alpha}$ is the same as giving a map $K\wedge S^1 \to X_{\alpha}$,
	and similarly for $K'$. 
	Here, we implicitly choose a fixed point in $S^1$ and view $S^1$ as a
	pointed space to make sense of the wedge product.
	Applying the hypotheses to $K\wedge S^1$ in
	place of $K$, 
	we get an pointed map $K\wedge S^1 \wedge [0,1]_+\to X_{\beta}$, 
	which is the same as a map $K\wedge[0,1]_+\to \Omega X_{\beta}$
	satisfying the desired properties $(1)$ and $(2)$.
	
	We finish the proof by showing that show that $\colim_{\alpha \in P}\pi_0\Omega X'_{\alpha} \to \colim_{\alpha \in P}\pi_0\Omega X_{\alpha}$ is a bijection. 
	We first check it is surjective.
	Given a point $* \in \Omega X_{\alpha}$, taking $K$ to be the compact
	space $*_+$ and $K' \subset K$ the base point,
	hypotheses $(2)$ 
	applied to the pointed map $*_+ \to \Omega
	X_{\alpha}$ shows surjectivity. For injectivity, if $[0,1]_+ \to \Omega
	X_{\alpha}$ is a path between two points with the end points factoring
	through $\Omega X'_{\alpha}$, then applying hypothesis $(1)$ 
	to this map, we get a map $[0,1]_+ \wedge [0,1]_+ = ([0,1]\times [0,1])_+\to \Omega X_{\beta}$, 
	and the part of the boundary $\{0,1\}\times [0,1]\cup [0,1]\times \{1\}$
	yields a path in $\Omega X'_{\beta}$ between the two points.
\end{proof}

The following scanning argument is a ind-version of a quite standard scanning argument (see for example \cite[Lemma
5.2]{randal-williams:homology-of-hurwitz-spaces} or \cite[Proposition 5.2.7]{bergstromDPW:hyperelliptic-curves-the-scanning-map}),
and is illustrated in \autoref{figure:scanning}.

\begin{figure}
	\includegraphics[scale=.5]{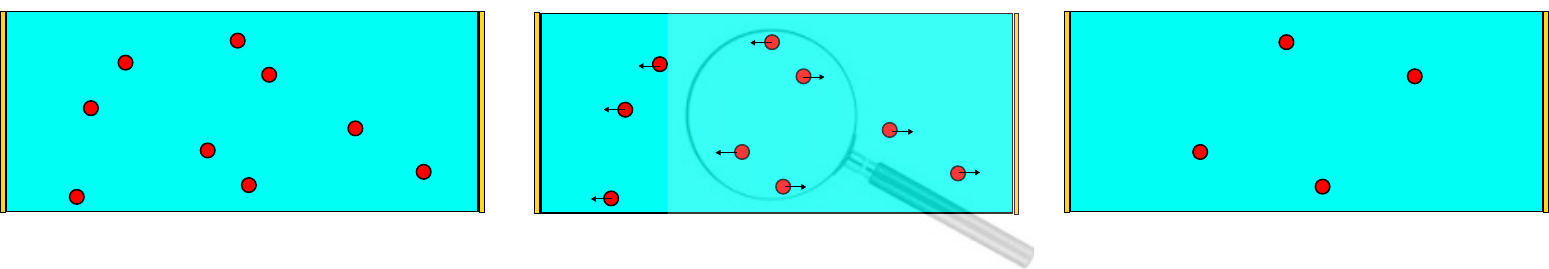}
\caption{This is a picture of the scanning argument of \autoref{lemma:scanningunpointed}.
	In the diagrams, there vertical distance at least $\varepsilon$ between any
	two red points,
	and
	all red points have second coordinate lie in 
	$[\epsilon, 1-\epsilon]$.
}
\label{figure:scanning}
\end{figure}

\begin{lemma}\label{lemma:scanningunpointed}
	The inclusions $\overline{Q}_\epsilon[M,\Hur^{X},N] \to B[M,\Hur^{X},N]$
	of  \autoref{proposition:modelquotdescription} indexed over the poset of
	real numbers $1>\epsilon>0$ are an ind-weak equivalence.
\end{lemma}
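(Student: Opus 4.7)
The plan is to apply \autoref{lemma:compacthtyeq} to the inclusions $\overline{Q}_\epsilon[M,\Hur^X,N] \hookrightarrow B[M,\Hur^X,N]$, viewed as an inclusion of ind-systems indexed by decreasing $\epsilon \in (0,1)$. So, given a commutative square
\[
\begin{tikzcd}
K' \ar[hook]{r}{i} \ar{d}{\psi} & K \ar{d}{f} \\
\overline{Q}_\alpha[M,\Hur^X,N] \ar[hook]{r} & B[M,\Hur^X,N]
\end{tikzcd}
\]
with $K' \subset K$ an inclusion of compact pointed spaces, I will produce $\beta < \alpha$ (that is, $\beta$ smaller as a real number) and a pointed homotopy $H: K \wedge [0,1]_+ \to B[M,\Hur^X,N]$ meeting the two conditions of \autoref{lemma:compacthtyeq}.

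The homotopy $H$ will be a scanning-type deformation that simultaneously achieves the two conditions characterizing points in the image of $\overline{Q}_\beta$ (by \autoref{proposition:modelquotdescription}): all configuration points have $\pi_2$-coordinates in $[\beta,1-\beta]$, and the $\pi_2$-coordinates of distinct points differ by at least $\beta$. The first condition is accomplished by a straight-line homotopy that linearly rescales the second coordinate of each configuration point from $(0,1)$ into $(\beta,1-\beta)$. The second condition is accomplished by a shear-type perturbation $(p_1,p_2) \mapsto (p_1, p_2 + t\,\eta(p_1))$ for $t \in [0,1]$, where $\eta: [0,1] \to \mathbb{R}$ is a small strictly monotone continuous function. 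Since any two distinct configuration points have distinct $(p_1,p_2)$, the shear separates their $\pi_2$-values whenever their $\pi_1$-values differ, while preserving the $\pi_2$-differences of points which already had the same $\pi_1$. By compactness of $K$ and continuity of $f$, the configurations $f(k)$ have a uniformly bounded number of points and a positive infimum on the "coordinate gap" needed; selecting $\beta$ small enough relative to this infimum and relative to $\alpha$ ensures that the endpoint $H_1$ of the homotopy lands in the subspace of $B[M,\Hur^X,N]$ characterized by \autoref{proposition:modelquotdescription} as the image of $\overline{Q}_\beta$, so $H_1$ factors continuously through $\overline{Q}_\beta$ by the last assertion of that proposition.

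The compatibility on $K'$ follows because $\psi(K')$ is a compact subset of $\overline{Q}_\alpha$, consisting of configurations that are already $\alpha$-separated in $\pi_2$ and lie in $[0,1] \times [\alpha,1-\alpha]$; choosing $\beta$ only slightly smaller than $\alpha$ guarantees that a small deformation preserves these separation properties, so the restricted homotopy stays in $\overline{Q}_\beta$. Points that reach the left/right boundary $\pi_1 \in \{0,1\}$ during the deformation are absorbed into $a \in M$ or $b \in N$ according to the topology from \autoref{definition:twosidedbartop} and the corresponding relations $(i)$ and $(ii)$ in \autoref{definition:quotientmodel}, so the homotopy is well-defined on the full space.

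The main obstacle will be verifying continuity of the shear separation construction over the whole compact family $K$, especially near configurations where two points collide in $\pi_1$ while being far apart in $\pi_2$, or vice versa. The essential technical point is that the set of "bad" configurations (with either $\pi_1$- or $\pi_2$-collisions) is, by openness of the separation conditions, avoided after a uniform deformation governed by the compactness of $K$; making this uniform choice of $\beta$ precise — in particular reconciling the deformation with the identifications in $B[M,\Hur^X,N]$ arising from points hitting the left/right boundary during the homotopy — is the core of the argument.
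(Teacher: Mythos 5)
Your overall strategy (apply \autoref{lemma:compacthtyeq} and build a scanning homotopy whose endpoint lands in the image of $\overline{Q}_\beta$ characterized by \autoref{proposition:modelquotdescription}) matches the paper, but the homotopy you construct does not work, and the failure is not merely technical. Both of your deformations --- the linear rescaling of the second coordinate into $[\beta,1-\beta]$ and the shear $(p_1,p_2)\mapsto(p_1,p_2+t\,\eta(p_1))$ --- move points only in the $\pi_2$-direction, so no configuration point ever reaches the left or right boundary and nothing is ever absorbed into $M$ or $N$. A homotopy that keeps all points in the interior cannot terminate in the $\pi_2$-injective locus: already for two points, the subspace of $\Conf_2$ of the square where the two points have distinct second coordinates is contractible (order them by $\pi_2$), while the ambient component is homotopy equivalent to $S^1$ (the angle of the chord mod $\pi$), so the identity map of a family realizing the generator of $\pi_1$ cannot be deformed into that subspace. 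Concretely, for the loop in which two points rotate halfway around each other, an intermediate value argument shows that for any fixed strictly monotone $\eta$ there is a configuration in the family whose two points have equal $\pi_2$-coordinate after the shear at $t=1$; the shear only separates $\pi_2$-values generically for a single configuration, and no single $\eta$ works uniformly over a compact family carrying a nontrivial braid.

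The paper's homotopy is essentially orthogonal to yours: it moves points only in the $\pi_1$-direction, using the outward flow of \autoref{construction:flow}, which pushes everything outside a thin vertical strip $[\tfrac12-\delta,\tfrac12+\delta]\times[0,1]$ into the left/right boundary where it multiplies into $a$ and $b$. The point of compactness of $K$ is then to choose $\delta$ and $\epsilon$ so that, for every $k\in K$, the finitely many configuration points of $f(k)$ lying in that strip \emph{already} project injectively to $[\epsilon,1-\epsilon]$ with pairwise $\pi_2$-gaps at least $\epsilon$ (this holds for $\delta=0$ since distinct points on a vertical line have distinct second coordinates, and persists for small $\delta$ uniformly over $K$). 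The flow never changes second coordinates, so the surviving points automatically satisfy the conditions of \autoref{proposition:modelquotdescription} and the endpoint factors through $\overline{Q}_\epsilon$ by the last assertion of that proposition. To repair your argument you would need to replace the rescaling-plus-shear with this boundary-absorbing flow (or some equivalent mechanism that discards points into $M$ and $N$); the separation in $\pi_2$ should be obtained by restricting attention to a thin strip rather than by perturbing second coordinates.
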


\begin{proof}
	It is easy to see that this map is always an equivalence on path
	components for each $\frac 1 2 \geq \epsilon>0$, and that every element is in the
	component of a point given by $(m,n) \in M\times N$ and an empty
	configuration of points.
	It suffices to prove the statement for each connected path component, so we
	now fix a path component $Z$, 
	restrict to $\epsilon\leq\frac 1 2$, and fix some $(m,n)$ on this
	component with empty configuration as the base point for all
	$\epsilon$. We will now apply \autoref{lemma:compacthtyeq} to the path
	component $Z$ of this base point. 
	Consider a diagram 	
	\begin{center}
		\begin{tikzcd}
			K'\ar[r,hook]\ar[d] &K \ar[d,"f"]\\
			\overline{Q}_\epsilon[M,\Hur^{X},N]\ar[r,hook] & B[M,\Hur^{X},N]
		\end{tikzcd}
	\end{center} with $K,K'$ compact. Choose a $\delta,\epsilon>0$ such that
	for all points in the image of $f : K \to B[M, \Hur^X, N]$, the part of the configuration in
	$[\frac 1 2-\delta,\frac 1 2+\delta]\times [0,1]$ projects injectively
	under the second projection and lands in $[\epsilon,
	1-\epsilon]$, and each two points in the
	image of this projection have distance at least $\epsilon$ from each
	other.
	Since the flow in \autoref{construction:flow} sends $\frac 1 2 \pm \delta$
	to the boundary in a finite amount of time, we can use the flow to
	produce a continuous homotopy starting from $f$ and ending with a map
	landing in the image of $\overline{Q}_{\epsilon}[M,\Hur^{X},N]$. 

	We conclude by verifying the two conditions of
	\autoref{lemma:compacthtyeq}.
	First, we verify condition (2) from \autoref{lemma:compacthtyeq}.
	Indeed, since $K$ is compact, at time $t=1$ of the homotopy, the map satisfies the criterion of the last sentence of \autoref{proposition:modelquotdescription}, so factors through $\overline{Q}_{\epsilon}[M,\Hur^{X},N]$.
	Finally, we verify condition $(1)$ of \autoref{lemma:compacthtyeq}
	by using the flow on $\overline{Q}_{\epsilon}[M,\Hur^{X},N]$. 
	Indeed, the homotopy also
	restricts on $K'$ to a homotopy of pointed maps to
	$\overline{Q}_{\epsilon}[M,\Hur^{X},N]$. This verifies the conditions of
	\autoref{lemma:compacthtyeq}, as desired.
	\end{proof}

We turn to proving a pointed version of \autoref{lemma:scanningunpointed}. As a first step, we prove a lemma which shows that certain pushouts of our topological models are homotopy pushouts.

\begin{lemma}\label{lemma:cofibration}
	Let $M$ be a right $\Hur^{X}$ module and $N$ be a left $\Hur^{X}$
	module. 
	Suppose that $Y \subset M\times N$ is a subset closed under the right
	and left actions of $\Hur^{X}$, and that $X' \to X$ is an inclusion of braided sets.
	Let
	$A\subset \overline{Q}_\epsilon[M,\Hur^{X},N]$ denote the subset of points in the image of points in $Q_{\epsilon}[M,\Hur^{X'},N]$ whose
	projection to $M\times N$ lies in $Y$.
	Then $A\subset \overline{Q}_\epsilon[M,\Hur^{X},N]$
	has the homotopy extension property.
\end{lemma}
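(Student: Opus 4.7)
The plan is to establish the homotopy extension property by exhibiting $\overline{Q}_\epsilon[M, \Hur^X, N]$ as a CW complex with $A$ as a subcomplex; inclusions of subcomplexes of CW complexes are cofibrations by standard results.

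Since $M$, $N$, and $X$ are discrete, the unquotiented space $Q_\epsilon[M, \Hur^X, N]$ decomposes as a disjoint union $\coprod_{k \geq 0} \coprod_{(m, n, l_1, \ldots, l_k) \in M \times N \times X^k} C_{k, \epsilon}$, where $C_{k, \epsilon}$ is the space of ordered tuples $(p_1, \ldots, p_k)$ in $[0, 1] \times [\epsilon, 1-\epsilon]$ satisfying $\pi_2(p_1) > \cdots > \pi_2(p_k)$ and $\pi_2(p_i) - \pi_2(p_{i+1}) \geq \epsilon$; the $\epsilon$-spacing provides a canonical ordering of the otherwise unordered configuration. Each $C_{k, \epsilon}$ is a compact convex polytope and hence admits a natural CW structure. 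The quotient $\overline{Q}_\epsilon$ is obtained by identifying the boundary strata $\{\pi_1(p_r) = 0\}$ and $\{\pi_1(p_r) = 1\}$ of each $C_{k, \epsilon}$ with lower-dimensional strata of components indexed by the modified data prescribed in relations $(i)$ and $(ii)$. These identifications assemble into attaching maps giving $\overline{Q}_\epsilon$ a CW structure; consistency of the gluing at corners of $C_{k, \epsilon}$ where multiple $p_r$'s are simultaneously on $\{0, 1\} \times [\epsilon, 1-\epsilon]$ is guaranteed by the braid relation $\sigma_i \sigma_{i+1} \sigma_i = \sigma_{i+1} \sigma_i \sigma_{i+1}$ satisfied by the braid action on $X^n$.

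The key step is verifying that $A$ is a subcomplex. An $A$-cell corresponds to data $(m, n, l_1, \ldots, l_k)$ with each $l_i \in X'$ and $(m, n) \in Y$, together with a cell in the CW decomposition of $C_{k, \epsilon}$. Consider the boundary stratum $\pi_1(p_r) = 0$ of such a cell. By relation $(i)$, this stratum is attached to the cell indexed by $(m y_1, n, l'_1, \ldots, l'_{k-1})$, where $(y_1, \ldots, y_r) = \sigma_1 \cdots \sigma_{r-1}(l_1, \ldots, l_r)$ and the $l'_i$ are defined accordingly. Since $X' \hookrightarrow X$ is an inclusion of braided sets, the braid group action on $X^r$ preserves $(X')^r$, so $y_1 \in X'$ and each $l'_i \in X'$. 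Since $Y$ is closed under the right action of $\Hur^X$, we have $(m y_1, n) \in Y$, so the attached cell is again an $A$-cell. The symmetric argument using relation $(ii)$ and left-action closure handles the stratum $\pi_1(p_r) = 1$. Hence every boundary face of an $A$-cell lies in $A$, confirming that $A$ is a subcomplex, from which the homotopy extension property follows.

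The main obstacle is making precise the CW structure on the quotient $\overline{Q}_\epsilon$, particularly the compatibility of the attaching maps at corners where multiple coordinates are simultaneously $0$ or $1$. The order-independence of applying relations $(i)$ and $(ii)$ in different orders in the quotient is exactly the content of the braid relations encoded by the braided set structure on $X$, which is what guarantees the quotient admits a well-defined CW structure rather than a pathological identification space.
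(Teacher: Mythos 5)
Your proof is correct and takes essentially the same approach as the paper's: exhibit $\overline{Q}_\epsilon[M,\Hur^{X},N]$ as a finite-dimensional CW complex built from the polytopes of $\epsilon$-spaced configurations, with $A$ a subcomplex. You in fact supply more detail than the paper does, notably the explicit check that the faces of an $A$-cell remain $A$-cells because the braid action preserves $(X')^r$ and $Y$ is closed under both $\Hur^{X}$-actions.
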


\begin{proof}
	 It is easy to see that for each $1>\epsilon>0$, $\overline{Q}_{\epsilon}[M,\Hur^X,N]$ admits the structure of a CW-complex of dimension $<\frac 2 \epsilon$.
	 This is because each component of $Q_{\epsilon}[M,\Hur^X,N]$ is a union of discs, and the gluing relations to form $\overline{Q}_{\epsilon}[M,\Hur^X,N]$ are along subcomplexes. It is finite dimensional since $\epsilon$ puts a bound on the number of points in the configurations that are allowed.
	 For a suitably chosen
CW structure on $\overline{Q}_{\epsilon}[M,\Hur^X,N]$, 
the inclusion $A\subset \overline{Q}_\epsilon[M,\Hur^{X},N]$
is that of a subcomplex, showing the lemma.
\end{proof}

We are finally prepared to prove a pointed version of \autoref{lemma:scanningunpointed}, which is the main result of this section:
\begin{theorem}\label{lemma:modelquotdescriptionpointed}
	Let $M$ be a pointed set with a right $\Hur^X$ action and $N$ be a
	pointed set with a left $\Hur^{X}$ action. There is a natural
	identification of $M\otimes_{\Hur^{X}_+}N$ with the ind-weak homotopy
	type of the quotient of the pointed spaces
	$$\overline{Q}^*_\epsilon[M,\Hur_+^{X},N]:=
	\overline{Q}_\epsilon[M,\Hur^{X},N]/S_{M,X,N},$$ as $\epsilon$ approaches $0$ (for $0<\epsilon<1$) where $S_{M,X,N} \subset \overline{Q}_\epsilon[M,\Hur^{X},N]$ is the subspace
	consisting of all points whose projection to either $M$ or $N$ is the
	base point.
\end{theorem}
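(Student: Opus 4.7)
The plan is to deduce this pointed ind-weak equivalence from the unpointed equivalence $M \otimes_{\operatorname{hur}^X} N \simeq B[M,\Hur^X,N]$ already furnished by \autoref{lemma:maincomparison} and from the scanning comparison with $\overline{Q}_\epsilon$ in \autoref{lemma:scanningunpointed}. The key observation is that for pointed modules, $M \otimes_{\Hur^X_+} N$ is computed by the same two-sided bar construction as in the unpointed case, but with $M \times \Hur^{X,n} \times N$ replaced by $M \wedge \Hur^{X,n}_+ \wedge N$; since $\Hur^{X,n}_+$ is $\Hur^{X,n}$ with a disjoint basepoint, this smash product is obtained from the unpointed product by collapsing the subspace where the $M$- or $N$-coordinate equals the basepoint. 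Thus $M \otimes_{\Hur^X_+} N$ is, up to weak equivalence, the cofiber of the inclusion of this basepoint subspace into the unpointed tensor product $M \otimes_{\operatorname{hur}^X} N$.

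First, I would produce from \autoref{lemma:maincomparison} a weak equivalence of pairs. Let $T_{M,X,N} \subset B[M,\Hur^X,N]$ denote the closed subspace of points $(a,b,y)$ whose projection to $M \times N$ lies in $\{*_M\} \times N \cup M \times \{*_N\}$, and let $T_{M,X,N}^{\mathrm{bar}} \subset M \otimes_{\operatorname{hur}^X} N$ denote the analogous basepoint locus. The scanning map $\sigma$ constructed in \autoref{lemma:maincomparison} is manifestly compatible with the $M$- and $N$-coordinates, so it restricts to a map $T_{M,X,N}^{\mathrm{bar}} \to T_{M,X,N}$, and the verification carried out there (homotopy lifting on disks and spheres mapped to basepoint loci) applies verbatim to yield a weak equivalence of pairs $(M \otimes_{\operatorname{hur}^X} N, T_{M,X,N}^{\mathrm{bar}}) \to (B[M,\Hur^X,N], T_{M,X,N})$. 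Consequently the quotient $B[M,\Hur^X,N]/T_{M,X,N}$ computes $M \otimes_{\Hur^X_+} N$, provided the relevant subspace inclusion has the homotopy extension property, which holds by the same CW-decomposition argument as in \autoref{lemma:cofibration}.

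Second, I would transport this identification from $B[M,\Hur^X,N]$ to the ind-system $\overline{Q}_\epsilon[M,\Hur^X,N]$. The natural map $\overline{Q}_\epsilon[M,\Hur^X,N] \hookrightarrow B[M,\Hur^X,N]$ of \autoref{proposition:modelquotdescription} carries $S_{M,X,N}$ into $T_{M,X,N}$, because the basepoint condition is defined purely in terms of the $M$- and $N$-coordinates and these are preserved by the inclusion. The scanning argument in \autoref{lemma:scanningunpointed} uses the flow $\phi_t$ of \autoref{construction:flow}, which commutes with the left and right actions on $M$ and $N$ — in particular, a point whose $M$ or $N$ coordinate sits at the basepoint stays at the basepoint under the flow. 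Thus the homotopies verifying hypotheses (1) and (2) of \autoref{lemma:compacthtyeq} restrict to basepoint loci, yielding an ind-weak equivalence of pairs $(\overline{Q}_\epsilon[M,\Hur^X,N], S_{M,X,N}) \to (B[M,\Hur^X,N], T_{M,X,N})$. Combining with \autoref{lemma:cofibration}, which grants the homotopy extension property needed to identify these pair quotients with the corresponding cofibers, one obtains the desired ind-weak equivalence $\overline{Q}^*_\epsilon[M,\Hur^X_+,N] \simeq M \otimes_{\Hur^X_+} N$.

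The main obstacle is bookkeeping the basepoints through the scanning construction; specifically, verifying that at boundary collisions (where points of the configuration exit $[0,1]\times[0,1]$ and multiply the $M$- or $N$-coordinate), the induced maps and homotopies genuinely preserve the basepoint strata. Once one commits to the convention that the added disjoint basepoint of $\Hur^X_+$ absorbs everything to $*_M$ and $*_N$, this becomes a routine compatibility check, but it must be made explicit because relations $(i)$ and $(ii)$ of \autoref{definition:quotientmodel} involve nontrivial $\sigma_i$-braidings on the labels before they interact with the boundary. The remaining argument is then purely formal: take the pointwise cofiber of pairs of ind-spaces and invoke \autoref{lemma:cofibration} to recognize it as the pointed quotient $\overline{Q}^*_\epsilon[M,\Hur^X_+,N]$.
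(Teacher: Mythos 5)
Your proposal follows essentially the same route as the paper: identify the pointed bar construction $M\otimes_{\Hur^X_+}N$ as a cofiber of the unpointed one over the basepoint locus, transport this through the unpointed comparisons of \autoref{lemma:maincomparison} and \autoref{lemma:scanningunpointed}, and use \autoref{lemma:cofibration} to recognize the resulting homotopy cofiber as the point-set quotient $\overline{Q}_\epsilon[M,\Hur^X,N]/S_{M,X,N}$. The paper packages the first step as the total cofiber of the square with corners $*\otimes_{\Hur^X}*$, $M\otimes_{\Hur^X}*$, $*\otimes_{\Hur^X}N$, $M\otimes_{\Hur^X}N$, and your ``basepoint locus'' $T^{\mathrm{bar}}_{M,X,N}$ is exactly the image of the pushout of the first three corners, so the two formulations agree; likewise your ``equivalence of pairs'' is just the naturality in $M,N$ of the unpointed lemmas, which the paper exploits by applying them to each corner separately.

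The one step I would push back on is your claim that $T_{M,X,N}\subset B[M,\Hur^X,N]$ has the homotopy extension property ``by the same CW-decomposition argument as in \autoref{lemma:cofibration}.'' That lemma's CW structure on $\overline{Q}_\epsilon[M,\Hur^X,N]$ depends crucially on the $\epsilon$-separation condition, which bounds the number of points in a configuration and makes each component a finite-dimensional complex glued along subcomplexes; $B[M,\Hur^X,N]$ has no such bound and its topology (defined via the cutting basis) carries no evident CW structure, so the argument does not transfer. As written, your intermediate identification of $M\otimes_{\Hur^X_+}N$ with the point-set quotient $B[M,\Hur^X,N]/T_{M,X,N}$ is therefore unjustified. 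The gap is avoidable, and this is how the paper proceeds: keep the homotopy-invariant total cofiber (which needs no cofibrancy) at the level of the bar constructions, pass each corner to its $\overline{Q}_\epsilon$ model, and only then invoke \autoref{lemma:cofibration} — where the CW argument genuinely applies — to replace the total cofiber of the $\overline{Q}_\epsilon$ square by the point-set quotient, before commuting with the filtered colimit in $\epsilon$.
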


\begin{proof}
		The pointed bar construction $M\otimes_{\Hur^{X}_+}N$ is the total cofiber of the square
		\begin{equation}
			\label{equation:bar-square}
				\begin{tikzcd}
						*\otimes_{\Hur^{X}}*\ar[r]\ar[d] &M\otimes_{\Hur^{X}}* \ar[d]\\
						*\otimes_{\Hur^{X}}N	\ar[r] & M\otimes_{\Hur^{X}}N.
					\end{tikzcd}
			\end{equation}
		In other words, $M\otimes_{\Hur^{X}_+}N$ is 
		the cofiber of the map from the pushout of the diagram
		
		$$M\otimes_{\Hur^{X}}*\leftarrow*\otimes_{\Hur^{X}}*\rightarrow*\otimes_{\Hur^{X}}N	$$
		to $M\otimes_{\Hur^{X}}N$.
		
		By \autoref{lemma:maincomparison} and \autoref{lemma:scanningunpointed},
		$M'\otimes_{\Hur^{X}}N'$ can be modeled as the colimit over
		$\epsilon>0$ of the homotopy types of $Q_{\epsilon}[M',\Hur^{X},N']$
		for any unpointed right module $M'$ and unpointed left module $N'$. It then follows by applying
		\autoref{lemma:cofibration}, that 
		the maps
		$$\overline{Q}_{\epsilon}[*,\Hur^{X},*] \to \overline{Q}_{\epsilon}[*,\Hur^{X},N]$$
		$$\overline{Q}_{\epsilon}[*,\Hur^{X},*] \to \overline{Q}_{\epsilon}[M,\Hur^{X},*]$$
		are cofibrations, i.e., they both have the homotopy extension
		property.
		In turn, this implies the map
		$$\overline{Q}_{\epsilon}[*,\Hur^{X},N]\cup_{\overline{Q}_{\epsilon}[*,\Hur^{X},*]}\overline{Q}_{\epsilon}[M,\Hur^{X},*] \to \overline{Q}_{\epsilon}[M,\Hur^{X},N]$$
		is a cofibration, where the $\cup$ notation denotes the pushout.
		Finally, this implies
		the total cofiber of the square
		\begin{equation}
			\label{equation:q-epsilon-square}
				\begin{tikzcd}
						\overline{Q}_{\epsilon}[*,\Hur^{X},*]\ar[r]\ar[d] &\overline{Q}_{\epsilon}[M,\Hur^{X},*] \ar[d]\\
						\overline{Q}_{\epsilon}[*,\Hur^{X},N]	\ar[r] & \overline{Q}_{\epsilon}[M,\Hur^{X},N]
					\end{tikzcd}
			\end{equation}
		can be modeled as the quotient of
		$\overline{Q}_\epsilon[M,\Hur^{X},N]$ by $S_{M,X,N}$.
		The total cofiber of \eqref{equation:bar-square} is the colimit over $\epsilon$
		of the total cofibers of the squares
		\eqref{equation:q-epsilon-square} for $\varepsilon > 0$,
		since the total cofiber is a colimit, so commutes with the
		filtered colimit over $\varepsilon$. Thus
		we obtain the
		desired result. 
\end{proof}

\bibliographystyle{alpha}
\bibliography{./bibliography}

\def\cprime{$'$} \providecommand{\noopsort}[1]{}
\begin{thebibliography}{BDPW23}

\bibitem[Ach06]{Achter:distributionClassGroups}
Jeffrey~D. Achter.
\newblock The distribution of class groups of function fields.
\newblock {\em J. Pure Appl. Algebra}, 204(2):316--333, 2006.

\bibitem[Ach08]{Achter:cohenQuadratic}
Jeffrey~D. Achter.
\newblock Results of {C}ohen-{L}enstra type for quadratic function fields.
\newblock In {\em Computational arithmetic geometry}, volume 463 of {\em
  Contemp. Math.}, pages 1--7. Amer. Math. Soc., Providence, RI, 2008.

\bibitem[ACV03]{abramovichCV:twisted-bundles}
Dan Abramovich, Alessio Corti, and Angelo Vistoli.
\newblock Twisted bundles and admissible covers.
\newblock volume~31, pages 3547--3618. 2003.
\newblock Special issue in honor of Steven L. Kleiman.

\bibitem[AM15]{adamM:a-class-group-heuristic}
Michael Adam and Gunter Malle.
\newblock A class group heuristic based on the distribution of 1-eigenspaces in
  matrix groups.
\newblock {\em J. Number Theory}, 149:225--235, 2015.

\bibitem[BBH17]{bostonBH:heuristics-imaginary}
Nigel Boston, Michael~R. Bush, and Farshid Hajir.
\newblock Heuristics for {$p$}-class towers of imaginary quadratic fields.
\newblock {\em Math. Ann.}, 368(1-2):633--669, 2017.

\bibitem[BBH21]{bostonBH:heuristics-real}
Nigel Boston, Michael~R. Bush, and Farshid Hajir.
\newblock Heuristics for {$p$}-class towers of real quadratic fields.
\newblock {\em J. Inst. Math. Jussieu}, 20(4):1429--1452, 2021.

\bibitem[BDPW23]{bergstromDPW:hyperelliptic-curves-the-scanning-map}
Jonas Bergstr{\"o}m, Adrian Diaconu, Dan Petersen, and Craig Westerland.
\newblock Hyperelliptic curves, the scanning map, and moments of families of
  quadratic l-functions.
\newblock {\em arXiv preprint arXiv:2302.07664v2}, 2023.

\bibitem[Beh93]{behrend:lefschetz-trace-formula}
Kai~A. Behrend.
\newblock The {{Lefschetz}} trace formula for algebraic stacks.
\newblock {\em Inventiones Mathematicae}, 112(1):127--149, December 1993.

\bibitem[Bia24]{bianchi:deloopings-of-hurwitz-spaces}
Andrea Bianchi.
\newblock Deloopings of {H}urwitz spaces.
\newblock {\em Compos. Math.}, 160(7):1651--1714, 2024.

\bibitem[BM25]{bianchiM:polynomial-stability}
Andrea Bianchi and Jeremy Miller.
\newblock Polynomial stability of the homology of {H}urwitz spaces.
\newblock {\em Math. Ann.}, 391(3):4117--4144, 2025.

\bibitem[BW17]{bostonW:non-abelian-cohen-lenstra-heuristics}
Nigel Boston and Melanie~Matchett Wood.
\newblock Non-abelian {C}ohen-{L}enstra heuristics over function fields.
\newblock {\em Compos. Math.}, 153(7):1372--1390, 2017.

\bibitem[Cad07]{cadman:using-stacks-to-impose-tangency}
Charles Cadman.
\newblock Using stacks to impose tangency conditions on curves.
\newblock {\em Amer. J. Math.}, 129(2):405--427, 2007.

\bibitem[CL84]{cohen1984heuristics}
H.~Cohen and H.~Lenstra.
\newblock Heuristics on class groups of number fields.
\newblock {\em Number Theory Noordwijkerhout 1983}, pages 33--62, 1984.

\bibitem[CL22]{canningL:chow-rings-of-low-degree-hurwitz-spaces}
Samir Canning and Hannah Larson.
\newblock Chow rings of low-degree {H}urwitz spaces.
\newblock {\em J. Reine Angew. Math.}, 789:103--152, 2022.

\bibitem[DH71]{davenportH:density-discriminants-cubica}
H.~Davenport and H.~Heilbronn.
\newblock On the {{Density}} of {{Discriminants}} of {{Cubic Fields}}. {{II}}.
\newblock {\em Proceedings of the Royal Society of London. Series A,
  Mathematical and Physical Sciences}, 322(1551):405--420, 1971.

\bibitem[Dri90]{drinfeld2006some}
Vladimir~G Drinfeld.
\newblock On some unsolved problems in quantum group theory.
\newblock In {\em Quantum Groups: Proceedings of Workshops held in the Euler
  International Mathematical Institute, Leningrad, Fall 1990}, pages 1--8.
  Springer, 1990.

\bibitem[DW88]{datskovsky-wright:density-of-discriminants-of-cubic-extensions}
Boris Datskovsky and David~J. Wright.
\newblock Density of discriminants of cubic extensions.
\newblock {\em J. Reine Angew. Math.}, 386:116--138, 1988.

\bibitem[EG03]{etingof2003rack}
Pavel Etingof and Mat{\i}as Grana.
\newblock On rack cohomology.
\newblock {\em Journal of pure and applied algebra}, 177(1):49--59, 2003.

\bibitem[EL23]{ellenbergL:homological-stability-for-generalized-hurwitz-spaces}
Jordan~S Ellenberg and Aaron Landesman.
\newblock Homological stability for generalized hurwitz spaces and selmer
  groups in quadratic twist families over function fields.
\newblock {\em arXiv preprint arXiv:2310.16286v3}, 2023.

\bibitem[ell]{ellenberg:homological-stability-withdrawn}
"homological stability for hurwitz spaces...ii" temporarily withdrawn.
\newblock
  \url{https://quomodocumque.wordpress.com/2013/11/23/homological-stability-for-hurwitz-spaces-ii-temporarily-withdrawn/}.
\newblock Accessed: 2024-04-23.

\bibitem[EVW12]{ellenbergVWhomologicalII}
Jordan~S Ellenberg, Akshay Venkatesh, and Craig Westerland.
\newblock Homological stability for {H}urwitz spaces and the {C}ohen-{L}enstra
  conjecture over function fields, {II}.
\newblock {\em arXiv preprint arXiv:1212.0923v1}, 2012.

\bibitem[EVW16]{EllenbergVW:cohenLenstra}
Jordan~S. Ellenberg, Akshay Venkatesh, and Craig Westerland.
\newblock Homological stability for {H}urwitz spaces and the {C}ohen-{L}enstra
  conjecture over function fields.
\newblock {\em Ann. of Math. (2)}, 183(3):729--786, 2016.

\bibitem[FRS07]{fennRS:the-rack-space}
Roger Fenn, Colin Rourke, and Brian Sanderson.
\newblock The rack space.
\newblock {\em Trans. Amer. Math. Soc.}, 359(2):701--740, 2007.

\bibitem[Gar15]{garton:random-matrices-the-cohen-lenstra-heuristics-and-roots-of-unity}
Derek Garton.
\newblock Random matrices, the {C}ohen-{L}enstra heuristics, and roots of
  unity.
\newblock {\em Algebra Number Theory}, 9(1):149--171, 2015.

\bibitem[Gor05]{gorinov:real-cohomology}
Alexei~G Gorinov.
\newblock Real cohomology groups of the space of nonsingular curves of degree 5
  in $\mathbb{CP}^2$.
\newblock In {\em Annales de la Facult{\'e} des sciences de Toulouse:
  Math{\'e}matiques}, volume~14, pages 395--434, 2005.

\bibitem[Har85]{Harer1985}
John~L. Harer.
\newblock Stability of the homology of the mapping class groups of orientable
  surfaces.
\newblock {\em Ann. of Math. (2)}, 121(2):215--249, 1985.

\bibitem[Hay92]{hayes:a-brief-introduction-to-drinfield-modules}
David~R. Hayes.
\newblock A brief introduction to {D}rinfel\cprime d modules.
\newblock In {\em The arithmetic of function fields ({C}olumbus, {OH}, 1991)},
  volume~2 of {\em Ohio State Univ. Math. Res. Inst. Publ.}, pages 1--32. de
  Gruyter, Berlin, 1992.

\bibitem[HS24]{hebestreit2024notehigherringtheory}
Fabian Hebestreit and Peter Scholze.
\newblock A note on higher almost ring theory.
\newblock {\em arXiv preprint arXiv:2409.01940v1}, 2024.

\bibitem[JS86]{joyal1986braided}
Andr{\'e} Joyal and Ross Street.
\newblock Braided monoidal categories.
\newblock {\em Mathematics Reports}, 86008, 1986.

\bibitem[Lev22]{kk1}
Ishan Levy.
\newblock The algebraic k-theory of the k (1)-local sphere via tc.
\newblock {\em arXiv preprint arXiv:2209.05314v1}, 2022.

\bibitem[Liu22]{liu:non-abelian-cohen-lenstra-in-the-presence-of-roots}
Yuan Liu.
\newblock Non-abelian cohen--lenstra heuristics in the presence of roots of
  unity.
\newblock {\em arXiv preprint arXiv:2202.09471v2}, 2022.

\bibitem[LL24]{landesmanL:an-alternate-computation}
Aaron Landesman and Ishan Levy.
\newblock An alternate computation of the stable homology of dihedral group
  {H}urwitz spaces.
\newblock {\em arXiv preprint arXiv:2410.22222v1}, 2024.

\bibitem[LST20]{lipnowskiST:cohen-lenstra-roots-of-unity}
Michael Lipnowski, Will Sawin, and Jacob Tsimerman.
\newblock Cohen-lenstra heuristics and bilinear pairings in the presence of
  roots of unity.
\newblock {\em arXiv preprint arXiv:2007.12533v1}, 2020.

\bibitem[Lur09]{HTT}
Jacob Lurie.
\newblock {\em Higher topos theory}.
\newblock Princeton University Press, 2009.

\bibitem[Lur17]{HA}
Jacob Lurie.
\newblock Higher {A}lgebra.
\newblock \url{https://www.math.ias.edu/~lurie/papers/HA.pdf}, 2017.
\newblock Accessed: 2024-10-24.

\bibitem[Lur18]{kerodon}
Jacob Lurie.
\newblock Kerodon.
\newblock \url{https://kerodon.net}, 2018.

\bibitem[LWZB24]{liuWZB:a-predicted-distribution}
Yuan Liu, Melanie~Matchett Wood, and David Zureick-Brown.
\newblock A predicted distribution for {G}alois groups of maximal unramified
  extensions.
\newblock {\em Invent. Math.}, 237(1):49--116, 2024.

\bibitem[LYZ00]{lu2000set}
Jiang-Hua Lu, Min Yan, and Yong-Chang Zhu.
\newblock On the set-theoretical yang-baxter equation.
\newblock 2000.

\bibitem[Mal08]{malle2008cohen}
G.~Malle.
\newblock Cohen-lenstra heuristic and roots of unity.
\newblock {\em Journal of Number Theory}, 128(10):2823--2835, 2008.

\bibitem[Mal10]{Malle:distribution}
Gunter Malle.
\newblock On the distribution of class groups of number fields.
\newblock {\em Experiment. Math.}, 19(4):465--474, 2010.

\bibitem[MW07]{madsen-weiss:the-stable-moduli-space-of-riemann-surfaces}
Ib~Madsen and Michael Weiss.
\newblock The stable moduli space of {R}iemann surfaces: {M}umford's
  conjecture.
\newblock {\em Ann. of Math. (2)}, 165(3):843--941, 2007.

\bibitem[NK24]{sheavesonmanifolds}
Thomas Nikolaus and Akhim Krause.
\newblock Sheaves on {M}anifolds.
\newblock
  \url{https://www.uni-muenster.de/IVV5WS/WebHop/user/nikolaus/Papers/sheaves-on-manifolds.pdf},
  2024.
\newblock Accessed: 2024-10-24.

\bibitem[Ols16]{olsson2016algebraic}
Martin Olsson.
\newblock {\em Algebraic spaces and stacks}, volume~62 of {\em American
  Mathematical Society Colloquium Publications}.
\newblock American Mathematical Society, Providence, RI, 2016.

\bibitem[PV15]{patel2015chow}
A.~{Patel} and R.~{Vakil}.
\newblock {On the Chow ring of the Hurwitz space of degree three covers of
  ${\bf P}^{1}$}.
\newblock {\em arXiv:1505.04323v1}, May 2015.

\bibitem[Rav84]{ravenel1984localization}
Douglas~C Ravenel.
\newblock Localization with respect to certain periodic homology theories.
\newblock {\em American Journal of Mathematics}, 106(2):351--414, 1984.

\bibitem[RW20]{randal-williams:homology-of-hurwitz-spaces}
Oscar Randal-Williams.
\newblock Homology of {H}urwitz spaces and the {C}ohen-{L}enstra heuristic for
  function fields [after {E}llenberg, {V}enkatesh, and {W}esterland].
\newblock {\em Ast\'{e}risque}, (422):Exp. No. 1164, 469--497, 2020.

\bibitem[Smi22]{smith:the-distribution-of-selmer-groups-1}
Alexander Smith.
\newblock The distribution of $\ell^\infty$-selmer groups in degree $\ell$
  twist families i.
\newblock {\em arXiv preprint arXiv:2207.05674v2}, 2022.

\bibitem[Sol00]{soloviev2000non}
Alexandre Soloviev.
\newblock Non-unitary set-theoretical solutions to the quantum yang-baxter
  equation.
\newblock {\em arXiv preprint math/0003194}, 2000.

\bibitem[Sun12]{sun:l-series-of-artin-stacks}
Shenghao Sun.
\newblock {$L$}-series of {A}rtin stacks over finite fields.
\newblock {\em Algebra Number Theory}, 6(1):47--122, 2012.

\bibitem[SW23]{sawinW:conjectures-for-distributions-containing-roots-of-unity}
Will Sawin and Melanie~Matchett Wood.
\newblock Conjectures for distributions of class groups of extensions of number
  fields containing roots of unity.
\newblock {\em arXiv preprint arXiv:2301.00791v1}, 2023.

\bibitem[Tie16]{tietz:thesis}
Jakob~Frederik Tietz.
\newblock {\em Homological stability for Hurwitz spaces}.
\newblock Phd thesis, Hannover: Gottfried Wilhelm Leibniz Universit{\"a}t
  Hannover, 2016.

\bibitem[Tom05]{tomassi:rational-cohomology-of-the-moduli-space}
Orsola Tommasi.
\newblock Rational cohomology of the moduli space of genus 4 curves.
\newblock {\em Compos. Math.}, 141(2):359--384, 2005.

\bibitem[Vas99]{vassiliev:how-to-calculuate}
V.~A. Vassiliev.
\newblock How to calculate the homology of spaces of nonsingular algebraic
  projective hypersurfaces.
\newblock {\em Tr. Mat. Inst. Steklova}, 225:132--152, 1999.

\bibitem[VE10]{ellenbergV:statistics-of-number-fields-and-function-fields}
Akshay Venkatesh and Jordan~S. Ellenberg.
\newblock Statistics of number fields and function fields.
\newblock In {\em Proceedings of the {I}nternational {C}ongress of
  {M}athematicians. {V}olume {II}}, pages 383--402. Hindustan Book Agency, New
  Delhi, 2010.

\bibitem[Wah08]{wahl:homological-stability-for-the-mapping-class-groups}
Nathalie Wahl.
\newblock Homological stability for the mapping class groups of non-orientable
  surfaces.
\newblock {\em Invent. Math.}, 171(2):389--424, 2008.

\bibitem[Wah13]{Wahl:stability-handbook}
Nathalie Wahl.
\newblock Homological stability for mapping class groups of surfaces.
\newblock In {\em Handbook of moduli. {V}ol. {III}}, volume~26 of {\em Adv.
  Lect. Math. (ALM)}, pages 547--583. Int. Press, Somerville, MA, 2013.

\bibitem[Woo19]{wood:nonabelian-cohen-lenstra-moments}
Melanie~Matchett Wood.
\newblock Nonabelian {C}ohen-{L}enstra moments.
\newblock {\em Duke Math. J.}, 168(3):377--427, 2019.
\newblock With an appendix by the author and Philip Matchett Wood.

\bibitem[Woo21]{wood:an-algebraic-lifting-invariant}
Melanie~Matchett Wood.
\newblock An algebraic lifting invariant of {E}llenberg, {V}enkatesh, and
  {W}esterland.
\newblock {\em Res. Math. Sci.}, 8(2):Paper No. 21, 13, 2021.

\bibitem[WW21]{wangW:moments-and-interpretations}
Weitong Wang and Melanie~Matchett Wood.
\newblock Moments and interpretations of the {C}ohen-{L}enstra-{M}artinet
  heuristics.
\newblock {\em Comment. Math. Helv.}, 96(2):339--387, 2021.

\bibitem[Zhe24]{zheng:stable-cohomology-trigonal}
Angelina Zheng.
\newblock Stable cohomology of the moduli space of trigonal curves.
\newblock {\em Int. Math. Res. Not. IMRN}, (2):1123--1153, 2024.

\end{thebibliography}

\end{document}